\newtheorem{theorem}{Theorem}[section]
\newtheorem{lemma}[theorem]{Lemma}
\newtheorem{proposition}[theorem]{Proposition}
\newtheorem{corollary}[theorem]{Corollary}
\theoremstyle{definition}
\newtheorem{definition}[theorem]{Definition}
\newtheorem{remark}[theorem]{Remark}
\numberwithin{equation}{section}
\acrodef{KPZ}{Kardar--Parisi--Zhang}
\acrodef{SHE}{Stochastic Heat Equation}
\acrodef{LDP}{Large Deviation Principle}
\renewcommand{\Pr}{\mathbb{P}}	
\newcommand{\Ex}{\mathbb{E}}	
\renewcommand{\d}{\mathrm{d}}	
\newcommand{\ind}{\mathbf{1}}	
\newcommand{\sd}{\textcolor{black}}
\newcommand{\e}{\varepsilon}
\newcommand{\z}{\epsilon}
\newcommand{\mur}{\mathfrak{G}}
\newcommand{\N}{\mathbb{N}}
\newcommand{\R}{\mathbb{R}} 
\newcommand{\Z}{\mathbb{Z}} 
\newcommand{\g}{\mathfrak{g}}
\newcommand{\m}{\mathsf}
\newcommand{\h}{\mathfrak{h}}
\newcommand{\sh}{\mathfrak{g}}
\newcommand{\calA}{\mathcal{A}}
\newcommand{\calD}{\mathcal{D}}
\newcommand{\calH}{\mathcal{H}}
\newcommand{\calZ}{\mathcal{Z}}
\renewcommand{\bar}{\overline}
\title[LIL and Fractal Properties of the KPZ]{Law of Iterated Logarithms and Fractal Properties of the KPZ equation}
\author[S.\ Das]{Sayan Das}
\address{S.\ Das,
	Department of Mathematics, Columbia University,
	\newline\hphantom{\quad \ \ S. Das}
	2990 Broadway, New York, NY 10027, USA
	}
\email{sayan.das@columbia.edu}
\author[P.\ Ghosal]{Promit Ghosal}
\address{P.\ Ghosal,
	Department of Mathematics, Massachusetts Institute of Technology,
	\newline\hphantom{\quad \ \ P. Ghosal}
	77 Massachusetts Avenue, Cambridge, MA 02139, USA
}
\email{promit@mit.edu}
\begin{document}
\begin{abstract}
	  We consider the Cole-Hopf solution of the $(1+1)$-dimensional KPZ equation started from the narrow wedge initial condition. In this article, we ask how the peaks and valleys of the KPZ height function (centered by time/$24$) at any spatial point grow as time increases. Our first main result is about the law of iterated logarithms for the KPZ equation. As time variable $t$ goes to $\infty$, we show that the limsup of the KPZ height function with the scaling by $t^{1/3}(\log \log t)^{2/3}$ is almost surely equal to $(3/4\sqrt{2})^{2/3}$ whereas the liminf of the height function with the scaling by $t^{1/3}(\log \log t)^{1/3}$ is almost surely equal to $-6^{1/3}$. Our second main result concerns with the \emph{macroscopic} fractal properties of the KPZ equation. Under exponential transformation of the time variable, we show that the peaks of KPZ height function mutate from being monofractal  to multifractal, a property reminiscent of a similar phenomenon in Brownian motion \cite[Theorem~1.4]{KKX17}. 
	  
 The proofs of our main results hinge on the following three key tools: $(1)$ a \emph{multi-point composition law} of the KPZ equation which can be regarded as a generalization of the two point composition law from \cite[Proposition~2.9]{corwin2019kpz}, $(2)$ the Gibbsian line ensemble techniques from \cite{CH14,CH16,corwin2019kpz} and, $(3)$ the tail probabilities of the KPZ height function in short time and its spatio-temporal modulus of continuity. We advocate this last tool as one of our new and important contributions which might garner independent interest.      	     
\end{abstract}

\maketitle

\section{Introduction}

We study the Kardar-Parisi-Zhang (KPZ) equation, a stochastic PDE which is formally written
 \begin{align}\label{kpz}
			\partial_t\mathcal{H}=\tfrac12\partial_{xx}\mathcal{H} + \tfrac12(\partial_x \mathcal{H})^2 + \xi, \qquad \mathcal{H} := \mathcal{H}(t,x) \qquad (t,x)\in [0,\infty)\times\R.
			\end{align} 
 Here, $\xi=\xi(t,x)$ is the space time white noise. The KPZ equation was originally introduced in \cite{kardar1986dynamic} for studying the fluctuation of growing interfaces and since then, it has found links to many systems including directed polymers, last passage percolation, interacting particle systems, and random matrices via its connections to the \emph{KPZ universality class} (see \cite{ferrari2010random,quastel2011introduction,corwin2012kardar,quastel2015one}). 
  
  The KPZ equation, as given in \eqref{kpz}, is ill-posed as a stochastic PDE due to the presence of the nonlinear term $(\partial_x\calH)^2$. The physically relevant notion of solution for the KPZ equation is given by the \textit{Cole-Hopf solution} which is defined as
  \begin{align*}
  \mathcal{H}(t,x):= \log \mathcal{Z}(t,x)
  \end{align*}
  where $\mathcal{Z}(t,x)$ is the solution of the stochastic heat equation (SHE)
    \begin{align}\label{she}
  	\partial_t\mathcal{Z} = \tfrac12\partial_{xx}\mathcal{Z} + \xi\mathcal{Z}, 
  	\qquad \calZ:=\calZ(t,x).
  \end{align} 
Throughout this paper, we work with the fundamental solution $\mathcal{Z}^{\mathbf{nw}}(t,x)$ of \eqref{she} and the associated Cole-Hopf solution $\mathcal{H}^{\mathbf{nw}}(t,x):=\log\mathcal{Z}^{\mathbf{nw}}(t,x)$ which corresponds to the SHE being started from the delta initial measure, i.e., $\mathcal{Z}^{\mathbf{nw}}(0,x)=\delta_{x=0}$. For any positive $t>0$, $\mathcal{Z}^{\mathbf{nw}}(t,x)$ is strictly positive (see \cite{flores2014strict}) which makes the Cole-Hopf solution $\mathcal{H}^{\mathbf{nw}}(t,x)$ well-defined. The corresponding initial data of the KPZ equation is termed as the \emph{narrow wedge} initial data.


%
 
 
 The ubiquity of the SHE is discernible in many applications stretching from modeling the density of the particles diffusing through random environments \cite{molchanov1996reaction, khoshnevisan2014analysis, BC17,CG17} to the partition function of the continuum directed random polymer model \cite{alberts2014,CDR10,borodin2014macdonald}. The solution theory for the SHE is standard \cite{Walsh86, quastel2011introduction, Cor18}; based on It\^{o} integral theory or martingale problems. The mathematical theory of the KPZ equation however has unleashed new challenges in recent years. Most notably, the study of the KPZ equation can now be classified into three broad directions, namely, to understand how the KPZ equation approximates the interface fluctuation of the random growth models, to build a robust solution theory of the KPZ equation and to unveil fine properties and asymptotics of the solution of the KPZ equation. The Cole-Hopf solution of the KPZ equation coincides with the limits of certain growth processes \cite{BG97,CT17,CST,G17,CGST,Lin20}. The KPZ equation being a testing ground for the nonlinear stochastic PDEs, stirs up intense recent innovations in the theory of singular PDEs including regularity structures \cite{Hai13}, paracontrolled distributions \cite{GIP15, GP17}, energy solution \cite{GJ14} and renormalisation group \cite{Kup16} methods. In this paper, we seek to pursue the third direction, i.e., to unravel finer properties of the Cole-Hopf solution of the KPZ equation.

In this paper, we consider the following  $1:2:3$ scaled version of the KPZ height function:
\begin{align}\label{def:kpz-long-scale}
	\h_t(\alpha,x):=\frac{\calH^{\mathbf{nw}}(\alpha t,t^{2/3}x)+\frac{\alpha t}{24}}{t^{1/3}}.
\end{align}
where $t$ specifies the time scale and $\alpha$ measures the time judged on that scale, $x$ measures the space judged on $t^{2/3}$ scale. Although the presence of $t$ and $\alpha$ bears a stain of redundancy, the notation introduced in \eqref{def:kpz-long-scale} will be useful in stating and proving many of our results. For $\alpha=1$, we will often use the shorthand $\h_t(x):=\h_t(1,x)$ and $\h_t:=\h_t(0)$. We will call the stochastic process $\h_t$ indexed by the time parameter $t$ as the \emph{KPZ temporal process}. In a seminal work, \cite{ACQ11} showed that 
\begin{align*}%
\h_t \stackrel{d}{\to} 2^{-1/3}TW_{\mathrm{GUE}}, \quad \text{as }t\to \infty. 
\end{align*}
Here, $TW_{\mathrm{GUE}}$ is the Tracy-Widom GUE distribution. 
\sd{The \emph{KPZ scaling} of the fluctuation, space and time, i.e., the ratio of the corresponding scaling exponents being $1:2:3$ and the Tracy-Widom distribution as the limit of the fluctuations are the characteristics of the models in the KPZ universality class. Recently, \cite{QS20,Vir20} have announced proofs of the convergence of the spatial process $\h_t(x)$ (upto a parabola) to the universal limiting process of the KPZ universality class, namely the \emph{KPZ fixed point} as $t$ goes to $\infty$}. 

\smallskip


Our objects of study are the large \textit{peaks and valleys} of the KPZ temporal process as the KPZ equation approaches the KPZ fixed point. Such study for any generic one-dimensional stochastic process with a macroscopic limiting profile usually starts up with two questions: \emph{What are the scalings of the large peaks and valleys}? \emph{Do they converge to any limit under such scaling}? For a Brownian motion $\mathfrak{B}_t$, these questions are answered via the (Brownian) \emph{law of iterated logarithms} (LIL). Under the $\sqrt{t}$ scaling, the fluctuation of the Brownian motion $\mathfrak{B}_t$ has the Gaussian limit. At the onset of this macroscopic Gaussianity, the peaks and valleys of $\mathfrak{B}_t/\sqrt{t}$ under further scaling by $\sqrt{2\log\log t}$ stays in between $-1$ and $1$. The extra scaling by an iterated logarithmic factor $\sqrt{2\log\log t}$ inflicts the name `law of iterated logarithms'.  

 Our first main result which is stated as follows concerns with the law of iterated logarithms of the KPZ equation started from the narrow wedge initial data. 


\begin{theorem}\label{lil} With probability $1$, we have
	\begin{align*}
	\limsup_{t\to \infty}\frac{\h_t}{(\log\log t)^{2/3}}=\Big(\frac{3}{4\sqrt{2}}\Big)^{\frac{2}{3}}, \quad \mbox{and } \ \  \liminf_{t\to \infty}\frac{\h_t}{(\log\log t)^{1/3}}  = -6^{\frac{1}{3}}.
	\end{align*}
\end{theorem}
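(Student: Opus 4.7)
The plan is a classical two-sided Borel--Cantelli argument tailored to the KPZ equation, with the asymmetric scalings $(\log\log t)^{2/3}$ and $(\log\log t)^{1/3}$ arising directly from the two-sided tail exponents of $2^{-1/3}TW_{\mathrm{GUE}}$. Writing $c_+ := (3/(4\sqrt{2}))^{2/3}$ and $c_- := 6^{1/3}$, the guiding computation is that the one-point tail asymptotics for large $s$ and $t$ read
\begin{align*}
\Pr\bigl(\h_t > s\bigr) \approx \exp\!\Bigl(-\tfrac{4\sqrt{2}}{3} s^{3/2}\Bigr),\qquad
\Pr\bigl(\h_t < -s\bigr) \approx \exp\!\Bigl(-\tfrac{1}{6} s^{3}\Bigr),
\end{align*}
and substituting $s = \lambda c_{\pm}(\log\log t)^{2/3}$ or $\lambda c_{\pm}(\log\log t)^{1/3}$ yields probabilities of order $(\log t)^{-\lambda^{3/2}}$ and $(\log t)^{-\lambda^{3}}$ respectively. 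Along a geometric time grid $t_n = e^n$, these become $n^{-\lambda^{3/2}}$ and $n^{-\lambda^{3}}$, so for $\lambda > 1$ they are summable, while for $\lambda < 1$ they are not. This gives the correct constants $c_{\pm}$ in the statement, and the task is to upgrade this heuristic into an almost sure bound.

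For the \textbf{upper halves} (i.e., $\limsup \le c_+$ and $\liminf \ge -c_-$), I would first apply the Borel--Cantelli lemma on $t_n = e^n$ using the sharp upper and lower tail estimates for $\h_{t_n}$ available in the literature (the $t$-independent tail bounds that the paper advertises as its ``third tool''). To pass from the sparse grid $\{t_n\}$ to all $t$, I would invoke the spatio-temporal modulus of continuity of $\h_t$: on each block $t \in [t_n, t_{n+1}]$ the deviation of $\h_t$ from $\h_{t_n}$ can be controlled on a scale much smaller than $(\log n)^{1/3}$, with probability summable in $n$. Combining the two contributions via a single Borel--Cantelli absorbs the fluctuation inside the geometric block.

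For the \textbf{lower halves} (i.e., $\limsup \ge c_+$ and $\liminf \le -c_-$) I would follow the Hartman--Wintner template, which requires a converse Borel--Cantelli along a subsequence of almost independent events. This is where the \emph{multi-point composition law} enters: writing $\mathcal{Z}^{\mathbf{nw}}(t_{n+1},0)$ as a spatial convolution of $\mathcal{Z}^{\mathbf{nw}}(t_n,\cdot)$ against an independent SHE propagator on the time window $[t_n, t_{n+1}]$, one obtains a decomposition $\h_{t_{n+1}} = F_n(\h_{t_n}, \dots) + G_n$, where $G_n$ is measurable with respect to the white noise on a disjoint slab. Choosing $t_{n+1}/t_n$ sufficiently large (to make $G_n$ dominate) and localising the spatial convolution in a window of size $t_n^{2/3}(\log n)^{1/2}$ (controlled by the KPZ tail bounds), one can approximate the target event $\{\h_{t_{n+1}} > (1-\delta)c_+ (\log n)^{2/3}\}$ by an event of the form $\{G_n > \ldots\}$ whose probability matches the heuristic, and which is independent of $\mathcal{F}_{t_n}$. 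Summing these divergent probabilities and applying the second Borel--Cantelli lemma yields the matching lower bound. The same argument with the left-tail estimate produces $\liminf \le -c_-$.

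The main obstacle, as above, is the \emph{decoupling step} in the lower halves: the composition law expresses $\h_{t_{n+1}}$ as an integral involving all spatial values of $\h_{t_n}$, so the ``independent piece'' $G_n$ is not genuinely independent of $\h_{t_n}$ unless one first localises the convolution kernel and controls the tail contribution from the parabolic decay of $\mathcal{Z}^{\mathbf{nw}}(t_n,y)$ for large $|y|$. This localisation relies critically on the spatio-temporal tail and modulus estimates advertised as the paper's new contribution, and it must be done with quantitative errors that beat the Borel--Cantelli summability threshold. The remaining bookkeeping---choice of geometric ratio, handling of the prefactors in the tail asymptotics, and converting the sharp one-point bounds into bounds that are uniform in $t$ large---is then routine.
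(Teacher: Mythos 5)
Your architecture is the right one — split into two one-sided bounds, prove the non-trivial (divergence) side via the composition law and a converse Borel--Cantelli along almost-independent increments, and close the block gaps on the easy side via modulus of continuity --- and this is exactly the structure of the paper's Section~6. But the liminf direction does not go through as sketched, and the reason is precisely the asymmetry between the two $\log\log$ exponents that the theorem is trying to establish.

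Concretely, your claim for the upper half that ``on each block $t\in[t_n,t_{n+1}]$ the deviation of $\h_t$ from $\h_{t_n}$ can be controlled on a scale much smaller than $(\log n)^{1/3}$, with probability summable in $n$'' is false over a geometric block. The infimum modulus-of-continuity estimate (Proposition~\ref{temp-modulus}, \eqref{temp-inf}) has tail $e^{-cs^2}$; plugging in a deviation threshold of size $\eta(\log n)^{1/3}$ gives failure probability $\exp\bigl(-c\eta^2(\log n)^{2/3}\bigr)$, and $\sum_n\exp(-c(\log n)^{2/3})=\infty$ because $2/3<1$. Making the geometric ratio small does not help: it shrinks the constant, not the exponent of $\log n$. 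The paper's proof of $\mathfrak{LimInf}_l$ therefore \emph{subdivides} each geometric block into $k_n=\lfloor(\log\log\alpha_n)^4\rfloor$ pieces; on a sub-block of relative length $k_n^{-1}$ the scaling prefactor $(\tau/a)^{1/8}\log^{1/2}(a/\tau)$ shrinks to $\sim k_n^{-1/8}(\log k_n)^{1/2}$, which boosts the effective exponent to $(\log n)^{7/6}>(\log n)^{1}$, and summability is restored after a union bound over the $k_n$ pieces. That subdivision is essential, not bookkeeping. (The limsup direction has tail $e^{-cs^{3/2}}$ and threshold $(\log n)^{2/3}$, giving exponent $(\log n)^{1}$; there one only needs the grid ratio small to push the constant past $1$, which is what the paper does with $\alpha_n=(1+\delta)^n$ and $\delta$ small.)

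A second, more fixable issue is the lower-half grid. Your heuristic probability $\sim n^{-\lambda^{3/2}}$ requires $\log\log t_n\sim\log n$, i.e.\ a geometric grid, but your decoupling step requires $t_{n+1}/t_n\to\infty$, which a geometric grid does not satisfy; the paper's Proposition~\ref{prop:IndProx} gives $\Pr(|\h_{t_n}-Y_n|\ge x)\le\exp(-cx^{3/2}\mathfrak{s}^{1/2})$ with $\mathfrak{s}$ essentially the geometric ratio minus $1$, and with a constant ratio you cannot make the error $o((\log n)^{1/3})$ with summable failure probability. Two resolutions work: take $\log t_n=n^{\gamma}$ with $\gamma>1$ close to $1$, so $\log\log t_n\sim\gamma\log n$ still gives divergence for $\lambda<\gamma^{-1/3}$ while $t_{n+1}/t_n\sim\exp(\gamma n^{\gamma-1})\to\infty$ makes the approximation error super-summable, and then send $\gamma\downarrow 1$; or do what the paper does in $\mathfrak{LimInf}_u$ and $\mathfrak{LimSup}_l$ and pack $L_n\sim e^{n(\theta-\delta)}$ points, well separated, into the doubly-exponential shell $[\exp(e^n),\exp(e^{n+1})]$ where the threshold is $\sim n^{1/3}$, so a unit approximation error is negligible, and bound $\Pr(\text{all packed points fail})$ by the product formula. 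You should make the grid explicit; as written your proposal implicitly uses two incompatible grids in the same paragraph.
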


The above law of iterated logarithms reveals the scaling of the large peaks and valleys of $\mathfrak{h}_t$. As we may see, the scalings for limsup and liminf differ from each other. This naturally gives rise to the following two questions:

\medskip

\noindent $(1)$ \emph{Where are the scaling $(\log \log t)^{2/3}$ and $(\log \log t)^{1/3}$ coming from?}

\medskip

 The scaling of the large peaks and valleys for the KPZ height fluctuation are in fact orchestrated by the Tracy-Widom GUE distribution. This is in line with the LIL for the Brownian motion where the exponent $1/2$ of $(\log \log t)$ factor stems from the Gaussian tail decay of the limiting law. For the KPZ equation, the peaks and valleys have different scaling thanks to the distinct decay exponents of the upper and lower tail probabilities of the Tracy-Widom GUE. If $X$ is a Tracy-Widom GUE random variable, then, the probability of $X$ being higher than $s$ (i.e., upper tail probability) decays as $e^{-4s^{3/2}/3}$ and the probability of lower (i.e., lower tail probability) than $-s$ decays as $e^{-s^3/12}$. So, the upper tail decay exponent is $3/2$ which induce the scaling $(\log \log t)^{2/3}$ for the peaks of the KPZ fluctuation whereas the lower tail exponent being $3$ is the source for the scaling $(\log \log t)^{1/3}$ of the valleys.   
Interestingly, as one may observe, the values of the limsup and liminf in Theorem~\ref{lil} are seemingly connected to the constants $4/3$ and $1/12$ of the respective tail decays of the Tracy-Widom GUE distribution. This association is commensurate with the Brownian LIL and predicted in other works (discussed in Section~\ref{sec:PrevWorks}).     
 
\medskip 
 
\noindent $(2)$ \emph{How the LILs will vary with the initial data?}

\medskip

Based on the LIL for the narrow wedge solution, one may insinuate that the scaling of the peaks and valleys of the KPZ solution under other initial condition will be governed by the tail exponents of the limiting random variables. It follows from Theorem~1.1 and~1.4 of \cite{CG18b} that for a wide class of initial data, the upper tail exponents of the limiting r.v. of the KPZ equation under KPZ scaling is $3/2$ and the lower tail exponent is at least $3$. By drawing the analogy with the narrow wedge case, we conjecture that correct scaling of the peaks and valleys of the KPZ height fluctuation will be $(\log \log t)^{2/3}$ and $(\log \log t)^{1/3}$ respectively. Proving these claims is beyond the scope of the present paper since some of the major tools that we use are not available for the KPZ solution under other initial data. However, we hope to explore this direction in future works.

\medskip

Our next objective is to quantify how often the peaks and valleys of the KPZ fluctuation exceed a given level. This entails to studying the \emph{upper level sets} $\{t>t_0:\h_t\geq \gamma (\log \log t)^{2/3}\}$ and \emph{lower level sets} $\{t>t_0:\h_t\leq -\gamma (\log \log t)^{1/3}\}$ for different values of $\gamma$ where $\gamma>0$ is a tuning parameter and $t_0$ is an arbitrary constant. In particular, we study the \emph{macroscopic} fractal nature of the level sets. For brevity, we mainly focus on the study of the upper level sets \sd{in this paper}.

Fractal nature of the level sets of the KPZ equation is intimately connected to the moment growth of the SHE which is captured through the \emph{Lyapunov exponents}, i.e., the limit of $t^{-1}\mathbb{E}[(\mathcal{Z}^{\mathbf{nw}}(t,0))^{k}]$ as $t\to \infty$ for any integer $k$. The nonlinear nature of the Lyapunov exponents of the SHE (predicted by \emph{Kardar's formula} \cite{Kardar87}) suggests an abundance of the large peaks of the SHE. This is manifested through the existence of infinitely many scales for the peaks, a property often called as \emph{multifractality}. In contrast, the peaks of a scaled Brownian motion $\mathfrak{B}_t/\sqrt{t}$ only show a single scale as time $t$ increases to infinity. This latter property is named as \emph{monofractality}. In the following, we give a mathematical definition of these two different natures of the (macroscopic) fractality.


\begin{definition}[Mono- and Multifractality]\label{bd:Fractality}
Let $X$ be a stochastic process. Suppose there exists a non-random gauge function $g$ such that $g(r)$ increases to $\infty$ as $r\to \infty$ and 
$$ \limsup_{r\to \infty}\frac{X(r)}{g(r)} = 1\qquad \text{a.s.}$$
Fix a scalar $\gamma, t_0>0$. Define  
\begin{align*}
\Xi_{X,g}(\gamma) := \Big\{t>t_0: \frac{X(t)}{g(t)}> \gamma\Big\}.
\end{align*}
We denote the (Barlow-Taylor) \emph{macroscopic Hausdorff dimension} (see Definition~\ref{bfHausdorffCont}) of any Borel set $\mathfrak{F}$ by $\mathrm{Dim}_{\mathbb{H}}(\mathfrak{F})$. The tall peaks of $X$ is \emph{multifractal} in gauge $g$ when there exist infinitely many length scales $\gamma_1 >\gamma_2>\ldots > 0$ such that, with probability one, 
\begin{align*}
\mathrm{Dim}_{\mathbb{H}}\big(\Xi_{X,g}(\gamma_{i+1})\big)> \mathrm{Dim}_{\mathbb{H}}\big(\Xi_{X,g}(\gamma_{i})\big), \quad \forall i\geq 1.
\end{align*}
On the other hand, the peaks of $X$ with gauge function $g$ is \emph{monofractal} when 
\begin{align*}
\mathrm{Dim}_{\mathbb{H}}(\Xi_{X,g}(\gamma)) = \begin{cases}
\mathrm{Constant} & \gamma\leq \gamma_0\\
0 & \gamma> \gamma_0
\end{cases}
\end{align*}
for some $\gamma_0>0$.  
\end{definition}	

  By the law of iterated logarithms, the gauge function of a scaled Brownian motion $\mathfrak{B}_t/\sqrt{t}$ is dictated as $(2\log \log t)^{1/2}$. It follows from the works of \cite{KKX17,Str64,Mo58} that the Brownian motion with such choice of the gauge function is monofractal. However, the macroscopic nature of the peaks undergoes a transition under the exponential transformation of the time variable underpinning the Brownian motion. For instance, the Ornstein-Uhlenbeck process which is defined as $U(t):= \exp(-t/2)\mathfrak{B}_{e^{t}}$ for $t\in \mathbb{R}$ is multifractal in the gauge function $(2\log t)^{1/2}$.  
  
  \sd{Our second main result which is stated below shows that the KPZ tempral process is monofractal in the gauge function $(\tfrac{3}{4\sqrt{2}}\log \log t)^{2/3}$. Whereas under the exponential transformation of the time variable, the peaks of the KPZ temporal process exhibits multifractality.}

\begin{theorem}\label{thm:FracDim}
Consider the rescaled height function $\mathfrak{h}_t$ of the KPZ equation and the exponential time-changed process $\mur(t):= \mathfrak{h}_{e^{t}}$. Then, we have the following: $\mathfrak{h}_t$ is monofractal with positive probability in gauge function $(\log \log t)^{2/3}$, i.e., for every $t_0,\gamma>0$,
\begin{align}\label{eq:Monofractal}
\mathrm{Dim}_{\mathbb{H}}\Big\{t\geq e^e: \frac{\mathfrak{h}_t}{(\log \log t)^{2/3}}\geq \gamma\Big\} \stackrel{a.s.}{=} \begin{cases} 
1 & \text{when }\gamma\leq  \big(\frac{3}{4\sqrt{2}}\big)^{\frac{2}{3}}, \\
0 &  \text{when }\gamma> \big(\frac{3}{4\sqrt{2}}\big)^{\frac{2}{3}}.
\end{cases}
\end{align} 
 In contrast, $\mur(t)$ is multifractal in gauge function $(3/4\sqrt{2})^{2/3}(\log t)^{2/3}$. In fact,
\begin{align}\label{eq:Mult}
\mathrm{Dim}_{\mathbb{H}}\Big\{t\geq e^e: \frac{\mur(t)}{(3/4\sqrt{2})^{2/3}(\log t)^{2/3}}\geq \gamma\Big\} \stackrel{a.s.}{=} 1- \gamma^{3/2}, \quad \text{for }\gamma\in [0,1].
\end{align}
\end{theorem}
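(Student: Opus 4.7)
The strategy splits naturally between the two halves of the theorem: the monofractal claim follows quickly from the LIL, while the multifractal claim requires both the one-point tail bounds and the multi-point composition law.

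\textbf{Monofractal regime \eqref{eq:Monofractal}.} Write $c_\star=(3/(4\sqrt 2))^{2/3}$. For $\gamma>c_\star$, Theorem~\ref{lil} forces $\mathfrak{h}_t/(\log\log t)^{2/3}<\gamma$ for all $t$ beyond a random threshold, so the level set is bounded and has macroscopic Hausdorff dimension zero. For $\gamma\leq c_\star$, the LIL supplies an unbounded sequence $t_n$ along which $\mathfrak{h}_{t_n}/(\log\log t_n)^{2/3}$ clusters near $c_\star$; the spatio-temporal modulus of continuity for $\mathfrak{h}_t$ (a key tool flagged in the abstract) then thickens each $t_n$ into an interval on which the same inequality persists (with $\gamma$ replaced by $\gamma-\delta$ for arbitrarily small $\delta$). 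Since such intervals occur on every Barlow--Taylor dyadic shell $[e^n,e^{n+1}]$, comparing them with the natural cover in the definition of $\mathrm{Dim}_{\mathbb{H}}$ pushes the macroscopic Hausdorff dimension to its maximal value $1$.

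\textbf{Multifractal upper bound in \eqref{eq:Mult}.} Combining $\mathfrak{h}_t\stackrel{d}{\to}2^{-1/3}TW_{\mathrm{GUE}}$ with uniform one-point upper tail estimates of the form $\Pr(\mathfrak{h}_t\geq s)\lesssim \exp(-\tfrac{4\sqrt 2}{3}s^{3/2})$ and substituting $s=\gamma\, c_\star(\log t)^{2/3}$ produces
\[
\Pr\bigl(\mur(t)\geq\gamma\, c_\star(\log t)^{2/3}\bigr)\lesssim t^{-\gamma^{3/2}}.
\]
Integrating this over the shell $[2^n,2^{n+1}]$ bounds the expected Lebesgue size of the level set on that shell by $2^{n(1-\gamma^{3/2})}$; a Barlow--Taylor covering argument, sharpened by the temporal modulus of continuity to pass from Lebesgue measure to cover cost, then delivers $\mathrm{Dim}_{\mathbb{H}}\leq 1-\gamma^{3/2}$ via a Borel--Cantelli summation across shells.

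\textbf{Multifractal lower bound in \eqref{eq:Mult}.} This is the delicate half and is the principal obstacle. On each shell $[2^n,2^{n+1}]$, select $M_n\asymp 2^{n(1-\gamma^{3/2})}$ evenly spaced test times $\{s_i^{(n)}\}$ with a constant gap, and set
\[
N_n=\#\bigl\{i:\mur(s_i^{(n)})\geq\gamma\, c_\star(\log s_i^{(n)})^{2/3}\bigr\}.
\]
A matching Tracy--Widom \emph{lower} bound on $\Pr(\mathfrak{h}_t\geq s)$ with the same exponent gives $\Ex[N_n]\asymp M_n\cdot 2^{-n\gamma^{3/2}}$ of the intended order. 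The crux is controlling the second moment: one must bound $\Pr(\mur(s_i^{(n)})\geq\cdot,\ \mur(s_j^{(n)})\geq\cdot)$ by an approximate product of the one-point probabilities. This is precisely where the paper's new multi-point composition law (generalizing \cite[Proposition~2.9]{corwin2019kpz}) is invoked: it decomposes $\mathcal{Z}^{\mathbf{nw}}$ at exponentially separated times as a near-product of independent SHE contributions, with residual errors dominated by the short-time tail bounds and Gibbsian line ensemble estimates from \cite{CH14,CH16,corwin2019kpz}. Once $\Ex[N_n^2]\lesssim(\Ex[N_n])^2$ is secured, Paley--Zygmund yields $N_n\gtrsim\Ex[N_n]$ with probability bounded away from zero uniformly in $n$; a Borel--Cantelli step along shells (whose occurrence events can again be rendered nearly independent by the composition law) finally upgrades this to the almost-sure lower bound $\mathrm{Dim}_{\mathbb{H}}\geq 1-\gamma^{3/2}$.
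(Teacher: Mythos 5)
Your split into four sub-claims is reasonable, and the case $\gamma>(3/(4\sqrt 2))^{2/3}$ (bounded level set, hence dimension $0$) matches the paper exactly, as does the general flavor of the multifractal upper bound (one-point upper tail + modulus of continuity $\Rightarrow$ cover-probability estimate $\Rightarrow$ Borel--Cantelli across shells, cf.\ Lemma~\ref{tech}). However, your argument for the $\gamma\leq(3/(4\sqrt 2))^{2/3}$ case of \eqref{eq:Monofractal} has a genuine gap, and in fact would prove the \emph{wrong} conclusion. You argue that the LIL gives a sequence $t_n$, that the modulus of continuity fattens each $t_n$ into an interval, and that ``intervals in every dyadic shell'' forces $\mathrm{Dim}_{\mathbb{H}}=1$. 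But a set that meets each shell $[e^n,e^{n+1}]$ in a single interval of $O(1)$ length can be covered in shell $n$ by one interval of length $1$, so $\nu_{n,\rho}\lesssim e^{-n\rho}$ and $\sum_n\nu_{n,\rho}<\infty$ for \emph{every} $\rho>0$; this gives $\mathrm{Dim}_{\mathbb{H}}=0$, not $1$. What is actually needed is that the level set occupies a positive \emph{fraction} of (a subset of) the shells in Lebesgue measure, and that cannot be read off from the LIL alone. The paper proves this by bounding the Lebesgue content $\mu(\mathcal{S}_n)$ from below via a second-moment/Paley--Zygmund argument (Proposition~\ref{haus}), whose second-moment control in turn requires the two-time decorrelation estimate $\Pr(\m{A}_s\cap\m{A}_t)=(1+o(1))\Pr(\m{A}_s)\Pr(\m{A}_t)$ for well-separated $s,t$ (Proposition~\ref{asymp_ind}), and then passes to $\nu_{n,1}$ through the macroscopic Frostman lemma (Proposition~\ref{ppn:Frostman}) using $K_{1,n}\leq 1$ for the Lebesgue restriction.

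For the multifractal lower bound in \eqref{eq:Mult} you propose a second-moment/Paley--Zygmund scheme; the paper instead proves $\Lambda_\gamma$ is $\theta$-thick for every $\theta\in(\gamma^{3/2},1)$ and applies Proposition~\ref{thick}. The thickness route only needs the \emph{first} moment combined with the near-independence furnished by the multi-point composition law: on each short subinterval of length $e^{n\theta}$ one plants $L_n\asymp e^{n(\theta-\delta)}$ test times with gaps $\geq e^{n\delta}$, approximates them by independent copies via Proposition~\ref{prop:IndProx}, and gets $\Pr(\text{all miss})\lesssim\exp(-e^{n(\theta-\delta-\zeta)})$, which is summable after the union bound since $\theta>\zeta$. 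Your sketch also contains an internal inconsistency — $M_n\asymp 2^{n(1-\gamma^{3/2})}$ points in a shell of length $\asymp 2^n$ have gap $\asymp 2^{n\gamma^{3/2}}$, not a constant — and, more substantially, it omits the step that converts ``$N_n$ large'' into a lower bound on $\nu_{n,\rho}$: with a level set of size only $\asymp e^{n(1-\gamma^{3/2})}$ one must still show this mass is not concentrated in a few unit intervals (i.e.\ control the ratio $\mu(Q)/\mathrm{Leb}(Q)$ in Proposition~\ref{ppn:Frostman}), which the thickness criterion sidesteps entirely. Your instinct that the multi-point composition law is the engine of decorrelation is correct, but it is deployed in the paper via thickness for \eqref{eq:Mult} and via the Frostman/second-moment route for \eqref{eq:Monofractal}; your write-up effectively swaps these two strategies and drops the parts that make each work.
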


Note that \eqref{eq:Monofractal} show that the peaks of $\h_t$ are monofractal in the gauge function $(\log \log t)^{2/3}$. On the other hand, the multifractality of the peaks of $\mur(t)$ is clear from \eqref{eq:Mult} since 
$$\Xi_{\mur(t), (3/4\sqrt{2})^{2/3}(\log t)^{2/3}}(\gamma_2)\stackrel{a.s}{=}1- \gamma^{3/2}_2 <1- \gamma^{3/2}_1\stackrel{a.s}{=}\Xi_{\mur(t), (3/4\sqrt{2})^{2/3}(\log t)^{2/3}}(\gamma_1)$$ 
for $0\leq \gamma_1<\gamma_2\leq 1$. This raises the following three interesting questions. 

\medskip

\noindent $(1)$ \emph{What is the minimal speed up needed for the time variable to see transition from monofractality to multifractality of the peaks of the KPZ equation?}

\medskip

 We are indebted to Davar Khoshnevisan for asking this question. By carefully studying the outreach of our tools, we expect to see the appearance of  multifractality of the peaks under the transformation $t\mapsto \h_{e^{(\log t)^a}}$ for any $a>1$. Due to lack of detailed information on the correlation decay of the KPZ temporal process, we are unable to make precise prediction of the fractality under the transformation $t\mapsto \h_{t^a}$ for any $a>1$. Based on our intuition, we expect that the monofractality will still be survived under such transformations.  
 
 \medskip

\noindent $(2)$ \emph{Is there a similar notion of macroscopic fractality for the valleys? What are the macroscopic fractal properties of the valleys of the KPZ height function?}

\medskip

The fractal properties of the valleys can be studied using the lower level sets. For instance, if $X$ is a stochastic process such that $\liminf_{r\to -\infty} X(r)/f(r) =-1$ almost surely for some gauge function $f$, then, the multifractality and/or monofractality of the valleys of $X$ can be defined in the same way as in Definition~\ref{bd:Fractality} using the macroscopic Hausdorff dimension of the following lower level sets
$$\widehat{\Xi}_{X,f}(\gamma):= \Big\{t>t_0: \frac{X(t)}{f(t)}<- \gamma\Big\}.$$
For studying the valleys of $\h_t$, the natural choice of the gauge function is $(6\log \log t)^{1/3}$ as shown by Theorem~\ref{lil}. Using the tools of this paper, we expect that one can show monofractality of the valleys of $\h_t$ in the gauge function $(6\log \log t)^{1/3}$. Furthermore, drawing the analogy with \eqref{eq:Mult}, we also expect the following equality holds 
$$\mathrm{Dim}_{\mathbb{H}}\Big(\widehat{\Xi}_{\mur(t), (6\log t)^{1/3}}(\gamma)\Big) \stackrel{a.s}{=} 1- \gamma^3. $$ 
 While the fractal properties of the valleys seem extremely exciting, for brevity, we restrict ourselves only to exploring the peaks of the KPZ temporal process \sd{in this paper}. 
 
\medskip
 
\noindent $(3)$ \emph{What is expected about the peaks and valleys of the KPZ fixed point in the temporal direction?}

\medskip

It is believed that $\h_t(\alpha,x)$ weakly converges as a time-space process to the KPZ fixed point (started from the narrow wedge data) which has recently been constructed in \cite{MQR16} via its transition probability and simultaneously in \cite{DOV18} via the \emph{Airy sheet}. Very recently, \cite{QS20,Vir20} announced proofs of a special case of this conjecture, namely the weak convergence of the spatial process $x\mapsto (2\alpha^{-1})^{1/3}(\h_t(\alpha,x)+\frac{x^2}{2})$ to the Airy$_2$ process (introduced in \cite{PS02}) for any fixed $\alpha>0$. In light of this conjecture, we expect that the law of iterated logarithms of the KPZ fixed point in the temporal direction bear the same scaling as in Theorem~\ref{lil}. Moreover, the macroscopic nature of the peaks and valleys of the KPZ equation as revealed in the above discussion is expected to be reflective of the case for the KPZ fixed point. Although, our proof techniques which will be touched on in Section~\ref{sec:ProofIdea} are very much likely to be applicable for the KPZ fixed point, we defer from proving results analogous to Theorem~\ref{lil} and~\ref{thm:FracDim} for the KPZ fixed point.

\medskip

Proving the law of iterated logarithms and the fractal properties of the KPZ equation requires information on the growth of $\mathfrak{h}_{t_1}-\mathfrak{h}_{t_2}$ for $t_1>t_2>0$. When $t_1-t_2$ is large, \cite[Theorem~1.5]{corwin2019kpz} obtained upper and lower bounds on the tail probabilities of $\mathfrak{h}_{t_1}-\mathfrak{h}_{t_2}$. However, controlling the variations of the peaks in a smaller interval necessitates the study of the tail probabilities of the increments $\mathfrak{h}_{t_1}-\mathfrak{h}_{t_2}$ for $t_1-t_2$ small. One of the main obstructions for studying the increments of $\mathfrak{h}_t$ in a small interval is the lack of uniform tail bounds of $\mathfrak{h}_t$ for all small $t>0$. In the following two results, we seek to fill this gap. To state those results, we introduce the following notations: 
\begin{align*}
\sh_t := \frac{\mathcal{H}^{\mathbf{nw}}(t,0)+ \log\sqrt{2\pi t}}{(\pi t/4)^{1/4}}.
\end{align*}

The first result proves a uniform bound on the upper tail probabilities of $\mathfrak{h}_t$ for all small $t>0$. 

\begin{theorem}\label{uptail-upb} Fix $\e>0$. There exist $t_0=t_0(\e)>0$, $c=c(\e)>0$, and $s_0=s_0(\e)>0$ such that for all $t\le t_0$ and $s\ge s_0$, 
	\begin{align}\label{e:short-up}
	\Pr(\sh_t\ge s) \le \exp\Big(-\frac{cs^2}{1+\sqrt{1+st^{1/4-4\e}}}\Big).
	\end{align}
\end{theorem}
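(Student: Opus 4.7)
The plan is to apply Markov's inequality to integer moments of $\calZ^{\mathbf{nw}}(t,0)$ and then optimize the moment order as a function of $s$ and $t$. Rewriting $\{\sh_t\ge s\}$ as $\{\calZ^{\mathbf{nw}}(t,0)\ge (2\pi t)^{-1/2}e^{s(\pi t/4)^{1/4}}\}$, for any positive integer $k$ Markov gives
\begin{equation*}
\Pr(\sh_t\ge s)\le (2\pi t)^{k/2}\, e^{-k s(\pi t/4)^{1/4}}\, \Ex\bigl[\calZ^{\mathbf{nw}}(t,0)^k\bigr].
\end{equation*}

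The crucial input I would need is a short-time moment estimate that bifurcates with $k$. In the regime where $\calH^{\mathbf{nw}}(t,0)+\tfrac12\log(2\pi t)$ is approximately Gaussian with variance $\sim \sqrt{t}$ (the Edwards--Wilkinson-type short-time behaviour), the Laplace transform should behave like $\exp(C k^2\sqrt{t})$; in the KPZ-dominant regime the correct upper bound is the cubic-in-$k$ growth predicted by Kardar's formula. Concretely, I would establish
\begin{equation*}
(2\pi t)^{k/2}\, \Ex[\calZ^{\mathbf{nw}}(t,0)^k]\le \begin{cases} \exp\bigl(C_1 k^2\sqrt{t}\bigr) & \text{if } k\le t^{-1/2+2\e},\\ \exp\bigl(C_2 k^3 t\bigr) & \text{if } k\ge t^{-1/2+2\e}. \end{cases}
\end{equation*}
The $2\e$ slack at the crossover is the source of the $4\e$ in the statement. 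The natural derivation uses the Bertini--Cancrini Feynman--Kac representation
\begin{equation*}
\Ex[\calZ^{\mathbf{nw}}(t,0)^k]=\frac{1}{(2\pi t)^{k/2}}\,E\Bigl[\exp\Bigl(\sum_{1\le i<j\le k} L^{i,j}_t\Bigr)\Bigr],
\end{equation*}
where $L^{i,j}_t$ is the local time at zero of $B^i-B^j$ on $[0,t]$. For the Gaussian branch I would Taylor-expand the exponential and dominate each term using the sub-Gaussian tails of Brownian local time (which scale as $t^{n/2}$ at order $n$) together with combinatorial bookkeeping; for the cubic branch a cruder exponential domination of the local-time sum suffices.

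With the moment bounds in hand, I would analyse two regimes of $s$ and then merge. In the \emph{small-$s$ regime} $s\le t^{4\e-1/4}$ one has $st^{1/4-4\e}\le 1$ and the target reduces to $\exp(-cs^2/2)$; choosing $k\asymp s\, t^{-1/4}$ stays within the Gaussian moment regime, and the Markov bound $\exp(C_1 k^2\sqrt{t}-ks(\pi t/4)^{1/4})$ optimises to $\exp(-c s^2)$. In the \emph{large-$s$ regime} $s\ge t^{4\e-1/4}$ one has $st^{1/4-4\e}\ge 1$ and the target is $\exp(-cs^{3/2}t^{-(1/8-2\e)})$; choosing $k\asymp s^{1/2} t^{-3/8}$ lands inside the cubic moment regime, and the Markov bound $\exp(C_2 k^3 t -k s(\pi t/4)^{1/4})$ optimises to $\exp(-c s^{3/2}t^{-1/8})$, with $t^{2\e}$ absorbing the mismatch between the chosen $k$ and the range of validity of the cubic bound. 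The two estimates are packaged into the single expression in the theorem by observing that $s^2/(1+\sqrt{1+st^{1/4-4\e}})=\Theta(s^2)$ when $st^{1/4-4\e}\le 1$ and $\Theta(s^{3/2}t^{-(1/8-2\e)})$ when $st^{1/4-4\e}\ge 1$.

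The main obstacle is the Gaussian branch of the moment bound. The pairwise-local-time exponential is genuinely nontrivial even for small $t$, and extracting the sharp $k^2\sqrt{t}$ scaling rather than the crude $k^3 t$ scaling requires careful control of intersection local-time moments with the correct combinatorial dependence on $k$. Once that estimate is established, the remainder of the argument reduces to an elementary optimisation over $k$ and a choice of $s_0,t_0$ large/small enough to swallow the numerical constants into the single $c$ of the theorem.
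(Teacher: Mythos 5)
Your overall strategy—Markov's inequality on integer moments of $\calZ^{\mathbf{nw}}(t,0)$ followed by a choice of moment order $k$ adapted to $(s,t)$—matches the paper's. The differences are in the two places where the work actually lives.

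First, the moment bound. You propose to derive the quadratic/cubic dichotomy $\exp(C_1k^2\sqrt{t})$ vs.\ $\exp(C_2k^3t)$ from the Bertini--Cancrini pairwise local-time representation and a Taylor expansion of the exponential with combinatorial bookkeeping. You correctly flag this as the hard part, but it is not just hard—it is essentially an open sub-problem in your sketch, and it is precisely what the paper avoids having to do. The paper instead invokes the explicit Bethe-ansatz/Macdonald-process moment formula for $\calZ^{\mathbf{nw}}(2t,0)$, bounds the interaction factor in the contour integral by $1$, evaluates the surviving Gaussian integral, and then estimates the resulting sum over partitions algebraically. This produces, for the specific $k=\lfloor s(\pi t/2)^{-1/4}\rfloor$, the bound $\exp(C(s^3t^{1/4-4\e}+s^2))$ with no probabilistic local-time analysis at all. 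Your route would need a rigorous uniform-in-$t$ quadratic-in-$k$ control of $\Ex\exp(\sum_{i<j}L^{i,j}_t)$ for $k$ up to $t^{-1/2+2\e}$, and the correlations among the $\binom{k}{2}$ pairwise local times make this delicate; as written this is a genuine gap.

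Second, the regime matching. Your split $s\lessgtr t^{-1/4+4\e}$ does not line up with the claimed range of validity of the two moment bounds. With the Gaussian moment bound restricted to $k\le t^{-1/2+2\e}$ and your small-$s$ choice $k\asymp st^{-1/4}$, validity forces $s\le t^{-1/4+2\e}$; so for $s\in(t^{-1/4+2\e},t^{-1/4+4\e})$ neither branch of your moment estimate applies at the chosen $k$, and capping $k$ at the threshold does not by itself recover the target $\exp(-cs^2)$. The paper handles exactly this window (its Case-3, $t^{-1/4+\e}\le s\le t^{-1/4-\e}$) with a H\"older interpolation: writing $\Ex e^{k(\cdot)}\le(\Ex e^{\tilde k(\cdot)})^{k/\tilde k}$ with $\tilde k$ at the boundary and applying the cubic bound to $\tilde k$. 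That interpolation is the actual source of the $4\e$ loss in the exponent. Your sketch asserts the $2\e$ slack absorbs the mismatch but supplies no mechanism; the H\"older step is the missing ingredient.
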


\begin{remark}
 Note that the right hand side of \eqref{e:short-up} decays like Gaussian tails, i.e., $\exp(-cs^2)$ for some constant $c>0$ as $t\downarrow 0$. This is embraced by the fact that $\sh_t$ weakly converges to a standard Gaussian distribution as $t$ approaches $0$ (shown in \cite[Proposition~1.8]{ACQ11}). On the other hand, for large $t$, the decay turns to $\exp(-cs^{3/2}t^{-1/8+2\e})$. The decay exponent $3/2$ accords with  the finite time upper tail exponent (see \cite[Theorem~1.10]{CG18b}) of the KPZ equation. 
\end{remark}

For the purpose of latter use, we will only require the following loose bound which is  free of the time variable and follows immediately from Theorem~\ref{uptail-upb}. 
\begin{corollary}\label{short-uptail} There exists $t_0>0$, $c>0$, and $s_0>0$ such that for all $t\le t_0$ and $s\ge s_0$, we have  
	\begin{align*}
	\Pr(\sh_t\ge s) \le \exp(-cs^{3/2}).
	\end{align*}

\end{corollary}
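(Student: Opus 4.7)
The plan is to derive Corollary~\ref{short-uptail} as a direct weakening of Theorem~\ref{uptail-upb}. Fix any $\e \in (0, 1/16)$ so that $\beta := 1/4 - 4\e$ is strictly positive, and let $t_0, c, s_0$ be the constants produced by Theorem~\ref{uptail-upb} for this choice of $\e$. That theorem supplies
$$\Pr(\sh_t \ge s) \le \exp\Big(-\frac{c s^2}{1+\sqrt{1+s t^{\beta}}}\Big)$$
for all $t \le t_0$ and $s \ge s_0$, so the task reduces to checking that the exponent on the right-hand side is bounded below by $c' s^{3/2}$ for some $c' > 0$, after possibly shrinking $t_0$ and enlarging $s_0$.

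To verify this I would split according to the size of the product $s t^{\beta}$. In the regime $s t^{\beta} \le 1$ the denominator $1+\sqrt{1+s t^{\beta}}$ is uniformly bounded by $1+\sqrt{2}$, so the exponent is of order $s^2$, which exceeds $s^{3/2}$ once $s \ge 1$. In the complementary regime $s t^{\beta} > 1$ one has the crude bound $1+\sqrt{1+s t^{\beta}} \le 3\sqrt{s t^{\beta}}$, which turns the exponent into a constant multiple of $s^{3/2} t^{-\beta/2}$; after insisting $t_0 \le 1$ the factor $t^{-\beta/2}$ is at least $1$, so this exponent is again at least a constant multiple of $s^{3/2}$. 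Taking $c'$ to be the smaller of the two implicit constants and $s_0$ large enough to absorb the transition between the regimes yields the desired tail bound.

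There is no substantive obstacle in this step, since all the analytic content is packaged into Theorem~\ref{uptail-upb}; the corollary is merely a uniform-in-$t$ convenience repackaging used downstream. The only choice one has to make is fixing $\e$ small enough that $\beta > 0$ and ensuring $t_0 \le 1$ so that the small-$t$ factor $t^{-\beta/2}$ never helps the bound go the wrong way.
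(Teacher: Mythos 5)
Your proposal is correct, and it fills in exactly the elementary estimate the paper tacitly relies on when it says the corollary ``follows immediately'' from Theorem~\ref{uptail-upb}: fix $\e<1/16$ so that $\beta=1/4-4\e>0$, insist $t_0\le 1$, and then bound the denominator $1+\sqrt{1+st^{\beta}}$ by a constant when $st^{\beta}\le 1$ and by a constant multiple of $\sqrt{st^{\beta}}\le\sqrt{s}$ when $st^{\beta}>1$. Both regimes yield an exponent of order $s^{3/2}$, so there is no gap; this is the same argument the paper leaves implicit.
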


The second result shows an uniform bound on the lower tail probability of $\sh_t$ for all small $t>0$.

\begin{theorem}\label{short:lowertail} There exist constants $t_0 \in (0,2]$, $s_0>0$ and $c>0$ such that for all $t\le t_0$, $s\ge s_0$,  
	\begin{align}\label{eq:LowTailUP}
	\Pr(\sh_t\le -s)\le e^{-cs^2}.
	\end{align}
\end{theorem}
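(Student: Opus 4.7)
The plan rests on the heuristic that $\sh_t \Rightarrow \mathcal{N}(0,1)$ as $t \downarrow 0$ by \cite[Proposition 1.8]{ACQ11}, so $e^{-cs^2}$ is exactly the Gaussian tail, and the theorem is really asserting that this Gaussian tail holds uniformly in $t$. I would proceed via a Markov-bound/negative-moment strategy. Since $\sh_t = (\mathcal{H}^{\mathbf{nw}}(t,0) + \log\sqrt{2\pi t})/(\pi t/4)^{1/4}$ and $\mathcal{Z}^{\mathbf{nw}}(t,0) > 0$ almost surely, Markov's inequality applied to $\mathcal{Z}^{\mathbf{nw}}(t,0)^{-k}$ gives, for any $k > 0$,
\begin{equation*}
\Pr(\sh_t \le -s) = \Pr\!\left(\mathcal{Z}^{\mathbf{nw}}(t,0) \le \tfrac{1}{\sqrt{2\pi t}} e^{-s(\pi t/4)^{1/4}}\right) \le (2\pi t)^{-k/2}\, e^{-sk(\pi t/4)^{1/4}}\, \mathbb{E}\!\left[\mathcal{Z}^{\mathbf{nw}}(t,0)^{-k}\right].
\end{equation*}

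The key technical input is then a uniform negative-moment bound: show that there exist $t_0 \in (0,2]$ and $C > 0$ with
\begin{equation}\label{eq:planNegMom}
\mathbb{E}\!\left[\mathcal{Z}^{\mathbf{nw}}(t,0)^{-k}\right] \le (2\pi t)^{k/2}\exp\!\left(C k^2 \sqrt{t}\right) \qquad \forall\, t \in (0, t_0],\ k > 0.
\end{equation}
Substituting \eqref{eq:planNegMom} into the previous display yields the exponent $-sk(\pi t/4)^{1/4} + Ck^2\sqrt{t}$, which is minimized at $k^\ast = s(\pi/4)^{1/4}/(2Ct^{1/4})$ and there equals $-s^2\sqrt{\pi}/(8C)$; the $t$-dependence cancels and the claim follows with $c = \sqrt{\pi}/(8C)$. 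Notice, encouragingly, that the log-normal heuristic $\log\mathcal{Z}^{\mathbf{nw}}(t,0) \approx \mathcal{N}(-\log\sqrt{2\pi t},\,\sqrt{\pi t}/2)$ predicts the right-hand side of \eqref{eq:planNegMom} exactly with $C = \sqrt{\pi}/4$, confirming the scale.

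The task thus reduces to establishing \eqref{eq:planNegMom}. For $t$ bounded away from $0$ (say $t \in [\delta, t_0]$) this is straightforward: it is equivalent to finiteness of negative exponential moments of $\mathcal{H}^{\mathbf{nw}}(t,0)$, and follows from the (stronger) cubic lower-tail estimates of \cite{CG18b}. The novel content is \eqref{eq:planNegMom} in the short-time regime $t \downarrow 0$. I would obtain it by combining the Amir--Corwin--Quastel identity $\mathbb{E}[\exp(-u\mathcal{Z}^{\mathbf{nw}}(t,0))] = \det(I - K_{t,u})$ for an explicit Airy-based kernel with the Mellin representation
\begin{equation*}
\mathbb{E}\!\left[\mathcal{Z}^{\mathbf{nw}}(t,0)^{-k}\right] = \frac{1}{\Gamma(k)}\int_0^\infty u^{k-1} \det(I - K_{t,u})\,\d u,
\end{equation*}
and then performing asymptotic analysis of the Fredholm determinant as $t \downarrow 0$. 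An alternative route is the Wiener chaos decomposition $\mathcal{Z}^{\mathbf{nw}}(t,0) = p_t(0)\sum_{n \ge 0} I_n(f_n^{(t)})$, isolating the leading Gaussian contribution from the low chaos and controlling higher chaos using the factorial decay of $\|f_n^{(t)}\|_{L^2}$.

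The principal obstacle is that the optimal $k^\ast$ above grows as $t^{-1/4}$, so \eqref{eq:planNegMom} must hold uniformly in $k$ arbitrarily large as $t \downarrow 0$. In the chaos approach this requires delicately balancing the combinatorial growth in $k$ of $\mathbb{E}[|I_n|^{2k}]$ against the factorial decay of the kernel norms $\|f_n^{(t)}\|_{L^2}$. In the Fredholm approach, the Airy-function asymptotics exploited in \cite{ACQ11} break down in the short-time regime and must be replaced by sharp elementary Gaussian estimates for $\det(I - K_{t,u})$ valid in the joint regime $t \downarrow 0$, $u \to \infty$; navigating this corner of parameter space, where neither the standard large-$t$ asymptotics nor the naive Neumann expansion of the determinant is adequate, is the most subtle part of the argument.
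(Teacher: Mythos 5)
Your reduction to the negative-moment bound \eqref{eq:planNegMom} is correct and the optimization computation checks out, but the reduction is essentially a restatement rather than a simplification, and the key estimate is left unproven. Writing $m = k(\pi t/4)^{1/4}$, the bound \eqref{eq:planNegMom} is equivalent to $\mathbb{E}[e^{-m\sh_t}] \le e^{C'm^2}$ uniformly for $t \le t_0$, i.e., a sub-Gaussian bound on the lower-tail moment generating function of $\sh_t$ itself. By the standard Chernoff/integration duality, this is exactly as strong as the theorem's conclusion — so the Markov step hands you an equivalent statement, not an easier one. The paper's result is precisely the assertion that this sub-Gaussianity survives down to $t = 0$, and you have not produced a proof of it.

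Both of your proposed routes to \eqref{eq:planNegMom} run into obstacles that the proposal names but does not clear. In the Fredholm route, the optimal $k^\ast \sim s t^{-1/4}$ forces you to extract uniform-in-$t$, large-$u$ asymptotics of $\det(I - K_{t,u})$ in a corner of parameter space ($t \downarrow 0$, $u = e^{s(\pi t/4)^{1/4}}/\sqrt{2\pi t} \to \infty$) where neither the Airy saddle-point analysis of \cite{ACQ11} nor a naive Neumann series applies; nothing in the existing literature supplies this, and the analysis would likely be comparable in length to the whole theorem. The Wiener-chaos route is in even worse shape: control of negative moments of a chaos sum $\sum_n I_n(f_n^{(t)})$ through positive-moment combinatorics is not a workable strategy — positive moments of a random variable say almost nothing about its small-ball probability, which is what negative moments encode. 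The proposal conflates lower-tail decay with moment growth.

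The paper's proof avoids negative moments entirely. It works on the discrete side: taking the lattice directed polymer $Z_n^{(\Xi)}((t/2n)^{1/4})$ (which converges to the SHE by \cite{alberts2014}), it uses Lemma~\ref{lem:Overlap} (an overlap/coupling inequality between two independent environments $\Xi, \Xi'$) and Lemma~\ref{antc-lemma} (a uniform-in-$t$ lower bound on the probability of the favorable event $A_{n,t,C}$) to reduce the lower-tail bound to a distance estimate $d_n(\Xi, A_{n,t,C}) \ge u$, which Talagrand's concentration inequality controls by $e^{-u^2/2}$. The Gaussian tail thus comes for free from Gaussian concentration of the environment, without needing any analytic control on the partition function itself. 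This is a fundamentally different mechanism — concentration of measure on the noise rather than moment analysis of the solution — and it sidesteps exactly the issue you flagged as the hardest part of your plan. If you want to salvage your approach, the honest way to prove \eqref{eq:planNegMom} is probably to first establish the theorem by the paper's Talagrand argument and then derive the moment bound as a corollary; but then the Markov step buys nothing.
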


\begin{remark}
The decay exponent of the upper bound in \eqref{eq:LowTailUP} is consistent with the Gaussian limit of $\sh_t$ as $t$ goes down to zero. It is worthwhile to note that Theorem~\ref{short:lowertail} provides an upper bound to the lower tail probability which holds uniformly for all small $t>0$. This should be contrasted with the work of \cite[Theorem~1.1]{CG18a} which showed that the lower tail probability at finite time $t>0$ decays as $\exp(-ct^{1/3}s^{5/2})$ for some constant $c>0$. The interpolation between the exponents $2$ and $5/2$ as one gradually increases time $t$ from $0$ to a finite value is not covered in Theorem~\ref{short:lowertail}.
\end{remark}

Short time uniform tail bounds of Theorem~\ref{uptail-upb} and~\ref{short:lowertail} opens directions to a plethora of new results. One of such directions is the study of modulus of continuity of the time-space process $\mathfrak{h}_{t}(\alpha,x)$. Our next and final main result proves a super-exponential tail bound of the modulus of continuity of $\mathfrak{h}_{t}(\alpha,x)$. 

\begin{theorem}\label{thm:ModCont}
Fix $t_0>0$, $\varepsilon\in (0,\frac{1}{4})$ and any interval $[a,b]\subset \mathbb{R}$ and $[c,d]\subset \mathbb{R}_{> t_0}$. Define $\mathrm{Norm}:([a,b]\times [c,d])^2\to \mathbb{R}_{\geq 0}$ 
\begin{align}\label{eq:NormCgg}
\mathrm{Norm}(\alpha_1,x_1;\alpha_2,x_2) = |x_1-x_2|^{\frac{1}{2}}\Big(\log\frac{|b-a|}{|x_1-x_2|}\Big)^{2/3} + |\alpha_1-\alpha_2|^{\frac{1}{4}-\varepsilon}\Big(\log\frac{|d-c|}{|\alpha_1-\alpha_2|}\Big)^{2/3} 
\end{align}
and 
\begin{align}\label{eq:Cgg}
\mathcal{C}:= \sup_{\alpha_1\neq \alpha_2,x_1\neq x_2}\frac{1}{\mathrm{Norm}(\alpha_1,x_1;\alpha_2,x_2)} \big|\h_t(\alpha_1,x_1)+\frac{x^2_1}{2\alpha_1} - \h_t(\alpha_2,x_2)- \frac{x^2_2}{2\alpha_2}\big|.
\end{align}
Then there exist $s_0= s_0(t_0,|b-a|,|c-d|,\e)>0$ and $c=c(t_0,|b-a|,|c-d|,\e)>0$ such that for all $s\geq s_0$ and $t\geq t_0$, 
\begin{align}\label{eq:CIneq}
\mathbb{P}(\mathcal{C}>s)\leq e^{-cs^{3/2}}.
\end{align}
\end{theorem}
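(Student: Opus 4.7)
The plan is to establish \eqref{eq:CIneq} by combining (a) sub-Weibull$(3/2)$ tail bounds on the two-point increments
\[
\Delta(\alpha_1,x_1;\alpha_2,x_2) := \h_t(\alpha_1,x_1) + \tfrac{x_1^2}{2\alpha_1} - \h_t(\alpha_2,x_2) - \tfrac{x_2^2}{2\alpha_2},
\]
with explicit polynomial dependence on $|x_1-x_2|^{1/2}$ and $|\alpha_1-\alpha_2|^{1/4-\varepsilon}$, with (b) a Kolmogorov--Chentsov/dyadic chaining argument that upgrades pointwise bounds to the uniform modulus $\mathcal{C}$. The logarithmic corrections appearing in \eqref{eq:NormCgg} and the $3/2$ exponent in \eqref{eq:CIneq} are both produced by chaining: at dyadic scale $2^{-k}$ there are $O(2^{Ck})$ pairs to control, and the maximum of $N$ random variables with $\Pr(X > s) \lesssim e^{-c s^{3/2}}$ is of order $(\log N)^{2/3}$, which telescopes into the stated norm.

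\medskip

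\textbf{Spatial two-point bound.} After adding the parabolic correction $x^2/(2\alpha)$, the narrow-wedge height enjoys the affine shift symmetry $\h_t(\alpha, x+y) + (x+y)^2/(2\alpha) \stackrel{d}{=} \h_t(\alpha,x)+x^2/(2\alpha)$ as a process in $x$, so one reduces to a spatially stationary-in-distribution field. Combining Corollary~\ref{short-uptail} with the Brownian-like local behavior of $x \mapsto \h_t(1,x)+x^2/2$ supplied by the Gibbsian line ensemble machinery of \cite{CH14,CH16,corwin2019kpz}, one obtains, uniformly in $\alpha \in [c,d]$ and $x_1,x_2 \in [a,b]$ and $t \geq t_0$,
\[
\Pr\!\big(|\Delta(\alpha,x_1;\alpha,x_2)| > s\, |x_1-x_2|^{1/2}\big) \le e^{-c s^{3/2}}, \qquad s \ge s_0.
\]

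\medskip

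\textbf{Temporal two-point bound (the main obstacle).} This is the principal technical hurdle. For $\alpha_2 < \alpha_1$ the multi-point composition law expresses $\mathcal{Z}^{\mathbf{nw}}(\alpha_1 t,\cdot)$ as a spatial convolution of $\mathcal{Z}^{\mathbf{nw}}(\alpha_2 t,\cdot)$ with an independent short-time SHE propagator of lifespan $\tau := (\alpha_1-\alpha_2)t$. Taking logarithms and rescaling per \eqref{def:kpz-long-scale} gives
\[
\h_t(\alpha_1,x)-\h_t(\alpha_2,x) = t^{-1/3} \log\!\int\! \frac{\mathcal{Z}^{\mathbf{nw}}(\alpha_2 t,z)}{\mathcal{Z}^{\mathbf{nw}}(\alpha_2 t, t^{2/3}x)}\, \mathcal{Z}^{\mathrm{sh}}(z,t^{2/3}x)\,\mathrm{d}z + \tfrac{(\alpha_1-\alpha_2)t^{2/3}}{24},
\]
where $\mathcal{Z}^{\mathrm{sh}}$ is the independent SHE propagator on time interval $\tau$. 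Localizing $z$ to an $O(\tau^{1/2})$-neighborhood of $t^{2/3}x$ (using the spatial modulus of $\mathcal{Z}^{\mathbf{nw}}(\alpha_2 t,\cdot)$ established in the previous step), the integrand concentrates around a short-time KPZ height $\sh_\tau$ evaluated at the base point, to which Theorem~\ref{uptail-upb} and Theorem~\ref{short:lowertail} apply and deliver super-exponential control with Gaussian-type decay. After undoing the $t^{1/3}$ rescaling, the canonical $1/4$ Hölder exponent in time of the SHE translates into a tail bound
\[
\Pr\!\big(|\h_t(\alpha_1,x)-\h_t(\alpha_2,x)|> s\,|\alpha_1-\alpha_2|^{1/4-\varepsilon}\big) \le e^{-c s^{3/2}};
\]
the $\varepsilon$ loss absorbs both the spatial-localization error and the $t^{1/4-4\varepsilon}$ correction already present in Theorem~\ref{uptail-upb}. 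This step is the main obstacle because it requires simultaneously managing the Gaussian short-time regime of $\sh_\tau$ and the Tracy--Widom long-time regime of $\mathcal{Z}^{\mathbf{nw}}(\alpha_2 t,\cdot)$ appearing in the composition formula.

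\medskip

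\textbf{Chaining and conclusion.} With the two directional increment bounds in hand, I would run a dyadic chaining on the rectangle $[c,d]\times[a,b]$ with spatial mesh $2^{-k}$ and temporal mesh $2^{-k/(2(1/4-\varepsilon))}$, so that both directional contributions to $\mathrm{Norm}$ are balanced at each scale. At level $k$ there are only polynomially many ($O(2^{Ck})$) pairs of consecutive points, so a union bound at threshold $r\cdot k^{2/3}$ produces via the $e^{-cr^{3/2}k}$ factor a summable tail; telescoping the differences along the nested dyadic approximations of $(\alpha_1,x_1)$ and $(\alpha_2,x_2)$ and identifying $\log(|b-a|/|x_1-x_2|) \asymp k\log 2$ at the appropriate scale yields $\mathcal{C}<\infty$ a.s.\ with the bound $\Pr(\mathcal{C}>s) \le e^{-cs^{3/2}}$ claimed in \eqref{eq:CIneq}, on performing the same chaining at general threshold $s$.
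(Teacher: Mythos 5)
Your overall plan matches the paper's: establish two-point increment tail bounds of sub-Weibull$(3/2)$ type via the composition law and the short-time/long-time one-point estimates, then chain dyadically. However, there is a real gap in the temporal step. The propagator lifespan is $\tau=|\alpha_1-\alpha_2|t$, and since $t\geq t_0$ and $|\alpha_1-\alpha_2|$ ranges up to $d-c$, $\tau$ is not necessarily small. Your argument localizes around a \emph{short-time} height $\sh_\tau$ and invokes Theorems~\ref{uptail-upb} and~\ref{short:lowertail}, which are short-time results; they do not apply when $\tau$ exceeds the short-time threshold, which happens even for small $|\alpha_1-\alpha_2|$ once $t$ is large. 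In that complementary regime the propagator is itself a KPZ-scaled height in the Tracy--Widom regime, and a separate argument is required --- this is exactly the split between Proposition~\ref{shortdiff} (valid only for $\beta t\leq 1/t_0$) and Proposition~\ref{longdiff} (valid for $\beta t\geq t_0$) in the paper. The combined bound, as assembled in \eqref{eq:SortTimeTail} inside the proof of Proposition~\ref{temp-modulus}, is what actually feeds the chaining. Your sketch only proves half of it.

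A secondary point: the spatial modulus is not a short-time phenomenon here. Because $\alpha\geq c>t_0$ and $t\geq t_0$, the time parameter $\alpha t\geq ct_0$ is bounded away from zero, so the spatial two-point bound is the long-time result of Theorem~1.4 of \cite{corwin2019kpz} (proved via the long-time KPZ line ensemble), not an application of Corollary~\ref{short-uptail}. The Gibbsian-line-ensemble mechanism you describe is the right one, but the citation should be to the long-time spatial modulus. With these two repairs the chaining step you outline is correct and produces the $(\log\cdot)^{2/3}$ factors and the $e^{-cs^{3/2}}$ tail as claimed.
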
 

\begin{remark}
It was known (due to \cite[Theorem~2.2]{BC95}) that the fundamental solution of the SHE (i.e., $\mathcal{Z}^{\mathbf{nw}}(t,x)$) as a time-space process is almost surely H\"older continuous with the spatial and temporal H\"older exponents being less than $1/2$ and $1/4$ respectively. This indicates H\"older continuity of $\mathcal{H}^{\mathbf{nw}}(t,x)$ with same spatial and temporal H\"older exponents as that of $\mathcal{Z}^{\mathbf{nw}}(t,x)$. Theorem~\ref{thm:ModCont} corroborates to this fact by giving tail bounds to the modulus of continuity.  
\end{remark}

\subsection{Proof ideas}\label{sec:ProofIdea}

   We start with discussing what makes our work hard to accomplish using other approaches. As a testing ground for non-linear SPDE's, the KPZ equation embraces a stack of new tools including regularity structures, paracontrolled distributions, energy solution method. Through its connection with the KPZ universality class, the KPZ equation is a paramount testament of a playing field for the techniques from integrable systems and random matrix theory. While these tools unveiled salient features of the KPZ equation in the past, many finer properties are still out of reach. One of the basic requirement for showing the law of iterated logarithms and the fractal nature of the KPZ level sets is to attain a delicate understanding of the modulus of continuity of the KPZ temporal process. This entails to knowing multi-point joint distribution  of the KPZ equation. While the seminal paper \cite{ACQ11} derived one point distribution of the narrow wedge solution of the KPZ equation, the exact formulas of more than one point does not seem to be on the horizon (see \cite{evgeni} for some recent progress in other positive temperature models). In \cite[Theorem~1.5]{corwin2019kpz}, the authors derived near-exponentially decaying bounds on the tail probabilities of the difference of the KPZ equation at two time points. Although these tail bounds were useful for setting forth the two time correlations of the KPZ equation, they fell short of achieving the modulus of continuity of the KPZ temporal process since those bounds only valid when the two time points are far apart.

 Our approach is mainly probabilistic while some of the key inputs bear an integrable origin. Two of such examples are the short time (upper) tail bounds of the KPZ equation (see Theorem~\ref{uptail-upb}) and the Gibbsian line ensemble. The short time upper tail will be derived using the integer moments of the SHE which has the recourse to some amenable contour integral formulas. On the other hand, while the Gibbsian line ensemble owes it inception to some integrable system, it has so far been fostered by the probabilistic ideas. One of the other key tools which we will procure in the due course of this paper is the short time lower tail bound (see Theorem~\ref{short:lowertail}) which in contrast to the upper tail has its chassis made of core probabilistic ideas like Talagrand's concentration inequality.   

Our first main tool is a multi-point composition law (see Proposition~\ref{ppn:MulpointComposition}) which generalizes the two-point composition law of \cite[Proposition~2.9]{corwin2019kpz}. In words, for any given set of time points $0<t_1<t_2<\ldots<t_k$, this law constructs $k$ independent random spatial profiles equivalent in law to the narrow wedge solution such that the KPZ temporal process at at $t_i$ is obtained by exponential convolution of one of such independent profiles and $\h_{t_{i-1}}(\cdot)$ for $i=2,\ldots ,k$.   

Our second main tool is the Gibbsian line ensemble. More precisely, we use a special Gibbsian line ensemble called the KPZ line ensemble introduced by \cite{CH14}. In short, KPZ line ensemble is a set of random curves whose lowest indexed curve has the same law as the narrow wedge solution of the KPZ equation. Furthermore, this set of random curves satisfies the \emph{Brownian Gibbs property} which ensures that the law of any fixed index curve in an interval only depends on the boundary value and can be described using the law of a Brownian bridge conditioned to have same boundary values, a connection elicited through a very explicit Radon-Nikodym derivative expression. As it was revealed in \cite{CH14}, the Brownian Gibbs property of the KPZ line ensemble imparts  \emph{stochastic monotonicity} on its lowest indexed curve, a property amenable to finding exquisite tail bounds of the spatial profile of the KPZ equation. Furthermore, we also enrich the arsenal of the Gibbsian line ensemble by introducing and exploring a \emph{short time KPZ line ensemble} (see Proposition~\ref{line-ensemble}) whose lowest indexed curve is the narrow wedge solution with \emph{short-time KPZ scaling}, i.e., the scaling exponent of the fluctuation, space and time follows the ratio $1:2:4$. In order to distinguish, we would refer the KPZ line ensemble whose lowest indexed curve is narrow wedge solution with the KPZ scaling as the \emph{long-time KPZ line ensemble}.   

Our third main tool is the short time upper and lower tail bounds (Theorem~\ref{uptail-upb} and~\ref{short:lowertail}) and the long time tail bounds of the KPZ equation from \cite{CG18a, CG18b} (summarized in Proposition~\ref{onepointlowtail}-\ref{ppn:UpperTail}). The short time upper tail is derived using the contour integral formulas of the moments of the SHE whereas the short time lower tail (uniform in time) is obtained via controlling the tail estimates of the partition function of random polymer model whose continuum limit solves the SHE. We also improve the bounds available for the long time upper tail of the KPZ equation (see Proposition~\ref{lem:RefUpTail}), a key input for showing the fractal nature of the upper level sets in Theorem~\ref{thm:FracDim}.  
  
Now we proceed to discuss how we use those tools to prove our results. The one point tail estimates of the KPZ equation (from Theorem~\ref{uptail-upb},~\ref{short:lowertail} and Proposition~\ref{onepointlowtail}-~\ref{ppn:UpperTail}) in conjugation with the tail bounds of the Brownian bridge fluctuations would allow us to derive delicate tail bounds of the spatial profile of the narrow wedge solution in finite intervals at the behest of the Brownian Gibbs property of the long and short time KPZ line ensembles. All these new tail estimates are detailed in Section~\ref{sec:spatialtail}.         
For any given $t_1>t_2$, the two point composition law relates $\h_{t_1}$ with the narrow wedge profile $\h_{t_2}(1,\cdot)$ via an exponential convolution with another independent random spatial process which will be denoted as $\h_{t_2\downarrow t_1}(\cdot)$ and has the same distribution as $\h_{t_1}((t_2-t_1)/t_1,\cdot)$. Mating of this convolution principle with the tail bounds of the KPZ spatial process from Section~\ref{sec:spatialtail} propagates the one point tail estimates to the tail bounds of the difference of the KPZ height functions at two time points. These ideas, inculcated in Proposition~\ref{shortdiff}-\ref{lngdiff} of Section~\ref{sec:diffprob}, will unfold to be a mainstay on which the proof of Theorem~\ref{thm:ModCont} rests with.     

By the Borel-Cantelli lemmas,  the law of iterated logarithm of Theorem~\ref{lil} can be recasted as showing that the infimum and supremum of the LIL adjusted temporal processes $\h_t/(\log \log t)^{1/3}$ and $\h_t/(\log \log t)^{2/3}$ respectively over the intervals $[\exp(e^n), \exp(e^{n+1})]$ cannot stay further away from $-6^{1/3}$ and $(3/4\sqrt{2})^{2/3}$ infinitely often. For proving these claims, one needs delicate tail bounds of the supremum and infimum of the KPZ temporal process which will be obtained in the following two ways. The first way uses the multi-point composition law of the KPZ equation (from Proposition~\ref{ppn:MulpointComposition}) to find upper bounds to the upper tail probability of the infimum and lower tail probability of the supremum. For any given set of time points, the multi-point composition law returns a set of independent random spatial profiles which are same in law with the narrow wedge solution. By Proposition~\ref{shortdiff}-\ref{lngdiff} of Section~\ref{sec:diffprob}, we give upper bound of the multi-point tail probabilities of the temporal process by the one-point tail probabilities of those independent spatial processes upto to some sharply decaying additive terms (Proposition~\ref{prop:IndProx}). The tail estimates of the multi-point distribution of the KPZ temporal process which are later turned into the tail probabilities of the infimum and supremum bring forth a new set of tools, unknown previously and hefty to obtain otherwise. The second way would find upper bound to the lower tail probability of the infimum and upper tail probability of the supremum of the KPZ temporal process using the tail bounds of the modulus of continuity claimed and proved in Proposition~\ref{temp-modulus}.

Much akin to the law of iterated logarithms, the proofs of mono- and multi-fractality of the KPZ equation heavily rely on the tail probabilities of the supremum and infimum of the KPZ temporal process in compact intervals. In addition, the mono-fractality result (\eqref{eq:Monofractal} of Theorem~\ref{thm:FracDim}) requires fast decoupling of the two-point upper tail probabilities of the KPZ equation. While such decoupling results are obtained for the Brownian motion in \cite[Lemma~3.5-3.6]{KKX17} without much ado, the situation for the KPZ equation is complicated and hinges on getting fine estimates of the one-point upper tail probability. Based on similar techniques as in \cite[Proposition~4.1]{CG18b}, Proposition~\ref{thm:MainTheorem} of Section~\ref{sec:App} provides such tail bounds which will be finally used in Proposition~\ref{asymp_ind} for showcasing the decoupling in the KPZ upper tail probabilities.         

Our approach of studying the peaks and valleys of the KPZ equation has the potential to generalize for other models in the KPZ universality class. As it was mentioned earlier, our approach stands on the shoulders of three main components: multi-point composition law, Gibbsian line ensemble and one-point tail probabilities. For the zero temperature models like the last passage percolation model, Airy process and many more, the analogues of the multi-point composition law are easy to obtain and stated in terms of the \emph{maximum convolution} instead of the exponential convolution. Gibbsian line ensemble approach was first introduced by \cite{CH14} for studying the Airy line ensemble and then, latter been applied in numerous zero temperature models. Furthermore, precise one-point tail estimates are available for many zero temperature model including the KPZ fixed point. Some of these technical appliances are also available for few positive temperature models such as the asymmetric simple exclusion process (ASEP), stochastic six vertex model, strictly weak lattice polymer model etc.  With the aid of the above three proof components, the revelation of the landscape of the aforementioned models bears immense possibilities which we hope to explore in future works.

\subsection{Previous works}\label{sec:PrevWorks}

Studying \emph{macroscopic landscapes} of stochastic processes is one of the most compelling research directions in probability theory. Starting from the middle of the previous century to the present time, Brownian motion serves as a fertile ground for doing alluring predictions on the landscape of the models in the Gaussian universality class and demonstrating those with lots of success. One of the main goals of this work is to showcase the KPZ equation as a representative of the models in the KPZ universality class when it comes to explaining the macroscopic landscape of its members under the KPZ scaling. Below, we review some of the previous works on the LIL and fractal properties of the models in the KPZ universality with the aim of comparing and contrasting those with our results.

 Random matrix theory is intimately connected with the models of the KPZ universality class. In fact, the Tracy-Widom GUE distribution which became one of the characteristics of the fixed points of the universality class was born out \cite{Tracy94} as a by-product of a random matrix model. To be more precise, the limiting distribution of the largest eigenvalue $\lambda^{\mathrm{GUE}}_n$ of an $n\times n$ Gaussian unitary ensemble under centering by $\sqrt{2n}$ and scaling by $n^{1/6}$ is essentially known as the Tracy-Widom GUE distribution. One may also regard $\lambda^{\mathrm{GUE}}_n$ as the $n$-th element of the GUE minor process. From this point of view, it was an interesting open question to study the law of fractional logarithm of $\lambda^{\mathrm{GUE}}_n$ which was finally solved by \cite{paquette2017extremal}. The authors found the value of the limsup of $(\lambda^{\mathrm{GUE}}_n-\sqrt{2n})/\sqrt{2}n^{1/6}$ under a  normalization by $(\log n)^{2/3}$ when $n$ goes to $\infty$. The authors had shown that the value of the limsup is almost surely equal to $(1/4)^{2/3}$. On the other hand, \cite{paquette2017extremal} had also shown that the liminf of $(\lambda^{\mathrm{GUE}}_n-\sqrt{2n})/\sqrt{2}n^{1/6}$ under a normalization by $(\log n)^{1/3}$ is almost surely finite. They had conjectured that the liminf is  almost surely equal to $-4^{1/3}$. To the best of our knowledge, the macroscopic Hausdorff dimensions of the level sets of  $\lambda^{\mathrm{GUE}}_n$ are not known yet. Drawing the analogy with the KPZ equation, we conjecture that the peaks and valleys of $\lambda^{\mathrm{GUE}}_n$ are multifractal in the gauge functions $(\log n)^{2/3}$ and $(\log n)^{1/3}$ respectively.
 
  Last passage percolation (LPP) is one of the widely studied models in the KPZ universality class. Due to the presence of endearing geometric properties, the study of the LPP model fueled lots of interests in the recent times. \cite{ledoux2018law} had initiated the study on the laws of iterated logarithms in the case of integrable LPP models. In \cite{ledoux2018law}, the author had considered the LPP model in $\mathbb{Z}^2_{\geq 0}$ lattice where the weights of the lattice sites are independent exponential or, geometric random variables. It was shown in \cite{ledoux2018law} that the limsup of point to point last passage percolation time from $(0,0)$ to $(n,n)$ (centered by $4n$ and scaled by $(2^4n)^{1/3}(\log \log n)^{2/3}$) is almost surely bounded between $\alpha_{\mathrm{sup}}$ and $(3/4)^{2/3}$ for some $0<\alpha_{\mathrm{sup}}\leq (3/4)^{2/3}$. In fact, \cite{ledoux2018law} had conjectured that $\alpha_{\mathrm{sup}}$ is equal to $(3/4)^{2/3}$. \cite{ledoux2018law} had also investigated the liminf of the LPP model. It was shown that the LPP time between $(0,0)$ and $(n,n)$ (centered by $4n$ and scaled by $(2^4n)^{1/3}(\log \log n)^{1/3}$) is almost surely lower bounded by some constant. 
  Recently, \cite{basu2019lower} have shown that the value of liminf is almost surely equal to a constant. However, not much is known about the exact value.

Fractal properties of the putative distributional limit of the models in the KPZ universality class, namely the KPZ fixed point has been investigated in few of the latest works. Recently, \cite{DOV18} gave a probabilistic construction of the KPZ fixed point as a distributional limit of the point-to-point Brownian last passage percolation model. The limiting space-time process which they named as the \emph{directed landscape} led to a flurry of new discoveries.  The study of the fractal geometry of the directed landscape has lately been initiated by \cite{BGH19,BGH20} who considered the problem of fractal dimension of some exceptional points along the spatial direction. In spite of the recent developments, the fractal nature of the space-time process of the directed landscape is still not fully understood. We hope that our results on fractality of the KPZ equation would shed some light for such study in future.

In the last decade, fractal properties of stochastic partial differential equations (SPDE) became an active area of research. The main focus of a vast majority of those works resided on the study of the large peaks of the SPDEs with multiplicative noise \cite{GM90,CM94,BC95,HHNT,FK09,CJKS,CD15,BC16, Ch17,CHN19}. The growth of the large peaks of the SPDEs is attested by the intermittency property which is the center of attention in the field of the research of complex multiscale system for last five-six decades. See introduction of \cite{BC95} and \cite{CM94, khoshnevisan2014analysis} for a detailed discussion. Recently, \cite{KKX17} investigated the fractal properties of the stochastic heat equation started from the constant initial data at the onset of intermittency and established the multifractal nature of the spatial process. Denote the solution of the SHE started from the constant initial data (i.e., $\mathcal{Z}^{\mathrm{flat}}(0,x)=1$ for all $x\in \mathbb{R}$) by $\mathcal{Z}^{\mathrm{flat}}(t,x)$. Drawing on an earlier result of \cite{Chen15} which showed a fractional law of logarithm -
\begin{align}\label{spat}
\limsup_{x\to \infty} \frac{\log\mathcal{Z}^{\mathrm{flat}}(t,x)}{t^{1/3}(\frac{3}{4\sqrt2}\log_{+}x)^{2/3}} = 1 \quad \text{a.s.},
\end{align}
Theorem 1.2 of \cite{KKX17} established the multifractal nature of the  spatial process $\log\mathcal{Z}^{\mathrm{flat}}(t,\cdot)$ for any fixed $t>0$. The results of \cite{KKX17} is complemented by the study of the spatio-temporal fractal properties by \cite{KKX18} which showed that there are infinitely many different stretch scale (in the spatial direction) and time scale such that for any given stretch and time scale, the peaks of the spatio-temporal process of the stochastic heat equation attain non-trivial macroscopic Hausdorff dimensions. The idea of peaks of the stochastic heat equation forming complex multiscale system were also echoed in \cite{Zim00,GD05a,GD05b}. \sd{However, the macroscopic behavior of the KPZ temporal process as considered in this paper shows a different nature due to its slow decay of correlations in comparison to the KPZ equation along the spatial direction. For instance, our first result, Theorem \ref{lil} exhibits LIL for the KPZ temporal process as opposed to the fractional law of logarithm satisfied by the KPZ spatial process demonstrated in \eqref{spat}.} In the same spirit, our second result (Theorem~\ref{thm:FracDim}) which is reminiscent of a similar result in \cite[Theorem~1.4]{KKX17} for $1$-dimensional Brownian motion shows that the peaks of the KPZ temporal process exhibit a monofractal (see Definition~\ref{bd:Fractality}) nature as time $t$ goes to $\infty$. This is in contrast to the multifractal nature of the spatial process as shown in \cite{KKX17}. Nevertheless, Theorem \ref{thm:FracDim} shows that the crossover to the multifractality of the KPZ temporal process happens under exponential transformation of the time variable. While the complete understanding of the spatio-temporal landscape of the KPZ equation is far-off
to our present reach, we hope that our results will ignite further interests along this direction.


 We end this section with a review on the tail probabilities of the KPZ equation, one of the key tools of this paper. Study of the KPZ tail probabilities had been taken up in many works \cite{MN08,corwin2013crossover,flores2014strict} in the past. One of the recent major advances has been achieved in \cite{CG18a} which proved tight bounds to the lower tail probability of the KPZ equation started from the narrow wedge initial data. This sowed the seeds of a series of works \cite{corwin2018coulomb,tsai2018exact,KL19,CC19,Z19, CCR20} which studied in details the lower tail large deviation of the KPZ equation as time goes to $\infty$. The upper tail probabilities of the KPZ equation has been recently investigated by \cite{CG18b}. The same paper also initiated the study of the tail probabilities under general initial data. The upper tail large deviation was later found in \cite{DT19} for narrow wedge initial data and in \cite{GL20} for general initial data. In spite of these recent advances, not much were known about the evolution of the tail probabilities of the KPZ as time $t$ goes to $0$. In a very recent work, \cite{LT20} showed the large deviation of the KPZ equation as $t$ tends to $0$. However, this does not shed much light on the  uniform tail estimates of the KPZ height function starting from time equal to $0$ to a finite value. Such uniform estimates which were reported in Theorem~\ref{uptail-upb} and~\ref{short:lowertail} will be instrumental in obtaining our other main results Theorem~\ref{lil},~\ref{thm:FracDim} and~\ref{thm:ModCont}.      

\medskip

\noindent\textbf{Outline.} Section~\ref{sec:tools} will introduce the basic frameworks of the KPZ line ensemble and the Barlow-Taylor macroscopic fractal theory. It will also introduce other useful tools including multipoint composition law, one-point tail probabilities of the KPZ equation, tail probabilities of the supremum and infimum of the KPZ spatial process. Section~\ref{sec:onepoint} will prove Theorem~\ref{uptail-upb} and~\ref{short:lowertail}. This will be followed by Section~\ref{sec:spatialtail} where we derive delicate tail bounds of the KPZ spatial process for finite and short time. Section~\ref{sec:diffprob} will study the temporal modulus of continuity of the KPZ equation and use it to prove Theorem~\ref{thm:ModCont}. Based on the tools from Section~\ref{sec:tools}-\ref{sec:diffprob}, the law of iterated logarithms of Theorem~\ref{lil} will be proved in Section~\ref{sec:lil}. The proof of the mono- and multifractality results of the KPZ equation from Theorem~\ref{thm:FracDim} will be given in Section~\ref{sec:MuonoMult}. This last section will use an improved KPZ upper tail probability estimate which is proved in Proposition~\ref{lem:RefUpTail} of Appendix~\ref{sec:App}. 

\medskip

\noindent\textbf{Acknowledgements.} We are grateful to Ivan Corwin for numerous stimulating discussions, encouragement, and giving us valuable inputs in an earlier draft of the paper. We express our gratitude towards Benjamin Landon whose suggestions helped us to improve Theorem~\ref{thm:FracDim}. We thank Davar Khoshnevisan for asking interesting questions on our work. We thank Shalin Parekh and Li-Cheng Tsai for helpful conversations and discussions. We thank Yier Lin for reading the manuscript carefully and making useful comments which improve the presentation.

SD's research was partially supported by Ivan Corwin's NSF grant DMS-1811143 as well as the Fernholz Foundation's ``Summer Minerva Fellows'' program.

\section{Basic Framework and Tools} \label{sec:tools}

In this section, we will review mainly three topics which are required for our subsequent analysis. One of the main topics of this section is the KPZ line ensemble and its Brownian Gibbs property. The KPZ line ensemble is a set of random curves whose lowest indexed curve is same in distribution with the narrow wedge solution of the KPZ equation. The Brownian Gibbs property of the KPZ line ensemble induces stochastic monotonicity of the spatial profile the KPZ equation, one of the major tools in our analysis. Lemma~\ref{Coupling1} of  Section~\ref{sec:KPZLE} will precisely state such monotonicity result. In a similar way as in \cite{CH16}, we will introduce a short-time version of the KPZ line ensemble which would play a key role in later sections to find the temporal modulus of continuity of the KPZ equation. 

Our second main topic of this section is the Barlow-Taylor theory of macroscopic fractal properties of a stochastic process. In light of the expositions in \cite{KKX17,BT1,BT2}, the notions of Barlow-Taylor \emph{Hausdorff content} and \emph{dimension} of any Borel set will be recalled. Some of the basic properties of the Barlow-Taylor Hausdorff dimension are presented in Proposition~\ref{ppn:EssHaus},~\ref{ppn:Frostman} and~\ref{thick} of Section~\ref{sec:BT}.

Lastly, we recall some of the known facts about the KPZ equation including its \emph{multipoint composition law} and the tail estimates of its one point distribution in Section~\ref{sec:kpzeq}. 

\subsection{KPZ line ensemble}\label{sec:KPZLE}
Describing the KPZ line ensemble and its Brownian Gibbs property requires some notations which are introduced as follows. 
\begin{definition}[Brownian Gibbs line ensembles; Definitions 2.1 and 2.2 of \cite{CH16}]\label{LineEns}
 Fix intervals $\Sigma\subset \Z$ and $\Lambda\subset\R$. Let $X$ be the set of continuous functions $f:\Sigma \times \Lambda\to \R$, endowed with the topology of uniform convergence on compact subsets;. Denote the $\sigma$-field generated by $X$ by $\mathcal{C}$. 

A $(\Sigma\times \Lambda)$-\emph{indexed line ensemble} $\mathcal{L}$ is a random variable $\mathcal{L}$ in a probability space $(\Omega,\mathcal{B},\mathbb{P})$ taking values in $X$ such that $\mathcal{L}$ is measurable with respect to $(\mathcal{B}, \mathcal{C})$. In simple words, $\mathcal{L}$ is a set of random continuous curves indexed by $\Sigma$ where each of those curves maps $\Lambda$ to $\R$. An element of $\Sigma$ is a curve index, and we will write $\mathcal{L}_k(x)$ instead of $\mathcal{L}(k,x)$ for $k\in \Sigma$ and $x\in \Lambda$; we will write $\mathcal{L}_k$ for the entire index $k$ curve.


For any two integers  $ k_1 <k_2$, two vectors of reals $\vec{x}, \vec{y} \in \R^{k_1-k_2+1}$, and an interval $(a,b)$, we say that a $\{k_1, \ldots , k_2\}\times (a,b)$-indexed line ensemble is a \emph{Brownian bridge line ensemble with entrance data $\vec{x}$ and  exit data $\vec{y}$} if its law, which we denote by $\mathbb{P}^{k_1, k_2, (a,b), \vec{x}, \vec{y}}_{\mathrm{free}}$, is equal to that of $k_2- k_1+1$ independent Brownian bridges starting at values~$\vec{x}$ at $a$  and ending at values $\vec{y}$ at $b$. We use the notation $\mathbb{E}^{k_1, k_2, (a,b), \vec{x}, \vec{y}}_{\mathrm{free}}$ to denote the expectation with respect to the probability measure $\mathbb{P}^{k_1, k_2, (a,b), \vec{x}, \vec{y}}_{\mathrm{free}}$. When $k_1=k_2=1$, we write $\mathbb{P}^{(a,b), \vec{x}, \vec{y}}_{\mathrm{free}}$. One may think $a$ and $b$ as times and $\vec{x}$ and $\vec{y}$ as starting and ending locations for the Brownian bridges. 

Suppose we are given a continuous function  $\mathbf{H}:\R\to [0,\infty)$ which we will call a \emph{Hamiltonian}. We will consider the following two kinds of Hamiltonians:
\begin{align}
\mathbf{H}^{\mathrm{long}}_t(x)= e^{t^{1/3}x} \, \, \, \,  \quad \text{and, }\quad \,\, \, \mathbf{H}^{\mathrm{short}}_t(x)= e^{(\pi t/4)^{1/4}x} \, \, \, \, \textrm{for given $t > 0$} \,\label{eq:Ht}
\end{align}

For any given Hamiltonian $\mathbf{H}$ and two measurable functions $f,g:(a,b)\to \R$,  we define a $\{k_1, \ldots , k_2\}\times (a,b)$-{\em indexed $\mathbf{H}$-Brownian bridge line ensemble with entrance data $\vec{x}$, exit data $\vec{y}$ and boundary data $(f,g)$} to be a collection of random curves  $\mathcal{L}_{k_1}, \ldots , \mathcal{L}_{k_2}: (a,b)\to \R$ whose law will be denoted by $\mathbb{P}^{k_1, k_2, (a,b), \vec{x}, \vec{y}, f,g}_{\mathbf{H}}$ and is specified  by the Radon-Nikodym derivative
\begin{align*}
\frac{d\mathbb{P}^{k_1, k_2,(a,b), \vec{x}, \vec{y}, f, g}_{\mathbf{H}}}{d\mathbb{P}^{k_1, k_2, (a,b), \vec{x}, \vec{y}}_{\mathrm{free}}}(\mathcal{L}_{k_1}, \ldots , \mathcal{L}_{k_2}) &= \frac{W^{k_1, k_2, (a,b), \vec{x}, \vec{y}, f,g}_{\mathbf{H}}(\mathcal{L}_{k_1}, \ldots , \mathcal{L}_{k_2})}{Z^{k_1, k_2, (a,b), \vec{x}, \vec{y}, f,g}_{\mathbf{H}}} \, ,\\
W^{k_1, k_2, (a, b), \vec{x}, \vec{y}, f,g}_{\mathbf{H}}(\mathcal{L}_{k_1}, \ldots , \mathcal{L}_{k_2})  &= \exp\bigg\{- \sum_{i=k_1}^{k_2+1} \int \mathbf{H}\big(\mathcal{L}_{i}(x)- \mathcal{L}_{i-1}(x)\big) dx \bigg\} \, .
\end{align*}
where in the right-hand side of the preceding display, we use the convention that $\mathcal{L}_{k_1-1}$ is equal to $f$, or to   $+\infty$ if $k_1-1\notin \Sigma$; and of $\mathcal{L}_{k_2+1}$ is equal to $g$, or to $-\infty$ if $k_2+1\notin \Sigma$. Here, $Z^{k_1, k_2, (a,b), \vec{x}, \vec{y}, f, g}_{\mathbf{H}}$ is the normalizing constant which produces a probability measure.

A $(\Sigma\times\Lambda)$-indexed line ensemble $\mathcal{L}$ enjoys the \emph{$\mathbf{H}$-Brownian Gibbs property} if, for all $K = \{k_1,\ldots, k_2\}\subset \Sigma$ and $(a,b)\subset \Lambda$, the following distributional equality holds:
\begin{align*}
\mathrm{Law}\Big(\mathcal{L}_{K\times (a,b)} \text{ conditioned on }\mathcal{L}_{\Sigma\times \Lambda \backslash K \times (a,b)}\Big) = \mathbb{P}^{k_1,k_2, \vec{x}, \vec{y}, f, g}_{\mathbf{H}} \, ,
\end{align*}
where $\vec{x}= (\mathcal{L}_{k_1}(a), \ldots , \mathcal{L}_{k_2}(a))$, $\vec{y} =(\mathcal{L}_{k_1}(b),\ldots , \mathcal{L}_{k_2}(b))$, and where again $f = \mathcal{L}_{k_1-1}$ (or $+\infty$ if $k_1-1\notin \Sigma$) and $g = \mathcal{L}_{k_2+1}$ (or $-\infty$ if $k_2+1\notin \Sigma$). That is, the influence of the complementary part on the ensemble's restriction to $K\times (a,b)$ comes only through the boundary data, i.e., the starting and ending points and the neighbouring curves.

 Like as for Markov processes, there is a \emph{strong} version of the $\mathbf{H}$-Brownian Gibbs property which is valid with respect to \emph{stopping domains} which we now describe. For a given line ensemble $\mathcal{L}$, let $\mathfrak{F}_{\textrm{ext}}\big(K\times (a,b)\big)$ be the $\sigma$-field generated by the curves $K\times (a,b)$. A pair $(\mathfrak{a},\mathfrak{b})$ of random variables is called a $K$-stopping domain if
$\big\{\mathfrak{a} \leq a, \mathfrak{b}\geq b\big\} \in \mathfrak{F}_{\textrm{ext}}\big(K\times (a,b)\big)$.
 Denote the set of continuous $K$-indexed functions $(f_{k_1},\ldots, f_{k_2})$, each from $(a,b)\to \R$ by $C^{K}(a,b)$ and furthermore, write
$$
C^K := \Big\{ (a,b,f_{k_1},\ldots, f_{k_2}):a<b \textrm{ and } (f_{k_1},\ldots, f_{k_2})\in C^K(a,b)\Big\}\, .
$$
Let $\mathcal{B}(C^K)$ be the set of all Borel measurable functions from $C^K$ to $\R$. We will say a $K$-stopping domain $(\mathfrak{a},\mathfrak{b})$ satisfies the \emph{strong $\mathbf{H}$-Brownian Gibbs property} if, for all $F\in \mathcal{B}(C^K)$, $\mathbb{P}$-almost surely
$$
\mathbb{E} \Big[ F\big(\mathfrak{a},\mathfrak{b},\mathcal{L}\big\vert_{K\times (\mathfrak{a},\mathfrak{b})}\big) \Big\vert \mathfrak{F}_{\textrm{ext}}\big(K\times (a,b)\big)\Big] = \mathbb{E}^{k_1,k_2, (\ell,r),\vec{x}, \vec{y}, f, g}_{\mathbf{H}} \Big[F\big(\ell,r,\mathcal{L}_{k_1},\ldots, \mathcal{L}_{k_2}\big)\Big] \, ,
$$
where, on the right-hand side, $\ell = \mathfrak{a}$, $r= \mathfrak{b}$, $\vec{x} =(\mathcal{L}_i(\mathfrak{a}))_{i\in K}$, $\vec{y} =(\mathcal{L}_i(\mathfrak{b}))_{i\in K}$, $f = \mathcal{L}_{k_1-1}$ (or $+\infty$ if $k_2+1\notin \Sigma$), $g=\mathcal{L}_{k_2+1}$ (or $-\infty$ if $k_2+1\notin \Sigma$), and the curves $\mathcal{ L}_{k_1},\ldots, \mathcal{L}_{k_2}$ have law $\mathbb{P}^{k_1,k_2, (\ell,r), \vec{x}, \vec{y}, f, g}_{\mathbf{H}}$.
\end{definition}

The following lemma demonstrates a sufficient condition under which the strong H-Brownian
Gibbs property holds. 

\begin{lemma}[Lemma 2.5 of \cite{CH16}]\label{lem:strongBGP}
Any line ensemble which enjoys the $\mathbf{H}$-Brownian Gibbs property also enjoys the strong $\mathbf{H}$-Brownian Gibbs property.
\end{lemma}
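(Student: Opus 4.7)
The plan is to mirror the classical upgrade from the Markov property to the strong Markov property for right-continuous processes. I would first approximate the stopping domain $(\mathfrak{a}, \mathfrak{b})$ by a dyadic sequence $(\mathfrak{a}_n, \mathfrak{b}_n)$ taking at most countably many values, verify the strong $\mathbf{H}$-Brownian Gibbs property for each $(\mathfrak{a}_n, \mathfrak{b}_n)$ by partitioning the sample space on its values and applying the ordinary $\mathbf{H}$-Brownian Gibbs property to each piece, and finally pass to the limit using continuity of the curves $\mathcal{L}_i$.

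For the discretization, let $D_n := 2^{-n}\mathbb{Z}$ and define $\mathfrak{a}_n := 2^{-n}\lceil 2^n \mathfrak{a}\rceil$, $\mathfrak{b}_n := 2^{-n}\lfloor 2^n \mathfrak{b}\rfloor$; the inward rounding ensures $\mathfrak{a} \leq \mathfrak{a}_n$ and $\mathfrak{b}_n \leq \mathfrak{b}$, with $\mathfrak{a}_n \downarrow \mathfrak{a}$ and $\mathfrak{b}_n \uparrow \mathfrak{b}$. For each $(a, b) \in D_n \times D_n$, the identity $\{\mathfrak{a}_n \leq a,\ \mathfrak{b}_n \geq b\} = \{\mathfrak{a} \leq a,\ \mathfrak{b} \geq b\}$ shows that $(\mathfrak{a}_n, \mathfrak{b}_n)$ is itself a $K$-stopping domain. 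Furthermore, writing
\[\{(\mathfrak{a}_n, \mathfrak{b}_n) = (a, b)\} = \{\mathfrak{a} \leq a,\, \mathfrak{b} \geq b\} \setminus \big(\{\mathfrak{a} \leq a - 2^{-n},\, \mathfrak{b} \geq b\} \cup \{\mathfrak{a} \leq a,\, \mathfrak{b} \geq b + 2^{-n}\}\big),\]
all three component events lie in $\mathfrak{F}_{\textrm{ext}}(K \times (a, b))$ (the last two are in $\mathfrak{F}_{\textrm{ext}}$ of strictly larger rectangles, which are contained in $\mathfrak{F}_{\textrm{ext}}(K \times (a, b))$), so the singleton event itself is $\mathfrak{F}_{\textrm{ext}}(K \times (a, b))$-measurable.

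For the discrete strong Gibbs step, fix $F \in \mathcal{B}(C^K)$ and an event $A$ measurable with respect to the natural external $\sigma$-field for $(\mathfrak{a}_n, \mathfrak{b}_n)$ (meaning $A \cap \{(\mathfrak{a}_n, \mathfrak{b}_n) = (a, b)\} \in \mathfrak{F}_{\textrm{ext}}(K \times (a, b))$ for every $(a, b)$). Partitioning on the countably many values of $(\mathfrak{a}_n, \mathfrak{b}_n)$ gives
\[\mathbb{E}\big[F(\mathfrak{a}_n, \mathfrak{b}_n, \mathcal{L}|_{K \times (\mathfrak{a}_n, \mathfrak{b}_n)}) \mathbf{1}_A\big] = \sum_{(a, b) \in D_n^2} \mathbb{E}\big[F(a, b, \mathcal{L}|_{K \times (a, b)}) \mathbf{1}_{A \cap \{(\mathfrak{a}_n, \mathfrak{b}_n) = (a, b)\}}\big].\]
Applying the ordinary $\mathbf{H}$-Brownian Gibbs property to each summand (whose indicator is $\mathfrak{F}_{\textrm{ext}}(K \times (a, b))$-measurable by the previous step) and re-summing yields the strong $\mathbf{H}$-Brownian Gibbs identity for $(\mathfrak{a}_n, \mathfrak{b}_n)$.

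Finally I would send $n \to \infty$. Continuity of the curves $\mathcal{L}_i$ forces the boundary vectors $(\mathcal{L}_i(\mathfrak{a}_n))_{i \in K}$, $(\mathcal{L}_i(\mathfrak{b}_n))_{i \in K}$ and the neighboring profiles $\mathcal{L}_{k_1 - 1}, \mathcal{L}_{k_2 + 1}$ restricted to $(\mathfrak{a}_n, \mathfrak{b}_n)$ to converge to their limits at $(\mathfrak{a}, \mathfrak{b})$, and $\mathcal{L}|_{K \times (\mathfrak{a}_n, \mathfrak{b}_n)} \to \mathcal{L}|_{K \times (\mathfrak{a}, \mathfrak{b})}$ uniformly on compacts. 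Continuity of $\mathbf{H}$ makes both the weight $W^{k_1, k_2, \cdot}_{\mathbf{H}}$ and the normalizer $Z^{k_1, k_2, \cdot}_{\mathbf{H}}$ continuous in their endpoints and boundary data, so bounded convergence applied to the bounded measurable $F$ transports the identity from $(\mathfrak{a}_n, \mathfrak{b}_n)$ to $(\mathfrak{a}, \mathfrak{b})$. The main obstacle I anticipate is precisely this limit passage: one must show the Brownian bridge partition functions $Z^{k_1, k_2, (\ell_n, r_n), \vec{x}_n, \vec{y}_n, f_n, g_n}_{\mathbf{H}}$ are bounded away from zero uniformly in $n$ so that the Radon--Nikodym derivatives converge strongly, which can be extracted from a.s.\ continuity of $\mathcal{L}$ and the boundedness of $\mathbf{H}$ on compacta near the random endpoints.
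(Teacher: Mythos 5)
The paper does not reproduce a proof here — it cites Lemma~2.5 of \cite{CH16} verbatim — and your sketch recovers essentially the argument given there: inward dyadic approximation of the stopping domain, a partition-on-values application of the ordinary $\mathbf{H}$-Brownian Gibbs property (your measurability computation for $\{(\mathfrak{a}_n,\mathfrak{b}_n)=(a,b)\}$ and the verification that the rounded pair is again a stopping domain are both correct), followed by a continuity limit. The only place your outline is lighter than the original is the limit passage: on the conditional-expectation side the external $\sigma$-fields $\mathfrak{F}_{\textrm{ext}}(K\times(\mathfrak{a}_n,\mathfrak{b}_n))$ decrease as $n\to\infty$, so one invokes reverse-martingale convergence (and identifies the limit $\sigma$-field), while on the bridge side one uses continuity of the Radon--Nikodym weight and positivity of the normalizing constant $Z$ — you correctly anticipate this last point as the main technical obstacle, and almost-sure continuity of $\mathcal{L}$ together with continuity and nonnegativity of $\mathbf{H}$ does deliver the needed lower bound on compacta.
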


Line ensembles with the $\mathbf{H}$-Brownian Gibbs property  benefit from certain  stochastic monotonicities of the underlying measures. The following definition formally defines such monotonicity of measures. Afterwards, we state a proposition showing that two line ensembles
with the same index set can be coupled in such a way that if the boundary conditions of one
ensemble dominates the other, then same is true for laws of the restricted curves.

\begin{definition}[Domination of measure]\label{DomM}
Let $\mathcal{L}_1$ and $\mathcal{L}_2$ be two  $(\Sigma\times \Lambda)$-indexed line ensembles with respective laws $\mathbb{P}_1$ and $\mathbb{P}_2$. We say that $\mathbb{P}_{1}$ dominates $\mathbb{P}_{2}$ if there exists a coupling of  $\mathcal{L}_1$ and~$\mathcal{L}_2$ such that $\mathcal{L}^{j}_{1}(x)\geq \mathcal{L}^{j}_{2}(x)$ for all $j\in \Sigma$ and $x\in \Lambda$.
\end{definition}


\begin{lemma}[Stochastic monotonicity: Lemmas~$2.6$ and $2.7$ of \cite{CH16}]\label{Coupling1}
Fix finite intervals $K\subset \Sigma $ and $(a,b)\subset \Lambda$; and, for $i\in \{1,2\}$, vectors $\vec{x}_{i} = \big( x_i^{(k)}: k \in K \big)$ and  $\vec{y}_{i} = \big( y_i^{(k)}: k \in K \big)$ in $\R^K$ that satisfy $x^{(k)}_{2}\leq x^{(k)}_{1}$ and $y^{(k)}_{2}\leq y^{(k)}_{1}$ for $k\in K$; as well as measurable functions $f_i: (a,b)\to\R \cup \{+\infty\} $ and $g_i: (a,b) \to \R \cup \{-\infty\}$ such that $f_2(s)\leq f_1(s)$ and  $g_2(s)\leq  g_1(s)$ for $s\in (a,b)$. For $i \in \{1,2\}$, let
$\Pr_i$ denote the law $\mathbb{P}^{k_1, k_2, (a,b), \vec{x}_i, \vec{y}_i, f_i,g_i}_{\mathbf{H}}$, so that a $\Pr_i$-distributed random variable
$\mathcal{L}_i= \{\mathcal{L}_i^{k}(s)\}_{k\in K, s\in(a,b)}$ is a $K\times (a, b)$-indexed line ensemble.
If $\mathbf{H}:[0,\infty)\to \R$ is convex, then $\mathbb{P}_{1}$ dominates $\mathbb{P}_{2}$ -- that is, a common probability space $(\Omega,\mathcal{B},\mathbb{P})$ may be constructed on which the two measures are supported such that, almost surely, $\mathcal{L}_{1}^{k}(s)\geq \mathcal{L}_{2}^{k}(s)$ for $k\in K$ and $s\in (a,b)$.
\end{lemma}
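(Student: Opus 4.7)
The plan is to derive the coupling by first establishing it on a finite-dimensional time discretization of the Brownian Gibbs measure via Holley's inequality, and then passing to the continuum limit. The structural reason behind the coupling is twofold: on any finite mesh the joint density is $\text{MTP}_2$ (multivariate totally positive of order $2$), and the density ratio between the two measures is coordinatewise nondecreasing in the curve values.

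First I would partition $(a,b)$ into $N$ equal subintervals of length $\delta=(b-a)/N$ and consider the joint density $\rho_{i,N}$ of $\mathcal{L}\vert_{K\times\{a+j\delta\}_{j=0}^N}$ under $\mathbb{P}_i$. This density is a product of Gaussian bridge densities (with endpoints determined by $\vec{x}_i,\vec{y}_i$) times the discrete tilt $\exp\big(-\delta\sum_{j,k}\mathbf{H}(L^{(k)}_j-L^{(k-1)}_j)\big)$, where the top- and bottom-interaction terms involve the discretizations of $f_i$ and $g_i$ in place of $L^{(k_1-1)}$ and $L^{(k_2+1)}$. The $\text{MTP}_2$ property of $\rho_{2,N}$ amounts to supermodularity of $\log\rho_{2,N}$, which I would verify by showing that the Hessian of $-\log\rho_{2,N}$ has nonnegative off-diagonal entries; these come from nearest-neighbor Gaussian increments (contributing $1/\delta$) and from the convex pair interactions (contributing $\delta\mathbf{H}''\ge 0$). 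The monotonicity of $\rho_{1,N}/\rho_{2,N}$ is a direct computation: its logarithm is a sum of boundary contributions whose partial derivative in each coordinate is nonnegative because $\vec{x}_2\le\vec{x}_1,\vec{y}_2\le\vec{y}_1$ (for the bridge endpoints) and $f_2\le f_1,g_2\le g_1$ combined with monotonicity of $\mathbf{H}'$ (for the tilt boundary terms). By Holley's theorem, applicable once $\rho_{2,N}$ is $\text{MTP}_2$ and $\rho_{1,N}/\rho_{2,N}$ is coordinatewise nondecreasing, we obtain a coupling of the discretized measures with $L^{(k)}_{1,j}\ge L^{(k)}_{2,j}$ almost surely at every mesh point.

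Finally, I would pass to the limit $N\to\infty$: $\rho_{i,N}$ converges weakly to $\mathbb{P}_i$ by standard Gaussian manipulations (Riemann-sum convergence of $\int\mathbf{H}(\mathcal{L}_k-\mathcal{L}_{k-1})dx$ in the exponent and of the normalizing constants), tightness of the coupled pair on $C(K\times[a,b])^2$ follows from the tightness of each marginal Brownian Gibbs measure, and the coordinatewise partial order is closed under uniform convergence, so any subsequential weak limit of the coupling remains monotone. The chief technical obstacle is precisely this continuum limit step, particularly controlling the normalizing constants and the Riemann-sum approximation when $f$ and $g$ are merely measurable and possibly unbounded; the standard workaround is to first truncate the boundary functions to bounded ones, establish the coupling at each truncation via the above argument, and then extend by monotone approximation of $f$ from below and $g$ from above, relying at each step on the boundary-data monotonicity just proved.
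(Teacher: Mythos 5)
This paper does not prove the lemma itself: it is quoted from Lemmas~2.6 and~2.7 of \cite{CH16}, so your write-up should be measured against that source. The route in \cite{CH16} also proceeds by discretization, discrete-level monotone coupling, and a continuum limit, but at the discrete level it builds an explicit Markov chain (a coordinate-by-coordinate Gibbs sampler / heat-bath dynamics) on ordered pairs of configurations, verifies that a single-site update preserves the partial order because the univariate conditional laws admit a monotone coupling, and reads off the desired joint law as the stationary measure. Your proposal instead verifies the hypotheses of Holley's inequality directly ($\mathrm{MTP}_2$ of the discretized base measure, monotone likelihood ratio in the boundary data). These are two packagings of the same mechanism -- the Gibbs sampler argument is essentially a constructive proof of Holley's inequality in this model -- but your route is more compact, whereas the constructive dynamics makes the continuum limit slightly more transparent because one can run the chain directly on pairs of continuous paths filled in by Brownian bridges between mesh points.

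Two corrections. First, the sign attribution in the $\mathrm{MTP}_2$ step is flipped: the property requires the off-diagonal Hessian entries of $+\log\rho_{2,N}$ to be nonnegative (equivalently, those of $-\log\rho_{2,N}$ to be nonpositive). The quantities you list, $1/\delta$ from the quadratic $\tfrac{1}{2\delta}(L_j-L_{j-1})^2$ and $\delta\,\mathbf{H}''\geq 0$ from the convex pair interaction, are indeed the nonnegative off-diagonal Hessian entries of $\log\rho_{2,N}$, so the conclusion stands but the label is wrong. Second, Holley's theorem is usually stated for finite distributive lattices; your discretized measure lives on $\mathbb{R}^{K\times N}$, so you need either the standard continuous-state extension (cite it) or a further discretization of the range with a second limiting argument. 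This is routine but not free, and it is exactly the type of step the constructive Markov chain proof of \cite{CH16} avoids having to outsource. Your treatment of measurable, possibly unbounded boundary data by truncating and then taking monotone limits using the already-proved monotonicity is the right device and matches what \cite{CH16} does.
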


Recall that the Hamiltonians $\mathbf{H}^{\mathrm{long}}_t(x)$ and $\mathbf{H}^{\mathrm{short}}_t(x)$ in \eqref{eq:Ht} are convex. Thus, Lemma~\ref{Coupling1} applies to any $\mathbf{H}^{\mathrm{long}}_t$ or, $\mathbf{H}^{\mathrm{short}}_t$-Brownian Gibbs line ensemble.

The following proposition recalls the unscaled and scaled KPZ line ensemble constructed in~\cite{CH16} which satisfies $\mathbf{H}^{\mathrm{long}}_t$-Brownian Gibbs property and introduces the short time KPZ line ensemble which exhibits $\mathbf{H}^{\mathrm{short}}_t$-Brownian Gibbs property.

\begin{lemma} \label{line-ensemble}
	Let $t>0$. There exists an $\N\times \R$-indexed line ensemble $\mathcal{H}_t =\{\mathcal{H}^{(n)}_{t}(x)\}_{n\in \N, x\in \R}$ such that:
	\begin{enumerate}
		\item the lowest indexed curve $\mathcal{H}^{(1)}_{t}(x)$ is equal in distribution (as a process in $x$) to the Cole-Hopf solution $\mathcal{H}^{\mathbf{nw}}(t,x)$ of KPZ started from the narrow wedge initial data;
		\item $\mathcal{H}_t$ satisfies the $\mathbf{H}^{\mathrm{long}}_{1}$-Brownian Gibbs property;
		\item the scaled KPZ line ensemble $\{\mathfrak{h}^{(n)}_t(x)\}_{n\in \N,x\in \R}$, defined by 
		\begin{align*}
		\mathfrak{h}^{(n)}_t(x) \, = \,  t^{-1/3} \Big( \mathcal{H}^{(n)}_{t}\big(t^{2/3} x\big)+ t/24 \Big) \, ,
		\end{align*}
		satisfies the $\mathbf{H}^{\mathrm{long}}_{t}$-Brownian Gibbs property.
		\item and the scaled short time line ensemble $\{\mathfrak{g}^{(n)}_t(x)\}_{n\in \N,x\in \R}$, defined by 
		\begin{align}\label{eq:UsilonNDefd}
		\mathfrak{g}^{(n)}_t(x) \, = \,  (\pi t/4)^{-1/4} \Big( \mathcal{H}^{(n)}_{t}\big((\pi t/4)^{1/2} x\big)+ \log\sqrt{2\pi t} \Big) \, ,
		\end{align}
		satisfies the $\mathbf{H}^{\mathrm{short}}_{t}$-Brownian Gibbs property.
	\end{enumerate}
\end{lemma}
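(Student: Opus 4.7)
The plan is to treat (1)--(3) as already established in the literature and to derive (4) by the same scaling argument that produces (3). Specifically, \cite[Theorem~2.15]{CH16} constructs an $\mathbb{N}\times\mathbb{R}$-indexed line ensemble $\mathcal{H}_t$ whose lowest indexed curve agrees in distribution with the narrow wedge Cole--Hopf solution $\mathcal{H}^{\mathbf{nw}}(t,\cdot)$ and which satisfies the $\mathbf{H}^{\mathrm{long}}_1$-Brownian Gibbs property (i.e., Hamiltonian $e^x$). This hands me (1) and (2) for free; item (3), giving the $\mathbf{H}^{\mathrm{long}}_t$-Brownian Gibbs property under the $1{:}2{:}3$ KPZ scaling $\mathfrak{h}_t^{(n)}(x) = t^{-1/3}(\mathcal{H}_t^{(n)}(t^{2/3}x) + t/24)$, is also recorded there and amounts to a change of variables.

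For (4), I would perform the analogous scaling computation with the short time $1{:}2{:}4$ exponents. The additive shift $\log\sqrt{2\pi t}$ and the fluctuation rescaling $(\pi t/4)^{-1/4}$ do not affect the Gibbs property since adding a constant to all curves leaves the Radon--Nikodym densities invariant and multiplying every curve by a common factor simply rescales them. The substantive check is twofold: first, that the spatial rescaling $y = (\pi t/4)^{1/2}x$ combined with the vertical rescaling $(\pi t/4)^{-1/4}$ preserves Brownian bridge law, which follows from the standard Brownian scaling identity $c^{-1/2}B(cx) \stackrel{d}{=} B(x)$ with $c=(\pi t/4)^{1/2}$; second, that the Hamiltonian transforms correctly. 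For the latter, writing $\mathcal{H}_t^{(n+1)}(y) - \mathcal{H}_t^{(n)}(y) = (\pi t/4)^{1/4}\bigl(\mathfrak{g}_t^{(n+1)}(x) - \mathfrak{g}_t^{(n)}(x)\bigr)$ and $dy = (\pi t/4)^{1/2}\,dx$, the exponential interaction transforms as
\begin{equation*}
\int \exp\bigl(\mathcal{H}_t^{(n+1)}(y) - \mathcal{H}_t^{(n)}(y)\bigr)\,dy
= (\pi t/4)^{1/2}\int \mathbf{H}^{\mathrm{short}}_t\bigl(\mathfrak{g}_t^{(n+1)}(x) - \mathfrak{g}_t^{(n)}(x)\bigr)\,dx ,
\end{equation*}
and the constant prefactor $(\pi t/4)^{1/2}$ is absorbed into the partition function. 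Hence the Radon--Nikodym derivative defining the conditional law of the restricted curves reproduces the $\mathbf{H}^{\mathrm{short}}_t$-Brownian bridge line ensemble density, yielding the $\mathbf{H}^{\mathrm{short}}_t$-Brownian Gibbs property of $\{\mathfrak{g}_t^{(n)}\}$.

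I expect no serious obstacle here; the proof is essentially a bookkeeping exercise, parallel to the derivation of (3) in \cite{CH16}. The only subtlety is verifying that the change of variables in both the free Brownian bridge reference measure and the Gibbs weight produces exactly matching scaling exponents $1/2$ (space) and $1/4$ (fluctuation) so that the $(\pi t/4)^{1/4}$ factor appears in the exponent of the Hamiltonian as required. Once this is checked on an arbitrary stopping domain $K\times(a,b)$, the $\mathbf{H}^{\mathrm{short}}_t$-Brownian Gibbs property for $\mathfrak{g}_t$ follows on all finite index sets and finite intervals, completing the proof.
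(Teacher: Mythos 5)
Your proof proposal takes a genuinely different route from the paper. The paper's proof of part (4) is not a change-of-variables argument: it says that the main ingredients of CH16's proof of part (3) were one-point tail probabilities and spatial stationarity of the lowest-indexed curve, and part (4) is obtained by supplying the analogous inputs in the short-time scaling, namely the newly-proven short-time tail bounds (Theorems~\ref{uptail-upb} and~\ref{short:lowertail}) and stationarity (Lemma~\ref{stationary}). You instead derive (4) directly from (2) by a scaling computation, treating it as ``essentially a bookkeeping exercise.'' If your argument were complete it would be the simpler route and would, in particular, render the short-time tail bounds superfluous for establishing this lemma. Before relying on it you should verify against CH16's actual proof of Theorem~2.15(3) whether that part really is a pure rescaling of part (2); the present paper's author indicates it is not, and your proposal is premised on it being so without checking.

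There is also a concrete gap in the scaling computation itself. The Jacobian factor $(\pi t/4)^{1/2}$ from the change of variables $dy = (\pi t/4)^{1/2}\,dx$ does \emph{not} get absorbed into the partition function: it multiplies the energy \emph{inside} the exponent, not the Radon--Nikodym derivative as a whole, and thus changes the conditional law rather than merely its normalization. Carried out carefully, the pushforward of the conditional Gibbs law satisfies the Brownian Gibbs property with Hamiltonian
\begin{align*}
z\mapsto (\pi t/4)^{1/2}\,e^{(\pi t/4)^{1/4}z} = (\pi t/4)^{1/2}\mathbf{H}^{\mathrm{short}}_t(z),
\end{align*}
and not $\mathbf{H}^{\mathrm{short}}_t(z)$ as stated in the lemma. (The analogous computation from (2) to (3) produces $t^{2/3}\mathbf{H}^{\mathrm{long}}_t$ rather than $\mathbf{H}^{\mathrm{long}}_t$, so whatever normalization convention reconciles this for part (3) must be made explicit and applied consistently, not assumed away.) This prefactor is harmless for the downstream uses in the paper, which invoke only convexity of the Hamiltonian via Lemma~\ref{Coupling1}, but the claim that it ``simply absorbs into $Z$'' is incorrect, and the Hamiltonian your argument actually yields differs from the one in the lemma's statement.
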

\begin{proof}
The part $(1)$, $(2)$ and $(3)$ follow from the part $(1)$, $(2)$ and $(3)$ of Theorem 2.15 of \cite{CH16} respectively. For the proof of part $(4)$, we rely on the proof of part $(3)$ in \cite[Theorem 2.15]{CH16}. The main ingredients of their proof were the one point tail probabilities and spatial stationarity of $\h^{(1)}_t(\cdot)$. These two properties are also present for the lowest indexed curve for the short time line ensemble $\{\mathfrak{g}^{(n)}_t(x)\}_{n\in \N,x\in \R}$ (see Theorem~\ref{uptail-upb},~\ref{short:lowertail} and Lemma~\ref{stationary}). With these in hand, the  part $(4)$ can be proved exactly in the same way as part $(3)$ of \cite[Theorem 2.15]{CH16}. For brevity, we skip the details. 
\end{proof}

The above result envisages the Brownian Gibbs property of the ensemble to be brought to bear as a tool for analysing the spatial profiles $\mathfrak{h}_t(x)$ and $\sh_t(x)$ by demonstrating that the lowest indexed curves $\mathfrak{h}_t^{(1)}$ and $\sh_t^{(1)}$ in the scaled long time and short time KPZ line ensemble have the laws of the centered and scaled narrow wedge solution $\mathfrak{h}_t(x):=\mathfrak{h}_t(1,x)$ and $\sh_t(x):= \sh_t(1,x)$ of the KPZ equation defined in~\eqref{def:kpz-short-scale}. 



\subsection{Barlow-Taylor's macroscopic fractal theory}\label{sec:BT}

\begin{definition}[Hausdorff content and dimension]\label{bfHausdorffCont}
For any Borel set $\mathcal{A}\subset \mathbb{R}$, the $n$\emph{-th shell} of $\mathcal{A}$ is defined as $\mathcal{A}\cap \big\{(-e^{n+1},-e^{n}]\cup [e^{n},e^{n+1})\big\}$. Let us fix a number $c_0>0$, and the set $\mathcal{A}\subset \mathbb{R}$ and $\rho>0$, define $\rho$-dimensional \emph{Hausdorff content} of the $n$-th shell of $\mathcal{A}$ as 
\begin{align*}
\nu_{n,\rho}(\mathcal{A}) := \inf \sum_{i=1}^{m} \Big(\frac{\mathrm{Length}(Q_i)}{e^n}\Big)^{\rho}
\end{align*} 
where the infimum is taken over all sets of intervals $Q_1,\ldots , Q_m$ of length greater than $c_0$ and covering $n$-th shell of $\mathcal{A}$. Define the $\rho$-dimensional Hausdorff content of the set $\mathcal{A}$ as a sum total of $\nu_{n,\rho}(\mathcal{A})$ as $n$ varies over the set of all positive integers. Then, the Barlow-Taylor \emph{macroscopic Hausdorff dimension} the set $\mathcal{A}$ is defined as the infimum over all $\rho>0$ such that the $\rho$-dimensional Hausdorff content of $\mathcal{A}$ is finite, i.e.,
\begin{align*}
\mathrm{Dim}_{\mathbb{H}}(\mathcal{A}) := \inf \big\{\rho>0: \sum_{n=0}^{\infty} \nu_{n,\rho}(\mathcal{A})<\infty\big\}.
\end{align*} 
\end{definition}
From the definition, it follows that the macroscopic Hausdorff dimension of a bounded set is $0$. Just as in the microscopic case, one has $\mathrm{Dim}_{\mathbb{H}}(E)\leq \mathrm{Dim}_{\mathbb{H}}(F)$ when $E$ is contained in $F$. Furthermore, it has been observed in \cite[Lemma~2.3]{KKX17} that the macroscopic Hausdorff dimension does not depend on the value of $c_0$. These observations are summarized in the following proposition.

\begin{proposition}[\cite{BT1,BT2,KKX17}]\label{ppn:EssHaus}
Consider $E\subset \mathbb{R}$. Then, $\mathrm{Dim}_{\mathbb{H}}(E)$ does not depend on the value of $c_0$ of Definition~\ref{bfHausdorffCont} and $\mathrm{Dim}_{\mathbb{H}}(E)\leq \mathrm{Dim}_{\mathbb{H}}(F)$ for $F\supset E$. Moreover, $\mathrm{Dim}_{\mathbb{H}}(E)=0$ if $E$ is bounded.
\end{proposition}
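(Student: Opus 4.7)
The plan is to prove the three claims in Proposition~\ref{ppn:EssHaus} in increasing order of substance: monotonicity is immediate, the bounded-set statement reduces to finite summability, and the $c_0$-independence is the only claim that requires an actual comparison argument.

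For monotonicity, my plan is to unwind the definition directly. If $F\supset E$, then the $n$-th shell of $F$ contains the $n$-th shell of $E$, so any finite collection of intervals of length at least $c_0$ that covers the shell of $F$ also covers the shell of $E$. Hence the infimum defining $\nu_{n,\rho}(E)$ is taken over a larger admissible class, which gives $\nu_{n,\rho}(E)\le \nu_{n,\rho}(F)$ for every $n$ and every $\rho>0$. Summing in $n$ and taking infimum over $\rho$ immediately yields $\mathrm{Dim}_{\mathbb{H}}(E)\le \mathrm{Dim}_{\mathbb{H}}(F)$.

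For the statement that bounded sets have dimension zero, I would fix $R>0$ with $E\subset [-R,R]$. Then the $n$-th shell of $E$ is empty for all $n$ with $e^n>R$, so only finitely many shells contribute. For each non-empty shell, I would cover it by a single interval whose length is $\max(c_0,e^{n+1}-e^n)$; this gives $\nu_{n,\rho}(E)\le \bigl(\max(c_0,(e-1)e^n)/e^n\bigr)^{\rho}$, which is uniformly bounded in $n$. Since there are only finitely many non-zero terms, $\sum_{n}\nu_{n,\rho}(E)<\infty$ for every $\rho>0$, so $\mathrm{Dim}_{\mathbb{H}}(E)=0$.

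The main step is the $c_0$-independence. Let $\nu^{(c)}_{n,\rho}$ denote the content computed with cutoff $c$, and fix $c_0<c'_0$. The easy direction $\nu^{(c_0)}_{n,\rho}(E)\le \nu^{(c'_0)}_{n,\rho}(E)$ is immediate since every $c'_0$-admissible cover is also $c_0$-admissible. For the reverse direction, my plan is to take any $c_0$-admissible cover $\{Q_i\}$ of the $n$-th shell with lengths $\ell_i\ge c_0$, and for each $i$ with $\ell_i<c'_0$ replace $Q_i$ by an interval $Q'_i\supset Q_i$ of length exactly $c'_0$; this yields a $c'_0$-admissible cover whose $\rho$-sum satisfies
\[
\sum_{i:\ell_i\ge c'_0}(\ell_i/e^n)^{\rho}+\sum_{i:\ell_i<c'_0}(c'_0/e^n)^{\rho}\le (c'_0/c_0)^{\rho}\sum_{i}(\ell_i/e^n)^{\rho},
\]
where I used $c'_0/\ell_i\le c'_0/c_0$ when $\ell_i<c'_0$. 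Taking infimum over $c_0$-admissible covers gives $\nu^{(c'_0)}_{n,\rho}(E)\le (c'_0/c_0)^{\rho}\nu^{(c_0)}_{n,\rho}(E)$. Therefore the two series $\sum_n \nu^{(c_0)}_{n,\rho}(E)$ and $\sum_n \nu^{(c'_0)}_{n,\rho}(E)$ differ at most by a multiplicative constant, so they converge for exactly the same set of $\rho$'s, and the two definitions of $\mathrm{Dim}_{\mathbb{H}}(E)$ coincide.

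The only place where any care is needed is ensuring that the replacement $Q_i\mapsto Q'_i$ in the last argument preserves the covering property, which is trivial since $Q'_i\supset Q_i$; otherwise all three parts are purely combinatorial and essentially transcribe the corresponding facts from \cite{BT1,BT2,KKX17}, so I do not anticipate any genuine obstacle.
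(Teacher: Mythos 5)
The paper gives no in-house proof of this proposition: it is quoted from \cite{BT1,BT2,KKX17}, with the $c_0$-independence specifically credited to Lemma~2.3 of \cite{KKX17}, so there is nothing in the paper to compare your argument against. That said, your proof is correct and is essentially the standard one: monotonicity comes from the pointwise inequality $\nu_{n,\rho}(E)\le\nu_{n,\rho}(F)$; the bounded case follows because only finitely many shells of $E$ are nonempty; and the cutoff-independence follows from the two-sided comparison $\nu^{(c_0)}_{n,\rho}(E)\le\nu^{(c'_0)}_{n,\rho}(E)\le(c'_0/c_0)^{\rho}\,\nu^{(c_0)}_{n,\rho}(E)$, which does not change which values of $\rho$ make the series $\sum_n\nu_{n,\rho}(E)$ converge. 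One cosmetic slip in the bounded-set argument: the $n$-th shell $(-e^{n+1},-e^{n}]\cup[e^{n},e^{n+1})$ has two connected components, so a single interval of length $\max(c_0,e^{n+1}-e^n)$ will not cover it in general; use two such intervals, or one interval of length $2e^{n+1}$ covering $[-e^{n+1},e^{n+1}]$. Either fix leaves $\nu_{n,\rho}(E)$ uniformly bounded in $n$ and does not affect the conclusion.
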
 
Since the choice of $c_0>0$ does not matter, we will work with the choice of Barlow and Taylor \cite{BT1,BT2,KKX17} and from now on, we set $c_0:= 1$.

We next mention a technical estimate on the Hausdorff content of any set. The following proposition, as stated in \cite{KKX17} is a macroscopic analogue of the classical Frostman lemma for microscopic Hausdorff dimension.

\begin{proposition}[Lemma~2.5 of \cite{KKX17}]\label{ppn:Frostman}
Fix $n\in \mathbb{R}_{\geq 1}$, and suppose $E$ is a subset of the shell $[-e^{n+1}, -e^{n}) \cup (e^{n}, e^{n+1}]$. Denote the Lebesgue measure of a Borel set $B\subset \mathbb{R}$ by $\mathrm{Leb}(B)$. Let $\mu$ be a finite Borel measure on $\mathbb{R}$ and define for $\rho>0$,
\begin{align}\label{eq:Kdef}
K_{n,\rho} := \sup\Big\{\frac{\mu(Q)}{\mathrm{Leb}(Q)}: Q \text{ is a Borel set in }[-e^{n+1}, -e^{n}) \cup (e^{n}, e^{n+1}], \quad \mathrm{Leb}(Q)\geq 1\Big\}.
\end{align}
Then, we have $\nu_{n,\rho}(E)\geq K^{-1}_{n,\rho}e^{-n\rho}\mu(E)$.
\end{proposition}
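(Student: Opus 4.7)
The plan is to give a macroscopic Frostman-type argument: use the defining property of $K_{n,\rho}$, which bounds the $\mu$-mass of any sufficiently large Borel subset of the shell in terms of its Lebesgue measure, to convert an arbitrary admissible cover of $E$ into a lower bound on the $\rho$-dimensional Hausdorff content $\nu_{n,\rho}(E)$. Set $S_n := [-e^{n+1}, -e^n) \cup (e^n, e^{n+1}]$, so by hypothesis $E \subset S_n$.

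I would proceed in three steps. First, fix any admissible cover $\{Q_i\}_{i=1}^m$ of $E$ consisting of intervals with lengths $\ell_i \geq c_0 = 1$. Since $E \subset S_n$, the intersections $Q_i \cap S_n$ still cover $E$, and therefore $\mu(E) \leq \sum_{i=1}^m \mu(Q_i \cap S_n)$. Second, for each $i$ I convert the geometric size $\ell_i$ into a $\mu$-bound via \eqref{eq:Kdef}. When $\mathrm{Leb}(Q_i \cap S_n) \geq 1$, the definition of $K_{n,\rho}$ applies directly and gives a bound of the desired shape in terms of $(\ell_i/e^n)^\rho$, using $\mathrm{Leb}(Q_i \cap S_n) \leq \ell_i$. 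When $\mathrm{Leb}(Q_i \cap S_n) < 1$, I enlarge $Q_i \cap S_n$ to a Borel subset of $S_n$ of Lebesgue measure exactly $1$ before invoking \eqref{eq:Kdef}; this enlargement is always possible because $\mathrm{Leb}(S_n) \geq e - 1 > 1$ for $n \geq 1$, and only increases the $\mu$-mass under control. Third, I sum the resulting estimates across $i$, factor out $e^{n\rho}$ from the cover sum to recognize $\sum_i (\ell_i/e^n)^\rho$, and finally take infimum over admissible covers to land at the claimed inequality $\nu_{n,\rho}(E) \geq K_{n,\rho}^{-1} e^{-n\rho} \mu(E)$.

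The main (mild) obstacle is the boundary regime of short covering intervals, where $\mathrm{Leb}(Q_i \cap S_n)$ may fall below the threshold $1$ required by the definition of $K_{n,\rho}$; the enlargement trick in the previous paragraph disposes of this. A secondary bookkeeping point is the comparison between the linear bound in Lebesgue measure naturally supplied by \eqref{eq:Kdef} and the $\rho$-th power needed in the definition of $\nu_{n,\rho}$; here one uses the combination $\ell_i \geq 1$ together with the shell's geometric constraint $\ell_i \leq \mathrm{diam}(S_n) \lesssim e^{n+1}$, so that the two are comparable up to a factor absorbed into the constant $K_{n,\rho}$. Once these elementary inequalities are in place, the rest is routine summation and passage to the infimum.
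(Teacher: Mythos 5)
Your cover-and-compare strategy is the right skeleton, and your enlargement device for the boundary case $\mathrm{Leb}(Q_i\cap S_n)<1$ is fine. However, the resolution you offer for what you call a ``secondary bookkeeping point'' --- reconciling the linear estimate $\mu(Q_i\cap S_n)\leq K_{n,\rho}\,\mathrm{Leb}(Q_i\cap S_n)$ with the $\rho$-th power $(\ell_i/e^n)^{\rho}$ required by the content --- does not work. You propose ``absorbing a factor into $K_{n,\rho}$'', but $K_{n,\rho}$ is the constant appearing explicitly in the conclusion; it cannot eat a cover-dependent factor like $\ell_i^{1-\rho}$, which is unbounded over admissible covers when $\rho<1$. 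In fact the inequality, as literally stated for all $\rho>0$, fails when $\rho<1$: take $n=1$, $\mu=\mathrm{Leb}|_{S_1}$ (so $K_{1,\rho}=1$), and $E=(e,e+4]\subset S_1$; covering $E$ by the single interval $(e,e+4]$ of length $4$ gives $\nu_{1,\rho}(E)\leq(4/e)^{\rho}=4^{\rho}e^{-\rho}$, which is strictly less than $K_{1,\rho}^{-1}e^{-\rho}\mu(E)=4e^{-\rho}$ whenever $\rho<1$.

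The correct and elementary observation you are missing is that for $\rho\geq 1$ (which is the only regime the paper actually uses; in particular $\rho=1$) one has $\ell^{\rho}\geq\ell$ for every $\ell\geq 1$, so the linear bound becomes a power bound for free: for each covering interval, $\mu(Q_i\cap S_n)\leq K_{n,\rho}\max\{1,\mathrm{Leb}(Q_i\cap S_n)\}\leq K_{n,\rho}\,\ell_i\leq K_{n,\rho}\,\ell_i^{\rho}$. Summing over $i$, dividing by $K_{n,\rho}e^{n\rho}$, and taking the infimum over admissible covers yields the claim. No diameter bound on the shell is needed here, and in any case the one you invoke is spurious --- the admissible covering intervals are not required to lie inside $S_n$, so $\ell_i$ can be much larger than $e^{n+1}$; and even if it held, $\ell_i\lesssim e^{n+1}$ points in the wrong direction for the comparison you want. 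With this one step replaced by the monotonicity $\ell_i^{\rho}\geq\ell_i$ (valid since $\ell_i\geq c_0=1$ and $\rho\geq 1$), the remainder of your outline goes through.
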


The above proposition will be used in Section~\ref{sec:MuonoMult} to show lower bound to the macroscopic Hausdorff dimension of the level sets of the KPZ equation. In the following, we introduce a notion of \emph{thickness} of a set, another important tool to bound the Hausdorff dimension from below. 
\begin{definition}[$\theta$-Thickness]\label{bd:Thickness}
 Fix $\theta\in (0,1)$ and define $$\Pi_n(\theta):=\bigcup_{\substack{0\le j\le e^{n(1-\theta)+1}-e^{n(1-\theta)} \\ j\in \Z}} \{e^n+je^{n\theta}\}.$$
We say $E\subset \R$ is $\theta$-thick if there exist integer $M=M(\theta)$ such that $E\cap [x,x+e^{\theta n}]\neq \varnothing$ for all $x\in \Pi_{n}(\theta)$ and for all $n\geq M$.
\end{definition}
The following result from \cite{KKX17} provides a lower bound to the Hausdorff dimension of a given set in terms of its thickness.   

\begin{proposition}[Corollary~4.6 in \cite{KKX17}]\label{thick} If $E\subset \R$ is $\theta$-thick for some $\theta\in (0,1)$, then $\operatorname{Dim}_{\mathbb{H}}(E)\ge 1-\theta$. 
\end{proposition}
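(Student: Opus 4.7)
My plan is to derive the lower bound from the Frostman-type inequality of Proposition~\ref{ppn:Frostman} applied to a carefully chosen atomic measure built directly from the $\theta$-thickness hypothesis. By the definition of $\operatorname{Dim}_{\mathbb{H}}$, it suffices to show that $\sum_{n\ge 0}\nu_{n,\rho}(E)=\infty$ for every fixed $\rho\in(0,1-\theta)$. Since $\nu_{n,\rho}$ is monotone in its set argument, I can restrict attention to the right half-shell $E\cap[e^n,e^{n+1})$ and lower-bound its Hausdorff content directly; the negative half-shell is irrelevant.

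Fix $n\ge M$. Using $\theta$-thickness, select for each $x\in\Pi_n(\theta)$ a point $e_x\in E\cap[x,x+e^{\theta n}]$, and define the finite Borel measure
\[
\mu_n := e^{\theta n}\sum_{x\in\Pi_n(\theta)}\delta_{e_x},
\]
supported in $E\cap[e^n,\,e^{n+1}+e^{\theta n}]$. Since $|\Pi_n(\theta)|\asymp e^{n(1-\theta)}$, the total mass is $\mu_n(E)\asymp e^n$. To bound the constant $K_{n,\rho}$ from \eqref{eq:Kdef}, note that any Borel set $Q$ with $\mathrm{Leb}(Q)=L\ge 1$ can intersect at most $O(L/e^{\theta n}+1)$ of the disjoint intervals $[e^n+je^{\theta n},e^n+(j+1)e^{\theta n})$, and each such interval carries $O(1)$ of the selected points $e_x$. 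Hence $\mu_n(Q)\le C(L+e^{\theta n})$, and using $L\ge 1$ one obtains
\[
\frac{\mu_n(Q)}{\mathrm{Leb}(Q)}\ \le\ C\Big(1+\frac{e^{\theta n}}{L}\Big)\ \le\ C'e^{\theta n},
\]
so $K_{n,\rho}\le C' e^{\theta n}$ uniformly in $\rho$.

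Plugging these estimates into Proposition~\ref{ppn:Frostman} yields
\[
\nu_{n,\rho}(E)\ \ge\ K_{n,\rho}^{-1}\,e^{-n\rho}\,\mu_n(E)\ \ge\ c\,e^{n(1-\theta-\rho)}
\]
for an absolute constant $c>0$ and all $n$ large. Since $\rho<1-\theta$ the exponent is strictly positive, the series $\sum_n \nu_{n,\rho}(E)$ diverges, so $\operatorname{Dim}_{\mathbb{H}}(E)\ge\rho$; letting $\rho\uparrow 1-\theta$ finishes the proof. The only non-cosmetic obstacle I foresee is the combinatorial bookkeeping around $K_{n,\rho}$: one must reconcile the Frostman constraint $\mathrm{Leb}(Q)\ge 1$ with the mass estimate (this is precisely what forces the bound on $K_{n,\rho}$ to carry the factor $e^{\theta n}$ rather than a constant), and the minor overshoot of selected points beyond $e^{n+1}$ must be absorbed into multiplicative constants by trimming the sum defining $\mu_n$. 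Beyond that the argument is a clean application of the macroscopic Frostman lemma already supplied by the paper.
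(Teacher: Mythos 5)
The paper offers no proof of this proposition---it simply cites Corollary~4.6 of \cite{KKX17}---so what you have written is a genuinely self-contained reconstruction rather than a restatement of something in the text. The architecture is right: an atomic measure charging $e^{\theta n}$ to one witness point per $e^{\theta n}$-cell of $\Pi_n(\theta)$ gives total mass $\asymp e^n$, the Frostman constant is controlled by $e^{\theta n}$, and Proposition~\ref{ppn:Frostman} then yields $\nu_{n,\rho}(E)\gtrsim e^{n(1-\theta-\rho)}$, whose divergence for $\rho<1-\theta$ gives the dimension bound. Restricting to the right half-shell and trimming the $O(1)$ overshooting witness points are both handled correctly.

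The one place you need to be more careful is your bound on $K_{n,\rho}$. As stated in Proposition~\ref{ppn:Frostman}, $K_{n,\rho}$ is a supremum of $\mu(Q)/\mathrm{Leb}(Q)$ over \emph{Borel} sets $Q$ in the shell with $\mathrm{Leb}(Q)\geq 1$. Under that literal reading your estimate fails: taking $Q$ to be the finite set of atoms $\{e_x : x\in\Pi_n(\theta)\}$ together with a disjoint unit interval gives $\mathrm{Leb}(Q)=1$ while $\mu_n(Q)=\mu_n(E)\asymp e^n$, so $K_{n,\rho}\gtrsim e^n$ and the bound $\nu_{n,\rho}(E)\gtrsim e^{n(1-\theta-\rho)}$ evaporates. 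Your claim that ``any Borel set $Q$ with $\mathrm{Leb}(Q)=L$ can intersect at most $O(L/e^{\theta n}+1)$ of the grid cells'' is true for intervals but false for general Borel sets, which can be scattered across every cell while having small total measure. The resolution is that the Frostman lemma in \cite{KKX17} (and the only form of it that is actually \emph{used} in proving $\nu_{n,\rho}(E)\ge K^{-1}e^{-n\rho}\mu(E)$, since $\nu_{n,\rho}$ is defined via covers by intervals of length $\geq 1$) only requires the supremum over \emph{intervals} $Q$; the phrase ``Borel set'' in the paper's Proposition~\ref{ppn:Frostman} is an overstatement, harmless in the paper's own application (where $\mu$ is absolutely continuous with density $\le 1$, so $\mu(Q)\leq\mathrm{Leb}(Q)$ for every Borel $Q$), but fatal if taken literally with your atomic $\mu_n$. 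You should explicitly invoke the interval form of the Frostman lemma, at which point your combinatorial count of cells met by an interval of length $L\geq 1$ is correct and the rest of the argument goes through.
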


\subsection{KPZ equation results}\label{sec:kpzeq} We start with introducing the space-time scaling of the KPZ height function appropriate for the short time regime, i.e., the case when the time variable goes to $0$.
\begin{align}\label{def:kpz-short-scale}
\sh_t(\alpha,x) :=\frac{\calH^{\mathbf{nw}}(\alpha t,(\pi t/4)^{1/2}x)+\log\sqrt{2\pi\alpha t}}{(\pi t/4)^{1/4}}.
\end{align}
We will often use the shorthand notation $\sh_t(x):=\sh_t(1,x)$. In addition, we simply write $\sh_t:=\sh_t(1,0)$ when $x=0$. The following lemma shows the spatial stationarity of the process $\h_t(\cdot)$ and $\sh_t(\cdot)$.   

\begin{lemma}[Stationarity]\label{stationary} The one point distribution of $\h_t(x)+\frac{x^2}{2}$ is independent of $x$ and converges weakly to Tracy-Widom GUE distribution as $t\uparrow \infty$. On the other hand, the one point distribution of $\sh_t(x)+\frac{(\pi t/4)^{3/4}x^2}{2t}$ is independent of $x$ and converges weakly to standard Gaussian distribution as $t\downarrow 0$. 
\end{lemma}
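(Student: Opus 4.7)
The plan is to decouple the proof into two independent pieces: (a) spatial stationarity (up to a parabola) of the narrow wedge solution at fixed $t$, and (b) the known one-point weak limits at $x=0$. Piece (a) is purely a statement about the KPZ/SHE fundamental solution, and piece (b) is an immediate appeal to \cite{ACQ11}.

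First I would establish the underlying stationarity statement, namely that for every fixed $t>0$ the one-point distribution of
\[
\mathcal{F}(t,y) \;:=\; \mathcal{H}^{\mathbf{nw}}(t,y) \;+\; \log\sqrt{2\pi t} \;+\; \frac{y^{2}}{2t}
\]
does not depend on $y\in\mathbb{R}$. The natural route is via the chaos/Feynman--Kac representation
\[
\frac{\mathcal{Z}^{\mathbf{nw}}(t,y)}{p(t,y)} \;=\; \mathbb{E}_{B^{0\to y}}\!\left[{:}\exp{:}\int_{0}^{t}\xi(s,B(s))\,ds\right],
\]
where $p(t,y)=\frac{1}{\sqrt{2\pi t}}e^{-y^{2}/(2t)}$ and $B^{0\to y}$ is a Brownian bridge from $0$ to $y$ on $[0,t]$. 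Decomposing $B^{0\to y}(s)=ys/t+b(s)$ with $b$ a standard $0\!\to\!0$ bridge, and using that the shifted field $\tilde{\xi}(s,z):=\xi(s,z+ys/t)$ has the same law as $\xi$ (the shift is deterministic and, although time-dependent, preserves the joint distribution of a space-time white noise because $\xi(s,\cdot+c)\stackrel{d}{=}\xi(s,\cdot)$ is preserved under every deterministic family of spatial shifts indexed by $s$), one sees that the above expectation has a law independent of $y$. Taking $\log$ gives the claim for $\mathcal{F}(t,\cdot)$.

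Second, I would translate this into the two stated invariances by pure algebra. With $y=t^{2/3}x$, a direct manipulation yields
\[
\mathfrak{h}_{t}(x)+\frac{x^{2}}{2} \;=\; t^{-1/3}\,\mathcal{F}(t,y) \;+\; t^{-1/3}\!\left(\tfrac{t}{24}-\log\sqrt{2\pi t}\right),
\]
and with $y=(\pi t/4)^{1/2}x$,
\[
\mathfrak{g}_{t}(x)+\frac{(\pi t/4)^{3/4}x^{2}}{2t} \;=\; (\pi t/4)^{-1/4}\,\mathcal{F}(t,y).
\]
Since $\mathcal{F}(t,\cdot)$ is stationary by the previous step and the additive correction is deterministic, both expressions have one-point law independent of $x$.

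Finally, specializing to $x=0$, the claimed weak limits reduce to classical one-point results: $\mathfrak{h}_{t}(0)=\mathfrak{h}_{t}$ converges weakly (up to the $2^{-1/3}$ rescaling convention fixed earlier in the introduction) to the Tracy--Widom GUE law as $t\uparrow\infty$ by \cite{ACQ11}, and $\mathfrak{g}_{t}(0)=\mathfrak{g}_{t}$ converges weakly to a standard Gaussian as $t\downarrow 0$ by \cite[Proposition~1.8]{ACQ11}; combining with stationarity gives the same limits for every $x$. The only delicate point is the rigorous justification of the time-dependent spatial shift of the white noise in Step~1, but this is standard (e.g.\ it can be verified term by term on the Wiener chaos expansion).
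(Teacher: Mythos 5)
Your proposal is correct, and the reduction via the algebraic identities relating $\h_t(x)+\tfrac{x^2}{2}$ and $\sh_t(x)+\tfrac{(\pi t/4)^{3/4}x^2}{2t}$ to the normalized field $\mathcal{F}(t,y)=\mathcal{H}^{\mathbf{nw}}(t,y)+\log\sqrt{2\pi t}+\tfrac{y^2}{2t}$ is exactly the same algebra the paper uses. The one place you diverge is the underlying stationarity of $\mathcal{F}(t,\cdot)$: the paper simply cites \cite[Proposition~1.4]{ACQ11} (and \cite[Proposition~1.7]{CH16} for the $\h_t$ statement), whereas you re-derive it from scratch via the Feynman--Kac representation, the bridge decomposition $B^{0\to y}(s)=ys/t+b(s)$, and shear-invariance of space-time white noise under the time-dependent shift $z\mapsto z+ys/t$. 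Your shear argument is sound — the covariance of $\xi$ is preserved because the shift $c(s)=ys/t$ is deterministic, so $\delta(s-s')\delta(z+c(s)-z'-c(s'))=\delta(s-s')\delta(z-z')$ — and, as you note, it can be made rigorous term-by-term on the Wiener chaos. What your version buys is self-containedness: the invariance is proved rather than imported. What the paper's version buys is brevity, since the stationarity is a standard and already-recorded fact. Both proofs then conclude identically by specializing to $x=0$ and invoking \cite{ACQ11} for the Tracy--Widom and Gaussian one-point limits. One minor caveat, which you already flag: the displayed weak limits are stated in the paper's normalization conventions (e.g.\ $\h_t\to 2^{-1/3}TW_{\mathrm{GUE}}$ in the introduction), so the phrase ``Tracy--Widom GUE distribution'' in the lemma should be read with the appropriate constant absorbed.
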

\begin{proof} The first part was proved in Proposition 1.7 of \cite{CH16}. By Proposition~1.4 of \cite{ACQ11}, we know $\calH^{\mathbf{nw}}(t,z)+\frac{z^2}{2t}$ is stationary in $z$. As a result,  \begin{align*}
	\sh_t(x)+\frac{(\pi t/4)^{3/4}x^2}{2t}=(\pi t/4)^{-1/4}\left[\calH^{\mathbf{nw}}(t,(\pi t/4)^{1/2}x)+\frac{(\pi t/4)x^2}{2t}+\log\sqrt{2\pi t}\right]
\end{align*}
is stationary in $x$. From Proposition 1.8 in \cite{ACQ11}, it follows that $\sh_t(0)$ converges weakly to standard Gaussian distribution as $t\downarrow 0$.
\end{proof}
Our next result provides a multipoint composition law of the KPZ temporal process. In latter sections, this will be used to infer properties of multipoint distributions of $\h_t$. Our proof of the multipoint composition law resembles the one in \cite[Proposition~2.9]{corwin2019kpz} which proves the two point composition law. For stating the law, we introduce the following notation.
For $t>0$, define a $t$-indexed composition map $I_t(f,g)$ between two functions $f(\cdot)$ and $g(\cdot)$ as
\begin{align}\label{eq:Icomp}
I_t(f,g):=t^{-1/3}\log \int^{\infty}_{-\infty} e^{t^{1/3}\big(f(t^{-2/3}y) +g(-t^{-2/3} y)\big)} dy \, .
\end{align}

\begin{proposition}\label{ppn:MulpointComposition}
For any fixed $t>0$, $k\in \mathbb{N}$ and $1<\alpha_1<\alpha_2<\ldots <\alpha_k$, there exist independent spatial processes $\mathfrak{h}_{\alpha_1 t \downarrow t}, \mathfrak{h}_{\alpha_2 t \downarrow \alpha_1 t}, \cdots ,\mathfrak{h}_{\alpha_{k} t \downarrow \alpha_{k-1}t} $ supported on the same probability space as the KPZ equation solution such that:
\begin{enumerate}
\item $\mathfrak{h}_{\alpha_i t\downarrow \alpha_{i-1}t}(\cdot)$ is distributed according to the law of the process $\mathfrak{h}_{\alpha_{i-1}t}((\alpha_i-\alpha_{i-1})/\alpha_{i-1}, \cdot)$;
\item $\mathfrak{h}_{\alpha_i t\downarrow \alpha_{i-1}t}(\cdot)$ is independent of $\mathfrak{h}_{\alpha_{i-1} t}(\cdot)$; and
\item $\mathfrak{h}_{\alpha_{i-1}t}(\frac{\alpha_i}{\alpha_{i-1}},0) = I_{\alpha_{i-1}t}\big(\mathfrak{h}_{\alpha_{i-1} t}, \mathfrak{h}_{\alpha_i t\downarrow \alpha_{i-1}t}\big)$.
\end{enumerate} 
\end{proposition}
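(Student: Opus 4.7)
The plan is to reduce the multi-point composition law to an iterated application of the Markov/Chapman--Kolmogorov property of the SHE on disjoint time strips, much as in the proof of the two-point case (\cite[Proposition~2.9]{corwin2019kpz}); the only new content relative to that result is the joint independence of the $k$ new spatial profiles, which will fall out of the independence of the white noise on the strips. Let $\tau_0 := t$ and $\tau_i := \alpha_i t$ for $i=1,\dots,k$, and decompose the driving white noise $\xi$ into the independent pieces $\xi^{(i)} := \xi|_{(\tau_{i-1},\tau_i]\times \mathbb{R}}$. For a noise $\eta$ on a strip $(s,\tau]\times\mathbb{R}$, write $\mathcal{Z}^{s,y;\eta}(\tau,x)$ for the SHE solution on $[s,\tau]$ driven by $\eta$ and started from $\delta_y$. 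The Walsh-integral theory for SHE gives the convolution identity
\begin{equation*}
\mathcal{Z}^{\mathbf{nw}}(\tau_i, x) \;=\; \int_{\mathbb{R}} \mathcal{Z}^{\mathbf{nw}}(\tau_{i-1}, y)\,\mathcal{Z}^{\tau_{i-1},\, y;\, \xi^{(i)}}(\tau_i, x)\, dy,
\end{equation*}
in which the two factors under the integral are independent because they are measurable with respect to $\xi|_{(0,\tau_{i-1}]}$ and $\xi^{(i)}$ respectively.

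Next, I would use two invariances of space-time white noise. First, spatial translation invariance of $\xi^{(i)}$ gives $\mathcal{Z}^{\tau_{i-1},y;\,\xi^{(i)}}(\tau_i,0) \stackrel{d}{=} \mathcal{Z}^{\tau_{i-1},0;\,\xi^{(i)}}(\tau_i,-y)$ jointly in $y$. Second, temporal translation invariance shows that $\mathcal{Z}^{\tau_{i-1},0;\,\xi^{(i)}}(\tau_i,\cdot)$ is equal in law, as a spatial process, to $\mathcal{Z}^{\mathbf{nw}}((\alpha_i-\alpha_{i-1})t, \cdot)$. With these two facts I would define
\begin{equation*}
\mathfrak{h}_{\alpha_i t \downarrow \alpha_{i-1} t}(x) \;:=\; (\alpha_{i-1} t)^{-1/3}\Big[\log \mathcal{Z}^{\tau_{i-1},\,0;\,\xi^{(i)}}\!\bigl(\tau_i,\, -(\alpha_{i-1} t)^{2/3} x\bigr) + (\alpha_i-\alpha_{i-1}) t/24\Big].
\end{equation*}
Then property (1) is immediate on comparing this to the definition \eqref{def:kpz-long-scale} with base time $\alpha_{i-1}t$ and scale parameter $\beta = (\alpha_i-\alpha_{i-1})/\alpha_{i-1}$ (so $\beta\cdot\alpha_{i-1}t = (\alpha_i-\alpha_{i-1})t$); property (3) is obtained by inserting the exponential forms of the factors into the convolution identity, factoring out $e^{-\alpha_i t/24}$, taking logarithms, dividing by $(\alpha_{i-1} t)^{1/3}$, and matching the result to the definition of $I_{\alpha_{i-1} t}$ in \eqref{eq:Icomp}.

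For the independence statements, property (2) follows from the observation that $\mathfrak{h}_{\alpha_i t \downarrow \alpha_{i-1} t}$ is by construction measurable with respect to $\xi^{(i)}$, while $\mathfrak{h}_{\alpha_{i-1} t}$ is measurable with respect to $\xi|_{(0,\tau_{i-1}]}$, and these two $\sigma$-algebras are independent. The joint independence of $\mathfrak{h}_{\alpha_1 t\downarrow t},\dots, \mathfrak{h}_{\alpha_k t \downarrow \alpha_{k-1} t}$ required by the statement follows from the pairwise-disjointness of the strips $\{\xi^{(i)}\}_{i=1}^{k}$. The multi-point law can alternatively be derived by induction on $k$, applying the two-point composition law at $(\tau_{i-1},\tau_i)$ at the $i$th step while keeping track of the white-noise dependence; the two arguments are equivalent.

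The main obstacle I foresee is purely a bookkeeping one: making the Markov decomposition of the SHE measure-theoretically rigorous (i.e.\ pinning down the precise noise $\sigma$-algebras against which each object is measurable, and justifying the Chapman--Kolmogorov identity and the symmetries under spatial and temporal translation as a joint-distribution statement rather than a pointwise one). No new analytic difficulty arises beyond the two-point case, and the signs and factors of $(\alpha_{i-1} t)^{1/3}$, $(\alpha_{i-1} t)^{2/3}$ must be tracked carefully so that the definition of $\mathfrak{h}_{\alpha_i t\downarrow \alpha_{i-1} t}(x)$ is consistent with both the scaling that appears in property (1) and the argument $-(\alpha_{i-1} t)^{-2/3} y$ appearing inside the integrand of $I_{\alpha_{i-1} t}$.
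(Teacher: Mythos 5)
Your overall strategy — decompose the noise $\xi$ into the independent strips $\xi^{(i)} := \xi|_{(\tau_{i-1},\tau_i]\times\mathbb{R}}$, invoke the Chapman--Kolmogorov identity for the SHE, and obtain joint independence of the $k$ profiles from the disjointness of the strips — is exactly the paper's strategy, and your direct appeal to the Walsh/Markov semigroup property is a legitimate shortcut past the explicit chaos-series computation that the paper performs (the paper essentially re-derives Chapman--Kolmogorov by resumming the Wiener chaos; your citing it as a known fact is cleaner). However, the concrete construction you write down has a gap that would cause property (3) to fail pathwise.

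The convolution identity you correctly record involves the \emph{backward} kernel
\begin{equation*}
\mathcal{Z}^{\mathbf{nw}}(\tau_i,0) \;=\; \int_{\mathbb{R}}\mathcal{Z}^{\mathbf{nw}}(\tau_{i-1},y)\,\mathcal{Z}^{\tau_{i-1},y;\,\xi^{(i)}}(\tau_i,0)\,dy,
\end{equation*}
in which one varies the \emph{starting} position $y$ of the second factor while keeping the terminal point fixed. But you then define
\begin{equation*}
\mathfrak{h}_{\alpha_i t\downarrow\alpha_{i-1}t}(x) := (\alpha_{i-1}t)^{-1/3}\Bigl[\log\mathcal{Z}^{\tau_{i-1},\,0;\,\xi^{(i)}}\bigl(\tau_i,\,-(\alpha_{i-1}t)^{2/3}x\bigr)+(\alpha_i-\alpha_{i-1})t/24\Bigr],
\end{equation*}
which is built from the \emph{forward} process $y\mapsto\mathcal{Z}^{\tau_{i-1},0;\xi^{(i)}}(\tau_i,y)$ started from $\delta_0$. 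If you insert this into $I_{\alpha_{i-1}t}(\mathfrak{h}_{\alpha_{i-1}t},\cdot)$ as you propose, you arrive at the integral $\int\mathcal{Z}^{\mathbf{nw}}(\tau_{i-1},y)\,\mathcal{Z}^{\tau_{i-1},0;\xi^{(i)}}(\tau_i,y)\,dy$, which is \emph{not} almost surely equal to $\mathcal{Z}^{\mathbf{nw}}(\tau_i,0)$; the backward and forward kernels $\mathcal{Z}^{\tau_{i-1},y;\xi^{(i)}}(\tau_i,0)$ and $\mathcal{Z}^{\tau_{i-1},0;\xi^{(i)}}(\tau_i,y)$ agree only in distribution (the interchange property of the SHE), not pathwise, as one can check already at the first level of the Wiener chaos. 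The source of the error is that you invoke the distributional identity $\mathcal{Z}^{\tau_{i-1},y;\xi^{(i)}}(\tau_i,0)\stackrel{d}{=}\mathcal{Z}^{\tau_{i-1},0;\xi^{(i)}}(\tau_i,-y)$ \emph{before} choosing the construction, whereas property (3) is an almost-sure equality and must be satisfied by the actual random object, not a distributional replacement. The fix is to define $\mathfrak{h}_{\alpha_i t\downarrow\alpha_{i-1}t}(x)$ from the backward kernel $\mathcal{Z}^{\tau_{i-1},\,-(\alpha_{i-1}t)^{2/3}x;\,\xi^{(i)}}(\tau_i,0)$; then (3) becomes the Chapman--Kolmogorov identity verbatim, (2) remains the noise-disjointness observation, and (1) follows distributionally from the interchange property together with temporal translation invariance (plus spatial reflection symmetry of $\mathcal{Z}^{\mathbf{nw}}(\cdot,\cdot)$ to eliminate the sign). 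With that substitution your proof matches the paper's in all respects.
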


\begin{proof}
For $s<t$ and $x,y\in \mathbb{R}$, let $\mathcal{Z}^{\mathbf{nw}}_{s,x}(t,y)$ be the solution at time $t$ and position $y$ of the SHE started at time $s$ with Dirac delta initial data at position $x$. 
We will show that for any $0<t_1<\ldots <t_k$ and $y_1,\ldots ,y_k\in \mathbb{R}$, there exists independent spatial processes $\mathcal{Z}_{y_2}(t_2\downarrow t_1,\cdot), \ldots , \mathcal{Z}_{y_k}(t_{k}\downarrow t_{k-1}, \cdot)$ coupled on a probability space upon which the space-time white noise of the KPZ equation is defined such that  
\begin{align}\label{eq:Conv}
\mathcal{Z}^{\mathbf{nw}}(t_i,y_i) = \mathcal{Z}^{\mathbf{nw}}_{0,0}(t_i,y_i) = \int_{\mathbb{R}} \mathcal{Z}^{\mathbf{nw}}_{0,0}(t_{i-1},x) \mathcal{Z}_{y_i}(t_{i}\downarrow t_i,x)dx \, ,
\end{align}
  and the law of $\mathcal{Z}_{y_{i}}(t_i\downarrow t_{i-1},\cdot)$ is same as that of $\mathcal{Z}^{\mathbf{nw}}(t_i-t_{i-1},y_i-\cdot)$ for $2\leq i\leq k$. Expressing the convolution and interchange properties in terms of $\mathfrak{h}_t$ immediately yields the proposition.


 We now return to show \eqref{eq:Conv}. The above convolution formula is known when $k=2$ (see \cite{corwin2019kpz}). We extend the proof given in \cite{corwin2019kpz} for $k>2$ using the chaos series for the SHE (see \cite{Cor18,Walsh86,alberts2014} for background). 
Here we have used the following notations. We write $\vec{s}=(s_1,\ldots, s_\ell)\in \mathbb{R}^{\ell}_{\geq 0}$,  $\vec{x}= (x_1,\ldots, x_\ell)\in \mathbb{R}^{\ell}$ and define the set of ordered times
$$\Delta_{\ell}(s,t) = \{\vec{s}: s\leq s_1\leq s_2\leq \ldots \leq s_\ell\leq t\}.$$ 
 For any $0\leq s<t$ and $x,y\in \mathbb{R}$, $\mathcal{Z}^{\mathbf{nw}}_{s,x}(t,y)$ is given as  the following chaos series expansion (see Theorem~2.2 of \cite{Cor18}):
\begin{align}
\mathcal{Z}^{\mathbf{nw}}_{s,x}(t,y) = \sum_{\ell=0}^{\infty} \int_{\Delta_{\ell}(s,t)} \int_{\mathbb{R}^{\ell}} P_{\ell;s,x;t,y}(\vec{s},\vec{x})d\xi^{\otimes_\ell}(\vec{s},\vec{x})\label{eq:ZEq}.
\end{align}
The integration in \eqref{eq:ZEq} is a multiple It\^o stochastic integral against the white noise $\xi$ and the term $P_{\ell;s,x;t,y}(\vec{s},\vec{x})$ is the density function for a one-dimensional Brownian motion starting from $(s,x)$ to go through the time-space points $(s_1,x_1),\ldots, (s_\ell,x_\ell)$ and ends up at $(t,y)$. This transition density has the following product formula using the Gaussian heat kernel $p(s,y) := (2\pi s)^{-1/2}\exp(-y^2/2s)$ and the conventions $s_0=s$,  $s_{\ell+1}=t$, $x_0=x$ and $x_{\ell+1}=y$:
\begin{align*}
P_{\ell;s,x;t,y}(\vec{s},\vec{x}) = \prod_{j=0}^{\ell} p(s_{i+1}-s_i,x_{i+1}-x_i).
\end{align*}
For any $0\leq s<t$, the heat kernel $p(\cdot,\cdot)$ satisfies the simple convolution identity
\begin{align}\label{eq:pconv}
p(t,x) = \int p(s,y) p(t-s,x-y)  dy.
\end{align}

Fix $2\leq i\leq k$. By using the fact that the sum of indicator functions gives the value one, we may replace $\int_{\Delta_\ell(0,t_i)}$  in \eqref{eq:ZEq} by the quantity $\sum_{j=0}^\ell \int_{\Delta_\ell(0,t_i)} \mathbf{1}_{s_j\leq t_i<s_{j+1}}$. As a consequence, we get 
\begin{align}\label{eq:indinsert}
\mathcal{Z}^{\mathbf{nw}}_{0,0}(t_i,y_i) =  \sum_{\ell=0}^{\infty}\sum_{j=0}^{\ell} \int_{\Delta_{\ell}(s_j,t_i)} \int_{\mathbb{R}^{k}} \mathbf{1}_{s_j\leq t_{i-1}<s_{j+1}}  P_{\ell;0,0;t_i,y_i}(\vec{s},\vec{x})d\xi^{\otimes_\ell}(\vec{s},\vec{x}).
\end{align}
For $1\leq a\leq b\leq k$,  $\vec{s}_{[a,b]}$ denotes $(s_{a},\ldots, s_b)$ and likewise for $\vec{x}$. Using these notations and \eqref{eq:pconv}, we may write 
$$
 \mathbf{1}_{s_j\leq t_{i-1}<s_{j+1}}  P_{\ell;0,0;t_i,y_i}(\vec{s},\vec{x}) =  \mathbf{1}_{s_j\leq t_{i-1}<s_{j+1}}  \int_{\mathbb{R}} P_{\ell;0,0;t_{i-1},z}(\vec{s}_{[1,j]},\vec{x}_{[1,j]})P_{\ell-j;t_{i-1},z;t_i,y}(\vec{s}_{[j+1,\ell]},\vec{x}_{[j+1,\ell]})dz.
$$

We now insert the above display into \eqref{eq:indinsert}. We also replace $\int_{\Delta_{\ell}(0,t_i)} \mathbf{1}_{s_j\leq t_{i-1}<s_{j+1}}$ by the product of the integral $\int_{\Delta_{j}(0,t_{i-1})}\int_{\Delta_{\ell-j}(t_{i-1},t_i)}$ and relabel $\vec{s}_{[1,j]}=\vec{u}$, $\vec{s}_{[j+1,\ell]}=\vec{v}$, $\vec{x}_{[1,j]}=\vec{a}$, $\vec{x}_{[j+1,\ell]}=\vec{b}$, $P_{j;0,0;t_{i-1},z}(\vec{u},\vec{a}) = P^{0,0}_{j;t_{i-1},z}(\vec{u},\vec{a})=P_{j;t_{i-1},z}(\vec{u},\vec{a})$ and $P_{\ell-j;t_{i-1},z;t_i,y}(\vec{v},\vec{b}) = P^{t_{i-1},z}_{\ell-j;t_i,y}(\vec{v},\vec{b})$. Using of the fact that the white noise integration can be split since the times range over disjoint intervals, we find
$$
\mathcal{Z}^{\mathbf{nw}}_{0,0}(t_i,y_i) =  \sum_{\ell=0}^{\infty}\sum_{j=0}^{\ell} \int_{\Delta_j(0,t_{i-1})}\int_{\Delta_{\ell-j}(t_{i-1},t_i)} \int_{\mathbb{R}^{i}}\int_{\mathbb{R}^{\ell-j}} \int_{\mathbb{R}} P_{j;t_{i-1},z}(\vec{u},\vec{a})P^{t_{i-1},z}_{\ell-j;t_i,y}(\vec{v},\vec{b})dz d\xi^{\otimes_j}(\vec{u},\vec{a})d\xi^{\otimes_{\ell-j}}(\vec{v},\vec{b}).
$$
By the change of variables $m=\ell-j$, the double sum $\sum_{\ell=0}^{\infty}\sum_{j=0}^\ell$ can be replaced by $\sum_{j=0}^{\infty}\sum_{m=0}^\infty$. We bring  the integral in $z$ to the outside resumming and reordering of integrals is readily justified since all sums are convergent in $L^{2}$ (with respect to the probability space on which $\xi$ is defined -- see, for example, \cite[Theorem 2.2]{Cor18} for details). As a result, we get 
\begin{align*}
\mathcal{Z}^{\mathbf{nw}}_{0,0}(t_i,y_i) = \int_{\mathbb{R}} dz &\Big(\sum_{j=0}^{\infty}\int_{\Delta_j(0,t_{i-1})}P_{j;t_{i-1},z}(\vec{u},\vec{a}) d\xi^{\otimes_j}(\vec{u},\vec{a})\Big)\nonumber\\
& \times \Big(\sum_{m=0}^{\infty}\int_{\Delta_{\ell-j}(t_{i-1},t_i)}P^{t_{i-1},z}_{\ell-j;t_i,y}(\vec{v},\vec{b})d\xi^{\otimes_{\ell-j}}(\vec{v},\vec{b})\Big) .
\end{align*}
Comparing with \eqref{eq:ZEq}, we may now recognize that 
\begin{align*}
\mathcal{Z}^{\mathbf{nw}}_{0,0}(t_{i-1}, z) = \sum_{j=0}^{\infty}\int_{\Delta_j(0,t_{i-1})}P_{j;t_{i-1},z}(\vec{u},\vec{a}) d\xi^{\otimes_j}(\vec{u},\vec{a}),
\end{align*}
for any $z\in \mathbb{R}$ whereas the stochastic process 
$$\mathcal{Z}_{y_i}(t_i\downarrow t_{i-1},z) :=  \sum_{m=0}^{\infty}\int_{\Delta_{\ell-j}(t_{i-1},t_i)}P^{t_{i-1},z}_{\ell-j;t_i,y_i}(\vec{v},\vec{b})d\xi^{\otimes_{\ell-j}}(\vec{v},\vec{b}) $$
is same in distribution with $\mathcal{Z}^{\mathbf{nw}}_{t_{i-1},z}(t_i,y_i)$. Furthermore, $\mathcal{Z}^{\mathbf{nw}}_{0,0}(t_{i-1}, \cdot)$ and $\mathcal{Z}_{y_i}(t_i\downarrow t_{i-1},\cdot)$ are independent since they are defined with respect to disjoint portions of the space-time white noise. Due to the same reason, $\mathcal{Z}_{y_i}(t_i\downarrow t_{i-1},\cdot)$ and $\mathcal{Z}_{y_j}(t_i\downarrow t_{j-1},\cdot)$ for any $1\leq i<j\leq k$. Recall the {\em interchange} property of the SHE: namely that, for $s<t$ and $y\in \mathbb{R}$ fixed,  $\mathcal{Z}^{\mathbf{nw}}_{s,x}(t,y)$ is equal in law as a process in $x$ to $\mathcal{Z}^{\mathbf{nw}}_{s,y}(t,x)$ -- the change between the two expressions is in the interchange of $x$ and $y$. By the interchange property, the spatial process $\mathcal{Z}^{\mathbf{nw}}_{t_{i-1},\cdot}(t_i,y_i)$ has same law as $\mathcal{Z}^{\mathbf{nw}}_{0,0}(t_i-t_{i-1},y_i-\cdot)$. This completes the proof of \eqref{eq:Conv}.  
\end{proof}

Our next proposition which is taken from \cite[Proposition~2.7]{corwin2019kpz} states a FKG type inequality for the KPZ equation. 

\begin{proposition}\label{ppn:FKG}
For any $t_1,t_2>\mathbb{R}_{>0}$ and $s_1,s_2\in \mathbb{R}$, we have 
\begin{align*}
\mathbb{P}(\h_{t_1}\geq s_1, \h_{t_2}\geq s_2\big)\geq \mathbb{P}(\h_{t_1}\geq s_1)\mathbb{P}(\h_{t_2}\geq s_2).
\end{align*}
\end{proposition}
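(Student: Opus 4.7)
The plan is to derive the inequality from the classical Harris--FKG inequality for product measures, combined with the fact that the Cole--Hopf solution $\calZ^{\mathbf{nw}}(t,x)$ is a monotone functional of the driving space--time white noise $\xi$. The cleanest way to see this monotonicity is through a regularization: mollify $\xi$ to a smooth noise $\xi^{\e}:=\xi*\rho_{\e}$ and consider the Wick-renormalized SHE driven by $\xi^{\e}$. The Feynman--Kac representation expresses its solution as an expectation of $\exp\bigl(\int_0^t \xi^{\e}(s,B_s)\,ds\bigr)$ against a heat kernel, which is manifestly non-decreasing in the pointwise partial order on $\xi^{\e}$. The Wick renormalization contributes only a deterministic multiplicative factor and hence does not affect this monotonicity.

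Next, I would discretize the smooth field $\xi^{\e}$ on a grid, so that the driving noise reduces to a finite collection of independent Gaussians $\{\xi^{\e}_{i,j}\}$. For any fixed time $t$, $\log \calZ^{\mathbf{nw},\e}(t,0)$ is a coordinatewise non-decreasing function of these independent variables. Consequently, both of the events $\{\h^{\e}_{t_1}\ge s_1\}$ and $\{\h^{\e}_{t_2}\ge s_2\}$ are increasing events on a product probability space, and the Harris--FKG inequality for product Gaussian measures immediately yields
\[
\Pr(\h^{\e}_{t_1}\ge s_1,\,\h^{\e}_{t_2}\ge s_2) \ge \Pr(\h^{\e}_{t_1}\ge s_1)\,\Pr(\h^{\e}_{t_2}\ge s_2).
\]

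The final step is to pass to the limit as the mollification and grid scales tend to $0$. The Bertini--Giacomin convergence theorem (or any of its modern strengthenings) gives joint convergence in distribution of $(\h^{\e}_{t_1},\h^{\e}_{t_2})$ to $(\h_{t_1},\h_{t_2})$. Since the one-point laws of $\h_{t_i}$ are atomless (their marginal densities are explicit via the Tracy--Widom crossover formulas), the inequality displayed above extends to every pair $(s_1,s_2)$ by a standard continuity-point argument followed by right-continuity of distribution functions.

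The main obstacle is the bookkeeping around the Wick renormalization: one must verify that the deterministic shift used to renormalize the mollified SHE does not depend on the random noise in any way that could disrupt coordinatewise monotonicity, and that joint convergence of two-time marginals survives the limit $\e\downarrow 0$. Both points are well documented in the SHE literature, and nothing from the rest of this paper (Brownian Gibbs property, multi-point composition law, one-point tail bounds) is needed, so the argument is essentially self-contained modulo the regularization theory for the KPZ equation.
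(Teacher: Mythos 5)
The paper itself does not prove this proposition---it is quoted verbatim as Proposition~2.7 of \cite{corwin2019kpz}---so the comparison is really with that reference, where the FKG inequality is obtained by approximating the SHE by a prelimiting model (a discrete/semi-discrete polymer partition function, equivalently a Feynman--Kac functional of independent disorder variables) in which the Cole--Hopf log-partition function is a coordinatewise non-decreasing functional of the independent weights, applying Harris' inequality there, and passing to the scaling limit. Your proposal follows exactly this template, with a mollified white noise and the continuum Feynman--Kac formula playing the role of the independent weights.

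There is, however, one concrete gap in the middle step. The grid samples $\{\xi^{\e}(x_i,t_j)\}$ of a \emph{mollified} field are \emph{not} independent Gaussians: mollification on scale $\e$ introduces nontrivial correlations among neighbouring grid values, so the reduction to the Harris inequality for product measures does not go through as written. Two standard repairs restore the argument: (i) choose a non-negative mollifier $\rho_\e\geq 0$, note that the covariance kernel of $\xi^{\e}$, being a convolution $\rho_\e$ against its reflection, is then pointwise non-negative, and invoke Pitt's FKG theorem for Gaussian vectors with non-negative correlations directly (no further discretization needed); or (ii) discretize the \emph{white noise} itself into independent cell averages, and express $\xi^{\e}$ as a non-negative linear combination of those i.i.d.\ Gaussians, so that $\log\calZ^{\mathbf{nw},\e}(t,0)$ is coordinatewise non-decreasing in genuinely independent coordinates. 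Two smaller points worth fixing: the result you want for the renormalized mollified SHE converging to the It\^o SHE is due to Bertini--Cancrini (1995), not Bertini--Giacomin (which concerns ASEP); and you should verify that this convergence is \emph{joint} across the two time coordinates $(t_1,t_2)$---this does follow, e.g., from $L^2$ convergence of the Wiener chaos expansions, but it needs to be said since you are taking a two-time limit of an inequality between probabilities.
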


%
%

In the following two results, we will see the one point tail probabilities of the temporal process $\h_t$ which are proved in \cite{CG18a, CG18b}. We state the results from \cite{corwin2019kpz} which has used same notations as ours. These results hold for any finite time $t>0$. Since the short time scaling of the KPZ equation has the Gaussian limit, the same tail bounds as in the forthcoming result does not hold as $t$ goes to $0$. The short time tail bounds which are tackled in Theorem~\ref{short-uptail} and~\ref{short:lowertail} should be contrasted with the following two propositions.

\begin{proposition}[Proposition 2.12 from \cite{corwin2019kpz}] \label{onepointuptail}
For any $t_0>0$, there exist $s_0 = s_0(t_0)>0$ and $c_1(t_0)>c_2(t_0)>0$ such that, for $t\geq t_0$, $s>s_0$ and $x\in \R$,
\begin{align}\label{eq:UpperTail}
 \exp\big(-c_1 s^{3/2}\big)\le \mathbb{P}\Big(\mathfrak{h}_t(x)+\frac{x^2}{2}\geq s\Big) \leq \exp\big(-c_2 s^{3/2}\big) \, .
\end{align}
\end{proposition}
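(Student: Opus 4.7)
The plan is to reduce to $x=0$ using the spatial stationarity of $\mathfrak{h}_t(x)+x^2/2$ in $x$ from Lemma~\ref{stationary}; it then suffices to prove matching one-point tail bounds for $\mathfrak{h}_t$. The engine driving both bounds is the exact formula for the positive integer moments of $\mathcal{Z}^{\mathbf{nw}}(t,0)$ (Bertini--Cancrini / delta Bose gas), which yields
$$\mathbb{E}\big[(\mathcal{Z}^{\mathbf{nw}}(t,0))^k\big] = \frac{e^{(k^3-k)t/24}}{(2\pi t)^{k/2}}\, C_k(t),$$
with $C_k(t)$ at most polynomially large in $k$ and $t^{-1}$ in the regime we need. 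Unpacking the definition of $\mathfrak{h}_t$ shows $\{\mathfrak{h}_t \ge s\} = \{\mathcal{Z}^{\mathbf{nw}}(t,0) \ge \exp(s\,t^{1/3} - t/24)\}$, so tail estimates for $\mathfrak{h}_t$ reduce to tail estimates for $\mathcal{Z}^{\mathbf{nw}}(t,0)$ at exponentially large thresholds.

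For the upper bound I would apply Markov's inequality at a positive integer $k$,
$$\mathbb{P}(\mathfrak{h}_t \ge s) \le \exp\!\big(-ks\,t^{1/3} + kt/24\big)\, \mathbb{E}\big[(\mathcal{Z}^{\mathbf{nw}}(t,0))^k\big],$$
insert the moment formula, and optimize in $k$. The continuous optimum sits at $k = \sqrt{8s}/t^{1/3}$ and produces a leading exponent $-\tfrac{4\sqrt 2}{3}\,s^{3/2}$. Rounding $k$ to the nearest integer and absorbing the prefactor $C_k(t)(2\pi t)^{-k/2}$ costs only a subleading additive term in the exponent, yielding $\mathbb{P}(\mathfrak{h}_t \ge s) \le \exp(-c_2 s^{3/2})$ for any $c_2 < 4\sqrt 2/3$ once $s \ge s_0(t_0)$. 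For the matching lower bound I would apply the Paley--Zygmund inequality to $(\mathcal{Z}^{\mathbf{nw}}(t,0))^k$,
$$\mathbb{P}\!\left((\mathcal{Z}^{\mathbf{nw}}(t,0))^k \ge \tfrac12\, \mathbb{E}\big[(\mathcal{Z}^{\mathbf{nw}}(t,0))^k\big]\right) \ge \tfrac14 \cdot \frac{\mathbb{E}\big[(\mathcal{Z}^{\mathbf{nw}}(t,0))^k\big]^2}{\mathbb{E}\big[(\mathcal{Z}^{\mathbf{nw}}(t,0))^{2k}\big]},$$
which the moment formula bounds below by $\exp(-c\,k^3 t)$ for an explicit constant $c>0$. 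Choosing the smallest integer $k$ for which $\tfrac12 \mathbb{E}\big[(\mathcal{Z}^{\mathbf{nw}}(t,0))^k\big]^{1/k} \ge \exp(s\,t^{1/3}-t/24)$ --- again giving $k \asymp \sqrt s / t^{1/3}$ --- converts this into $\mathbb{P}(\mathfrak{h}_t \ge s) \ge \exp(-c_1 s^{3/2})$ for some $c_1 > c_2$.

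The main obstacle is tracking subleading corrections uniformly in the range $t \ge t_0$, $s \ge s_0$: the integrality of $k$ shifts the optimal real exponent by an $O(1)$ amount that must be shown subdominant; the polynomial-in-$(k,t^{-1})$ prefactor in the moment formula must be absorbed into the exponential at the cost of a logarithmic loss; and the Paley--Zygmund factor $\tfrac14$ contributes an additive $O(1)$ error inside the exponent. All of this is routine but must be arranged simultaneously so that the final constants $c_1 > c_2$ depend only on $t_0$.
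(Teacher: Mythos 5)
Your reduction to $x=0$ via spatial stationarity (Lemma~\ref{stationary}) is fine, but the moment--Markov and Paley--Zygmund machinery applied to $\mathcal{Z}^{\mathbf{nw}}(t,0)$ cannot deliver the full claim. The problem is the regime ``$s$ moderate, $t$ large,'' which the proposition must cover since it asks for uniformity over all $t\ge t_0$ and all $s\ge s_0$ with $s_0$ depending only on $t_0$. Your optimizing exponent $k\asymp\sqrt{8s}/t^{1/3}$ then drops below~$1$, and the only available integer is $k=1$. Markov's inequality at $k=1$ gives
\begin{align*}
\mathbb{P}(\mathfrak{h}_t\ge s)\le e^{-st^{1/3}+t/24}\,\mathbb{E}\big[\mathcal{Z}^{\mathbf{nw}}(t,0)\big]=e^{-st^{1/3}+t/24}(2\pi t)^{-1/2},
\end{align*}
whose right-hand side tends to $+\infty$ as $t\to\infty$ with $s$ fixed, so the bound is vacuous once $t\gtrsim s^{3/2}$. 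The same collapse occurs on the lower-bound side: Paley--Zygmund at $k=1$ bounds $\mathbb{P}\big(\mathcal{Z}^{\mathbf{nw}}(t,0)\ge\tfrac12\mathbb{E}\mathcal{Z}^{\mathbf{nw}}(t,0)\big)$ below by $\tfrac14(\mathbb{E}\mathcal{Z}^{\mathbf{nw}})^2/\mathbb{E}[(\mathcal{Z}^{\mathbf{nw}})^2]\asymp e^{-t/4}$, and although that event is contained in $\{\mathfrak{h}_t\ge s\}$ for $t$ large, the resulting lower bound $e^{-t/4}$ is far smaller than the required $e^{-c_1 s^{3/2}}$ once $t\gg s^{3/2}$. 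This is not an $O(1)$ integrality correction that can be swept into the exponent: it is the intermittency of the SHE. For large $t$ the $k$-th moment $\mathbb{E}[\mathcal{Z}^{\mathbf{nw}}(t,0)^k]\sim e^{(k^3-k)t/24}(2\pi t)^{-k/2}$ is driven by exponentially rare events and wildly overshoots the typical scale $e^{-kt/24}$, so integer-moment Markov/PZ is inherently lossy whenever the continuous optimizer lies in $(0,1)$, equivalently $s\lesssim t^{2/3}$.

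A smaller slip: the prefactor $C_k(t)$ you describe as ``at most polynomially large in $k$'' is in fact factorial; the dominant partition $\lambda=(k)$ carries the coefficient $k!/\prod m_j!=k!$. This is harmless where the method does work, since $\log k!=o(s^{3/2})$ along $k\asymp\sqrt{s}/t^{1/3}$, but it reflects that the estimate was designed for a different regime. Indeed the integer-moment strategy is essentially what the paper uses in Section~\ref{sec:onepoint} to prove Theorem~\ref{uptail-upb}, which is the \emph{short-time} ($t\le t_0$) analogue of the present statement, precisely the regime where the optimizing $k$ is guaranteed to be a large integer. For $t\ge t_0$ the cited source proceeds quite differently, by the same mechanism used in the Appendix here (Propositions~\ref{ppn:PropConnection} and~\ref{thm:MainTheorem}): the Borodin--Gorin identity converts $\mathbb{E}\big[\exp\!\big(-e^{t^{1/3}(\mathfrak{h}_t-s)}\big)\big]$ into an expectation over the Airy point process, and the upper and lower tails of $\mathfrak{h}_t$ are then read off from tail and rigidity estimates for the top particle and counting function of that determinantal point process. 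That identity is uniform in $t$ and does not suffer from moment blow-up.
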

\begin{proposition}[Proposition 2.11 from \cite{corwin2019kpz}]\label{onepointlowtail}
	For any  $t_0>0,\varepsilon>0$, there exist $s_0 = s_0(t_0)>0$ and $c=c(t_0)>0$ such that, for  $t>t_0$, $s>s_0$ and $x\in \R$,
	\begin{align}
 \mathbb{P}\Big(\mathfrak{h}_{t}(x)+\frac{x^2}{2} \leq -s \Big) &\leq \exp\big(- cs^{5/2}\big) \, . \label{eq:LowTailUp}
	\end{align}
\end{proposition}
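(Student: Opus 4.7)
The plan is to reduce to a one-point estimate via spatial stationarity and then exploit the integrable structure of the stochastic heat equation through an explicit Laplace-transform / Fredholm-determinant identity. First I would use Lemma~\ref{stationary}, which states that the law of $\mathfrak{h}_t(x)+x^2/2$ is independent of $x$, to reduce the problem to bounding $\mathbb{P}(\mathfrak{h}_t(0)\le -s)$. Unfolding the definition gives $\mathfrak{h}_t(0)=t^{-1/3}(\log\mathcal{Z}^{\mathbf{nw}}(t,0)+t/24)$, so the event becomes $\{\mathcal{Z}^{\mathbf{nw}}(t,0)e^{t/24}\le e^{-t^{1/3}s}\}$.

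Next I would invoke the Amir--Corwin--Quastel formula, which expresses the Laplace transform of $\mathcal{Z}^{\mathbf{nw}}(t,0)e^{t/24}$ as a Fredholm determinant
$$\mathbb{E}\bigl[\exp\bigl(-u\,\mathcal{Z}^{\mathbf{nw}}(t,0)e^{t/24}\bigr)\bigr]=\det(I-K_u)_{L^2(\mathbb{R})},$$
where $K_u$ is an explicit trace-class operator built from Airy functions. Applying Markov's inequality gives, for any $u>0$,
$$\mathbb{P}\bigl(\mathcal{Z}^{\mathbf{nw}}(t,0)e^{t/24}\le e^{-t^{1/3}s}\bigr)\le \exp(u\,e^{-t^{1/3}s})\,\det(I-K_u)_{L^2(\mathbb{R})}.$$
Choosing $u$ of order $e^{t^{1/3}s}$ keeps the prefactor bounded, and the whole bound then reduces to showing that with this choice of $u$ the determinant decays like $\exp(-c\,s^{5/2})$ uniformly in $t\ge t_0$.

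The main work, and the principal obstacle, is carrying out that Fredholm determinant asymptotic. I would write $-\log\det(I-K_u)=\sum_{k\ge1}\tfrac{1}{k}\mathrm{tr}(K_u^k)$ and perform a steepest-descent analysis on the resulting contour integrals: in the scaling $u=e^{t^{1/3}s}$ the kernel concentrates on a shrinking region, and the dominant balance between the Airy exponential weights and the shifted parameter $s$ produces exactly the exponent $5/2$. The delicate point is ensuring the estimates remain uniform in $t\ge t_0$, since one must prevent the bound from degenerating as $t$ approaches $t_0$ or as $t\to\infty$; in particular, one needs a careful truncation of the kernel's support and sharp bounds on the Airy function on the relevant contour. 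A purely probabilistic approach via the Brownian Gibbs property (Lemma~\ref{Coupling1}) would only produce the Gaussian exponent $2$, rather than the sharp $5/2$, so the integrable determinantal input seems essential here.
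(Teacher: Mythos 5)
Your setup is correct: stationarity (Lemma~\ref{stationary}) does reduce the claim to $x=0$, and a Laplace-transform plus Markov's-inequality reduction is indeed the starting point of the known proof. With $u=e^{t^{1/3}s}$ one has $ua=1$, so $\mathbb{P}(\mathcal{Z}^{\mathbf{nw}}(t,0)e^{t/24}\le e^{-t^{1/3}s})\le e\,\mathbb{E}[e^{-u\mathcal{Z}^{\mathbf{nw}}(t,0)e^{t/24}}]$, and the whole content of the proposition is that this Laplace transform decays like $\exp(-cs^{5/2})$ uniformly for $t\ge t_0$. Up to that point the proposal matches the route taken in the literature.

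The gap is in the last paragraph. Writing $-\log\det(I-K_u)=\sum_{k\ge1}\tfrac1k\operatorname{tr}(K_u^k)$ and ``performing a steepest-descent analysis'' on the traces is not a proof, and it is not how the result is actually established. In the regime $u=e^{t^{1/3}s}$ with $s$ large, the operator norm of $K_u$ approaches $1$, so the trace series converges slowly, the leading term does not dominate, and there is no clean saddle that produces a $5/2$ exponent from the individual traces. Asserting that ``the dominant balance $\ldots$ produces exactly the exponent $5/2$'' is circular: establishing that $5/2$ rate is precisely the hard theorem of Corwin--Ghosal, and it does not fall out of a one-line steepest-descent computation.

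The route that actually works, and that underlies the cited Proposition~2.11 of \cite{corwin2019kpz}, passes through the Borodin--Gorin identity (quoted in this paper's Appendix as Proposition~\ref{ppn:PropConnection}) rather than the Fredholm-determinant trace expansion: the Laplace transform is rewritten as $\mathbb{E}_{\mathrm{Airy}}\bigl[\prod_{k\ge1}\mathcal{I}_s(\mathbf{a}_k)\bigr]$ over the Airy point process. One then bounds this expectation probabilistically, using rigidity of the Airy eigenvalues around the deterministic locations $\lambda_k\sim-(3\pi k/2)^{2/3}$ together with large-deviation estimates for the Airy counting function. The $5/2$ arises because roughly $s^{3/2}$ points must each shift by order $s$; this counting/rigidity mechanism is invisible in the raw trace series. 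You can see exactly this structure in the proof of Proposition~\ref{thm:MainTheorem} in the Appendix of the present paper, which handles the upper tail by the same Airy-point-process method. Finally, your worry that the estimate might ``degenerate as $t\to\infty$'' is misplaced: the finite-time bound is $\exp(-ct^{1/3}s^{5/2})$, which only improves as $t$ grows, so the uniformity issue is confined to $t$ near $t_0$.
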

As one may notice, the constants of the tail bound in the above two propositions are left imprecise. For deriving tail bounds of Section~\ref{sec:spatialtail} and~\ref{sec:diffprob}, we do not need the exact values of those constants. However, in Section~\ref{sec:lil} and~\ref{sec:MuonoMult}, we require precise description of those constants only in the case when the time variable $t$ is large. The following proposition quotes relevant tail bounds from \cite{CG18a,CG18b,CC19,Z19} for large values of $t$. 

\begin{proposition}\label{ppn:LargeTime}
Fix  $t_0>0$ large  and $\varepsilon\in (0,1)$. Then, there exist $s_0 = s_0(t_0,\varepsilon)>0$ and $c=c(t_0,\varepsilon)>0$ such that, for  $t>t_0$, $c(\log t)^{2/3}> s>s_0$ and $x\in \R$,
\begin{align}
\exp\big(-\frac{4\sqrt{2}}{3}(1+\varepsilon)s^{3/2}\big)\leq \mathbb{P}\Big(\mathfrak{h}_{t}(x)+\frac{x^2}{2} \geq s\Big)\leq \exp\big(-\frac{4\sqrt{2}}{3}(1-\varepsilon)s^{3/2}\big) \label{eq:uptailTlarge}
\end{align}
and,
\begin{align}
\exp\big(-\frac{1}{6}(1+\varepsilon)s^3\big)\leq \mathbb{P}\Big(\mathfrak{h}_{t}(x)+\frac{x^2}{2} \leq -s\Big)\leq \exp\big(-\frac{1}{6}(1-\varepsilon)s^3\big). \label{eq:lowtailTlarge}
\end{align}

\end{proposition}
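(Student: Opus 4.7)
The plan is to derive Proposition~\ref{ppn:LargeTime} by collecting and quantitatively refining tail bounds that are already (essentially) present in the literature, and then checking that they assemble into uniform estimates on the stated range of $(t,s)$. By the spatial stationarity of $\mathfrak{h}_t(x)+x^2/2$ stated in Lemma~\ref{stationary}, it suffices to treat $x=0$; the claimed bounds then depend only on the law of $\mathfrak{h}_t:=\mathfrak{h}_t(0)$.

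For the upper tail \eqref{eq:uptailTlarge}, I would invoke the moderate-deviation analysis of the KPZ upper tail from \cite{CG18b}. That paper establishes bounds of the form $\exp(-\tfrac{4\sqrt{2}}{3}s^{3/2}(1+o(1)))$ for $\mathbb{P}(\mathfrak{h}_t\ge s)$, where the $o(1)$ depends jointly on $s$ and $t$. The plan is to extract from their proof the quantitative statement that the $o(1)$ can be bounded by $\varepsilon$ uniformly once $t\ge t_0(\varepsilon)$ and $s_0(\varepsilon)\le s\le c(\varepsilon)(\log t)^{2/3}$. The upper ceiling $c(\log t)^{2/3}$ ensures that we remain in the regime where the Tracy--Widom GUE tail $\exp(-\tfrac{4\sqrt{2}}{3}s^{3/2})$ dominates, rather than the large-deviation rate function regime handled in \cite{DT19}; concretely, one checks that the polymer replica estimates and the resulting Fredholm determinant asymptotic used in \cite{CG18b} are uniform in $t$ on this window.

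For the lower tail \eqref{eq:lowtailTlarge}, the analogous statement with the sharp constant $1/6$ comes from \cite{CG18a} together with the refinements \cite{CC19, Z19}. The initial bounds in \cite{CG18a} give decay of the right order $s^3$ but with constants that are not optimal. \cite{CC19} and \cite{Z19} (via Riemann--Hilbert / integrable-systems analysis) sharpen the constant to $1/6$, matching the Tracy--Widom GUE lower tail. I would combine these to get matching upper and lower bounds $\exp(-\tfrac{1}{6}(1\pm\varepsilon)s^3)$ on $\mathbb{P}(\mathfrak{h}_t\le -s)$ for $s$ in the stated window; again, the $(\log t)^{2/3}$ ceiling on $s$ keeps us in the moderate-deviation regime where the leading-order Tracy--Widom asymptotic is valid and the $t$-dependent corrections beyond it can be absorbed into $\varepsilon$.

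The main obstacle will be the \emph{uniformity in $t$}. While one-point tail bounds of the correct exponential order are known, the references quoted above often phrase their estimates either at fixed $t$ with implicit constants or only as $t\to\infty$ asymptotics. The careful step is therefore to trace through the arguments and verify that the implicit constants can be chosen independent of $t$ for all $t\ge t_0(\varepsilon)$ and for $s$ up to the $(\log t)^{2/3}$ threshold. I would handle this by: (i) using the exact Fredholm determinant formula for the one-point distribution of $\mathfrak{h}_t$ from \cite{ACQ11}; (ii) splitting the contour/kernel estimates into a ``Tracy--Widom part'' whose $t$-dependence is controlled by standard kernel decay, and a ``correction part'' whose contribution on the window $s\le c(\log t)^{2/3}$ is subleading; (iii) combining with the tightness of $\mathfrak{h}_t$ in $t$ (already implicit in Propositions~\ref{onepointuptail}--\ref{onepointlowtail}) to promote pointwise $t$-asymptotics to uniform bounds. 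Once this uniformity is in hand, the four inequalities in \eqref{eq:uptailTlarge}--\eqref{eq:lowtailTlarge} follow directly.
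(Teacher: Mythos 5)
Your high-level plan — reduce to $x=0$ by stationarity, then pull upper-tail asymptotics from \cite{CG18b} and lower-tail asymptotics from \cite{CG18a,CC19,Z19} — is the same as the paper's, but you propose considerably more work than is actually needed, and you skip over the one step that is genuinely used. The paper does not redo any Fredholm-determinant or contour/kernel analysis: Theorem~1.10(a) of \cite{CG18b} already gives \eqref{eq:uptailTlarge} directly with constants uniform in $t\ge t_0$, so there is nothing to re-derive there. For the lower tail, the paper quotes the explicit uniform-in-$t$ bounds from \cite{CG18a} (strengthened by \cite{CC19,Z19}), which have the form
\begin{align*}
\Pr(\h_t\le -s)\le \exp\!\Big(-\tfrac{4\sqrt{2}(1-2\varepsilon)}{15}\,t^{1/3}s^{5/2}\Big)+\exp\!\big(-Ks^3-\varepsilon s\,t^{1/3}\big)+\exp\!\Big(-\tfrac{1-2\varepsilon}{6}\,s^3\Big),
\end{align*}
with a matching two-term lower bound. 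The actual content of the proof — and the reason for the ceiling $s\le c(\log t)^{2/3}$ — is the elementary observation that on this window $t^{1/3}s^{5/2}\gg s^3$ and $\varepsilon s\, t^{1/3}\gg s^3$, so the two non-Tracy--Widom exponentials are subleading and can be absorbed into the $(1\pm\varepsilon)$ factor. Your write-up frames the difficulty as needing to extract uniformity from the references by reopening the integrable machinery (your items (i)--(iii)), but that uniformity is already present in the cited theorems; the missing ingredient in your proposal is instead the explicit comparison of the competing exponential rates, which is where the $(\log t)^{2/3}$ threshold is used. So this is not a wrong approach, but it both over-budgets the analytic work and leaves implicit the one computation the proof actually turns on.
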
  

\begin{proof}
 Since $\h_{t}(x)+ \frac{x^2}{2}$ is stationary in $x$, it suffices to prove \eqref{eq:uptailTlarge} and \eqref{eq:lowtailTlarge} for $x=0$. From the specifications of the upper and lower bounds of the upper tail probabilities in Theorem~1.10 (part (a)) of \cite{CG18b}, \eqref{eq:uptailTlarge} follows immediately. It remains to show \eqref{eq:lowtailTlarge}. Theorem~1.1 of \cite{CG18a} which is recently been strengthened in \cite{CC19,Z19} proves that for any given $\varepsilon, t_0>0$, there exists $s_0=(t_0,\varepsilon)>0$ such that for all $s\geq s_0$ and $t\geq t_0$,
\begin{align}\label{eq:LowUp}
 \mathbb{P} \big(\h_t(0)\leq -s\big)\leq \exp\Big(-\frac{4\sqrt{2}(1-2\varepsilon)}{15}t^{\frac{1}{3}}s^{\frac{5}{2}}\Big) +\exp(-Ks^3-\varepsilon st^{\frac{1}{3}})+ \exp\Big(-\frac{(1-2\varepsilon)}{6}s^3\Big)
\end{align}
   and,
\begin{align}\label{eq:HighUp}
\mathbb{P} \big(\h_t(0)\leq -s\big)\geq   \exp\big(-\frac{4\sqrt{2}}{15}(1+\varepsilon)t^{\frac{1}{3}}s^{\frac{5}{2}}\big)+ \exp\big(-\frac{1}{6}(1+\varepsilon)s^3\big).
\end{align} 
 The first inequality of \eqref{eq:lowtailTlarge} follows from \eqref{eq:HighUp}. Note that $s^{5/2}t^{1/3}\gg s^3$ and $\varepsilon st^{1/3}\gg s^3$ when we have $(\log t)^{2/3}\gg s$. By choosing $s_0$ and $c$ large, we may bound the right hand side of \eqref{eq:LowUp} by $\exp(-(1-\varepsilon)s^3/6)$ for all $s\geq s_0$ satisfying $c(\log t)^{2/3}> s$. This proves \eqref{eq:lowtailTlarge}.     
\end{proof}

The next two results which are proved in \cite{corwin2019kpz} provide tail bounds on the supremum and infimum of the spatial process $\h_t(\cdot)$ for any fixed time $t>0$.

\begin{proposition}[Proposition 4.1 from \cite{corwin2019kpz}]
	\label{ppn:LowerTail}
	For any $t_0>0$ and $\nu\in (0,1]$, there exist $s_0= s_0(t_0,\nu)>0$ and $c=c(t_0,\nu)>0$ such that, for $t\geq t_0$ and $s>s_0$,
	\begin{align*}
	\mathbb{P}(\mathsf{B}) \leq \exp\big(- cs^{5/2}\big)\qquad \textrm{where}\quad \mathsf{B}= \Big\{\inf_{x\in \R}\Big( \mathfrak{h}_t(x)+\frac{(1-\nu)x^2}{2}\Big) \leq -s\Big\}.
	\end{align*}
\end{proposition}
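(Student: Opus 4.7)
The plan is to combine the one-point lower tail of Proposition~\ref{onepointlowtail} with the $\mathbf{H}^{\mathrm{long}}_t$-Brownian Gibbs property of the scaled KPZ line ensemble (Lemma~\ref{line-ensemble}(3)) and its associated stochastic monotonicity (Lemma~\ref{Coupling1}). I would interpret the statement with $(1+\nu)x^2/2$ rather than $(1-\nu)x^2/2$, since stationarity of $\mathfrak{h}_t(x)+\tfrac{x^2}{2}$ from Lemma~\ref{stationary} together with the parabolic decay $\mathfrak{h}_t(x)\sim -x^2/2$ as $|x|\to\infty$ would otherwise render $\mathsf{B}$ trivially full measure.

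Writing $Y(x) := \mathfrak{h}_t(x)+\tfrac{x^2}{2}$, which is stationary in $x$ by Lemma~\ref{stationary}, the event becomes $\{\exists\, x \in \R: Y(x) \le -s - \tfrac{\nu x^2}{2}\}$. Partition $\R$ into the unit intervals $I_k=[k,k+1]$, $k\in\Z$, and set $s_k := s + \tfrac{\nu k^2}{4}$. A union bound yields $\mathbb{P}(\mathsf{B}) \le \sum_{k\in\Z} \mathbb{P}\big(\inf_{I_k} Y \le -s_k\big)$. Within each $I_k$ I would refine to a grid of mesh $\epsilon_k := s_k^{-1/2}$ and split $\{\inf_{I_k} Y \le -s_k\}$ into (i) the event that $Y$ at some grid point is below $-s_k/2$, and (ii) the event that the local oscillation of $Y$ on some mesh-interval exceeds $s_k/2$.

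By stationarity and Proposition~\ref{onepointlowtail}, the contribution from (i) is at most $\epsilon_k^{-1}\exp(-c s_k^{5/2})$. For (ii), condition on the grid-point values of $Y$ and on the second curve $\mathfrak{h}_t^{(2)}$; by Brownian Gibbs and stochastic monotonicity, the restriction of $Y$ to a mesh-interval stochastically dominates a free Brownian bridge with matching endpoints, whose oscillation tail is controlled by the reflection principle as $\exp\!\big(-c(s_k/2)^2/\epsilon_k\big)$. With $\epsilon_k = s_k^{-1/2}$ both contributions are of order $\exp(-c' s_k^{5/2})$ up to polynomial prefactors. Summing over $k$: the $O(s^{1/2})$ terms with $|k|\le s^{1/2}$ each yield at most $\exp(-cs^{5/2})$, while the terms with $|k|> s^{1/2}$ have super-exponential decay and are dominated by their leading term. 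Absorbing polynomial factors into the exponent then gives $\mathbb{P}(\mathsf{B}) \le \exp(-c'' s^{5/2})$.

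The main technical obstacle is the coupling of mesh scale with target exponent: the one-point lower tail decays as $\exp(-c s^{5/2})$, which is \emph{faster} than the Gaussian fluctuation tail of a Brownian bridge, so the mesh must be fine enough ($\epsilon_k \sim s_k^{-1/2}$) for the Brownian bridge argument to reach the $5/2$ exponent, while remaining coarse enough that the union bound over grid points does not swamp the rate. Passing from conditional Brownian-bridge domination to an unconditional bound requires integrating out the endpoint values using the same $\exp(-c s_k^{5/2})$ one-point estimate, so that both sources of error combine within a single exponential rate.
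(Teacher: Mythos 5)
Your reading of the sign is right: as printed, with $(1-\nu)x^2/2$, the event $\mathsf{B}$ is full measure. Indeed $Y(x):=\mathfrak{h}_t(x)+x^2/2$ is stationary (Lemma~\ref{stationary}) with a sub-Gaussian upper tail, so by Borel--Cantelli $Y(n)<\nu n^2/4$ eventually, whence $Y(x)-\nu x^2/2\to-\infty$. The quantity that is actually controlled is $\inf_x\big(\mathfrak{h}_t(x)+(1+\nu)x^2/2\big)$, which is consistent both with the original Proposition~4.1 of \cite{corwin2019kpz} and with the paper's own short-time analogue in Proposition~\ref{sup-proc}, where the infimum carries $(1+\nu)$ and the supremum carries $(1-\nu)$. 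So you have correctly identified a typo rather than misread the statement.

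The paper does not give an independent proof here -- it cites \cite{corwin2019kpz} -- and your reconstruction uses exactly the ingredients that proof (and the in-paper cousins Propositions~\ref{longtime:smdiffbd} and~\ref{shortsupbd}) rely on: a union bound over unit intervals driven by the residual parabola $\nu x^2/2$, a further mesh refinement, the one-point lower tail from Proposition~\ref{onepointlowtail}, and a comparison with a free Brownian bridge via the $\mathbf{H}^{\mathrm{long}}_t$-Brownian Gibbs property (Lemma~\ref{line-ensemble}) and stochastic monotonicity (Lemma~\ref{Coupling1}). The step that genuinely needs care, and you flag it, is the mesh scale: on an interval of length $\epsilon$ a free bridge dips by $d$ with probability $\asymp e^{-2d^2/\epsilon}$, so a unit mesh would only yield $e^{-cs_k^2}$; choosing $\epsilon_k\sim s_k^{-1/2}$ lifts the bridge rate to $e^{-cs_k^{5/2}}$, matching the one-point exponent. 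Two small refinements to make the write-up airtight: (i) one should condition on $\mathcal{F}_{\mathrm{ext}}\big(\{1\},(y_j,y_{j+1})\big)$ rather than literally on grid values of $Y$, with the restriction to $\{Y(y_j),Y(y_{j+1})>-s_k/2\}$ entering as an indicator under the outer expectation, exactly as in the displayed tower-property computations inside the proof of Proposition~\ref{longtime:smdiffbd}; and (ii) passing between $Y$ and $\mathfrak{h}_t$ introduces the deterministic perturbation $x^2/2$ on each mesh interval, whose range is $O(\epsilon_k\,|k|)=O(1)$ uniformly in $k$ (since $s_k\gtrsim k^2$ for $|k|\gg\sqrt{s}$), hence negligible against the $s_k/2$ dip -- worth a line, since the parabola appears both in the target level $-s_k$ and in the change of coordinates. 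With those remarks, the argument closes as you indicate, the polynomial prefactor $\epsilon_k^{-1}\sim s_k^{1/2}$ being absorbed into a slightly smaller constant in the exponent, and the $k$-sum dominated by its $k=0$ term.
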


\begin{proposition}[Proposition 4.2 from \cite{corwin2019kpz}]
	\label{ppn:UpperTail}
	For any $t_0>0$ and $\nu\in (0,1]$, there exist $s_0= s_0(t_0,\nu)>0$ and $c_1=c_1(t_0,\nu)>c_2=c_2(t_0,\nu)>0$ such that, for $t\geq t_0$ and $s>s_0$,
	\begin{align*}
	\exp\big(- c_1s^{3/2}\big) \leq \mathbb{P}(\mathsf{A}) \leq \exp\big(- c_2 s^{3/2}\big)\quad \textrm{where}\quad \mathsf{A}= \Big\{\sup_{x\in \R}\Big( \mathfrak{h}_t(x)+\frac{(1-\nu)x^2}{2}\Big) \geq s\Big\}.
	\end{align*}
\end{proposition}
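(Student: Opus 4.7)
\textbf{Proof plan for Proposition~\ref{ppn:UpperTail}.}

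The lower bound is immediate: because $\sup_{x\in\R}(\mathfrak{h}_t(x)+(1-\nu)x^2/2)\geq \mathfrak{h}_t(0)$, one has $\Pr(\mathsf{A})\geq \Pr(\mathfrak{h}_t(0)\geq s)\geq \exp(-c_1 s^{3/2})$ by the one-point upper tail of Proposition~\ref{onepointuptail} (applied at $x=0$). So the real task is the upper bound.

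For the upper bound, I will combine three ingredients: the stationarity of the parabolically-shifted process $\tilde{\mathfrak{h}}_t(x):=\mathfrak{h}_t(x)+x^2/2$ (Lemma~\ref{stationary}), the $\mathbf{H}_t^{\mathrm{long}}$-Brownian Gibbs property of the KPZ line ensemble (Lemma~\ref{line-ensemble}), and the one-point upper tail of Proposition~\ref{onepointuptail}. First I partition $\R$ into unit intervals $[k,k+1]$ for $k\in\Z$. On each such interval,
\[
\mathfrak{h}_t(x)+\tfrac{(1-\nu)x^2}{2}=\tilde{\mathfrak{h}}_t(x)-\tfrac{\nu x^2}{2},
\]
so $\sup_{x\in[k,k+1]}(\mathfrak{h}_t(x)+(1-\nu)x^2/2)\geq s$ forces $\sup_{x\in[k,k+1]}\tilde{\mathfrak{h}}_t(x)\geq s+c\nu k^2$ for an absolute $c>0$ (take $c=1/4$ and shift $k$ if needed). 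Stationarity of $\tilde{\mathfrak{h}}_t$ allows us to replace the $\sup$ over $[k,k+1]$ by the $\sup$ over $[0,1]$, reducing the task to a uniform tail bound on a fixed unit interval.

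The key intermediate step is to prove that there exist constants $c,s_0>0$ (depending only on $t_0$) such that for $t\geq t_0$ and $r\geq s_0$,
\[
\Pr\Big(\sup_{x\in[0,1]}\tilde{\mathfrak{h}}_t(x)\geq r\Big)\leq \exp(-c r^{3/2}).
\]
This is where the Brownian Gibbs property enters. I would fix a large window $[-M,M]\supset[0,1]$ and condition on the values $\tilde{\mathfrak{h}}_t(\pm M)$ and on the second curve $\mathfrak{h}_t^{(2)}$ of the long-time KPZ line ensemble. By Lemma~\ref{lem:strongBGP} and Lemma~\ref{Coupling1}, the top curve on $[-M,M]$ is stochastically dominated by a genuinely free Brownian bridge with the conditioned endpoints (the Hamiltonian tilt only pushes the curve down). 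On the favorable event that $\mathfrak{h}_t(\pm M)\geq -r/2$ (which, by Proposition~\ref{onepointlowtail}, fails with probability at most $\exp(-c r^{5/2})$), the Gaussian tail for the maximum of a Brownian bridge gives a bound of order $\exp(-c' r^2)$, which is far better than $\exp(-cr^{3/2})$. On the complementary event, the one-point lower tail already gives $\exp(-c r^{5/2})$. In both cases the bound is dominated by $\exp(-c r^{3/2})$, and summing in Proposition~\ref{onepointuptail} across the partition handles the genuine one-point upper-tail contribution.

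Finally, I would sum these unit-interval bounds over $k\in\Z$. Using $(s+c\nu k^2)^{3/2}\geq s^{3/2}+c''\nu s^{1/2}k^2$ for $s$ large, the sum telescopes into $\exp(-c_2 s^{3/2})\sum_{k\in\Z}\exp(-c''\nu s^{1/2}k^2)$, and the Gaussian-type series is $O(1)$, giving the claimed upper bound after slightly decreasing $c_2$. The main obstacle is the unit-interval bound: one must handle the Brownian Gibbs resampling carefully, control the lower endpoint values, and absorb the Hamiltonian interaction with $\mathfrak{h}_t^{(2)}$. This is precisely the argument carried out in \cite[Proposition~4.2]{corwin2019kpz}, and our exposition would parallel theirs, using Propositions~\ref{onepointuptail}--\ref{onepointlowtail} as the one-point inputs.
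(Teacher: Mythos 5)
The lower bound and the partition-plus-stationarity reduction are sound, but the heart of your argument for the unit-interval bound contains a directional error in the monotone coupling that makes the approach fail as stated. (Note the paper itself does not prove this proposition --- it is quoted from \cite{corwin2019kpz}, so the comparison is with that proof.)

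Your key claim is that ``the top curve on $[-M,M]$ is stochastically dominated by a genuinely free Brownian bridge with the conditioned endpoints (the Hamiltonian tilt only pushes the curve down).'' This has the direction backwards. With $f=+\infty$ and $g=\mathfrak{h}_t^{(2)}$, the Radon--Nikodym weight from Definition~\ref{LineEns} reduces to $\exp\{-\int \mathbf{H}_t^{\mathrm{long}}(\mathfrak{h}_t^{(2)}-\mathcal{L}_1)\,dx\}$; this tilt \emph{penalizes} paths $\mathcal{L}_1$ that dip below the second curve, and hence pushes the top curve \emph{up}. Lemma~\ref{Coupling1} with $g_1=\mathfrak{h}_t^{(2)}\geq -\infty=g_2$ says precisely that the conditional law dominates the free Brownian bridge, not the other way around. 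So $\Pr_s(\sup_{[0,1]}\tilde{\mathfrak{h}}_t\geq r)\geq \Pr_{\mathbf{free}}(\sup_{[0,1]}\mathfrak{B}\geq r)$, which is the wrong inequality for an upper tail. Everywhere the paper invokes this coupling (e.g.\ in Propositions~\ref{longtime:smdiffbd} and~\ref{longtime:diffbd}) it is only for events of the form $\{\text{curve is small}\}$, where $\Pr_s\leq\Pr_{\mathbf{free}}$ does hold.

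A related sign error appears in the choice of favorable event: to control the max of the dominating Brownian bridge you would need the bridge's endpoints to be small from \emph{above}, i.e.\ the favorable event should be $\{\tilde{\mathfrak{h}}_t(\pm M)\leq r/2\}$, whose complement is controlled by the one-point \emph{upper} tail (Proposition~\ref{onepointuptail}, cost $\exp(-cr^{3/2})$) and is exactly what sets the rate $s^{3/2}$. Your ``favorable'' event $\{\mathfrak{h}_t(\pm M)\geq -r/2\}$ is a lower bound on the endpoints; it gives no control on the bridge's maximum and the reference to Proposition~\ref{onepointlowtail} is irrelevant to the upper tail. But since the conditional law is a stochastic \emph{upper} bound, even the corrected favorable event does not give a working bound by the bridge route; you need a different mechanism. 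What \cite{corwin2019kpz} actually does (mirrored in this paper's Proposition~\ref{longtime:diffbd}) is a stopping-time/median argument: let $\chi$ be the first $x$ with $\tilde{\mathfrak{h}}_t(x)\geq r$, split on whether there is a large subsequent drop (controlled by one-point tails) or not, and on the latter use the strong Brownian Gibbs property on $(\chi,b)$ together with $\Pr_s\leq\Pr_{\mathbf{free}}$ applied to a \emph{curve-is-small} event to show, via the Brownian-bridge median, that $\Pr(\chi\leq 1)\leq\tfrac12\Pr(\chi\leq 1)+e^{-cr^{3/2}}$, hence $\Pr(\chi\leq 1)\leq 2e^{-cr^{3/2}}$. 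That is the step your proposal is missing.

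Finally, the summation over $k$ in your reduction is fine given a correct unit-interval bound: $(s+c\nu k^2)^{3/2}\geq s^{3/2}+\tfrac32 c\nu\, s^{1/2}k^2$ does control the tails, and the resulting series is bounded; just remember to adjust $c_2$ to absorb the factor of two at $k=0$.
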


We end this section with the Paley-Zygmund inequality which is applicable for any positive random variable. 

\begin{proposition}[Paley-Zygmund Inequality]\label{ppn:Paley-Zygmund}
Fix $\delta\in (0,1)$. For any positive random variable $X$, 
\begin{align*}
\mathbb{P}\big(X\geq \delta \mathbb{E}[X]\big)\geq \frac{(1-\delta)^2 (\mathbb{E}[X])^2}{\mathbb{E}[X^2]}.
\end{align*} 
\end{proposition}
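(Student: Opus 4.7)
The statement is the classical Paley--Zygmund inequality, so the plan is to give the textbook two-line proof rather than anything novel. The key idea is to split $\mathbb{E}[X]$ according to whether $X$ exceeds the threshold $\delta \mathbb{E}[X]$, bound the small-$X$ part trivially by $\delta \mathbb{E}[X]$, and then use Cauchy--Schwarz on the remaining piece to convert an $L^1$ statement into an $L^2$ statement that exposes the probability we want.

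Concretely, first I would decompose
\begin{align*}
\mathbb{E}[X] \;=\; \mathbb{E}\!\left[X \mathbf{1}_{\{X<\delta\mathbb{E}[X]\}}\right] + \mathbb{E}\!\left[X \mathbf{1}_{\{X\ge \delta\mathbb{E}[X]\}}\right].
\end{align*}
Since $X\ge 0$, on the event $\{X<\delta\mathbb{E}[X]\}$ we have $X\le \delta \mathbb{E}[X]$, so the first summand is at most $\delta \mathbb{E}[X]$. Rearranging yields
\begin{align*}
(1-\delta)\,\mathbb{E}[X] \;\le\; \mathbb{E}\!\left[X \mathbf{1}_{\{X\ge \delta\mathbb{E}[X]\}}\right].
\end{align*}

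Next I would apply the Cauchy--Schwarz inequality to the right-hand side to obtain
\begin{align*}
\mathbb{E}\!\left[X \mathbf{1}_{\{X\ge \delta\mathbb{E}[X]\}}\right] \;\le\; \sqrt{\mathbb{E}[X^2]}\,\sqrt{\mathbb{P}\!\left(X\ge \delta\mathbb{E}[X]\right)}.
\end{align*}
Combining the two displays and squaring gives
\begin{align*}
(1-\delta)^2 (\mathbb{E}[X])^2 \;\le\; \mathbb{E}[X^2]\,\mathbb{P}\!\left(X\ge \delta\mathbb{E}[X]\right),
\end{align*}
which is the claim after dividing by $\mathbb{E}[X^2]$. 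The only caveat is to note that if $\mathbb{E}[X^2]=\infty$ or $\mathbb{E}[X]=0$ the inequality is vacuous, and if $\mathbb{E}[X^2]=0$ then $X\equiv 0$ a.s.\ and the statement is trivial with the convention $0/0$ interpreted appropriately; otherwise all divisions are legitimate. There is no real obstacle here — the proof is elementary and purely a matter of presenting the split-plus-Cauchy--Schwarz argument cleanly.
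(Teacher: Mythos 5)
Your proof is correct and is the standard textbook argument for the Paley--Zygmund inequality: split $\mathbb{E}[X]$ at the threshold $\delta\mathbb{E}[X]$, bound the small part trivially, and apply Cauchy--Schwarz to the large part. The paper states this proposition as a classical fact without supplying a proof, so there is nothing to diverge from; your argument is exactly the one anyone would write, and your closing remark about the degenerate cases ($\mathbb{E}[X^2]\in\{0,\infty\}$) is the only care needed.
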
 

\section{Short Time Tail Bounds} \label{sec:onepoint}
The main goal of this section is to prove Theorem~\ref{uptail-upb} and~\ref{short:lowertail} which describe uniform bounds to the one point tail probabilities of the KPZ height function as time variable $t$ goes to $0$. The proof of Theorem~\ref{uptail-upb} which is given in Section~\ref{sec:UpperTail} will use the exact formulas of the integer moments of the SHE. These formulas are put forward by Kardar \cite{Kardar87} using the techniques of replica Bethe ansatz. See \cite{ghosal2018moments} for a discussion on different approaches to prove those formulas rigorously. On the other hand, the proof of Theorem~\ref{short:lowertail} which is contained in Section~\ref{sec:LowerTail} will be based on core probabilistic aspect like Gaussian concentration. 

\subsection{Upper Tail}\label{sec:UpperTail}
 
 Our starting point which is the content of the following proposition is to provide upper bounds to the exponential moments of $\sh_t$. Using these moment estimates, the proof of Theorem~\ref{uptail-upb} will be completed in the ensuing subsection.
\begin{proposition}\label{short-mom-bound} Fix $\e>0$. There exist $t_0=t_0(\e)>0$, $C=C(\e)>0$, and $s_0=s_0(\e)>0$, such that for all $t\le t_0$, $s\ge s_0$ and $k:=\lfloor s(\pi t/4)^{-1/4} \rfloor$ we have 
	\begin{align}\label{e:short-mom-bound}
	\Ex\big[\exp\big(k(\pi t/4)^{1/4}\sh_{2t}\big)\big] &  \le \exp\big(C(s^3t^{1/4-4\e}+s^2)\big).
	\end{align}
\end{proposition}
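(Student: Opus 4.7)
The plan is to reduce the exponential moment on the left of~\eqref{e:short-mom-bound} to an integer moment of the stochastic heat equation and then invoke Kardar's exact Bethe--ansatz moment formula. From~\eqref{def:kpz-short-scale} and the Cole--Hopf relation $\calZ^{\mathbf{nw}}(s,0)=e^{\calH^{\mathbf{nw}}(s,0)}$, the random variable $(\pi t/4)^{1/4}\sh_{2t}$ is a deterministic scalar multiple of $\calH^{\mathbf{nw}}(2t,0)+\log\sqrt{4\pi t}$; hence, after Jensen's inequality to round the exponent to an integer if necessary, the left side of~\eqref{e:short-mom-bound} is bounded above by an explicit $t,k$-dependent prefactor times $\Ex\bigl[(\calZ^{\mathbf{nw}}(2t,0))^m\bigr]^{\theta}$ for some integer $m\asymp s(\pi t/4)^{-1/4}$ and some $\theta\in(0,1]$. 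The problem thus reduces to a sharp upper bound on the $m$-th integer moment of the narrow-wedge SHE at time $2t$.

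For that integer moment I would use the Borodin--Corwin nested contour integral representation (made rigorous in~\cite{ghosal2018moments}),
$$\Ex\bigl[(\calZ^{\mathbf{nw}}(2t,0))^m\bigr] = \frac{m!}{(2\pi\iota)^m}\oint\!\!\cdots\!\!\oint \prod_{1\le a<b\le m}\frac{z_a-z_b}{z_a-z_b-1}\prod_{a=1}^{m}\frac{e^{t z_a^2}}{z_a}\,dz_a,$$
and expand it by residues into a sum over partitions $\lambda\vdash m$. The single-part partition $\lambda=(m)$ dominates and contributes $(4\pi t)^{-m/2}\exp\bigl(t(m^3-m)/12\bigr)$; after cancellation with the outer $(4\pi t)^{m/2}$ prefactor this leaves $\exp(tm^3/12)$, and substituting $m\asymp s(\pi t/4)^{-1/4}$ yields $tm^3=O(s^3 t^{1/4})$, which fits comfortably inside the target $Cs^3 t^{1/4-4\e}$. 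The additive $Cs^2$ piece in~\eqref{e:short-mom-bound} is present to absorb a Gaussian-variance correction coming from the $\log\sqrt{4\pi t}$ shift and from the Gaussian integrals transverse to the contours; this is consistent with the weak limit $\sh_{2t}\Rightarrow N(0,1)$ as $t\downarrow 0$ recorded in Lemma~\ref{stationary}.

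The main obstacle is to prevent the subleading partitions (and the combinatorial prefactors picked up by the contour deformation) from producing an extraneous term of order $m\log m \asymp st^{-1/4}\log(s/t^{1/4})$, which would destroy the bound. To suppress this contribution, I would deform to steepest-descent contours $\calC_a=A_a+\iota\mathbb{R}$ with real parts $A_a$ spaced at scale $t^{-1/2+2\e}$: this spacing is wide enough to uniformly defeat the simple poles of $(z_a-z_b-1)^{-1}$ across all pairs $a<b$, while keeping the Gaussian weights $e^{tA_a^2}$ comfortably controllable. The small $t^{-4\e}$ slack appearing in the target exponent of~\eqref{e:short-mom-bound} is precisely the loss accrued by choosing this slightly wider-than-optimal contour spacing, and balancing the two scales is where the exponent $\e$ enters quantitatively.
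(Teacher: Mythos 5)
Your instinct to invoke Kardar's moment formula and to track the cubic term $\exp(tm^3/12)$ is sound, and both you and the paper begin by reducing the exponential moment of $\sh_{2t}$ to an integer moment of $\calZ^{\mathbf{nw}}(2t,0)$. But the mechanism you then propose is not what the paper does, and several of your structural claims are wrong. The paper never touches the nested contour integral: it starts from the residue-evaluated form of the moment formula, already organized as a sum over partitions $\lambda\vdash k$ with factors $\prod_i e^{t\lambda_i^3/12}$, a Gaussian-type integral, and a rational cross-product. The rational cross-product is simply bounded termwise by $1$, the Gaussian integrals are evaluated, and $\exp(tk^3/12)$ is pulled out as a uniform bound (using $\sum_i\lambda_i^3\le k^3$), leaving
\begin{align*}
\Ex\big[\calZ^{\mathbf{nw}}(2t,0)^k e^{kt/12}\big]\le\sum_{\lambda\vdash k}\frac{k!\,e^{tk^3/12}}{(4\pi t)^{\ell(\lambda)/2}\prod_j m_j!}.
\end{align*}
No contour deformation, no spacing $t^{-1/2+2\e}$, no steepest descent: the whole algebraic content is elementary partition counting. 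The $t^{-4\e}$ slack also does not come from a contour-width loss; it arises when the two clean regimes $s\le t^{-1/4+\e}$ and $s\ge t^{-1/4-\e}$ are bridged across the intermediate window by a H\"older interpolation in $k$.

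The more substantive error is the claim that the single-part partition $\lambda=(m)$ dominates and that the $Cs^2$ piece is a Gaussian-variance correction from the $\log\sqrt{4\pi t}$ shift. What is true is that $\exp(tk^3/12)$ is a uniform upper bound on $\prod_i e^{t\lambda_i^3/12}$; it is not the case that the overall sum is dominated by $\lambda=(k)$. Once that factor is pulled out, the residual partition sum carries a $(4\pi t)^{-\ell(\lambda)/2}$ weight that favors partitions with many parts — for small $t$ the dominant contributions come from partitions that are nearly all singletons, with a sprinkling of pairs. In the regime $s\le t^{-1/4+\e}$ the paper sums over the number $m_2$ of parts of size $2$ and obtains $\sum_{m_2\ge 0}(2^{3/2}s^2)^{m_2}/m_2!=\exp(2^{3/2}s^2)$; that is the actual origin of the $Cs^2$ term. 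Your worry about an $m\log m\asymp st^{-1/4}\log(st^{-1/4})$ remainder is reasonable, but the paper disposes of it without any contour tuning: in the complementary regime $s\ge t^{-1/4-\e}$ it uses the crude bounds $k!\le k^k$ and (number of partitions of $k$) $\le k^k$, which costs $k\log k$, and observes that $k\log k\lesssim s^3t^{1/4}$ there, so the cubic term absorbs it. If you insisted on the nested-contour route you sketch, you would still have to reproduce the $e^{Cs^2}$ combinatorial factor from the residue sum and you would gain nothing over the elementary argument.
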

	\begin{proof} For any positive integer $k$, we recall the $k$-moment formula for $\calZ^{\mathbf{nw}}(2t,0)$ (see \cite{borodin2014macdonald, ghosal2018moments})
	\begin{align*} 
	\Ex\left[\calZ^{\mathbf{nw}}(2t,0)^ke^{\frac{kt}{12}}\right] & =\sum_{\substack{\lambda \vdash k \\ \lambda=1^{m_1}2^{m_2}\ldots}} \frac{k!}{\prod m_j!}\prod_{i=1}^{\ell(\lambda)} \frac{e^{\frac{t\lambda_i^3}{12}}}{2\pi}\int\limits_\mathbb{R}\cdots\int\limits_\mathbb{R} \prod_{i=1}^{\ell(\lambda)}\frac{dz_i 	e^{-t^{\frac13}\lambda_i z_i^2}}{t^{\frac13}\lambda_i}\prod_{i<j}^{\ell(\lambda)}\frac{\frac{t^{\frac23}(\lambda_i-\lambda_j)^2}{4}+(z_i-z_j)^2}{\frac{t^{\frac23}(\lambda_i+\lambda_j)^2}{4}+(z_i-z_j)^2}.
	\end{align*}
	Note that each terms of the product inside the integral is less than $1$.
Bounding those terms by $1$ and evaluating the left over Gaussian integral, we have
	\begin{align*}
	\Ex\left[\calZ^{\mathbf{nw}}(2t,0)^ke^{\frac{kt}{12}}\right] & \le \sum_{\substack{\lambda \vdash k \\ \lambda=1^{m_1}2^{m_2}\ldots}} \frac{k!}{\prod m_j!}\prod_{i=1}^{\ell(\lambda)} \frac{e^{\frac{t\lambda_i^3}{12}}}{2\pi} \prod_{i=1}^{\ell(\lambda)}\frac{\sqrt{\pi}}{t^{\frac12}\lambda_i^{3/2}}   \le \sum_{\substack{\lambda \vdash k \\ \lambda=1^{m_1}2^{m_2}\ldots}} \frac{k!e^{\frac{tk^3}{12}}}{(4\pi t)^{\frac{\ell(\lambda)}2}\prod m_j!}
	\end{align*}	
	The last inequality in above equation follows by using $\lambda_i^{3/2}\ge 1$ and $\sum_i\lambda_i^3 \le k^3$.
	 Expressing the left hand side of the above display in terms of $\sh_{2t}$ we get 
	\begin{align}\label{e:mom-bound4}
	\Ex e^{k(\pi t/2)^{1/4}\sh_{2t}}=\Ex\left[(\calZ^{\mathbf{nw}}(2t,0)\sqrt{4\pi t})^k\right] &  \le e^{\frac{tk^3-tk}{12}}\sum_{\substack{\lambda \vdash k \\ \lambda=1^{m_1}2^{m_2}\ldots}} (4\pi t)^{\frac{k-\ell(\lambda)}{2}}\frac{k!}{\prod m_j!}
	\end{align}
	We choose $t_0$ and $s_0$ such that $2^{5/2}t_0^{\e}(\pi/2)^{1/4}\le \frac12$ and $s_0 \ge 2(\pi t_0/2)^{1/4}$. Then for all $t\le t_0$ and $s\ge s_0$, we set $k=k(t):=\lfloor s(\pi t/2)^{-1/4} \rfloor$. By the condition on $t_0,s_0$ and $k$, we always have $k \ge 2$. We further have $k\le s(\pi t/2)^{-1/4}$ which implies $t \le \frac{2s^4}{\pi k^4}$. Bounding $t$ with this inequality, combining it with the estimate $k! \le k^{k-m_1}m_1!$ and using those in the right hand side of \eqref{e:mom-bound4} yields
	\begin{align}
	\Ex e^{k(\pi t/2)^{1/4}\sh_{2t}} \le e^{\frac{s^{4}(k^{-1}-k^{-3})}{6\pi}}\sum_{\substack{\lambda \vdash k \\ \lambda=1^{m_1}2^{m_2}\ldots}} \left(2^{3/4}s\right)^{2k-2\ell(\lambda)}\frac{k^{k-m_1}k^{2\ell(\lambda)-2k}}{\prod\limits_{j\ge 2} m_j!} \label{e:mom-bound1} 
	\end{align}
	Throughout the rest, we provide bound for the right hand side of \eqref{e:mom-bound1}. We separate our analysis into three cases depending on the location of $s$.

	\noindent\textbf{Case-1.} $s\le t^{-1/4+\e}$. Observe that $k-\ell(\lambda)= \sum_{j\geq 2} (j-1)m_j$ and $2\ell(\lambda)-m_1-k = -\sum_{j\geq 3} (j-2)m_j$. 
	
%


	We extend the range of $m_2,m_3,m_4,\ldots$ over all non-negative integers in \eqref{e:mom-bound1}. Taking first the sum w.r.t. $m_2$ shows  
	\begin{align}
	\mbox{r.h.s.~of }\eqref{e:mom-bound1} &  \le e^{\frac{s^{4}(k^{-1}-k^{-3})}{6\pi}}\sum_{j=3}^{\infty} \sum_{m_j=0}^{\infty} \left(2^{3/2}s^2\right)^{\sum\limits_{j\ge 3}(j-1)m_j}\frac{k^{-\sum\limits_{j\ge 3}(j-2)m_j}}{\prod\limits_{j\ge 3} m_j!}\sum_{m_2=0}^{\infty} \frac{(2^{3/2}s^2)^{m_2}}{m_2!} \label{e:mom-bound3}
 \end{align}	
 Note that the inner sum w.r.t. $m_2$ is equal to $\exp(2^{3/2}s^2)$. We may now write the right hand side of the above display as
 \begin{align*}
	  & e^{\frac{s^{4}(k^{-1}-k^{-3})}{6\pi}} e^{2^{3/2}s^2}\prod_{j=3}^{\infty} \sum_{m_j=0}^{\infty} \left(2^{3/2}s^2\right)^{(j-1)m_j}\frac{k^{-(j-2)m_j}}{m_j!}  \\
	  & =  \exp\left(\frac{s^{4}(k^{-1}-k^{-3})}{6\pi}+2^{3/2}s^2+\frac{2^{3}s^4k^{-1}}{1-2^{3/2}s^2k^{-1}}\right)
	\end{align*}
	where the equality is obtained by taking sum w.r.t. $m_3,m_4,...$ separately and simplifying the product. With this equality, we get 
	\begin{align*}
	\mbox{r.h.s.~of }\eqref{e:mom-bound3} &  \le \exp\left(\frac{2s^{3}(\pi t/2)^{1/4}}{6\pi}+2^{3/2}s^2+\frac{2^{4}s^3(\pi t/2)^{1/4}}{1-2^{5/2}s(\pi t/2)^{1/4}}\right)  \le \exp\big(Cs^3t^{1/4}+Cs^2\big)
	\end{align*}
	where the last inequality is obtained by using the facts $k^{-1}\le 2s^{-1}(\pi t/2)^{1/4}$, $s\le t^{-1/4+\e}$ and $t\le t_0$ with $2^{5/2}t_0^{\e}(\pi/2)^{1/4}\le \frac12$.
	This proves \eqref{e:short-mom-bound} for $s\le t^{-1/4+\e}$.
	\smallskip 
	
	\noindent\textbf{Case-2.} $s\ge t^{-1/4-\e}$. We assume $t_0\le\frac1{4\pi}$. Recall the definition of $k$. Since $k+1\geq s/(\pi t/2)^{1/4}$, we may bound $s^{2k-2\ell(\lambda)}$ by $(\pi t/2)^{(k-\ell(\lambda))/2}(k+1)^{2k-2\ell(\lambda)}$. Combining this with the facts $k!\le k^k$ and $\prod m_j! \ge 1$, we get
	\begin{align*}
	\mbox{r.h.s.~of }\eqref{e:mom-bound4} \le e^{\frac{s^{4}(k^{-1}-k^{-3})}{6\pi}}\sum_{\substack{\lambda \vdash k \\ \lambda=1^{m_1}2^{m_2}\ldots}} \left(4\pi t\right)^{\frac{k-\ell(\lambda)}{2}}k^{k-m_1}\cdot k!\big(1+\frac{1}{k}\big)^{2(k-\ell(\lambda))} & \le e^{\frac{s^{4}(k^{-1}-k^{-3})}{6\pi}}k^{2k} 
	\end{align*}
	where we bound $(1+1/k)^{2(k-\ell(\lambda))}$ by $1$ and the number of partitions of $k$ by $k^{k}$ to get the last inequality. Since we are in the case $s\ge t^{-1/4-\e}$, we have $s^4k^{-1} \le s^3(\pi t/2)^{1/4}$ and $k\ln k \le cst^{-1/4}\ln(st^{-1/4})  \le cs^3t^{1/4}$. Due to these inequalities, the right hand side of the above display is bounded by $\exp(cs^3t^{1/4})$ for some constant $c>0$. Combining this with \eqref{e:mom-bound1} shows $\Ex \exp(k(\pi t/2)^{1/4}\sh_{2t})   \le \exp(cs^3t^{1/4}).$
	\smallskip

	\noindent\textbf{Case-3.} $t^{-1/4+\e} \le s\le t^{-1/4-\e}$. Define $\tilde{s} = t^{-1/4-\varepsilon}$ and $\tilde{k}:=\lfloor \tilde{s}(\pi t/2)^{-1/4} \rfloor$. Note that $k\leq \tilde{k}$ since $s\leq t^{-1/4-\e}$. Using the H\"older's inequality, we know $\Ex \exp(k(\pi t/2)^{1/4}\sh_{2t})$ is bounded by $\big(\Ex \exp(\tilde{k}(\pi t/2)^{1/4}\sh_{2t})\big)^{k/\tilde{k}}$. By \textbf{Case-2}, we know $\Ex \exp(\tilde{k}(\pi t/2)^{1/4}\sh_{2t}) \le \exp(c\tilde{s}^3t^{1/4})$ for all $t\leq t_0 =\frac{1}{4\pi}$. Combining these observations shows 
	\begin{align*}
	\Ex \exp(k(\pi t/2)^{1/4}\sh_{2t}) \le \exp(ck\tilde{s}^3t^{1/4}/\tilde{k}) \le \exp(ct^{-3/4-3\e}t^{1/4}st^{1/4+\e}) = \exp(cst^{-1/4-2\e}) 
	\end{align*}
	where the second inequality follows from the definition of $\tilde{k}$ and $\tilde{s}$. Since $s\geq t^{-1/4+\e}$, the last term of the above display is bounded by $\exp(cs^3t^{1/4-4\e})$. This completes the proof  for \textbf{Case-3}. 
	
	Combining all cases we get \eqref{e:short-mom-bound}. This completes the proof.
	\end{proof}

\subsubsection{Proof of Theorem~\ref{uptail-upb}}
	We introduce the notations $f_{t,s}:=\frac{1}{C+\sqrt{C^2+3Cst^{1/4-4\e}}}$, $\tilde{s} := sf_{t,s}$ and $\tilde{k}:=\lfloor\tilde{s}(\pi t/2)^{-1/4} \rfloor$ where the constant $C$ is same as in \eqref{e:short-mom-bound}. By Markov's inequality,
	\begin{align}
	\Pr(\sh_{2t}\ge s)  = \Pr(e^{\tilde{k}(\pi t/2)^{1/4}\sh_{2t}}\ge e^{\tilde{k}{s}(\pi t/2)^{1/4}})   & \le \exp(-\tilde{k}{s}(\pi t/2)^{1/4})\mathbb{E}\big[\exp(\tilde{k}(\pi t/2)^{1/4}\sh_{2t})\big]\nonumber\\ \le &\exp\big(Cs^3f_{t,s}^3t^{1/4-4\e}+Cs^2f_{t,s}^2-\tilde{k}{s}(\pi t/2)^{1/4}\big) \label{e-uptail}
	\end{align}
	where the last inequality follows from Proposition \ref{short-mom-bound}. We choose $s_0$ large enough such that for all $s\ge s_0$ and $t\le t_0$ we have $\tilde{k}{s}(\pi t/2)^{1/4} \ge \frac{11}{12}s^2f_{t,s}$. From the definition of $f_{t,s}$, it follows $$Cf_{t,s} \le C\frac{1}{2C}=\frac12, \qquad Cst^{1/4-4\e}f_{t,s}^2 \le Cst^{1/4-4\e}\frac{1}{3Cst^{1/4-4\e}} = \frac13 .$$ Plugging all these inequalities in the right side of \eqref{e-uptail} yields
	\begin{align*}
	\Pr(\sh_{2t}\ge s) \le  \exp\big(-\frac{s^2f_{t,s}}{12}\big) \le \exp\Big(-\frac{C's^2}{1+\sqrt{1+st^{1/4-4\e}}}\Big).
	\end{align*}
 for all $t\leq t_0$, $s\geq s_0$ and some constant $C'>0$.	
	This completes the proof.

\subsection{Lower Tail}\label{sec:LowerTail}
Our proof of Theorem~\ref{short:lowertail} will utilize ideas from \cite{flores2014strict}. In \cite{flores2014strict}, the author provided an upper bound to the lower tail probability of $\mathcal{H}^{\mathbf{nw}}$. However, it was not clear whether the same bound holds for $\sh_t$, i.e., centering $\mathcal{H}^{\mathbf{nw}}$ with $\log \sqrt{2\pi t}$ and scaling by $(\pi t/4)^{1/4}$. Our analysis will demonstrate that it is indeed possible to derive similar tail bound for $\sh_t$.    

The main tool of our proof of Theorem~\ref{short:lowertail} are some properties of the directed random polymer partition functions and its convergence to the solution of the SHE. Below, we introduce relevant notations. 

Let $\Xi:=\{\mathcal{E}(i,x):i\in\mathbb{N},x\in\mathbb{Z}\}$ be a collection of independent standard normal random variables. We call such collections as \emph{lattice environment}. Let $\{S_i\}_{i\ge 0}$ be a simple symmetric random walk on $\mathbb{Z}$ starting at $S_0=0$ independent of $\Xi$. Denote the law of $\{S_i\}_{i\ge 0}$ by $\mathbb{P}_{S}$. At inverse temperature $\beta>0$, the directed polymer partition function $Z_n^{(\Xi)}(\beta)$ is defined as
\begin{align*}
Z^{(\Xi)}_n(\beta)=\Ex_S\left[\exp\left\{{\beta\sum_{i=1}^n \mathcal{E}(i,S_i)}\right\}\ind_{S_n=0}\right].
\end{align*}
where the expectation $\mathbb{E}_{S}$ is taken w.r.t. $\mathbb{P}_{S}$. From \cite{alberts2014}, we know as $n\to \infty$
\begin{align}\label{eq:WC}
\frac{1}{(\pi t/2)^{1/4}}\left[\log\sqrt{\frac{n\pi}{2}}+\log\frac{Z_{n}^{(\Xi)}((t/2n)^{1/4})}{\Ex Z_{n}^{(\Xi)}((t/2n)^{1/4})}\right] \Rightarrow  \sh_t, \quad \text{for each }t>0
\end{align}
 Here, `$\Rightarrow$' denotes the weak convergence. To complete the proof of Theorem~\ref{short:lowertail}, we need the following two lemmas. Lemma~\ref{lem:Overlap} is originally from a part of the proof of Theorem~1.5 in \cite{carmona2002partition}. 

\begin{lemma}[Lemma~1 of \cite{flores2014strict}]\label{lem:Overlap}
 Let $\Xi$ and $\Xi^{\prime}$ be two independent lattice environments. Let $S^{(1)}$ and $S^{(2)}$ be  two independent simple symmetric random walks starting at origin. Denote the expectation w.r.t. the joint law of $S^{(1)}$ and $S^{(2)}$ by $\Ex_{S^{(1)},S^{(2)}}$.Then, we have 
	\begin{align*}
	\log Z_n^{(\Xi)}(\beta)\ge \log Z_n^{(\Xi^{\prime})}(\beta)-\beta d_n(\Xi,\Xi^{\prime})\sqrt{\mathfrak{Overlap}_{\Xi^{\prime}}(S^{(1)}, S^{(2)})}
	\end{align*}
	where $d_n(\Xi,\Xi^{\prime})^2:=\sum_{i=1}^n \sum_{|x|\le i}(\mathcal{E}(i,x)-\mathcal{E}^{\prime}(i,x))^2$ and, 
	\begin{align*}
	\mathfrak{Overlap}_{\Xi^{\prime}}(S^{(1)}, S^{(2)}): = \frac1{Z_n^{(\Xi^{\prime})}(\beta)^2}\Ex_{S^{(1)},S^{(2)}}\left[\sum_{i=1}^n \ind_{S_i^{(1)}=S_i^{(2)}}e^{\beta\sum_{i=1}^n(\mathcal{E}^{\prime}(i,S_i^{(1)})+\mathcal{E}^{\prime}(i,S_i^{(2)}))}\ind_{S_n^{(1)}=S_n^{(2)}=0}\right].
	\end{align*}	
\end{lemma}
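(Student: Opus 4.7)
The plan is to express the difference $\log Z_n^{(\Xi)}(\beta) - \log Z_n^{(\Xi')}(\beta)$ as the logarithm of an expectation under the Gibbs (polymer) measure built from $\Xi'$, and then combine Jensen's inequality with Cauchy--Schwarz. More precisely, writing $\Delta(i,x) := \mathcal{E}(i,x) - \mathcal{E}'(i,x)$ and factoring $e^{\beta \sum_i \mathcal{E}(i,S_i)} = e^{\beta \sum_i \Delta(i,S_i)} \cdot e^{\beta \sum_i \mathcal{E}'(i,S_i)}$ inside the expectation over the walk, one gets
\begin{align*}
Z_n^{(\Xi)}(\beta) \;=\; Z_n^{(\Xi')}(\beta)\cdot \mathbb{E}_{\mu'}\!\left[\exp\!\Big(\beta\sum_{i=1}^n \Delta(i,S_i)\Big)\right],
\end{align*}
where $\mu'$ is the (random) probability measure on walks $\{S_i\}_{i\le n}$ defined by
\begin{align*}
\mu'(\mathrm{d}S) \;=\; \frac{1}{Z_n^{(\Xi')}(\beta)}\,e^{\beta \sum_{i=1}^n \mathcal{E}'(i,S_i)}\,\mathbf{1}_{S_n=0}\,\mathbb{P}_S(\mathrm{d}S).
\end{align*}

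Taking logarithms and applying Jensen's inequality (pushing $\log$ inside $\mathbb{E}_{\mu'}$) yields the lower bound
\begin{align*}
\log Z_n^{(\Xi)}(\beta) - \log Z_n^{(\Xi')}(\beta) \;\ge\; \beta\,\mathbb{E}_{\mu'}\!\left[\sum_{i=1}^n \Delta(i,S_i)\right] \;=\; \beta \sum_{i=1}^n \sum_{|x|\le i} \Delta(i,x)\, \mu'(S_i=x).
\end{align*}
I would then apply the Cauchy--Schwarz inequality to the double sum on the right, separating the deterministic difference of environments from the polymer weights:
\begin{align*}
\Big|\sum_{i,x}\Delta(i,x)\,\mu'(S_i=x)\Big| \;\le\; \Big(\sum_{i,x}\Delta(i,x)^2\Big)^{1/2}\Big(\sum_{i,x}\mu'(S_i=x)^2\Big)^{1/2}.
\end{align*}
The first factor is exactly $d_n(\Xi,\Xi')$ by definition.

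For the second factor, the key observation is that $\mu'(S_i=x)^2$ is the probability that two independent samples $S^{(1)},S^{(2)}$ from $\mu'\otimes\mu'$ both pass through $(i,x)$. Summing in $x$ collapses to $\mu'\otimes\mu'(S_i^{(1)}=S_i^{(2)})$, and summing in $i$ gives $\mathbb{E}_{\mu'\otimes\mu'}[\sum_{i=1}^n \mathbf{1}_{S_i^{(1)}=S_i^{(2)}}]$. Expanding $\mu'\otimes\mu'$ back into the original annealed form produces precisely $\mathfrak{Overlap}_{\Xi'}(S^{(1)},S^{(2)})$ as defined in the statement. Assembling these pieces and accounting for the possible negative sign gives
\begin{align*}
\log Z_n^{(\Xi)}(\beta) - \log Z_n^{(\Xi')}(\beta) \;\ge\; -\beta\, d_n(\Xi,\Xi')\,\sqrt{\mathfrak{Overlap}_{\Xi'}(S^{(1)},S^{(2)})},
\end{align*}
which is the claim.

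There is no real analytic obstacle here: the identities are exact, Jensen's and Cauchy--Schwarz are applied at their natural places, and the only care required is bookkeeping. The closest thing to a subtle point is recognizing that two independent Gibbs copies $S^{(1)},S^{(2)}$ under $\mu'$ reassemble into the stated overlap quantity with the correct normalization $Z_n^{(\Xi')}(\beta)^{-2}$ and with the indicator $\mathbf{1}_{S_n^{(1)}=S_n^{(2)}=0}$ coming from the pinning at the endpoint.
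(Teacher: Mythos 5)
Your proof is correct, and since the paper quotes this lemma from \cite{flores2014strict} (originally from the proof of Theorem~1.5 in \cite{carmona2002partition}) without reproving it, the right comparison is with that source: your argument — rewrite $\log Z_n^{(\Xi)}-\log Z_n^{(\Xi')}$ as a log-Gibbs average of $e^{\beta\sum_i\Delta(i,S_i)}$ under the $\Xi'$-polymer measure, apply Jensen, then Cauchy--Schwarz, and identify $\sum_{i,x}\mu'(S_i=x)^2$ with the replica overlap — is exactly the standard proof given there. All identities and normalizations (including the $Z_n^{(\Xi')}(\beta)^{-2}$ factor and the endpoint pinning indicator) check out.
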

The next lemma is similar to Lemma 2 of \cite{flores2014strict}. To state the lemma, we introduce for any $n\in \mathbb{N}$, $t>0$
 and $C>0$
 \begin{align*}
A_{n,t,C}:=\left\{\Xi^{\prime}:Z_n^{(\Xi^{\prime})}((t/2n)^{1/4})\ge \sqrt{\frac2{n\pi}}\Ex Z_n^{(\Xi)}((t/2n)^{1/4}), \mathfrak{Overlap}_{\Xi^{\prime}}(S^{(1)}, S^{(2)})\le C\sqrt{n}\right\}.
\end{align*}
\begin{lemma}\label{antc-lemma} For any given $\varepsilon>0$, there exist constants $t_0=t_0(\varepsilon)\in (0,2]$ and $C=C(\varepsilon)>0$ satisfying the following: for any $t\le t_0$, there exists $n_t\in\mathbb{N}$ such that for all $n\ge n_t$, we have $\Pr(A_{n,t,C})\ge \frac12-\varepsilon$.
\end{lemma}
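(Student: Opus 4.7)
The strategy is to estimate the probabilities of the two events defining $A_{n,t,C}$ separately and then combine them via a union bound. For the first event $\{Z_n^{(\Xi')} \ge \sqrt{2/(n\pi)}\,\Ex Z_n^{(\Xi)}\}$, the key observation is that after taking logarithms it becomes
\begin{align*}
\log\sqrt{n\pi/2} + \log\big(Z_n^{(\Xi')}/\Ex Z_n^{(\Xi)}\big) \ge 0,
\end{align*}
which is exactly the random variable appearing in the weak convergence \eqref{eq:WC}. Hence by the Portmanteau theorem, as $n \to \infty$, the probability of this event converges to $\Pr(\sh_t \ge 0)$. Combined with Lemma~\ref{stationary}, which yields the convergence of $\sh_t$ to a standard Gaussian as $t\downarrow 0$, we get $\lim_{t\downarrow 0}\Pr(\sh_t\ge 0) = 1/2$. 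Thus for $t\le t_0(\varepsilon)\in(0,2]$ small enough and $n$ large enough (depending on $t$ and $\varepsilon$), the first event has probability at least $\tfrac12 - \tfrac{\varepsilon}{2}$.

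For the second event $\{\mathfrak{Overlap}_{\Xi'} \le C\sqrt n\}$, the starting point is to compute the annealed expectation of the numerator $A := Z_n^{(\Xi')\,2}\cdot \mathfrak{Overlap}_{\Xi'}$ by integrating out the Gaussian environment:
\begin{align*}
\Ex[A] \;=\; (\Ex Z_n)^2\cdot \Ex_{S^{(1)},S^{(2)}}\big[\,L_n\,e^{\beta^2 L_n}\,\big|\,S^{(1)}_n = S^{(2)}_n = 0\,\big],
\end{align*}
where $L_n = \sum_{i=1}^n \ind_{S^{(1)}_i = S^{(2)}_i}$ is the collision local time and $\beta = (t/2n)^{1/4}$. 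Since $L_n/\sqrt n$ is tight under the SRW bridge measure (local CLT) and $\beta^2 L_n = \sqrt{t/(2n)}\,L_n$ has uniformly bounded exponential moments for $t\le t_0$, we obtain $\Ex[A] \le C_1\sqrt n\,(\Ex Z_n)^2$. Note however that a direct Markov bound using the inequality $Z_n^2 \ge (2/n\pi)(\Ex Z_n)^2$ supplied by the first event only yields $\mathfrak{Overlap}_{\Xi'} \le Cn^{3/2}$, which falls short of the desired $\sqrt n$ scaling.

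The main obstacle is therefore to obtain the sharp $\sqrt n$ bound jointly with the first event. Following the line of argument in \cite[Lemma~2]{flores2014strict}, the plan is to upgrade the scalar convergence in \eqref{eq:WC} to a joint weak convergence
\begin{align*}
\bigg(\log\sqrt{n\pi/2}+\log\frac{Z_n^{(\Xi')}}{\Ex Z_n^{(\Xi)}},\;\frac{\mathfrak{Overlap}_{\Xi'}}{\sqrt n}\bigg) \;\Longrightarrow\;\big((\pi t/2)^{1/4}\sh_t,\;\mathfrak{O}^*_t\big)
\end{align*}
as $n\to\infty$, where $\mathfrak{O}^*_t$ is the continuum analogue of the overlap, namely the collision local time of two independent paths under the continuum directed polymer Gibbs measure associated with $\mathcal{Z}^{\mathbf{nw}}(t,\cdot)$. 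Such joint convergence follows from the intermediate disorder results of \cite{alberts2014}. As $t\downarrow 0$ the continuum polymer measure concentrates on the Brownian bridge on $[0,t]$, so $\mathfrak{O}^*_t$ is tight (and in fact its expectation tends to $0$ with $t$). Hence $C = C(\varepsilon)$ may be chosen so that $\Pr(\mathfrak{O}^*_t \le C) \ge 1-\varepsilon/2$ uniformly for $t\le t_0$. Combining both steps via the joint Portmanteau theorem yields $\Pr(A_{n,t,C}) \to \Pr(\sh_t \ge 0,\,\mathfrak{O}^*_t \le C) \ge \tfrac12 - \varepsilon$ for $t$ small, $C$ large, and $n \ge n_t$, which is the desired conclusion.
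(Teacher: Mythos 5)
Your treatment of the first event is essentially identical to the paper's: use the weak convergence \eqref{eq:WC} together with $\Pr(\sh_t\ge 0)\to\tfrac12$ as $t\downarrow 0$ (from Lemma~\ref{stationary}) and choose $t_0$ and $n_t$ accordingly. That part is fine.

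The obstacle you identify for the second event is an artifact of a computational slip. After integrating out the environment, the correct annealed moment of $A:=\Ex_{S^{(1)},S^{(2)}}[\mathfrak{L}_n]$ is
\begin{align*}
\Ex_\Xi[A] \;=\; (\Ex Z_n)^2\;\Ex_{S^{(1)},S^{(2)}}\big[\,L_n\,e^{\beta_n^2 L_n}\,\ind_{S^{(1)}_n = S^{(2)}_n = 0}\,\big],
\end{align*}
with the \emph{indicator} of the bridge event, not the \emph{conditional} expectation you wrote. The indicator carries an extra factor $\Pr(S^{(1)}_n=S^{(2)}_n=0)\le a/n$ (local CLT / Stirling), and Lemma~3 of \cite{flores2014strict} controls the \emph{conditional} moment by $K\sqrt n$. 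Putting these together gives $\Ex_\Xi[A]\le (aK/\sqrt n)(\Ex Z_n)^2$, not $C_1\sqrt n\,(\Ex Z_n)^2$. With this corrected estimate, Markov's inequality applied directly to the numerator $A$ yields
\begin{align*}
\Pr\Big(A > \tfrac{2C}{\sqrt{n\pi^2}}(\Ex Z_n)^2\Big)\le \frac{\sqrt{n\pi^2}}{2C}\cdot\frac{\Ex_\Xi[A]}{(\Ex Z_n)^2}\le \frac{K'}{C},
\end{align*}
which is $\le\varepsilon/4$ once $C$ is large. Combined with the lower bound $Z_n^{(\Xi')\,2}\ge (2/n\pi)(\Ex Z_n)^2$ from the first event, the bound on $A$ implies $\mathfrak{Overlap}_{\Xi'}\le C\sqrt n$ --- exactly the $\sqrt n$ scale you thought was unreachable. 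In short, the paper never applies Markov to the overlap itself; it applies Markov to the numerator alone, and the first event already supplies the denominator bound. Your ``only $C n^{3/2}$'' conclusion comes entirely from the missing $\sim 1/n$ bridge probability.

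Because of this, the second half of your proposal --- upgrading \eqref{eq:WC} to a joint weak convergence of $(\log Z_n/\Ex Z_n,\;\mathfrak{Overlap}_{\Xi'}/\sqrt n)$ to a pair $((\pi t/2)^{1/4}\sh_t,\mathfrak{O}^*_t)$, and then proving tightness and vanishing of a continuum overlap $\mathfrak{O}^*_t$ as $t\downarrow 0$ --- is not needed. It is also not established in the argument you give: this joint convergence is not among the results of \cite{alberts2014}, which concerns convergence of the partition function (and its chaos expansion), not of the overlap functional under the quenched polymer measure. Asserting it would require a separate and nontrivial proof. Replace that detour with the corrected annealed moment and the Markov step above, and you recover the paper's argument.
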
 
Our proof of the above lemma uses some of the ideas from the proof of Lemma~2 of \cite{flores2014strict}. However, there is a major difference between these two results. Unlike Lemma~2 of \cite{flores2014strict}, Lemma~\ref{antc-lemma} provides a lower bound to $\Pr(A_{n,t,C})$ which does not depend on $t$. On the other hand, the lower bound of Lemma 2 of \cite{flores2014strict} is valid for all $n\geq 1$ which is not the case in Lemma~\ref{antc-lemma}. Since we are interested in the evolution of tail probabilities of $Z_n^{(\Xi)}((t/2n)^{1/4})$ as $n$ grows large, the probability bound of $A_{n,t,C}$ for large $n$ is more relevant to our analysis than a uniform bound for all $n\geq 1$. Furthermore, the independence of the lower bound of $\Pr(A_{n,t,C})$ from $t$ enables us in Theorem~\ref{short:lowertail} to derive bounds on the lower tail probability of $\sh_t$ uniform in $t$. Before proceeding to the proof of Lemma~\ref{antc-lemma}, we will show Theorem~\ref{short:lowertail} by assuming Lemma~\ref{antc-lemma}.   

\subsubsection{Proof of Theorem~\ref{short:lowertail}}

Fix $\epsilon\in (0,\frac{1}{2})$. We choose $t_0=t_0(\varepsilon)\in (0,2]$ as defined in Lemma \ref{antc-lemma}. Fix $t\le t_0$. From Lemma \ref{antc-lemma} we pick $C>0$ and $n_t\in\N$ such that for all  $n\ge n_t$, $\Pr(A_{n,t,C})\ge \frac14$. Fix $n\ge n_t$. Consider any $\Xi^{\prime}\in A_{n,t,C}$. By Lemma~\ref{lem:Overlap}, we have 
	\begin{align*}
	\log Z_n^{(\Xi)}((t/2n)^{1/4}) & \ge \log Z_n^{(\Xi^{\prime})}((t/2n)^{1/4})-(t/2n)^{1/4} d_n(\Xi,\Xi^{\prime})\sqrt{\mathfrak{Overlap}_{\Xi^{\prime}}(S^{(1)}, S^{(2)})} \\ & \ge \log\sqrt{\frac2{n\pi}}+\log\Ex Z_n^{(\Xi)}((t/2n)^{1/4})-(t/2n)^{1/4} d_n(\Xi,\Xi^{\prime})\sqrt{C\sqrt{n}}.
	\end{align*}
	where the second inequality follows since $\Xi^{\prime}\in A_{n,t,C}$. 
	Rearranging the above inequality and using the fact that it holds for any $\Xi^{\prime}\in A_{n,t,C}$ shows 
	\begin{align*}
	\frac1{(\pi t/2)^{1/4}}\left[\log\sqrt{\frac{n\pi}2}+\log \frac{Z_n^{(\Xi)}((t/2n)^{1/4})}{\Ex Z_n^{(\Xi)}((t/2n)^{1/4})}\right]& \ge-C^{1/2}\pi^{-1/4} \inf_{\Xi^{\prime}\in A_{n,t,C} }d_n(\Xi,\Xi^{\prime}).
	\end{align*}
	Thus, for all $s>0$,
	\begin{align}\label{eq:dist}
	\Pr\left(\frac1{(\pi t/2)^{1/4}}\left[\log\sqrt{\frac{n\pi}2}+\log \frac{Z_n^{(\Xi)}((t/2n)^{1/4})}{\Ex Z_n^{(\Xi)}((t/2n)^{1/4})}\right] \le -s\right)  
	\le \Pr( d_n(\Xi,A_{n,t,C})\ge s\pi^{1/4}C^{-1/2})
	\end{align}
	where $d_n(\Xi,A_{n,t,C}):= \inf_{\Xi^{\prime}\in A_{n,t,C} }d_n(\Xi,\Xi^{\prime})$.
	Since $\Pr(A_{n,t,C})\ge \frac12 -\varepsilon$, applying Theorem~3 of \cite{flores2014strict} ( Talagrand's inequality) shows $\Pr(d_n(\Xi,A_{n,t,C})\ge u+\sqrt{4\log 2})\le e^{-u^2/2}$. Applying this probability bound into the right hand side of the above display yields 
	\begin{align}\label{eq:UnifBd}
	\mbox{r.h.s.~of }\eqref{eq:dist} & \le \exp\left({-\frac12\left\{s\pi^{-1/4}C^{-1/2}-\sqrt{4\log 2}\right\}^2}\right) \le e^{-cs^2}
	\end{align}
	for some positive constant $c>0$ and for all $s\ge s_0$ where neither $s_0$ nor $c$ does depend on $n$ or $t$. Due to the weak convergence of \eqref{eq:WC}, we have 
	\begin{align*}
	\Pr\left(\frac1{(\pi t/2)^{1/4}}\left[\log\sqrt{\frac{n\pi}2}+\log \frac{Z_n^{(\Xi)}((t/2n)^{1/4})}{\Ex Z_n^{(\Xi)}((t/2n)^{1/4})}\right] \le -s\right) \stackrel{n\to\infty}{\to} \Pr(\sh_t\le -s)
	\end{align*} 
	Combining this convergence with \eqref{eq:dist} and \eqref{eq:UnifBd} shows the desired conclusion.

\subsubsection{Proof of Lemma~\ref{antc-lemma}}

Define $\beta_n=(t/2n)^{1/4}$. By Proposition~ of \cite{ACQ11}, $\sh_t$ converges weakly to the standard Gaussian distribution implying $\lim\limits_{t\to 0}\Pr(\sh_t\ge 0)=\frac12.$ We choose the largest $t_0=t_0(\varepsilon)\in (0,2]$ such that $\Pr(\sh_t\ge 0)\ge \frac12-\frac{\varepsilon}{2}$ for all $t\le t_0$. For simplicity in notations, we set
	\begin{align*}
	\mathfrak{L}_n:=\sum_{i=1}^n \ind_{S_i^{(1)}=S_i^{(2)}}\cdot \ind_{S_n^{(1)}=S_n^{(2)}=0}\cdot e^{\beta_n\sum_{i=1}^n(\mathcal{E}^{\prime}(i,S_i^{(1)})+\mathcal{E}^{\prime}(i,S_i^{(2)}))}, \quad L_n=\sum_{i=1}^n \ind_{S_i^{(1)}=S_i^{(2)}}.
	\end{align*}
	Recall that $\mathfrak{Overlap}_{\Xi^{\prime}}(S^{(1)}, S^{(2)})$ is equal to $\mathbb{E}_{S^{(1)}, S^{(2)}}[\mathfrak{L}_n]/(Z_n^{(\Xi^{\prime})}(\beta))^2$. By simple probability bounds, we get
	\begin{align}
	\Pr(A_{n,t,C})   
	 & \ge  \Pr\left(Z_n^{(\Xi^{\prime})}(\beta_n)\ge \sqrt{\frac2{n\pi}}\Ex Z_n^{(\Xi)}(\beta_n), \Ex_{S^{(1)}S^{(2)}}(\mathfrak{L}_n)\le \frac{2C}{\sqrt{n\pi^2}}(\Ex Z_n^{(\Xi)}(\beta_n))^2\right) \nonumber\\ & \label{antc} \ge  \Pr\left(Z_n^{(\Xi^{\prime})}(\beta_n)\ge \sqrt{\frac2{n\pi}}\Ex Z_n^{(\Xi)}(\beta_n)\right)- \Pr\left( \frac{\Ex_{S^{(1)}S^{(2)}}(\mathfrak{L}_n)}{(\Ex Z_n^{(\Xi)}(\beta_n))^2}> \frac{2C}{\sqrt{n\pi^2}}\right)
	\end{align}
	We claim that for any $t\leq t_0$, there exists $n_t\in \mathbb{N}$ such that for all $n\geq n_t$,
	\begin{align}
	\Pr\left(Z_n^{(\Xi^{\prime})}(\beta_n)\ge \sqrt{\frac2{n\pi}}\Ex Z_n^{(\Xi)}(\beta_n)\right)\geq \frac{1}{2}-\frac{3\varepsilon}{4}, \quad \Pr\left( \frac{\Ex_{S^{(1)}S^{(2)}}(\mathfrak{L}_n)}{(\Ex Z_n^{(\Xi)}(\beta_n))^2}> \frac{2C}{\sqrt{n\pi^2}}\right)\leq \frac{\varepsilon}{4}.\label{eq:Lastwo}
	\end{align}
	Substituting the above inequalities into the right hand side of \eqref{antc} completes the proof of Lemma~\ref{antc-lemma}. Thus, it suffices to show that the above inequalities hold for all large $n$. To see the first inequality of \eqref{eq:Lastwo}, we first note that $\Ex Z_n^{(\Xi)}(\beta_n)= \Ex Z_n^{(\Xi^{\prime})}(\beta_n)$ and write
	\begin{align*}
	 \Pr\left(Z_n^{(\Xi^{\prime})}(\beta_n)\ge \sqrt{\frac2{n\pi}}\Ex Z_n^{(\Xi^{\prime})}(\beta_n)\right)  = \Pr\left(\frac1{(\pi t/2)^{\frac14}}\left[\log\sqrt{\frac{n\pi}2}+\log \frac{Z_n^{(\Xi^{\prime})}(\beta_n)}{\Ex Z_n^{(\Xi^{\prime})}(\beta_n)}\right]\ge 0\right). 
	\end{align*}
	By the weak convergence in \eqref{eq:WC} and $\mathbb{P}(\sh_t\geq 0)\geq \frac{1}{2}-\frac{\varepsilon}{2}$, it follows that the right side of the above display is greater than $\frac{1}{2}-\frac{3\epsilon}{4}$ for all large $n$. This proves the first inequality of \eqref{eq:Lastwo}.
	
	Now, we show the second inequality of \eqref{eq:Lastwo}. Note that $\Ex Z_n^{(\Xi)}(\beta_n)=e^{n\beta^2_n/2}$. By Fubini, we have 
	\begin{align*}
	\Ex_{\Xi} \Ex_{S^{(1)}S^{(2)}}[\mathfrak{L}_n]  & = \Ex_{S^{(1)}S^{(2)}}\Big[\sum_{i=1}^n \ind_{S_i^{(1)}=S_i^{(2)}}\cdot\ind_{S_n^{(1)}=S_n^{(2)}=0}\cdot \prod_{j=1}^n\Ex_{\Xi}\Big(e^{\beta_n(\mathcal{E}(j,S_j^{(1)})+\mathcal{E}(j,S_j^{(2)}))}\Big)\Big] \\ & = e^{n\beta^2_n}\Ex_{S^{(1)}S^{(2)}}\Big[\sum_{i=1}^n \ind_{S_i^{(1)}=S_i^{(2)}}\cdot \ind_{S_n^{(1)}=S_n^{(2)}=0}\cdot \exp\big(\beta^2_n\sum\limits_{i=1}^n \ind_{S_i^{(1)}=S_i^{(2)}}\big)\Big]
	 \\ & = (\Ex Z_n^{(\Xi)}(\beta)_n)^2\Ex_{S^{(1)}S^{(2)}}\big[L_ne^{\beta^2_nL_n}\ind_{S_n^{(1)}=S_n^{(2)}=0}\big]
	\end{align*}
	Applying Markov's inequality and using the above expression of $\Ex_{\Xi} \Ex_{S^{(1)}S^{(2)}}[\mathfrak{L}_n]$ shows 
	\begin{align*}
	\Pr\left( \frac{\Ex_{S^{(1)}S^{(2)}}(\mathfrak{L}_n)}{(\Ex Z_n^{(\Xi)}(\beta_n))^2}> \frac{2C}{\sqrt{n\pi^2}}\right) & \le \frac{\sqrt{n\pi^2}}{2C}\Ex_{S^{(1)}S^{(2)}}\left[L_ne^{\beta^2_nL_n}\ind_{S_n^{(1)}=S_n^{(2)}=0}\right]\\ & \le \frac{\sqrt{n\pi^2}}{2C}\Pr(S_n^{(1)}=S_n^{(2)}=0)\Ex_{S^{(1)}S^{(2)}}\left[L_ne^{\beta^2_nL_n}\mid S_n^{(1)}=S_n^{(2)}=0\right]
	\end{align*}
By Stirling's approximation, there exists constant $a>0$ such that $\Pr(S_n^{(1)}=S_n^{(2)}=0)=\frac{1}{2^{2n}}\binom{n}{n/2}^2 \le \frac{a}{n}$ for all $n$. Since $\beta_n=(t/2n)^{1/4}$, we have $L_ne^{\beta^2_nL_n} = L_ne^{(t/2n)^{1/2}L_n} \le L_ne^{n^{-1/2}L_n}$ for all $t\le t_0\le 2$. Furthermore, Lemma 3 in \cite{flores2014strict} proves
	\begin{align*}
	\sup_{N\ge 1}\frac{1}{\sqrt{n}}\Ex_{S^{(1)}S^{(2)}}\left[L_ne^{n^{-1/2}L_n}\mid S_n^{(1)}=S_n^{(2)}=0\right]=K<\infty.
	\end{align*} 
	Thus for all $t\le t_0$ we have a constant $K'>0$ (free of $t$) so that
	\begin{align*}
	\Pr\left( \frac{\Ex_{S^{(1)}S^{(2)}}(\mathfrak{L}_n)}{(\Ex Z_n^{(\Xi)}(\beta_n))^2}> \frac{2C}{\sqrt{n\pi^2}}\right) \le \frac{K'}{C}
	\end{align*}
	Letting $C$ large shows the second inequality of \eqref{eq:Lastwo} for all large $n$. This completes the proof.
	

\section{Tail Bounds of the KPZ Spatial Process} \label{sec:spatialtail}
In this section, we prove delicate tail bounds on several functionals of the long and short time spatial processes $\h_t(\cdot)$ and $\g_t(\cdot)$ respectively. Four propositions will be proved in this section; two of them are about the supremum and the infimum of the spatial process $\h_t$ and other two are devoted on similar results about $\sh_t$. One may notice similarities between Proposition~\ref{longtime:smdiffbd},~\ref{longtime:diffbd} and Theorem~1.3 of \cite{corwin2019kpz} since both bound the tail probabilities of the supremum and/or infimum of the KPZ height differences between spatial points. However, in comparison to \cite[Theorem~1.3]{corwin2019kpz}, the bounds on the tail probabilities in Proposition~\ref{longtime:smdiffbd} and \ref{longtime:diffbd} improve on multiple aspects (e.g., decay exponents) which turn out to be extremely useful for proving the results of Section~\ref{sec:diffprob}. The main ingredients of the proofs of this sections are: $(1)$ tail bounds from Section~\ref{sec:onepoint} and $(2)$ Brownian Gibbs property of the line ensemble discussed in Section~\ref{sec:tools}. From this time forth, we will denote complement of any set $\m{B}$ by $\neg \m{B}$.

\begin{proposition}\label{longtime:smdiffbd} Fix $\kappa>0$ and $\alpha \in[\frac32,2]$. There exist constant $c>0$, $t_0>0$ such that for all $t \ge t_0$ and $\beta\in (0,1]$ and $s\ge s_0(t_0)$ we have
	\begin{align}
	\label{inf-smdiffbd} \Pr\Big(\inf_{|y|\le \beta^{2\kappa}s^{2-\alpha}} (\h_t(y)-\h_t(0))\le -\frac{7}{8}\beta^{\kappa}s\Big) & \le e^{-cs^{\alpha}}.
	\end{align}
\end{proposition}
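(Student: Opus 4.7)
Set $L:=\beta^{2\kappa}s^{2-\alpha}$ and $A:=\tfrac{7}{8}\beta^\kappa s$, so that $A^2/L=\tfrac{49}{64}\,s^{\alpha}$; this is precisely the Gaussian exponent for a Brownian-bridge fluctuation of amplitude $A$ over an interval of length $L$, matching the target decay. Let $\mathcal{B}$ denote the event on the left-hand side of \eqref{inf-smdiffbd}. The plan is to use the Brownian Gibbs property of the scaled long-time KPZ line ensemble (Lemma~\ref{line-ensemble}) to represent $\mathfrak{h}_t^{(1)}|_{(-L',L')}$, for $L'$ a large constant multiple of $L$, as a reweighted Brownian bridge, and then invoke a Gaussian oscillation estimate.

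\textbf{Stationarity reduction.} Setting $\widetilde{\mathfrak{h}}_t(y):=\mathfrak{h}_t(y)+y^2/2$, Lemma~\ref{stationary} gives stationarity in $y$. Applying $\inf(f+g)\ge\inf f+\inf g$ with $f(y)=\widetilde{\mathfrak{h}}_t(y)-\widetilde{\mathfrak{h}}_t(0)$ and $g(y)=-y^2/2$ on $|y|\le L$, and using that $\alpha\in[3/2,2]$, $\beta\in(0,1]$ force $L^2/2\le\tfrac12\beta^{\kappa}s=\tfrac{4}{7}A$, we obtain
\begin{align*}
\mathcal{B}\;\subseteq\;\Big\{\inf_{|y|\le L}\bigl[\widetilde{\mathfrak{h}}_t(y)-\widetilde{\mathfrak{h}}_t(0)\bigr]\le-\tfrac{3}{8}\beta^{\kappa}s\Big\}.
\end{align*}
It therefore suffices to bound the oscillation of the stationary process $\widetilde{\mathfrak{h}}_t$ on $[-L,L]$.

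\textbf{Favorable event and Gibbs representation.} Fix $L':=CL$ with $C$ a large absolute constant, $\nu\in(0,1]$ small, and $M:=C' s^{2\alpha/3}$ with $C'$ large. Define
\begin{align*}
\mathsf{Fav}:=\Bigl\{\sup_{|y|\le L'}\bigl|\widetilde{\mathfrak{h}}_t^{(1)}(y)\bigr|\le M\Bigr\};
\end{align*}
by the strict ordering $\mathfrak{h}_t^{(2)}\le\mathfrak{h}_t^{(1)}$ of the KPZ line ensemble, the analogous upper bound on $\widetilde{\mathfrak{h}}_t^{(2)}$ is automatic. Since $\alpha\ge 3/2$ gives $\nu(L')^2\le M$, Propositions~\ref{ppn:LowerTail} and~\ref{ppn:UpperTail} yield $\mathbb{P}(\neg\mathsf{Fav})\le e^{-cM^{3/2}}\le e^{-cs^{\alpha}}$. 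On $\mathsf{Fav}$, Lemma~\ref{lem:strongBGP} identifies the conditional law of $\mathfrak{h}_t^{(1)}|_{(-L',L')}$ given $\mathfrak{F}_{\mathrm{ext}}(\{1\}\times(-L',L'))$ as a Brownian bridge $B$ from $\mathfrak{h}_t(-L')$ to $\mathfrak{h}_t(L')$ reweighted by $W(B):=\exp\bigl\{-\int_{-L'}^{L'}\mathbf{H}^{\mathrm{long}}_t\bigl(\mathfrak{h}_t^{(2)}(x)-B(x)\bigr)\,dx\bigr\}$, so the conditional probability of $\mathcal{B}$ equals $\mathbb{E}^{\mathrm{BB}}[\mathbf{1}_\mathcal{B}\,W]/\mathbb{E}^{\mathrm{BB}}[W]$.

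\textbf{Bridge estimate and main obstacle.} The numerator is bounded by $\mathbb{P}^{\mathrm{BB}}(\mathcal{B})$ since $W\le 1$. Writing $B(y)-B(0)=\tfrac{y}{2L'}(\mathfrak{h}_t(L')-\mathfrak{h}_t(-L'))+(Z(y)-Z(0))$ for a Brownian bridge $Z$ with zero endpoints, the drift has absolute value $\le LM/L'\le\beta^{\kappa}s/32$ by the choice of $C$, and $\mathrm{Var}(Z(y)-Z(0))\le|y|$ together with a reflection-principle bound gives $\mathbb{P}(\inf_{|y|\le L}(Z(y)-Z(0))\le-\tfrac{11}{32}\beta^\kappa s)\le e^{-cs^\alpha}$. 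The principal obstacle is the denominator $\mathbb{E}^{\mathrm{BB}}[W]$: plain Brownian bridges with boundary values near $-(L')^2/2$ stay close to $-(L')^2/2$, while $\mathfrak{h}_t^{(2)}$ tracks the parabolic profile $-y^2/2+O(M)$ on $\mathsf{Fav}$, so the Hamiltonian integrand $\exp\bigl(t^{1/3}(\mathfrak{h}_t^{(2)}-B)\bigr)$ is astronomically large for typical bridge paths. One resolves this by restricting to the sub-event $\{B(\cdot)\ge \mathfrak{h}_t^{(2)}(\cdot)+t^{-1/3}\log(4L')\text{ on }[-L',L']\}$, on which $W\ge e^{-1/2}$, and showing via Gaussian concentration of the bridge around its linear interpolation (tuned to $\mathsf{Fav}$) that this sub-event carries Brownian-bridge probability of order one. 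Combining the numerator and denominator estimates with $\mathbb{P}(\neg\mathsf{Fav})\le e^{-cs^\alpha}$ yields \eqref{inf-smdiffbd}.
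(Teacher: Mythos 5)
Your plan has the right ingredients (Brownian Gibbs property, a favorable event controlling the curve at the boundary, a Gaussian bridge estimate), but there are two concrete problems, one a wrong fact and one a missing idea.

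First, the claim ``by the strict ordering $\mathfrak{h}_t^{(2)}\le\mathfrak{h}_t^{(1)}$'' is false for the KPZ line ensemble. Unlike the Airy line ensemble, the curves interact via the \emph{soft} Hamiltonian $\mathbf{H}_t^{\mathrm{long}}(x)=e^{t^{1/3}x}$ (Definition~\ref{LineEns}), which penalizes but does not forbid crossing; $\mathfrak{h}_t^{(2)}$ can and does rise above $\mathfrak{h}_t^{(1)}$. So an upper bound on $\widetilde{\mathfrak{h}}_t^{(1)}$ on your favorable event does not yield a bound on $\widetilde{\mathfrak{h}}_t^{(2)}$ ``automatically,'' and your $\mathsf{Fav}$ does not control the lower boundary data of the conditional law as asserted.

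Second, and more fundamentally: you correctly flag the normalization $\mathbb{E}^{\mathrm{BB}}[W]$ as the principal obstacle, but the proposed remedy (restrict to a sub-event of bridge paths above $\mathfrak{h}_t^{(2)}$ and claim it has ``Brownian-bridge probability of order one'') is not substantiated and is not clearly true once $L'$ grows with $s$ (as it does for $\alpha<2$), since plain bridges with both endpoints near $-(L')^2/2$ are exponentially unlikely to rise to the parabolic profile that $\mathfrak{h}_t^{(2)}$ tracks in the interior. The paper avoids this denominator entirely by invoking the stochastic monotonicity of Lemma~\ref{Coupling1}: replacing the lower boundary $\mathfrak{h}_t^{(2)}$ by $-\infty$ makes the conditional law a \emph{plain} Brownian bridge, which is stochastically dominated by the true conditional law; since the target event (infimum below a threshold) is decreasing in the path, one immediately gets $\mathbb{P}_s(\mathcal{A})\le \mathbb{P}_{\mathbf{free}}(\mathcal{A})$ with no Radon--Nikodym computation at all. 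The paper further conditions on the one-sided interval $(0,\beta^{\kappa}s^{1-\alpha/3})$ so that $\mathfrak{h}_t(0)$ sits on the boundary and is measurable with respect to $\mathcal{F}_s$, turning the difference $\inf_y(\mathfrak{h}_t(y)-\mathfrak{h}_t(0))$ into a pure threshold event and controlling the right endpoint (and hence the bridge's linear drift) by a single one-point estimate (the event $\mathsf{B}$); in your symmetric window $(-L',L')$, $\mathfrak{h}_t(0)$ is interior and random, which complicates the decreasing-event structure. I suggest reworking the argument around Lemma~\ref{Coupling1} and a one-sided conditioning interval.
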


\begin{proof} Let us define 
	\begin{align*}
	\mathsf{A}:=\left\{ \inf_{y\in [0,\beta^{2\kappa}s^{2-\alpha}]} (\h_t(y)-\h_t(0))\le -\frac{7}{8}\beta^{\kappa}s\right\}, \ \ \mathsf{B}:= \left\{\h_t(\beta^{\kappa}s^{1-\frac{\alpha}3})-\h_t(0)\le -\frac{3s^{2\alpha/3}}{4}\right\}.
	\end{align*}
	We seek to show that $\Pr(\m{A})$ is bounded above by $\exp(-cs^{\alpha})$ for all large $s$ with some constant $c>0$. Observe that $\Pr(\m{A}) \le \Pr(\m{A} \cap \neg \m{B})+\Pr(\m{B})$. In what follows, we show that there exists $s_0=s_0(t_0)$, $c>0$ such that for all $s\geq s_0$ and $t>t_0$, 
	\begin{align*}
	\underbrace{\Pr(\m{B})\leq \exp(-cs^{\alpha})}_{(\mathbf{I})}, \qquad \underbrace{\Pr(\m{A}\cap \neg \m{B}) \leq  \exp(-cs^{\alpha})}_{(\mathbf{II})}.
	\end{align*}
	Combining $(\mathbf{I})$ and $(\mathbf{II})$ will bound $\Pr(\m{A})$. By repeating the same argument for the interval $[-\beta^{2\kappa}s^{2-\alpha},0]$, one can show $$\mathbb{P}\Big(\inf_{y\in [-\beta^{2\kappa}s^{2-\alpha},0]} (\h_t(y)-\h_t(0))\le -\frac{7}{8}\beta^{\kappa}s\Big)\leq e^{-cs^{\alpha}}. $$
 Combining this inequality with the upper bound on $\Pr(\m{A})$ will complete the proof of this proposition. Throughout the rest, we prove the inequalities $(\mathbf{I})$ and $(\mathbf{II})$.
\smallskip	
	
	We first show the inequality  $(\mathbf{I})$. Note that $\m{B}$ is contained in the union of $\{\h_t(\beta^{\kappa}s^{1-\alpha/3})\leq -5s^{2\alpha/3}/8\}$ and $\{\h_t(0)\geq s^{2\alpha/3}/8\}$. By the union bound, 
	\begin{align}\label{eq:BBound}
	\Pr(\m{B}) & \le \Pr\Big(\h_t(\beta^{\kappa}s^{1-\frac{\alpha}3})+ \frac{\beta^{2\kappa}s^{2-\frac{2\alpha}3}}{2}\le -\frac{5s^{2\alpha/3}}{8}+\frac{\beta^{2\kappa}s^{2-\frac{2\alpha}3}}{2}\Big)+\Pr\Big(\h_t(0)\ge \frac{s^{2\alpha/3}}{8}\Big)
	\end{align}
	Due to the stationarity, $\h_t(\beta^{\kappa}s^{1-\frac{\alpha}3})+ \beta^{2\kappa}s^{2-\frac{2\alpha}3}/2$ is same in distribution with $\h_t(0)$. Furthermore we have the inequality $-5s^{2\alpha/3}8+\beta^{2\kappa}s^{2-\frac{2\alpha}3}/2\leq -s^{2\alpha/3}/8$ because $\alpha\geq 3/2$ and $\beta \leq 1$. Combining we get 
	\begin{align*}
	\text{r.h.s. of \eqref{eq:BBound}}\leq \Pr\Big(\h_t(0)\le -\frac{s^{2\alpha/3}}{8}\Big)+\Pr\Big(\h_t(0)\ge \frac{s^{2\alpha/3}}{8}\Big).
	\end{align*}
 Using \eqref{eq:LowTailUp} of Proposition~\ref{onepointlowtail} and \eqref{eq:UpperTail} of Proposition~\ref{onepointuptail}, we bound $\Pr(\h_t(0)\le -s^{2\alpha/3}/8)$ and $\Pr(\h_t(0)\ge s^{2\alpha/3}/8)$ by $\exp(-cs^{\alpha})$ for some constant $c>0$. Substituting these bound into the right side of the above display yields $\Pr(\m{B})\le 2\exp(-cs^{\alpha})$. 
 \smallskip
 
  Next we show $(\mathbf{II})$. For this, we use the Brownian Gibbs Property of the KPZ line ensemble. Let us denote $\mathcal{I}_{s,\beta}:=(0,\beta^{2\kappa}s^{1-\alpha/3})$. Recall that $\h_t$ is the lowest indexed curve $
\h^{(1)}_t$ of the KPZ line ensemble $\{\h^{(n)}_{t}\}_{n\in \mathbb{N}}$.  
  Let $\mathcal{F}_s:=\mathcal{F}_{\mathbf{ext}}(\{1\},\mathcal{I}_{s,\beta})$ be the $\sigma$-algebra generated by $\{\h^{(1)}_t(x): x\in \mathbb{R}\backslash \mathcal{I}_{s,\beta}\}$ and $\{\h_t^{(n)}(x):x\in \mathbb{R}\}_{n\in \mathbb{N}_{\geq 2}}$. Note that $\neg \m{B}$ is measurable w.r.t. $\mathcal{F}_s$. Thus, we may write  
   \begin{align}\label{eq:Prob}
	\Pr\big(\m{A}\cap \neg \m{B}\big)=\Ex \left[\ind_{\neg \m{B}}\Ex[\ind_{\m{A}}|\mathcal{F}_s]\right]=\Ex \left[\ind_{\neg \m{B}}\Pr_{s}(\m{A})\right].
	\end{align}
	where $\Pr_s:=\Pr_{\mathbf{H}^{\mathrm{long}}_t}^{1,1,(0,\beta^{\kappa}s^{1-\frac{\alpha}3}),\h_t^{(1)}(0),\h_t^{(1)}(\beta^{\kappa}s^{1-\frac{\alpha}3}),+\infty,\h_t^{(2)}}$. By the monotone coupling (Lemma \ref{Coupling1}) $\Pr_s(\m{A})\le \Pr_{\mathbf{free}}(\m{A})$, where $\Pr_{\mathbf{free}}:=\Pr_{\mathbf{H}^{\mathrm{long}}_t}^{1,1,(0,\beta^{\kappa}s^{1-\frac{\alpha}3}),\h_t^{(1)}(0),\h_t^{(1)}(\beta^{\kappa}s^{1-\frac{\alpha}3}),+\infty,-\infty}$ is the law of a Brownian Bridge $\mathfrak{B}$ on $[0,\beta^{\kappa}s^{1-\frac{\alpha}3}]$ with $\mathfrak{B}(0):=\h_t(0)$ and $\mathfrak{B}(\beta^{\kappa}s^{1-\frac{\alpha}3}):=\h_t(\beta^{\kappa}s^{1-\frac{\alpha}3})$. Since $\beta \in (0,1]$ and $\alpha\geq 3/2$, we have $\beta^{\kappa}s^{1-\alpha/3}\geq \beta^{2\kappa} s^{2-\alpha}$. By the affine equivariance of the law of Brownian bridges,
$$\big\{\mathfrak{B}(x): x\in \mathcal{I}_{s,\beta}\big\} \stackrel{d}{=} \big\{\widetilde{\mathfrak{B}}(x)+ \frac{\h_t(\beta^{\kappa}s^{1-\frac{\alpha}3})-\h_t(0)}{\beta^{\kappa}s^{1-\frac{\alpha}3}}y: x\in \mathcal{I}_{s,\beta}\big\}$$
where $\widetilde{\mathfrak{B}}$ is a Brownian Bridge on $[0,\beta^{\kappa}s^{1-\frac{\alpha}3}]$ starting and ending at $0$. Combining these observations with \eqref{eq:Prob} shows  
	\begin{align*}
	\Pr(\m{A}\cap \neg \m{B})& \le \mathbb{E}[\ind_{\neg \m{B}}\Pr_{s}(\m{A})]
	\\ & = \Ex\Big[\ind_{\neg \m{B}}\Pr\big(\inf_{y\in [0,\beta^{2\kappa}s^{2-\alpha}]} \big[\widetilde{\mathfrak{B}}(y)+\frac{\h_t(\beta^{\kappa}s^{1-\frac{\alpha}3})-\h_t(0)}{\beta^{\kappa}s^{1-\frac{\alpha}3}}y\big]\le -\frac78\beta^{\kappa}s \big|\mathcal{F}_s\big)\Big] \\ & \le \Ex\Big[\ind_{\neg \m{B}}\Pr\big(\inf_{y\in [0,\beta^{2\kappa}s^{2-\alpha}]} \widetilde{\mathfrak{B}}(y)-\frac{3s^{2\alpha/3}\beta^{2\kappa}s^{2-\alpha}}{4\beta^{\kappa}s^{1-\frac{\alpha}3}}\le -\frac78\beta^{\kappa}s \big|\mathcal{F}_s\Big)\Big] 
	\\ & \le \Pr\Big(\inf_{y\in [0,\beta^{2\kappa}s^{2-\alpha}]} \widetilde{\mathfrak{B}}(y)\le -\frac{\beta^{\kappa}s}8 \Big)=\Pr\Big(\frac1{\beta^{\kappa}s^{1-\frac{\alpha}{2}} }\inf_{y\in [0,\beta^{2\kappa}s^{2-\alpha}]} \widetilde{\mathfrak{B}}(y)\le -\frac{s^{\alpha/2}}8 \Big)
	\end{align*}
	The inequality in the third line follows since 
	$$\inf_{y\in [0,\beta^{2\kappa}s^{2-\alpha}]} \Big\{\widetilde{\mathfrak{B}}(y)+\frac{\h_t(\beta^{\kappa}s^{1-\frac{\alpha}3})-\h_t(0)}{\beta^{\kappa}s^{1-\frac{\alpha}3}}y\Big\} \geq  \inf_{y\in [0,\beta^{2\kappa}s^{2-\alpha}]} \widetilde{\mathfrak{B}}(y) - \frac{3s^{2\alpha/3}\beta^{2\kappa}s^{2-\alpha}}{4\beta^{\kappa}s^{1-\frac{\alpha}3}} $$ on the event $\neg \m{B}$. 
 The next inequality follows by neglecting the indicator. The last probability is clearly bounded by $\exp(-cs^{\alpha})$ by tail estimates of Brownian motion. This proves $(\mathbf{II})$ and hence, completes the proof of this proposition. 
\end{proof}

%
\begin{proposition}\label{longtime:diffbd} Fix $\kappa>0$. There exist constant $c>0$, $t_0>0$ such that for all $t \ge t_0$ and $\beta\in (0,1]$ and $s\ge s_0(t_0)$ we have
	\begin{align}
	\label{sup-diffbd}\Pr\Big(\sup_{|y|\le \frac1{16}\beta^{2\kappa}\sqrt{s}} (\h_t(y)-\h_t(0))\ge \beta^{\kappa}s\Big) & \le e^{-cs^{3/2}}.
	\end{align}
\end{proposition}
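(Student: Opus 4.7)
The plan is to mirror the structure of the proof of Proposition~\ref{longtime:smdiffbd}, with the subtlety that the supremum is an \emph{increasing} functional of the curve, so the stochastic monotonicity of Lemma~\ref{Coupling1} (comparison with the free Brownian bridge) only gives a lower, not an upper, bound on the probability of $\mathsf{A}$. Set $R:=\tfrac{1}{16}\beta^{2\kappa}\sqrt{s}$, $h:=\beta^\kappa s$, and $r:=\beta^\kappa\sqrt{s}$, so that $r/R = 16\beta^{-\kappa}\geq 16$. By the $y\leftrightarrow -y$ symmetry inherited from stationarity of $\tilde{\mathfrak{h}}_t(y):=\mathfrak{h}_t(y)+y^2/2$ (Lemma~\ref{stationary}), it suffices to prove $\Pr(\mathsf{A})\leq e^{-cs^{3/2}}$ for the one-sided event $\mathsf{A}:=\{\sup_{y\in[0,R]}(\mathfrak{h}_t(y)-\mathfrak{h}_t(0))\geq h\}$.

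Introduce the auxiliary event
$\mathsf{B}:=\{|\tilde{\mathfrak{h}}_t(0)|\geq s/2\}\cup\{|\tilde{\mathfrak{h}}_t(r)|\geq s/2\}$.
By stationarity of $\tilde{\mathfrak{h}}_t$ combined with Propositions~\ref{onepointuptail} and \ref{onepointlowtail}, a union bound yields $\Pr(\mathsf{B})\leq e^{-cs^{3/2}}$, with the one-point upper-tail exponent $3/2$ providing the dominant contribution. On $\neg\mathsf{B}$ one has $|\mathfrak{h}_t(r)-\mathfrak{h}_t(0)|\leq|\tilde{\mathfrak{h}}_t(r)-\tilde{\mathfrak{h}}_t(0)|+r^2/2\leq s + s/2 = 3s/2$, controlling the slope of any linear interpolation between the two endpoints.

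For $\Pr(\mathsf{A}\cap\neg\mathsf{B})$, apply the Brownian Gibbs property of the long-time KPZ line ensemble (Lemma~\ref{line-ensemble}) on $(0,r)$: conditionally on the external $\sigma$-field $\mathcal{F}_s:=\mathcal{F}_{\mathrm{ext}}(\{1\}\times(0,r))$, the law of $\mathfrak{h}_t|_{(0,r)}$ is $\Pr_s:=\mathbb{P}_{\mathbf{H}^{\mathrm{long}}_t}^{1,1,(0,r),\mathfrak{h}_t(0),\mathfrak{h}_t(r),+\infty,\mathfrak{h}_t^{(2)}}$. Rather than stochastic monotonicity (which gives the wrong direction for the increasing event $\mathsf{A}$), use the explicit Radon--Nikodym density $d\Pr_s/d\Pr_{\mathbf{free}} = W/Z$ with $W\in[0,1]$, which yields $\Pr_s(\mathsf{A})\leq Z^{-1}\Pr_{\mathbf{free}}(\mathsf{A})$, where $Z=\mathbb{E}_{\mathbf{free}}[W]$. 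Enlarge $\mathsf{B}$ by a further event ensuring that $\mathfrak{h}_t^{(2)}$ lies below $\mathfrak{h}_t^{(1)}$ by at least a positive constant gap on $[0,r]$; on this enlarged good event $Z\geq c_0>0$, while by standard line-ensemble tail bounds the augmented $\mathsf{B}$ still has probability at most $e^{-cs^{3/2}}$.

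Finally, under $\Pr_{\mathbf{free}}$ decompose $\mathfrak{h}_t(y) = \mathfrak{h}_t(0) + \tfrac{y}{r}\bigl(\mathfrak{h}_t(r)-\mathfrak{h}_t(0)\bigr)+\tilde{B}(y)$, with $\tilde{B}$ a Brownian bridge on $[0,r]$ vanishing at both endpoints. On $\neg\mathsf{B}$ the slope term restricted to $[0,R]$ is at most $(3s/2)(R/r) = (3s/2)(\beta^\kappa/16) = 3h/32\leq h/4$, so $\mathsf{A}\cap\neg\mathsf{B}$ forces $\sup_{y\in[0,R]}\tilde{B}(y)\geq 3h/4$. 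Since $\tilde{B}(y)$ on $[0,R]\subset[0,r]$ has variance at most $y$, the reflection principle gives
\[
\Pr_{\mathbf{free}}\Big(\sup_{y\in[0,R]}\tilde{B}(y)\geq \tfrac{3h}{4}\Big)\leq 2\exp\!\Big(-\tfrac{(3h/4)^2}{2R}\Big) = 2\exp(-cs^{3/2}),
\]
using the crucial cancellation $h^2/R = 16\,s^{3/2}$, which is precisely the Brownian scaling that makes the $\beta$ dependence disappear. Combining these pieces with $\Pr(\mathsf{B})\leq e^{-cs^{3/2}}$ yields the claim. The main obstacle in executing the plan is the lower bound $Z\geq c_0$, which needs a quantitative tail estimate for the gap $\mathfrak{h}_t^{(1)}-\mathfrak{h}_t^{(2)}$ at the points $\{0,r\}$; this is the technical heart, obtainable through additional Brownian Gibbs estimates on the second curve but not directly present in the tools listed in Section~\ref{sec:tools}.
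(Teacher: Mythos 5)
Your proposal correctly diagnoses the central obstacle---that stochastic monotonicity (Lemma~\ref{Coupling1}) runs the wrong way for the \emph{increasing} event $\mathsf{A}$---but the workaround you propose has a genuine gap that the paper avoids entirely by a different route. You reach for the explicit Radon--Nikodym density $W/Z$ with $W\leq 1$ to get $\Pr_s(\mathsf{A})\leq Z^{-1}\Pr_{\mathbf{free}}(\mathsf{A})$, and then need a uniform (random-boundary-data-dependent) lower bound $Z\geq c_0>0$. This requires quantitative control on the \emph{gap} $\mathfrak{h}_t^{(1)}-\mathfrak{h}_t^{(2)}$ near the endpoints $\{0,r\}$ and on $[0,r]$, plus a demonstration that the bad event where $Z$ is small has probability $e^{-cs^{3/2}}$. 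You acknowledge this is the ``technical heart'' and is not available among the stated tools. That acknowledgment is correct: no gap estimate on the second curve is established in Section~\ref{sec:tools} or proved elsewhere in the paper, and without it the argument does not close. So as written, this is a missing ingredient, not a small technicality.

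The paper sidesteps the issue with a stopping-domain trick. It introduces the first hitting point $\chi:=\inf\{y\in[0,R]:\mathfrak{h}_t(y)-\mathfrak{h}_t(0)\geq h\}$ and never tries to bound the conditional probability of the increasing event directly. Instead it decomposes $\{\chi\leq R\}$ into $\mathsf{A}_1\cup\mathsf{A}_2$ according to whether there is an additional \emph{drop} $\mathfrak{h}_t(\chi)-\mathfrak{h}_t(R)\gtrless h/8$ after $\chi$. The event $\mathsf{A}_1$ (no big drop, hence $\mathfrak{h}_t(0)-\mathfrak{h}_t(R)\leq -7h/8$) reduces to the \emph{infimum} bound of Proposition~\ref{longtime:smdiffbd} via stationarity. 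The event $\mathsf{A}_2$ is handled via the strong Brownian Gibbs property on the stopping domain $(\chi,\beta^\kappa\sqrt{s})$, and the conditional event there---namely that $\mathfrak{h}_t(R)$ lies $h/8$ below $\mathfrak{h}_t(\chi)$, equivalently below the linear interpolation of the two endpoint values on the good event about endpoints---is a \emph{decreasing} event, so stochastic monotonicity applies in the usable direction $\Pr_s(\cdot)\leq\Pr_{\mathbf{free}}(\cdot)$. The free bridge then visits below its own chord with probability exactly $1/2$, yielding $\Pr(\mathsf{A}_2\cap\mathsf{B}^{\mathrm{good}})\leq\tfrac12\Pr(\chi\leq R)$, which self-improves after absorbing the exceptional events. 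No bound on $Z$ is ever needed. Your reduction to a one-sided event, your endpoint-control event $\mathsf{B}$, and your Brownian scaling $h^2/R\asymp s^{3/2}$ are all the right ideas and reappear in the paper's argument; what is missing is the conversion of the increasing supremum event into a decreasing drop event via the stopping point $\chi$, which is precisely what makes stochastic monotonicity usable and avoids the $Z$-bound entirely.
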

\begin{proof}  
Let $\mathsf{Sup}_1$ and $\mathsf{Sup}_2$ be the supremum value of $\h_t(y)-\h_t(0)$ for $y \in [0,\frac1{16}\beta^{2\kappa}\sqrt{s}]$ and $y\in [-\frac1{16}\beta^{2\kappa}\sqrt{s},0]$ respectively. In what follows, we only bound $\Pr(\mathsf{Sup}_1\geq \beta^{\kappa}s)$. One can bound $\Pr(\mathsf{Sup}_2\geq \beta^{\kappa}s)$ analogously. Let $\chi$ be the infimum of $y$ in $[0,\frac1{16}\beta^{2\kappa}\sqrt{s}]$ such that $\h_t(y) - \h_t(0)\geq \beta^{\kappa}s$. If there is no such $y$, define $\chi$ to be $+\infty$. Note that $\Pr(\mathsf{Sup}_1\geq \beta^{\kappa}s)= \Pr(\chi\le \frac{1}{16}\beta^{2\kappa}\sqrt{s})$. We can write the event $\{\chi\le \frac{1}{16}\beta^{2\kappa}\sqrt{s}\}$ as a disjoint union of $\m{A}_1$ and $\m{A}_2$ which are defined as
$$\m{A}_1:=\Big\{\chi\le \frac{\beta^{2\kappa}\sqrt{s}}{16},\Big(\h_t(\chi)-\h_t\big(\frac{\beta^{2\kappa}\sqrt{s}}{16}\big)\Big)< \frac{\beta^{\kappa}s}{8}\Big\}, \quad \m{A}_2:=\Big\{\chi\le \frac{\beta^{2\kappa}\sqrt{s}}{16},\Big(\h_t(\chi)-\h_t\big(\frac{\beta^{2\kappa}\sqrt{s}}{16}\big)\Big)\geq  \frac{\beta^{\kappa}s}{8}\Big\}.$$  
In what follows, we show there exist $s_0=s_0(t_0)>0$ and constant $c>0$ such that for all $s\geq s_0$ and $t\geq t_0$, we have 
\begin{align}\label{eq:TwoIneq}
\Pr(A_1)\leq \exp(-cs^{3/2}), \qquad \qquad \Pr(A_2)\leq \frac{1}{2}\Pr(\chi\leq \frac{1}{16}\beta^{2\kappa}\sqrt{s})+ \exp(-cs^{3/2}).
\end{align}
Since $\Pr(\chi\le \frac{1}{16}\beta^{2\kappa}\sqrt{s})= \Pr(\m{A}_1)+ \Pr(\m{A}_2)$, combining the above two inequalities shows $2^{-1}\Pr(\chi\le \frac{1}{16}\beta^{2\kappa}\sqrt{s})\leq 2\exp(-cs^{\alpha})$. Thus, proving \eqref{sup-diffbd} boils down to showing \eqref{eq:TwoIneq}.
	
	We first prove $\Pr(\m{A}_1)\leq \exp(-cs^{3/2})$. By the continuity of the spatial process $\h_t(\cdot)$, we have $\h_t(\chi)=\h_t(0)+\beta^{\kappa}s$ on the event $\{\chi\le \frac{1}{16}\beta^{2\kappa}\sqrt{s}\}$. Thus
	\begin{align*}
	\Pr(A_1) & \le \Pr\Big(\h_t(0)-\h_t(\tfrac{\beta^{2\kappa}\sqrt{s}}{16})\le -\frac{7\beta^\kappa s}8\Big)\le \Pr\left(\inf_{y\in [0,\beta^{2\kappa}\sqrt{s}/16]}\left(\h_t(y)-\h_t(\tfrac{\beta^{2\kappa}\sqrt{s}}{16})\right)\le -\frac{7\beta^\kappa s}{8}\right)
	\end{align*}
	The right hand side of the above inequality is bounded by $\exp(-cs^{3/2})$ due to Proposition~\ref{longtime:smdiffbd} and the stationarity of spatial process $\h_t(x)+ \frac{x^2}{2}$. This proves the first inequality of \eqref{eq:TwoIneq}. 
	
	Now we turn to show the second inequality of \eqref{eq:TwoIneq}. Consider the following event 
 $$\m{B}:=\Big\{\h_t(0)\in [-s/4,s/4], \h_t(\beta^{\kappa}\sqrt{s})\in [-3s/4,s/4]\Big\}.$$
 	Observe that $\Pr(\m{A}_2) \le \Pr(\m{A}_2 \cap \m{B})+\Pr(\neg \m{B})$. By Proposition~\ref{onepointlowtail} and~\ref{onepointuptail}, we get $\Pr(\neg \m{B})\leq \exp(-cs^{3/2})$ for some constant $c>0$ and all large $s$ and $t$. 
 	It suffices to show 
 	\begin{align}\label{eq:A_2B}
 	\Pr(\m{A}_2\cap \m{B})\le 2^{-1}\Pr(\chi\leq \beta^{2\kappa}\sqrt{s}/16)
 	\end{align} 
 	which is proven below.
 	
 	To bound the $\Pr(\m{A}_2\cap \m{B})$ we use the strong Brownian Gibbs property of the KPZ line ensemble. Let $\mathcal{F}_s=\mathcal{F}_{\mathbf{ext}}(\{1\},(\chi,\beta^{\kappa}\sqrt{s}))$ be the $\sigma$-algebra generated by $\{\h_t^{(n)}(x)\}_{n\in \mathbb{N},x\in \mathbb{R}}$ outside $\{\h_t^{(1)}(x)\}_{x\in (\chi,\beta^{\kappa}\sqrt{s})}$. By the tower property of the conditional expectation, we have
	\begin{align}\label{eq:Tower}
	\Pr(\m{A}_2\cap \m{B})=\Ex \left[\ind_{\left\{\chi \le \frac1{16}\beta^{2\kappa}\sqrt{s}\right\} \cap \m{B}}\Ex(\ind_{\m{D}}|\mathcal{F}_s)\right]=\Ex \left[\ind_{\left\{\chi \le \frac1{16}\beta^{2\kappa}\sqrt{s}\right\} \cap \m{B}}\Pr_{s}(\m{D})\right].
	\end{align}
	where $\m{D}:=\{\h_t(\chi)-\h_t(\frac1{16}\beta^{2\kappa}\sqrt{s}) \ge \frac18\beta^{\kappa}s\}$ and $\Pr_s:=\Pr_{\mathbf{H}^{\mathrm{long}}_t}^{1,1,(\chi,\beta^{\kappa}\sqrt{s}),\h_t^{(1)}(\chi),\h_t^{(1)}(\beta^{\kappa}\sqrt{s}),+\infty,\h_t^{(2)}}$. We show that $\Pr_s(\m{D})\leq \frac12$ on the event $\{\chi\leq \frac{1}{16}\beta^{2\kappa}\sqrt{s}\}\cap \m{B}$. By Lemma~\ref{Coupling1}, $\Pr_s(\m{D})\le \Pr_{\mathbf{free}}(\m{D})$, where $\Pr_{\mathbf{free}}:=\Pr_{\mathbf{H}^{\mathrm{long}}_t}^{1,1,(\chi,\beta^{\kappa}\sqrt{s}),\h_t^{(1)}(\chi),\h_t^{(1)}(\beta^{\kappa}\sqrt{s}),+\infty,-\infty}$ is the law of a Brownian Bridge $\mathfrak{B}(\cdot)$ on $[\chi,\beta^{\kappa}\sqrt{s}]$ with $\mathfrak{B}(\chi):=\h_t(\chi)$ and $\mathfrak{B}(\beta^{\kappa}\sqrt{s}):=\h_t(\beta^{\kappa}\sqrt{s})$. Let us define $$\mathfrak{B}_{\mathrm{Interpole}}(y) =\frac{\beta^{\kappa}\sqrt{s}-y}{\beta^{\kappa}\sqrt{s}-\chi} \h_t(\chi) + \frac{y-\chi}{\beta^{\kappa}\sqrt{s}-\chi} \h_t(\beta^{\kappa}\sqrt{s}) \quad \text{for }y \in  [\chi,\beta^{\kappa}\sqrt{s}].$$
Note that $\mathfrak{B}_{\mathrm{Interpole}}(y)- \h_t(\chi)$ is equal to $(y-\chi)(\h_t(\beta^{\kappa}\sqrt{s})-\h_t(\chi))/(\beta^{\kappa}\sqrt{s}-\chi)$. On the event $\{\chi\leq \beta^{\kappa}s/16\}\cap \m{B}$, we have $\h_t(\chi)= \h_t(0)+\beta^{\kappa}s$ by the continuity of the spatial process $\h_t$ and hence, $\h_t(\beta^{\kappa}\sqrt{s})-\h_t(\chi)$ is bounded below by $\frac{-3s}{4}-\h_t(0)-\beta^{\kappa}s$ which is further lower bounded by $-2s$. This shows $\mathfrak{B}_{\mathrm{Interpole}}(\beta^{2\kappa}s/16)\geq \h_t(\chi)- \beta^{\kappa}s/8$. However, we know that $\mathfrak{B}(\beta^{2\kappa}s/16)\leq \h_t(\chi)- \beta^{\kappa}s/8$ on the event $\m{D}$. This shows $\mathfrak{B}(\beta^{2\kappa}s/16)\leq \mathfrak{B}_{\mathrm{Interpole}}(\beta^{2\kappa}s/16)$ on the event $\{\chi\leq \beta^{\kappa}s/16\}\cap \m{B}$. However, since $\mathfrak{B}$ is a Brownian bridge and $\mathfrak{B}_{\mathrm{Interpole}}$ is the linear interpolation of the end points of $\mathfrak{B}$, the probability of the event $\{\mathfrak{B}(\beta^{2\kappa}s/16)\leq \mathfrak{B}_{\mathrm{Interpole}}(\beta^{2\kappa}s/16)\}$ is equal to $1/2$. This implies  $\Pr_s(\m{D})\leq \frac12$ on the event $\{\chi \le\beta^{2\kappa}\sqrt{s}/16\} \cap \m{B}$. Substituting this bound into the right hand side of \eqref{eq:Tower} shows $\Pr(\m{A}_2\cap \m{B})\leq \Pr(\chi\leq \beta^{2\kappa}\sqrt{s}/16)/2$. This completes the proof.

\end{proof}

\begin{proposition}\label{shortsupbd} Fix $a\in \R$ and $\delta>0$. There exist $t_0\in (0,1)$ and an absolute constant $c>0$ such that for all $t\le t_0$, $s\ge s_0(t_0)$ satisfying $(|a|+|\delta|)^2-a^2\le \frac{s}{2^8}$, we have 
	\begin{align}\label{short:smallbd}
	\Pr\Big(\sup_{x\in[a,a+\delta]}\Big(\sh_t((4^3t/\pi^3)^{1/8}x)+\frac{x^2}{2}\Big)\ge s\Big)\le e^{-cs^{3/2}}.
	\end{align}
	\begin{align}\label{e:shortinfbd}
	\Pr\Big(\inf_{x\in[a,a+\delta]}\Big(\sh_t((4^3t/\pi^3)^{1/8}x)+\frac{x^2}{2}\Big)\le -s\Big)\le e^{-cs^{2}}+e^{-cs^2t^{-1/8}\delta^{-1}}.
	\end{align}
\end{proposition}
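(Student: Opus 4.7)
Set $\lambda := (4^3 t/\pi^3)^{1/8}$, $y_1 := \lambda a$, $y_2 := \lambda(a+\delta)$, and $\mathfrak{G}(y) := \sh_t(y) + y^2/(2\lambda^2)$. A direct calculation exploiting $(\pi t/4)^{3/4}\lambda^2/t = 1$ combined with Lemma~\ref{stationary} shows that $\mathfrak{G}(y)$ is stationary in $y$. Hence \eqref{short:smallbd} and \eqref{e:shortinfbd} are equivalent to $\Pr(\sup_{y\in[y_1,y_2]}\mathfrak{G}(y)\ge s)\le e^{-cs^{3/2}}$ and $\Pr(\inf_{y\in[y_1,y_2]}\mathfrak{G}(y)\le -s)\le e^{-cs^2}+e^{-cs^2/(\lambda\delta)}$, with $y_2-y_1 = \lambda\delta$. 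The hypothesis $(|a|+\delta)^2-a^2\le s/2^8$ yields the key estimate $\delta^2 \le s/2^8$. Throughout, I will use the $\mathbf{H}^{\mathrm{short}}_t$-Brownian Gibbs property of the short-time KPZ line ensemble from Lemma~\ref{line-ensemble}(4) together with its monotone coupling (Lemma~\ref{Coupling1}).

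\textbf{Infimum bound.} For \eqref{e:shortinfbd}, let $\m{G}:=\{\mathfrak{G}(y_i)\ge -s/2,\ i=1,2\}$. By stationarity and Theorem~\ref{short:lowertail}, $\Pr(\neg\m{G})\le 2e^{-cs^2}$. Conditioning on $\mathcal{F}_{\mathrm{ext}}(\{1\}\times(y_1,y_2))$ and applying Lemma~\ref{Coupling1}, the conditional law of $\sh_t$ on $(y_1,y_2)$ stochastically dominates a free Brownian bridge with endpoints $\sh_t(y_1),\sh_t(y_2)$. Decomposing $\mathfrak{G}(y) = M^{\mathrm{lin}}(y) + \tilde{\mathfrak{B}}(y) - (y-y_1)(y_2-y)/(2\lambda^2)$ with $M^{\mathrm{lin}}$ the linear interpolation of $\mathfrak{G}(y_1),\mathfrak{G}(y_2)$ and $\tilde{\mathfrak{B}}$ a zero-endpoint Brownian bridge of length $\lambda\delta$, on $\m{G}$ the event $\{\inf\mathfrak{G}\le -s\}$ forces $\inf\tilde{\mathfrak{B}}\le -s/2+\delta^2/8\le -s/3$. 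The classical tail $\Pr(\inf\tilde{\mathfrak{B}}\le -s/3)=e^{-2s^2/(9\lambda\delta)}$ matches $e^{-cs^2 t^{-1/8}\delta^{-1}}$ upon substituting $\lambda$.

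\textbf{Supremum bound.} For \eqref{short:smallbd}, I plan to imitate the stopping-time strategy of Proposition~\ref{longtime:diffbd}. Let $\chi:=\inf\{y\in[y_1,y_2]:\mathfrak{G}(y)\ge s\}$ (with $\chi=+\infty$ if this set is empty), set $L:=3\lambda\delta$, and let $\mathcal{F}_s := \mathcal{F}_{\mathrm{ext}}(\{1\}\times(\chi,y_1+L))$. Define $\m{D}:=\{\mathfrak{G}(y_2)\le s/2\}$ and $\m{G}':=\{\mathfrak{G}(y_1+L)\ge -s/4\}$. Since $y_2\in(\chi,y_1+L)$ when $\chi<y_2$, the event $\m{D}$ is not $\mathcal{F}_s$-measurable, whereas $\{\chi\le y_2\}$ and $\m{G}'$ are. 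By stationarity with Corollary~\ref{short-uptail} and Theorem~\ref{short:lowertail}, $\Pr(\neg\m{D})\le e^{-c(s/2)^{3/2}}$ and $\Pr(\neg\m{G}')\le e^{-cs^2}$. The strong $\mathbf{H}^{\mathrm{short}}_t$-Brownian Gibbs property (Lemma~\ref{lem:strongBGP}) together with Lemma~\ref{Coupling1} yields
\[
\Pr\big(\{\chi\le y_2\}\cap\m{D}\cap\m{G}'\big) = \Ex\big[\ind_{\{\chi\le y_2\}\cap\m{G}'}\Pr_s(\m{D})\big] \le \Ex\big[\ind_{\{\chi\le y_2\}\cap\m{G}'}\Pr_{\mathrm{free}}(\mathfrak{G}(y_2)\le s/2)\big],
\]
where $\Pr_{\mathrm{free}}$ denotes the law of a free Brownian bridge on $(\chi,y_1+L)$ with boundary values $\sh_t(\chi),\sh_t(y_1+L)$.

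\textbf{The key halving step.} The crux is verifying $\Pr_{\mathrm{free}}(\mathfrak{G}(y_2)\le s/2)\le 1/2$ on $\{\chi\le y_2\}\cap\m{G}'$, equivalently $\sh_t^{\mathrm{lin}}(y_2)\ge s/2-y_2^2/(2\lambda^2)$. Setting $w:=(y_1+L-y_2)/(y_1+L-\chi)$, the choice $L=3\lambda\delta$ together with $\chi\in[y_1,y_2]$ forces $w\in[2/3,1]$. Using $\sh_t(\chi)=s-\chi^2/(2\lambda^2)$, the lower bound from $\m{G}'$, and the exact identity $P^{\mathrm{lin}}(y_2)-P(y_2)=(y_1+L-y_2)(y_2-\chi)/(2\lambda^2)\le \delta^2\le s/2^8$ for $P(y):=y^2/(2\lambda^2)$, a direct computation gives $\sh_t^{\mathrm{lin}}(y_2)-(s/2-P(y_2)) = s(5w-3)/4 - (P^{\mathrm{lin}}(y_2)-P(y_2))\ge s/12-s/256>0$. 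The resulting halved inequality $\Pr(\chi\le y_2)\le (1/2)\Pr(\chi\le y_2)+\Pr(\neg\m{D})+\Pr(\neg\m{G}')$ self-improves to $\Pr(\chi\le y_2)\le 2[\Pr(\neg\m{D})+\Pr(\neg\m{G}')]\le Ce^{-cs^{3/2}}$. The main obstacle lies precisely in this parabolic bookkeeping: the constraint $(|a|+\delta)^2-a^2\le s/2^8$ is exactly what keeps the convex gap small enough to preserve the $1/2$ bound.
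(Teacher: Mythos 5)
Your proof is correct, and it follows the same overall strategy as the paper: one-point tail bounds to control the endpoints, the $\mathbf{H}^{\mathrm{short}}_t$-Brownian Gibbs property of the short-time line ensemble together with the monotone coupling of Lemma~\ref{Coupling1}, and a stopping-domain halving argument for the supremum plus a Brownian-bridge tail estimate for the infimum.

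Where you genuinely diverge, and to good effect, is in how the parabola is bookkept. You work with the spatially stationary process $\mathfrak{G}(y)=\sh_t(y)+y^2/(2\lambda^2)$ throughout, so the Brownian-bridge decomposition is done relative to the linear interpolation $M^{\mathrm{lin}}$ of $\mathfrak{G}$'s endpoints. Since the endpoint events ($\m{G}$, $\m{G}'$) constrain $\mathfrak{G}$ itself, $M^{\mathrm{lin}}$ is automatically controlled, and the only residual is the sign-definite concave correction $(y-y_1)(y_2-y)/(2\lambda^2)\le\delta^2/8$, respectively the interpolation gap $P^{\mathrm{lin}}(y_2)-P(y_2)\le\delta^2$ for the sup. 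The paper instead shifts by the constant $a^2/2$ and is left with an explicit linear drift $\propto[(a+\delta)^2-a^2]$, whose sign depends on $a$; this forces the case split (a)--(b)--(c) at the end of their Stage 2, and in fact the opening reduction $\Pr(\cdot)\le\Pr(\m{B}_{[a,a+\delta]})$ there requires $a\ge 0$ and is patched by the cases. Your version eliminates the case analysis entirely and the opening reduction issue never arises. On the sup side you also stop directly at level $s$ in $\mathfrak{G}$, whereas the paper first passes to a difference event $\m{C}'$ at level $s/4$ before stopping; both work, yours is one step shorter, and your explicit computation $s(5w-3)/4-(P^{\mathrm{lin}}(y_2)-P(y_2))\ge s/12-s/256>0$ (using $w\ge 2/3$ from the choice $L=3\lambda\delta$ and $\delta^2\le s/2^8$) is a correct and complete verification of the $1/2$-bound. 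One small point worth making explicit if you write this up: $(\chi,y_1+L)$ is a stopping domain because $\{\chi\le a\}$ is determined by the curve on $[y_1,a]$, disjoint from $(a,y_1+L)$, and $y_1+L$ is deterministic; this is the hypothesis needed to invoke Lemma~\ref{lem:strongBGP}.
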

\begin{proof}  
We introduce the shorthand $\tilde{\g}_t(x):= \g_t((4^3t/\pi^3)^{1/8}x)$ which will be used throughout this proof. We divide the proof of this proposition in two stages. We prove \eqref{short:smallbd} and \eqref{e:shortinfbd} in \emph{Stage-1} and \emph{Stage-2} respectively. 
\smallskip 

\emph{Stage-1: Proof of \eqref{short:smallbd}.} Assume $[a,a+\delta]\subset \R_{\ge 0}$. Define 
\begin{align*}
{\m{C}}_{[a,a+\delta]}  :=\Big\{\sup_{x\in[a,a+\delta]}(\tilde{\g}_t(x)+\frac{x^2}{2})\ge s \Big\}, &\quad
\m{C}'_{[a,a+\delta]}  :=\Big\{\sup_{x\in[a,a+\delta]}(\tilde{\g}_t(x)-\tilde{\g}_t(a))\ge \frac{s}{4} \Big\}, \\
{\m{D}}_{w}  := \Big\{-\frac{s}{32}\le &\tilde{\g}_t(w)+\frac{w^2}{2}\le \frac{s}{32} \Big\}
\end{align*}
where $w\in \mathbb{R}$. We seek to show that $\Pr({\m{C}}_{[a,a+\delta]})\leq \exp(-cs^{3/2})$ for all large $s$ and small $t$. Combining the stationarity in $x$ of the process $\tilde{\sh}_t(x)+\frac{x^2}{2}$ (follows from Lemma~\ref{stationary}) with Corollary~\ref{short-uptail} and Theorem~\ref{short:lowertail} yields $\Pr(\neg\m{D}_{w})\le \exp(-cs^{3/2})$ for all $w\in \mathbb{R}$. This will be used throughout the proof.  
On the event $\m{C}_{[a,a+\delta]}\cap \m{D}_{a}$, there exists $x\in [a,a+\delta]$ such that
$$\tilde{\sh}_t(x) \geq s-\frac{x^2}{2}\geq s -\frac{(a+\delta)^2}2\geq \frac{31s}{32}+\tilde{\sh}_t(a)+\frac{a^2}{2}-\frac{(a+\delta)^2}{2}\geq \frac{s}{4}+\tilde{\sh}_t(a)$$
where the second inequality follows since $x\leq a+\delta$, the third inequality follows since $\tilde{\sh}_t(a)+\frac{a^2}{2}\leq s/32$ on $\m{D}_a$ and the last inequality holds since $(a+\delta)^2-a^2\leq s/2^{8}$. The above inequalities shows $\m{C}_{[a,a+\delta]}\cap \m{D}_{a}\subset \m{C}'_{[a,a+\delta]}$ which implies 
\begin{align*}
\Pr({\m{C}}_{[a,a+\delta]}) \le \Pr(\neg\m{D}_{a})+\Pr(\m{C}'_{[a,a+\delta]}).
\end{align*}
Recall that $\Pr(\neg\m{D}_{a})\le \exp(-cs^{3/2})$. To complete the proof, it suffices to show that $\Pr(\m{C}'_{[a,a+\delta]})\leq \exp(-cs^{3/2})$ for large $s$ and small $t$. This we do as follows.

Let $\sigma$ be the infimum of $y\in [a,a+\delta]$ such that $\tilde{\g}_t(y)-\tilde{\g}_t(a)\ge \frac{s}{4}$, with the understanding that $\sigma$ is equal to $+\infty$ if no such point exists. Let us define $\m{B}:= \{\tilde{\g}_t(a+\delta)-\tilde{\g}_t(\sigma)\le -\frac{s}{8}\}$ and write 
\begin{align*}
\Pr(\m{C}'_{[a,a+\delta]}) =\Pr(\sigma\le a+\delta) = \Pr(\{\sigma\le a+\delta\}\cap \m{B})+\Pr(\{\sigma\le a+\delta\}\cap \neg \m{B}) 
\end{align*}
 On the event $\{\sigma \le a+\delta\}$, we have $\tilde{\g}_t(\sigma)=\tilde{\g}_t(a)+\frac{s}{4}$. This implies $\tilde{\sh}_t(a+\delta)- \tilde{\sh}_t(a)=\tilde{\sh}_t(a+\delta)- \tilde{\sh}_t(\sigma)+ s/4\geq -s/8$ on $\{\sigma \le a+\delta\}\cap \neg\m{B}$ and hence, 
\begin{align}
\Pr(\{\sigma \le a+\delta\}\cap \neg\m{B}) &\le \Pr\big(\tilde{\g}_t(a+\delta)+\frac{(a+\delta)^2}2- \tilde{\g}_t(a)- \frac{a^2}2\geq -\frac{s}{8}\big)\nonumber\\
&\leq\Pr\big(\tilde{\g}_t(a+\delta)+\frac{(a+\delta)^2}2>\frac{s}{16}\big)+P\big(\tilde{\g}_t(a)+ \frac{a^2}2\le -\frac{s}{16}\big) \le \exp(-cs^{3/2}).\label{eq:LastIneq}
\end{align}
where the second inequality follows from the union bound and the last inequality follows by combining the stationarity of $\tilde{\sh}_t(x)+\frac{x^2}{2}$ with Corollary~\ref{short-uptail} and Theorem~\ref{short:lowertail}.   

 Now we proceed to bound $\Pr(\{\sigma \le a+\delta\}\cap \m{B})$. By the union bound, we have  
 \begin{equation}\label{eq:sigma}
 \Pr(\{\sigma \le a+\delta\}\cap \m{B}) \le \Pr(\{\sigma \le a+\delta\}\cap \m{B}\cap \m{D}_{a}\cap \m{D}_{a+4\delta})+\Pr(\neg \m{D}_{a})+\Pr(\neg \m{D}_{a+4\delta}). 
 \end{equation} 
 We know $\Pr(\neg \m{D}_{a})+\Pr(\neg \m{D}_{a+4\delta})$ is bounded above by $\exp(-cs^{3/2})$ for some constant $c>0$. In what follows, we show that  
 \begin{align}\label{eq:halfProb}
 \Pr(\{\sigma \le a+\delta\}\cap \m{B}\cap \m{D}_{a}\cap \m{D}_{a+4\delta})\leq \frac12 \Pr(\sigma \le a+\delta).
 \end{align} 
Combing this inequality with \eqref{eq:sigma} and \eqref{eq:LastIneq} show that $\Pr(\m{C}'_{[a,a+\delta]})\leq 2^{-1}\Pr(\m{C}'_{[a,a+\delta]})+ \exp(-cs^{3/2})$ for all large $s$ and small $t$. By simplifying this aforementioned inequality, we get the desired result. 

It remains to show \eqref{eq:halfProb} whose proof is similar to that of \eqref{eq:A_2B}. To avoid the repetition, we sketch the underlying idea without details. The main tool that we use is the Brownian Gibbs property of the short time KPZ line ensemble $\{\sh^{(n)}_t\}_{n\in \mathbb{N}}$ (Recall its definition from \eqref{eq:UsilonNDefd}). By the tower property, we write the left hand side of \eqref{eq:halfProb} as $\Ex[\ind_{\left\{\sigma \le a+\delta\right\}\cap \calD_{a}\cap \m{D}_{a+4\delta}} \Pr_s(\m{B})]$ where $$\Pr_s:=\Pr_{\mathbf{H}^{\mathrm{short}}_t}^{1,1,(4^3t/\pi^3)^{1/8}(\sigma,a+4\delta),\tilde{\g}_t^{(1)}((4^3t/\pi^3)^{1/8}\sigma),\tilde{\g}_t^{(1)}((4^3t/\pi^3)^{1/8}(a+4\delta)),+\infty,\tilde{\sh}_t^{(2)}}.$$ By monotone coupling, $\Pr_s(\m{B})\leq \Pr_{\mathbf{free}}(\m{B})$ where $\Pr_{\mathbf{free}}$ is the law of a free Brownian bridge between $(4^3t/\pi^3)^{1/8}\sigma$ and $(4^3t/\pi^3)^{1/8}(a+4\delta)$ with the value of the end points being $\tilde{\sh}_t(\sigma)$ and $\tilde{\sh}_t(a+4\delta)$. On the event $\{\sigma \le a+\delta\}\cap \m{D}_{a}\cap \m{D}_{a+4\delta}\cap \m{B}$, the value of the Brownian bridge at $(4^3t/\pi^3)^{1/8}(a+\delta)$ has to be lower than the value of the line joining two end points of the Brownian bridge. The probability of this is bounded by $1/2$ which shows $ \Pr_{{\mathbf{free}}}(\m{B})\leq 1/2$ on $\left\{\sigma \le a+\delta\right\}\cap \m{D}_{a}\cap \m{D}_{a+4\delta}$. Hence, we get $\Ex[\ind_{\left\{\sigma \le a+\delta\right\}\cap \m{D}_{a}\cap \m{D}_{a+4\delta}} \Pr_s(\m{B})]$ is less than $\Pr(\sigma\leq a+\delta)/2$. This shows \eqref{eq:halfProb} and hence, completes the proof of \eqref{short:smallbd}.

	\medskip

	\noindent \emph{Stage-2: Proof of \eqref{e:shortinfbd}.} Let us define the following two events:
	\begin{align*}
	\m{B}_{[a,a+\delta]}  =\Big\{\frac{a^2}{2}+\inf_{x\in[a,a+\delta]}\tilde{\sh}_t(x)\le -s \Big\},\qquad  	\m{E}_{w} : = \Big\{\tilde{\sh}_t(w)+ \frac{w^2}{2} \ge -\frac{s}{4}\Big\}
	\end{align*}
	for $w\in \mathbb{R}$. Note that  
\begin{align*}
\Pr\big(\inf_{x\in[a,a+\delta]}\big(\tilde{\sh}_t(x)+\frac{x^2}{2}\big)\le -s\big)\leq\Pr(\m{B}_{[a,a+\delta]}) \le \Pr(\neg\m{E}_a)+\Pr(\neg\m{E}_{a+\delta})+\Pr(\m{B}_{[a,a+\delta]}\cap \m{E}_a\cap \m{E}_{a+\delta}).
\end{align*}
Due to the spatial stationarity of the process $\tilde{\sh}_t(x)+x^2/2$ (see Lemma \ref{stationary}) and Theorem~\ref{short:lowertail}, we have $\Pr(\neg\m{E}_{a+\delta})=\Pr(\neg\m{E}_0)\le \exp(-cs^{2})$ for all large $s$ and small $t$. To complete the proof of \eqref{e:shortinfbd}, it suffices to show 
\begin{align}\label{eq:Intmed}
\Pr(\m{B}_{[a,a+\delta]}\cap \m{E}_a\cap \m{E}_{a+\delta})\leq \exp(-cs^2t^{-1/8}\delta^{-1}).
\end{align}


To show the above inequality, we use the Brownian-Gibbs property of the short time KPZ line ensemble. Recall from \eqref{def:kpz-short-scale} and \eqref{eq:UsilonNDefd} that $\left\{\tilde{\sh}_t((4^3t/\pi^3)^{-1/8}w)\right\}_{w\in\mathbb{R}}$ is same in distribution with $\g_{t}^{(1)}(\cdot)$ where $\g_{t}^{(1)}$ is the lowest indexed curve of the short-time KPZ line ensemble defined in (4) of Lemma \ref{line-ensemble}. Let us set $a':=(4^3t/\pi^3)^{1/8}a$ and $\delta':= (4^3t/\pi^3)^{1/8}\delta$ for convenience. Let $\mathcal{F}_s:=\mathcal{F}_{\mathbf{ext}}(\{1\},(a',a'+\delta'))$ be the $\sigma$-algebra generated by $\{\tilde{\g}_{t}^{(n)}(x)\}_{n\in \mathbb{N}_{\geq 2},x\in \mathbb{R}}$ outside $\{\tilde{\g}_{t}^{(1)}(x)\}_{x\in \mathbb{R}\backslash (a',a'+\delta')}$. Consider the following two measures
$$\Pr_s:=\Pr_{\mathbf{H}^{\mathrm{short}}_t}^{1,1,(a',a'+\delta'),\tilde{\sh}_t(a),\tilde{\sh}_t(a+\delta),\infty,\tilde{\g}_t^{(2)}},\quad \Pr_{\mathbf{free}}:=\Pr_{\mathbf{H}^{\mathrm{short}}_t}^{1,1,(a',a'+\delta'),\tilde{\sh}_t(a),\tilde{\sh}_t(a+\delta),\infty,-\infty}$$ where $\Pr_{\mathbf{free}}$ denotes the law of a Brownian bridge on $[a',a'+\delta']$ with the boundary values $\tilde{\sh}_t(a)$ and $\tilde{\sh}_t(a+\delta)$ respectively.
By the strong Brownian Gibbs property for the short-time KPZ line ensemble,
\begin{align*}
\Pr(\m{B}_{[a,a+\delta]}\cap\m{E}_{a}\cap\m{E}_{a+\delta})=\Ex \left[\ind_{\m{E}_{a}}\ind_{\m{E}_{a+\delta}}\Ex(\m{B}_{[a,a+\delta]}|\mathcal{F}_s)\right]=\Ex \left[\ind_{\m{E}_{a}}\ind_{\m{E}_{a+\delta}}\Pr_{s}(\m{B}_{[a, a+\delta]})\right].
\end{align*}
 Due to the monotone coupling, we know $\Pr_s(\m{B}_{[a,a+\delta]})\le \Pr_{\mathbf{free}}(\m{B}_{[a,a+\delta]})$. Let $\mathfrak{B}$ be a Brownian bridge on $[0,\delta']$ with $\mathfrak{B}(0)=\mathfrak{B}(\delta')=0$. Then, the law of $\mathfrak{B}(x)+\tilde{\sh}_t(a)\frac{\delta'-x}{\delta'}+\tilde{\sh}_t(a+\delta)\frac{x}{\delta'}$ is same as $\Pr_{\mathbf{free}}$. So, we have 
\begin{align}
& \Pr(\m{E}_{a}\cap\m{E}_{a+\delta}\cap\m{B}_{[a,a+\delta]})\nonumber \\ & \le \Ex\Big[\ind_{\m{E}_{a}}\ind_{\m{E}_{a+\delta}} \Pr\Big(\frac{a^2}{2}+\inf_{x\in [0,\delta']} \big(\mathfrak{B}(x)+\tilde{\sh}_t(a)\frac{\delta'-x}{\delta'}+\tilde{\sh}_t(a+\delta)\frac{x}{\delta'}\big)\le -s\Big)\Big] \nonumber\\ & \label{e0}\le \Ex\Big[\ind_{\m{E}_{a}}\ind_{\m{E}_{a+\delta}} \Pr\Big(\frac{a^2}{2}+\inf_{x\in [0,\delta']} \big[\mathfrak{B}(x)+\big(-\frac{s}{4}-\frac{a^2}{2}\big)\frac{\delta'-x}{\delta'}+\big(-\frac{s}{4}-\frac{(a+\delta)^2}{2}\big)\frac{x}{\delta'}\big]\le -s\Big)\Big] \\ & \label{e1} \le \Pr\Big(\frac{a^2-(a+\delta)^2}{2}-\frac{s}{4}+\inf_{x\in [0,\delta']} \big[\mathfrak{B}(x)+\frac{[(a+\delta)^2-a^2](\delta'-x)}{2\delta'}\big]\le -s\Big). 
\end{align} 
The inequality in \eqref{e0} follows by noting that $\tilde{\sh}_t(a)+a^2/2$ and $\tilde{\sh}_t(a+\delta)+(a+\delta)^2/2$ are at least $-s/4$ on the event on $(\m{E}_a\cap \m{E}_{a+\delta})$. The last inequality in \eqref{e1} follows by dropping the indicators $\ind_{\m{E}_{a}}$ and $\ind_{\m{E}_{a+\delta}}$ from inside the expectation. 
Recall that $(|a|+|\delta|)^2 - a^2 \leq s/2^8$. Using this inequality to bound in the last line of the above display yields 
\begin{align}\label{eq:e2}
\text{r.h.s. of \eqref{e1}}\leq\Pr\Big(\inf_{x\in [0,\delta']} \big[\mathfrak{B}(x)+\frac{[(a+\delta)^2-a^2](\delta'-x)}{2\delta'}\big]\le -\frac{3s}{4}+\frac{s}{2^9}\Big). 
\end{align}

We seek to bound the right hand side of the inequality. To this end, we divide the rest of the analysis into three cases: \emph{Case:} (a) when $[a,a+\delta]\subset \mathbb{R}_{\geq 0}$, \emph{Case:} (b) $[a,a+\delta]\subset \mathbb{R}_{\leq 0}$ and, \emph{Case:} (c) when both $[a,a+\delta]\cap \mathbb{R}_{> 0}$ and $[a,a+\delta]\cap \mathbb{R}_{<0}$ are nonempty. 
\smallskip 

\noindent \emph{Case:} (a) 
Note that the drift term of the Brownian bridge in the above display is always positive when $[a,a+\delta]\subset \mathbb{R}_{\geq 0}$. Ignoring the drift term of the Brownian bridge in \eqref{eq:e2} and upper bounding $-\tfrac{3s}{4}+ \tfrac{s}{2^9}$ by $-\frac{s}{4}$, we get
$$\text{r.h.s. of \eqref{eq:e2}}\leq \mathbb{P}\Big(\inf_{x\in [0,\delta^\prime]} \mathfrak{B}(x)\leq -\frac{s}{4}\Big)=   \Pr\Big(\inf_{x\in [0,1]} \widetilde{\mathfrak{B}}(x)\le -\frac{s}{4\sqrt{\delta'}}\Big) \le \exp(-cs^2/\delta')$$
 Here, $\widetilde{\mathfrak{B}}$ is a Brownian bridge on $[0,1]$ with $\widetilde{\mathfrak{B}}(0)= \widetilde{\mathfrak{B}}(1)=0$. The equality in the above display follows from the scale invariance property of the Brownian bridge. The last inequality is obtained by bounding the tail probability of the infimum of a Brownian bridge using reflection principle. Noting that $\delta'\le 2t^{1/8}\delta$, we get \eqref{eq:Intmed} from \eqref{eq:e2}  and hence, obtain \eqref{short:smallbd} when $[a,a+\delta]\subset \R_{\ge 0}$. 
\smallskip
 
\noindent \emph{Case:} (b) The drift term of the Brownian bridge in \eqref{eq:e2} is negative. Nevertheless, the absolute value of the drift term is bounded above by $s/{2^9}$. Adjusting the bound on the drift term in \eqref{eq:e2}, we get 
$$\text{r.h.s. of \eqref{e1}}\leq \mathbb{P}\Big(\inf_{x\in [0,\delta^\prime]} \mathfrak{B}(x)\leq -\frac{3s}{4}+ \frac{s}{2^8}\Big).$$
From the above inequality, the proof of \eqref{eq:Intmed} follows using the similar argument as in \emph{Case} (a). This completes the proof of \eqref{short:smallbd} when  $[a,a+\delta]\subset \R_{\le 0}$.
\smallskip
 
\noindent \emph{Case:} (c) For this case, the drift term of the Brownian bridge could be positive or, negative. When the drift is positive (i.e., $|a+\delta|>|a|$), one can complete the proof of \eqref{eq:Intmed} (and consequently, \eqref{short:smallbd}) using similar argument as in \emph{Case} (a). When the drift is negative, one can use similar argument as in \emph{Case} (b). This completes the proof.  
\end{proof}
Our next and final proposition of this section bounds the tail probabilities of the supremum and infimum of the spatial process $\sh_t(x)+(\pi t/4)^{3/4}x^2/(2t)$ as $x$ varies in $\R$. Proof of this proposition is similar to that of Proposition 4.1 and 4.2 of \cite{corwin2019kpz}. These results proved tail probability bound for the supremum and infimum of the spatial process $\h_t(x)+x^2/2$. The key tools for the proof of those propositions were the one point tail probabilities of $\h_t$ and the Brownian Gibbs property of the long time KPZ line ensemble. In a similar way, proving the following proposition would require one point tail probabilities of $\sh_t$ from Corollary~\ref{short-uptail} and Theorem~\ref{short:lowertail} and the Brownian Gibbs property of the short-time KPZ line ensemble. For brevity, we state the result without giving its proof.

\begin{proposition}\label{sup-proc} Let $\nu>0$. There exist $t_0=t_0(\nu)\in (0,1)$, $c=c(\nu)>0$ and $s=s(\nu)>0$ such that for all $t\le t_0$ and $s\ge s_0$, we have 
\begin{align*}
\Pr\Big(\sup_{x\in\R} \big(\sh_t(x)+\frac{(\pi t/4)^{3/4}(1-\nu)x^2}{2t}\big)\ge s\Big)  \le \exp(-cs^{3/2}), \\ \Pr\Big(\inf_{x\in\R} \big(\sh_t(x)+\frac{(\pi t/4)^{3/4}(1+\nu)x^2}{2t}\big)\le -s\Big)  \le \exp(-cs^{2}). 
\end{align*}
\end{proposition}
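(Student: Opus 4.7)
The plan is to adapt the strategy used for Propositions~4.1 and~4.2 of \cite{corwin2019kpz} to the short-time scaling. The two inputs are the localized supremum/infimum bounds of Proposition~\ref{shortsupbd}, together with the spatial stationarity of $\sh_t(x)+(\pi t/4)^{3/4}x^2/(2t)$ from Lemma~\ref{stationary}. After the change of variables $y=(\pi^3/(4^3 t))^{1/8}x$ and setting $\tilde{\sh}_t(y):=\sh_t((4^3 t/\pi^3)^{1/8}y)$ (matching the notation in the proof of Proposition~\ref{shortsupbd}), the quadratic drift $(\pi t/4)^{3/4}x^2/(2t)$ transforms into $y^2/2$. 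Thus it suffices to bound
\[
\Pr\Big(\sup_{y\in\R}\big(\tilde{\sh}_t(y)+\tfrac{(1-\nu)y^2}{2}\big)\ge s\Big) \quad\text{and}\quad \Pr\Big(\inf_{y\in\R}\big(\tilde{\sh}_t(y)+\tfrac{(1+\nu)y^2}{2}\big)\le -s\Big).
\]

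For the supremum, write $\tfrac{(1-\nu)y^2}{2}=\tfrac{y^2}{2}-\tfrac{\nu y^2}{2}$, so the event forces $\tilde{\sh}_t(y)+y^2/2\ge s+\nu y^2/2$ at some $y$. Decompose $\R$ into dyadic shells $I_0=[-1,1]$ and $I_k=\{y:2^{k-1}\le|y|\le 2^k\}$ for $k\ge 1$; on $I_k$ the threshold is at least $s_k:=s+\nu 2^{2k-3}$. Cover $I_k$ by sub-intervals $[a,a+\delta_k]$ with $\delta_k$ chosen so that the hypothesis $(|a|+\delta_k)^2-a^2\le s_k/2^8$ of Proposition~\ref{shortsupbd} is satisfied; for $|a|\le 2^k$ this forces $\delta_k\lesssim s_k\cdot 2^{-(k+9)}$, yielding $O(2^{2k}/s_k)$ sub-intervals in $I_k$. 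Applying \eqref{short:smallbd} to each sub-interval and summing gives
\[
\sum_{k\ge 0}\frac{C\cdot 2^{2k}}{s_k}\exp(-c s_k^{3/2})\le C'(\nu)\exp(-c s^{3/2}),
\]
where the shells with $\nu 2^{2k}\lesssim s$ contribute at most $\exp(-c s^{3/2})$ after aggregating the polynomial factor, and the tail $\nu 2^{2k}\gtrsim s$ is dominated by the super-exponential decay $\exp(-c(\nu 2^{2k-3})^{3/2})$.

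The infimum bound proceeds along the same dyadic scheme, now invoking \eqref{e:shortinfbd}, which yields for each sub-interval a contribution $\exp(-c s_k^2)+\exp(-c s_k^2 t^{-1/8}\delta_k^{-1})$. With the choice $\delta_k\lesssim s_k\cdot 2^{-(k+9)}$ and $t\le t_0<1$, the second term is bounded by $\exp(-c s_k\cdot 2^k)$ and decays super-exponentially in $k$, while the first term aggregates to $\exp(-c s^2)$ by the same dyadic summation as above. The only technically delicate step, and the main obstacle to a clean write-up, is the bookkeeping required to ensure that the hypothesis of Proposition~\ref{shortsupbd} is satisfied uniformly on every sub-interval in the dyadic cover, and that the polynomial factor $O(2^{2k}/s_k)$ coming from the number of sub-intervals per shell is absorbed by the stretched-exponential (respectively Gaussian) decay of the single-interval estimate. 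Once this balance is set up, the union bound closes and delivers the two claimed tail bounds.
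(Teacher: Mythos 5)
Your dyadic-shell decomposition is a genuinely different route from the paper's intended argument (the remark preceding the statement says one should re-run the Brownian Gibbs argument of Propositions~4.1--4.2 of \cite{corwin2019kpz} against the short-time KPZ line ensemble; you instead reuse the already-proved localized bounds of Proposition~\ref{shortsupbd} and glue them by a union bound). It is a reasonable strategy, but there is a concrete gap in the infimum estimate, and it lives in your choice of $\delta_k$. You take the largest mesh compatible with the hypothesis of Proposition~\ref{shortsupbd}, $\delta_k\lesssim s_k\cdot 2^{-(k+9)}$. This already fails to satisfy that hypothesis on the small shells: for $|a|\lesssim 2^k$ the constraint $(|a|+\delta)^2-a^2\le s_k/2^8$ forces $\delta\lesssim\min\big(\sqrt{s_k},\,s_k 2^{-k}\big)$, and when $s_k\gg 2^{2k}$ (which happens on $I_0$ once $s$ is large) the binding constraint is the $\sqrt{s_k}$ one, not $s_k 2^{-k}$. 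More importantly, even where your $\delta_k$ is admissible, the second term of \eqref{e:shortinfbd} per sub-interval becomes $\exp(-cs_k^2 t^{-1/8}\delta_k^{-1})\sim\exp(-cs_k\cdot 2^k)$, which is superexponential in $k$ but only \emph{linear} in $s$ at $k=0$. That single shell already caps the aggregate at $\exp(-cs)$, so the claimed Gaussian rate $\exp(-cs^2)$ cannot come out of this bookkeeping.

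The fix is small: decouple the mesh from $s_k$ and cap it at a constant, say $\delta_k\equiv 1$. The hypothesis of Proposition~\ref{shortsupbd} with $\delta=1$ and $|a|\le 2^k$ reads $2\cdot 2^k+1\le s_k/2^8$, and since $s_k=s+\nu 2^{2k-3}$, minimizing over $k$ shows this holds for every $k\ge 0$ once $s\ge s_0(\nu)$ with $s_0(\nu)$ of order $2^{19}/\nu$. With $\delta_k=1$ the shell $I_k$ is covered by $O(2^k)$ sub-intervals, and since $t\le t_0<1$ gives $t^{-1/8}\ge 1$, the second term of \eqref{e:shortinfbd} is $\exp(-cs_k^2 t^{-1/8})\le\exp(-cs_k^2)$, matching the first. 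Summing $O(2^k)\exp(-cs_k^2)$ over $k$, splitting at $\nu 2^{2k}\sim s$ exactly as in your supremum count, yields $\exp(-c's^2)$. The same constant mesh also gives the supremum bound $\sum_k O(2^k)\exp(-cs_k^{3/2})\le\exp(-c's^{3/2})$ and sidesteps the hypothesis issue entirely. With this one change your argument closes and stands as a legitimate alternative to the paper's Brownian-Gibbs proof.
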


\section{Spatio-Tempral Modulus of Continuity} \label{sec:diffprob}
 The main goal of this section is to study the temporal modulus of continuity of the KPZ equation and use it for proving Theorem~\ref{thm:ModCont}. The proof of Theorem~\ref{thm:ModCont} requires detailed study of the tail probabilities for difference of the KPZ height function at two distinct time points. This will be explored in Proposition \ref{shortdiff} and~\ref{longdiff}. In particular, Proposition~\ref{shortdiff} will study the tail estimates when two time points are close to each other and Proposition~\ref{longdiff} will focus on the case when the time points are far apart. With these result in hand, we show the H\"older continuity of the sample path of $\h_t$ in Proposition~\ref{temp-modulus}. Below, we first state those propositions; prove Theorem~\ref{thm:ModCont}; and then, complete proving those proposition in three ensuing subsections.  
\smallskip

\begin{proposition}\label{shortdiff} Fix $\e\in (0,\frac14)$. There exist $t_0=t_0(\e) \ge 1, c=c(\e)>0$, and $s_0=s_0(\e)> 0$ such that for all $t \ge t_0$, $s\ge s_0$ and $\beta \le (0,1]$ satisfying $\beta t\le \frac1{t_0}$, we have
	\begin{align} \label{sd-up}
	\Pr(\h_t(1+\beta,0)-\h_t(1,0)\ge \beta^{1/4-\e}s) & \le \exp(-cs^{3/2}), \\ \label{sd-low}
	\Pr(\h_t(1+\beta,0)-\h_t(1,0)\le -\beta^{1/4-\e}s) & \le \exp(-cs^{2}).
	\end{align}
\end{proposition}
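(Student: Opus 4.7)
The plan is to invoke the two-point composition law (Proposition~\ref{ppn:MulpointComposition} with $\alpha_1=1$, $\alpha_2=1+\beta$), which gives
\[
\h_t(1+\beta,0)-\h_t(0) = t^{-1/3}\log\!\int_{\R}e^{t^{1/3}[\h_t(t^{-2/3}y)-\h_t(0)]+t^{1/3}\h^{\ast}(-t^{-2/3}y)}\,dy,
\]
where $\h^{\ast}$ is independent of $\h_t$ and distributed as $\h_t(\beta,\cdot)$. Since $\beta t$ is small, I would re-express $\h^{\ast}$ through the short-time rescaling $\h_t(\beta,x)=(\pi\beta/4)^{1/4}t^{-1/12}\sh^{\ast}_{\beta t}(t^{2/3}x/(\pi\beta t/4)^{1/2})+O(t^{-1/3})$ and change variables $y=(\pi\beta t/4)^{1/2}z$, obtaining
\[
\h_t(1+\beta,0)-\h_t(0) = t^{-1/3}\log\!\int_{\R}e^{t^{1/3}[\h_t(\mu z)-\h_t(0)]+(\pi\beta t/4)^{1/4}\sh^{\ast}_{\beta t}(-z)}\,dz + O(t^{-1/3}),
\]
where $\mu=(\pi/4)^{1/2}\beta^{1/2}t^{-1/6}$. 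Both the prefactor $(\pi\beta t/4)^{1/4}$ in front of the independent short-time field and the H\"older-$\tfrac12$ behaviour of $\h_t$ on the window $|x|\le\mu$ scale like $\beta^{1/4}$, which is what produces the target exponent $1/4$; the small $\e$-loss is traceable to the $(\beta t)^{-1/8+2\e}$ crossover factor in the short-time upper-tail bound of Theorem~\ref{uptail-upb}.

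For the upper tail \eqref{sd-up}, I would split the integral at radius $R=c'\beta^{-2\e}t^{1/6}s^{1/2}$ chosen so that Proposition~\ref{longtime:diffbd} applies with $\kappa=\tfrac14-\e$ on the window $|x|\le\mu R$, producing $\sup_{|x|\le\mu R}[\h_t(x)-\h_t(0)]\le\tfrac14\beta^{1/4-\e}s$ outside an event of probability $\le e^{-cs^{3/2}}$. For the short-time field I would use Proposition~\ref{sup-proc} with $\nu=\tfrac12$: on the event $\sup_{z}[\sh^{\ast}_{\beta t}(-z)+(\pi/4)^{3/4}(\beta t)^{-1/4}z^2/4]\le S$ with $S=c''\beta^{-\e}t^{1/12}s$ (which has probability $\ge 1-e^{-cs^{3/2}}$), the prefactor-weighted contribution $(\pi\beta t/4)^{1/4}\sh^{\ast}_{\beta t}(-z)$ is bounded above by $t^{1/3}\cdot\tfrac14\beta^{1/4-\e}s - (\pi/16)z^2$, producing simultaneously the inner-region exponent target and a Gaussian-type integrand decay $e^{-\pi z^2/16}$. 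On $|z|>R$, bounding $\h_t(\mu z)-\h_t(0)$ by the quarter-parabola-corrected global sup via Propositions~\ref{ppn:UpperTail} ($\nu=\tfrac12$) and~\ref{onepointlowtail} contributes an additional decay $e^{-\pi\beta z^2/16}$, and since $R^2\sim\beta^{-4\e}t^{1/3}s$ the Gaussian tail $e^{-cR^2}\le e^{-ct^{1/3}s}$ strictly dominates the global-sup exponent $e^{ct^{1/3}s}$, rendering the outer contribution subdominant. Dividing by $t^{1/3}$ yields \eqref{sd-up}.

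For the lower tail \eqref{sd-low}, I would lower-bound the integral by its restriction to the window $|z|\le 1$:
\[
\h_t(1+\beta,0)-\h_t(0)\ge \inf_{|x|\le\mu}[\h_t(x)-\h_t(0)] + (\pi\beta/4)^{1/4}t^{-1/12}\inf_{|z|\le 1}\sh^{\ast}_{\beta t}(-z) + O(t^{-1/3}).
\]
Proposition~\ref{longtime:smdiffbd} with $\alpha=2$, $\kappa=\tfrac14-\e$ yields $\inf_{|x|\le\mu}[\h_t(x)-\h_t(0)]\ge -\tfrac12\beta^{1/4-\e}s$ with probability $\ge 1-e^{-cs^2}$ (the required containment $\mu\le\beta^{2\kappa}/16$ is immediate for $t\ge t_0$). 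For the second infimum, Proposition~\ref{shortsupbd}\,\eqref{e:shortinfbd} (with $\delta$ corresponding to $|z|\le 1$, so that $\delta\sim(\beta t)^{-1/8}$ and both exponentials in the tail bound are of order $e^{-cs'^2}$) gives $\inf_{|z|\le 1}\sh^{\ast}_{\beta t}(-z)\ge -c'''\beta^{-\e}t^{1/12}s$ with probability $\ge 1-e^{-cs^2}$, where the additive $(\beta t)^{-1/4}$ correction arising from the $x^2/2$ term in \eqref{e:shortinfbd} is absorbed into the bound by taking $s\ge s_0$ large.

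The main obstacle will be the simultaneous calibration of all the parameter choices: the splitting radius $R$ must lie inside the local spatial-modulus regime of Proposition~\ref{longtime:diffbd} and yet be large enough that the outer Gaussian decay $e^{-cR^2}$ dominates the growth contributions $e^{ct^{1/3}s}$ from the global spatial suprema; correspondingly, the $\e$-loss in the exponent $\tfrac14-\e$ is exactly the amount of slack needed to reconcile these competing constraints while preserving the tail decays $e^{-cs^{3/2}}$ and $e^{-cs^2}$ in \eqref{sd-up} and \eqref{sd-low} respectively.
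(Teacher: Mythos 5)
Your proposal reproduces the paper's proof structure for Proposition~\ref{shortdiff}: the two-point composition law, the short-time rescaling of the independent factor, the window restriction for the lower tail \eqref{sd-low}, and the inner/outer integral split for the upper tail \eqref{sd-up} via Propositions~\ref{longtime:smdiffbd}--\ref{longtime:diffbd} together with the short-time sup/inf estimates. Your change of variables, which absorbs the heat kernel density and the cancelling parabola into the Jacobian, is a cleaner presentation than the explicit bookkeeping of display~\eqref{diffrel}; and applying Proposition~\ref{sup-proc} with $\nu=\tfrac12$ (reserving half the parabola for Gaussian decay of the integrand) is a tidy substitute for the paper's Step~I, which instead runs a union bound over roughly $\beta^{3\kappa-1}$ subintervals via Proposition~\ref{shortsupbd}. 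Two of your side remarks are imprecise, however. First, the outer region is not ``strictly dominated'' by the Gaussian tail: at your cutoff the Gaussian tail is of the \emph{same} order $e^{-ct^{1/3}s}$ as the global-sup contribution $e^{t^{1/3}s/2^{14}}$ coming from Propositions~\ref{ppn:UpperTail} and~\ref{onepointlowtail}. What actually closes Step~II is that the outer piece stays below the \emph{common} target $\exp\big(\tfrac12 t^{1/3}\beta^{1/4-\e}s\big)$, and the parameter tightrope you flag at the end is exactly the resulting two-sided constraint on the splitting radius (it must fit inside the $\frac1{16}\beta^{2\kappa}\sqrt{s}$ window of Proposition~\ref{longtime:diffbd}, yet be large enough for this domination). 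Second, the $\e$-loss in the exponent $\tfrac14-\e$ is \emph{not} traceable to the $(\beta t)^{-1/8+2\e}$ crossover of Theorem~\ref{uptail-upb}: the downstream inputs (Corollary~\ref{short-uptail}, Propositions~\ref{shortsupbd} and~\ref{sup-proc}) are already $\e$-free. It comes rather from the deterministic heat-kernel tail in Stage~1, where the restriction window $\beta^{2\kappa}$ must strictly exceed the Gaussian scale $\beta^{1/2}$ so that the tail $e^{-ct^{1/3}\beta^{4\kappa-1}}$ decays super-exponentially in $\beta^{-1}$ and beats the polynomially small target $\beta^\kappa s$ uniformly over $\beta\in(0,1]$; with $\kappa=\tfrac14$ the Gaussian tail is $O(e^{-ct^{1/3}})$ uniformly in $\beta$ while the target $\beta^{1/4}s$ shrinks, and the bound fails.
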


\begin{proposition}\label{longdiff} Fix $t_0>0$. There exist $c=c(t_0)>0$, and $s_0=s_0(t_0)> 0$ such that for all $t \ge t_0$ satisfying $\beta t\ge t_0$, $\beta\in (0,1]$ and $s\ge s_0$, 
	\begin{align} \label{ld-up}
	\Pr(\h_t(1+\beta,0)-\h_t(1,0)\ge \beta^{1/4}s) & \le \exp(-cs^{3/2}), \\ \label{ld-low}
	 \Pr(\h_t(1+\beta,0)-\h_t(1,0)\le -\beta^{1/4}s) & \le \exp(-cs^{2}).
	\end{align}
\end{proposition}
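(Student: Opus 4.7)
The plan is to invoke the two-point case $(k=2,\,\alpha_1=1+\beta)$ of the multi-point composition law in Proposition~\ref{ppn:MulpointComposition}, which produces a random spatial process $\mathfrak{v}(\cdot):=\h_{(1+\beta)t\downarrow t}(\cdot)$ independent of $\h_t(\cdot)$ and distributed as $\h_t(\beta,\cdot)$. By the KPZ $1{:}2{:}3$ scaling, $\mathfrak{v}(x)\stackrel{d}{=}\beta^{1/3}\h_{\beta t}(x\beta^{-2/3})$, so $\mathfrak{v}(x)+x^2/(2\beta)$ is stationary in $x$ by Lemma~\ref{stationary}, and the hypothesis $\beta t\geq t_0$ lets one apply the one-point and sup/inf bounds of Section~\ref{sec:tools} to $\h_{\beta t}$ (at the cost of a $\beta^{1/3}$ rescaling). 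After the change of variables $x=t^{-2/3}y$ in the definition of $I_t$, the composition formula reads
\begin{equation*}
\h_t(1+\beta,0)-\h_t(0)=\tfrac{2\log t}{3t^{1/3}}+t^{-1/3}\log\int_{\mathbb{R}} e^{t^{1/3}[(\h_t(x)-\h_t(0))+\mathfrak{v}(-x)]}\,dx.
\end{equation*}

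For the upper tail \eqref{ld-up}, I would upper bound the integral by splitting at $|x|=\sqrt\beta$. On $|x|\leq\sqrt\beta$, I factor out $M_1:=\sup_{|x|\leq\sqrt\beta}(\h_t(x)-\h_t(0))$ and control the $\mathfrak{v}$ factor by the Gaussian integral $\int e^{t^{1/3}\mathfrak{v}(-x)}dx\leq e^{t^{1/3}B_\nu}\sqrt{2\pi\beta/((1-\nu)t^{1/3})}$, where $B_\nu:=\sup_x[\mathfrak{v}(x)+(1-\nu)x^2/(2\beta)]$. On $|x|>\sqrt\beta$, the combined parabolic decay $-t^{1/3}(1-\nu)x^2(1+\beta)/(2\beta)$ produces a super-exponentially small factor in $t^{1/3}$ which, for $t\geq t_0$, is dominated by the main term with high probability. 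Balancing scales in Proposition~\ref{longtime:diffbd} (taking $\tfrac{1}{16}\tilde\beta^{2\kappa}\sqrt{s'}=\sqrt\beta$ and $\tilde\beta^\kappa s'=\tfrac14\beta^{1/4}s$, which forces $s'=(s/16)^{4/3}$) shows $M_1\leq\tfrac14\beta^{1/4}s$ except on an event of probability $e^{-cs^2}$, while the scaling $B_\nu\stackrel{d}{=}\beta^{1/3}\sup_v[\h_{\beta t}(v)+(1-\nu)v^2/2]$ combined with Proposition~\ref{ppn:UpperTail} yields $B_\nu\leq\tfrac14\beta^{1/4}s$ except on an event of probability $e^{-c(\beta^{-1/12}s)^{3/2}}\leq e^{-cs^{3/2}}$. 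Combining delivers the desired bound.

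For the lower tail \eqref{ld-low}, I would use the pointwise inequality $\int e^{t^{1/3}A(x)}dx\geq 2r\,e^{t^{1/3}\inf_{|x|\leq r}A(x)}$ with $r=\beta^{2/3}$, reducing the problem to lower bounding $\inf_{|x|\leq\beta^{2/3}}(\h_t(x)-\h_t(0))$ and $\inf_{|x|\leq\beta^{2/3}}\mathfrak{v}(x)$. Proposition~\ref{longtime:smdiffbd} with $\alpha=2$, $\kappa=1/3$ gives the first infimum $\geq -\tfrac12\beta^{1/4}s$ up to probability $e^{-cs^2}$. For the second, the scaling turns it into $\beta^{1/3}\inf_{|y|\leq 1}\h_{\beta t}(y)$; Proposition~\ref{ppn:LowerTail} applied to $\h_{\beta t}$ with $\nu=\tfrac12$ gives $\inf_y[\h_{\beta t}(y)+y^2/4]\geq -s_a$ up to probability $e^{-cs_a^{5/2}}$, so $\inf_{|y|\leq 1}\h_{\beta t}(y)\geq -s_a-\tfrac14$, and the choice $s_a=\tfrac14\beta^{-1/12}s$ yields $\inf_{|x|\leq\beta^{2/3}}\mathfrak{v}(x)\geq -\tfrac12\beta^{1/4}s$ with probability at least $1-e^{-cs^{5/2}}\geq 1-e^{-cs^2}$.

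The main technical obstacle will be absorbing the residual logarithmic corrections $\tfrac{2\log t}{3t^{1/3}}$, $t^{-1/3}\log(2\beta^{2/3})$, and the $\log$-terms from the Gaussian prefactors into $\beta^{1/4}s$ uniformly over the admissible region $\{\beta\in(0,1],\,\beta t\geq t_0,\,t\geq t_0\}$. The key observation is that $\beta t\geq t_0$ forces $|\log\beta|\leq\log(t/t_0)$, so $t^{-1/3}|\log\beta|$ and $t^{-1/3}\log t$ are uniformly bounded for $t\geq t_0$; comparing with $\beta^{1/4}\geq (t_0/t)^{1/4}$, the correction-to-target ratio is controlled by $t_0^{-1/4}t^{-1/12}\log(t/t_0)$, which remains bounded, so $s_0=s_0(t_0)$ alone suffices to dominate these corrections.
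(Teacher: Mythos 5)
Your lower-tail argument for \eqref{ld-low} is essentially sound and follows the paper's strategy (restrict the convolution integral to a small window, lower-bound by the two infima plus a controllable Gaussian/Lebesgue correction; the paper uses the window $|x|\le\beta^{1/2}$ rather than $\beta^{2/3}$, and builds the parabola directly into the $\widehat{\h}_{\beta t}$ term, but these are minor variations). Apart from constant bookkeeping — you already spend the full $\beta^{1/4}s$ budget on the two infima, leaving nothing for the correction term — this part would go through.

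The upper-tail argument for \eqref{ld-up}, however, has a genuine gap at the outer-integral step, and the culprit is the choice of cutoff $|x|=\sqrt\beta$. On $|x|>\sqrt\beta$, the Gaussian tail from the parabolic decay is
$$\int_{|x|>\sqrt\beta}\exp\Bigl(-\tfrac{t^{1/3}(1-\nu)(1+\beta)x^2}{2\beta}\Bigr)dx\approx\exp\bigl(-\tfrac{t^{1/3}(1-\nu)(1+\beta)}{2}\bigr),$$
so after dividing $\log$ by $t^{1/3}$ it contributes a subtraction that is a \emph{constant}, $-\tfrac{(1-\nu)(1+\beta)}{2}$, not of order $s$. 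Meanwhile, the prefactor on this outer piece necessarily involves $\sup_x(\h_t(x)+(1-\nu)x^2/2)+B_\nu-\h_t(0)$, each of which is only controlled at scale $O(s)$ with a probability cost $e^{-c(\cdot)^{3/2}}$; forcing these to be $\leq\beta^{1/4}s$ (so that the constant Gaussian subtraction can win) would require one-point events at threshold $\beta^{1/4}s$, whose probabilities are only $e^{-c\beta^{3/8}s^{3/2}}$ — \emph{not} uniformly $e^{-cs^{3/2}}$ as $\beta\to 0$ (note $\beta$ can be as small as $t_0/t$). This is why the paper splits instead at $|y|\le\tfrac{1}{32}t^{2/3}\sqrt{\beta s}$, i.e.\ $|x|\lesssim\sqrt{\beta s}$: with that cutoff the Gaussian tail produces a subtraction of order $-cs$ after taking $t^{-1/3}\log$, which then dominates the suprema once those are pinned at threshold $s/2^{14}$ (events $\m{A}_3,\m{A}_4$ in the paper, controlled by Propositions~\ref{ppn:UpperTail} and~\ref{onepointlowtail}). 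Your inner-integral estimate with $\kappa$-balancing in Proposition~\ref{longtime:diffbd} is overkill — you got $e^{-cs^2}$ by shrinking the window to $\sqrt\beta$, but this very shrinkage is what breaks the outer-integral step. With the paper's window $\sqrt{\beta s}$, Proposition~\ref{longtime:diffbd} with $\kappa=1/4$ gives $e^{-cs^{3/2}}$ directly for the inner part, and the outer part closes.
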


\begin{remark} 
Note that Proposition~\ref{shortdiff} and~\ref{longdiff} together bounds the upper and lower tail probabilities of the difference of the KPZ height function at any two time points irrespective of their distance. This is in sharp contrast with Theorem~1.5 of \cite{corwin2019kpz} which was able to prove some tail bounds of the KPZ height difference only under the assumption that the two associated time points are far apart. While Proposition~\ref{longdiff} may appear to share the same spirit as \cite[Theorem~1.5]{corwin2019kpz} since they both work under the assumption of the time points being distant from each other, however, the tail bounds of Proposition~\ref{longdiff} (see \eqref{ld-up} and \eqref{ld-low}) improve on the decay exponents in comparison with those in \cite{corwin2019kpz}. That being said, we expect that same tail bounds as in \eqref{ld-up} and \eqref{ld-low} hold even when the exponent of $\beta$ is $\tfrac{1}{3}$ instead of $\tfrac{1}{4}$. Nevertheless, the present tail bounds of Proposition~\ref{shortdiff} and~\ref{longdiff} are sufficient for proving main results of this paper. 
\end{remark}

Proposition~\ref{shortdiff} and~\ref{longdiff} will be proved in Section~\ref{sec:Prop5.1} and~\ref{sec:Prop5.2} respectively. The following proposition is in the same vein as Proposition~\ref{longdiff}.

\begin{proposition}\label{lngdiff} Fix $t_0>0$. For any given $\beta>0$, recall the spatial process $\h_{(1+\beta)t\downarrow t}(\cdot)$ from Proposition~\ref{ppn:MulpointComposition}. There exist $c=c(t_0)>0$, and $s_0=s_0(t_0)> 0$ such that for all $t \ge t_0$, $s\ge s_0, \beta\ge 1$ with $t\ge t_0$ we have
	\begin{align*}
	\Pr(\h_t(1+\beta,0)-\h_{(t+\beta t)\downarrow t}(0)\ge s) & \le \exp(-cs^{3/2}) \\ 
	\Pr(\h_t(1+\beta,0)-\h_{(t+\beta t)\downarrow t}(0) \le -s) & \le \exp(-cs^{2}).
	\end{align*}
\end{proposition}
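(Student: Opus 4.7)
The plan is to mimic the scheme of Proposition~\ref{longdiff} but starting from the multi-point composition law. By Proposition~\ref{ppn:MulpointComposition} with $\alpha_1=1$, $\alpha_2=1+\beta$, we have the pathwise identity $\h_t(1+\beta,0)=I_t(\h_t,\h_{(1+\beta)t\downarrow t})$ with $\h_{(1+\beta)t\downarrow t}$ independent of $\h_t(\cdot)$ and distributed as $\h_t(\beta,\cdot)$. Subtracting $\h_{(1+\beta)t\downarrow t}(0)$ and changing variable $u=t^{-2/3}y$ inside $I_t$ gives the centered representation
$$D:=\h_t(1+\beta,0)-\h_{(1+\beta)t\downarrow t}(0)=\tfrac{2\log t}{3t^{1/3}}+\h_t(0)+t^{-1/3}\log\!\int_{\mathbb{R}}e^{t^{1/3}(G_1(u)+G_2(u))}\,du,$$
where $G_1(u):=\h_t(u)-\h_t(0)$ and $G_2(u):=\h_{(1+\beta)t\downarrow t}(-u)-\h_{(1+\beta)t\downarrow t}(0)$ both vanish at $u=0$. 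Throughout, the rescaling $\h_{(1+\beta)t\downarrow t}(u)\stackrel{d}{=}\beta^{1/3}\h_{\beta t}(u/\beta^{2/3})$ (valid since $\beta t\ge t_0$) transfers estimates for $\h_{\beta t}$ to $\h_{(1+\beta)t\downarrow t}$.

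For the lower tail, restricting the integral to $|u|\le 1$ gives $D\ge \h_t(0)+\inf_{|u|\le 1}G_1(u)+\inf_{|u|\le 1}G_2(u)+O(t^{-1/3})$. Proposition~\ref{longtime:smdiffbd} with $\alpha=2$ bounds $\inf_{|u|\le 1}G_1(u)\le -s$ with probability at most $e^{-cs^{2}}$. The rescaling turns the analogous event for $G_2$ into an inequality for $\h_{\beta t}$-increments on the shrunken window $|v|\le\beta^{-2/3}\le 1$, so Proposition~\ref{longtime:smdiffbd} applied at time $\beta t\ge t_0$ yields the same bound with constants independent of $\beta\ge 1$. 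Combined with Proposition~\ref{onepointlowtail} for $\h_t(0)$ this gives $\Pr(D\le -s)\le e^{-cs^{2}}$.

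For the upper tail, write $G_1+G_2=(U(u)-U(0))+(W(-u)-W(0))-\lambda u^{2}$ with $U(u):=\h_t(u)+u^{2}/2$ and $W(u):=\h_{(1+\beta)t\downarrow t}(u)+u^{2}/(2\beta)$ stationary and $\lambda=(\beta+1)/(2\beta)\in[1/2,1]$. Pulling out $e^{-t^{1/3}u^{2}/4}$ from the integrand (possible since $\lambda\ge 1/2$) and distributing the residual $\lambda-\tfrac14$ among the two pieces as $\mu_1+\mu_2$ bounds the log-integral by $\sup_u[U(u)-\mu_1 u^{2}]-U(0)+\sup_u[W(-u)-\mu_2 u^{2}]-W(0)+O(t^{-1/3}\log t)$. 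The $U$-term has tail $e^{-cs^{3/2}}$ by Propositions~\ref{ppn:UpperTail} and~\ref{onepointlowtail} applied to $\h_t$. The main obstacle is the $W$-term: a naive bound produces a prefactor $\beta^{1/3}$ from the amplitude rescaling of $\h_{(1+\beta)t\downarrow t}$, yielding only $e^{-cs^{3/2}/\beta^{1/2}}$. Removing this prefactor requires forgoing the sup bound on $W$ altogether and instead localizing the integral to $|u|\le R$ with $R$ of order one: inside this window, Proposition~\ref{longtime:diffbd} applied to $\h_{\beta t}$ combined with the rescaling gives a $\beta$-uniform bound on $\sup_{|u|\le R}G_2(u)$, since the $\beta^{1/3}$-factor is absorbed by the Brownian-scale modulus $\sqrt{|v|}$ on the shrunken window $|v|\le R/\beta^{2/3}$; outside this window, $\h_t$'s decay $-u^{2}/2$ dominates the much slower $-u^{2}/(2\beta)$ decay of $\h_{(1+\beta)t\downarrow t}$, so the far-field contribution is negligible provided $R$ is allowed to grow mildly with $s$. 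Executing this uniform-in-$\beta$ localization through the Laplace-type estimate is the technical crux of the argument.
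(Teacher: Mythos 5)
Your lower-tail argument is correct and is exactly the ``ditto.'' The cancellation $\beta^{1/3}\cdot\sqrt{\beta^{-2/3}}=1$ between the amplitude factor in $\h_{(1+\beta)t\downarrow t}(\cdot)\stackrel{d}{=}\beta^{1/3}\h_{\beta t}(\beta^{-2/3}\cdot)$ and the Brownian modulus on the shrunken window $|v|\le\beta^{-2/3}$ is why Proposition~\ref{longtime:smdiffbd}, applied to $\h_{\beta t}$ with the lemma's ``$\beta$'' set to $\beta^{-1/3}\le1$ and $\kappa=1$, gives a $\beta$-uniform bound on $\inf_{|u|\le1}G_2(u)$, and the rest is straightforward.

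For the upper tail you correctly flag the $\beta^{1/3}$-amplitude subtlety that the paper's one-line ``ditto'' does not spell out, and your near-field estimate is the right mechanism: on $|u|\le R$ with $R\asymp\sqrt{s}$, Proposition~\ref{longtime:diffbd} applied to $\h_{\beta t}$ over the window $|v|\le R\beta^{-2/3}$ has threshold $\asymp s/\beta^{1/3}$ and tail $e^{-cs^{3/2}}$ uniformly in $\beta$, because the amplitude and the $|v|^{1/2}$-modulus cancel. The far-field claim---``$\h_t$'s decay $-u^2/2$ dominates~$\dots$ so the far-field contribution is negligible''---does not close the argument, though. The quadratic decay of $\h_t$ only supplies the deterministic weight $e^{-t^{1/3}u^2/4}$ on $|u|>R$; the random factor $e^{t^{1/3}G_2(u)}$ is still present, and bounding $G_2$ there by a global supremum ($\sup_v\h_{\beta t}-\h_{\beta t}(0)$) reinstates the $\beta^{1/3}$, degrading the tail to $e^{-cs^{3/2}/\beta^{1/2}}$. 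Enlarging $R$ to $\gtrsim\beta^{1/6}\sqrt{s}$ so the Gaussian weight absorbs a size-$\beta^{1/3}$ quantity then spoils the near-field estimate, since the window $|v|\le R\beta^{-2/3}\asymp\beta^{-1/2}\sqrt{s}$ is too wide for the modulus bound to recover the $\beta^{1/3}$. The step that actually closes the argument is to re-use the \emph{same} cancellation shell-by-shell on the far field: decompose $\{|u|>R\}$ into dyadic shells $\{2^jR\le|u|<2^{j+1}R\}$, apply Proposition~\ref{longtime:diffbd} to $\h_{\beta t}$ on $|v|\le2^{j+1}R\beta^{-2/3}$ with threshold $\asymp 4^jR^2/\beta^{1/3}$ (so that after multiplying by $\beta^{1/3}$ it is dominated by the Gaussian decay $4^jR^2/4$), and check that the resulting shell probability $e^{-c\,2^{3j}R^3}$ is $\beta$-uniform and sums to $e^{-cR^3}\asymp e^{-cs^{3/2}}$ when $R\asymp\sqrt{s}$. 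This shell-wise use of the spatial modulus, rather than a global supremum on the outer region, is the missing ingredient; it is also not made explicit by the paper, which records the argument only as ``ditto to Proposition~\ref{longdiff} upon switching roles.''
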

 The proofs of Proposition~\ref{longdiff} and Proposition~\ref{lngdiff}, both use the representation $h_t(1+\beta,0)= I_t(\h_t, \h_{(1+\beta)t\downarrow t})$ (see the definition of $I_t$ in \eqref{eq:Icomp}). In fact, the proof of Proposition~\ref{lngdiff} is ditto to that of Proposition~\ref{longdiff} upto switching the role of $\h_t$ and $\h_{(1+\beta)t\downarrow t}$. With the aforementioned switching, the rest of the argument can be carried out exactly in the same way thanks to the fact that the spatial process $\h_{(1+\beta)\downarrow t}(\cdot)$ has the same law as $\h_t(\beta,0)$.  For avoiding repetitions, we will only prove Proposition~\ref{longdiff} and skip the details of the proof of Proposition~\ref{lngdiff}.

Our next result which will proved in Section~\ref{sec:Prop5.4} is on the tail bounds of the modulus of continuity of the KPZ temporal process.
%


\begin{proposition}[Temporal moulus of continuity]\label{temp-modulus} Fix $\e\in (0,\frac14)$. There exist $t_0=t_0(\e), s_0=s_0(\e)>0$ and $c=c(\e)>0$, such that for all $a,t\ge 0$ with $at\ge t_0$ and $s\ge s_0$,
	\begin{align}\label{temp-sup}
	\Pr\left(\sup_{\tau\in [0,a]} \frac{\h_t(a+\tau,0)-\h_t(a,0)}{(\tau/a)^{\frac14-\e}\log^{2/3}\frac{a}{\tau}} \ge a^{1/3}s\right) & \le e^{-cs^{3/2}}, \\ \label{temp-inf}
	\Pr\left(\inf_{\tau\in [0,a]} \frac{\h_t(a+\tau,0)-\h_t(a,0)}{(\tau/a)^{\frac14-\e}\log^{1/2}\frac{a}{\tau}} \le -a^{1/3}s\right) & \le e^{-cs^{2}}.
	\end{align}
\end{proposition}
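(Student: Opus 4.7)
I will approach this via a chaining argument built on top of the two-point increment bounds in Propositions~\ref{shortdiff} and~\ref{longdiff}.

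\textbf{Step 1: Scaling reduction.} Directly from the definition~\eqref{def:kpz-long-scale} of $\h_t(\alpha,x)$,
\[
\h_t(a+\tau,0)-\h_t(a,0) \;=\; a^{1/3}\big[\h_{at}(1+\tau/a,0)-\h_{at}(1,0)\big].
\]
Setting $\beta=\tau/a$ and $t'=at\ge t_0$, the suprema and infima in~\eqref{temp-sup}--\eqref{temp-inf} equal $a^{1/3}$ times the same quantities built from $\h_{t'}(1+\beta,0)-\h_{t'}(1,0)$ with $\beta\in(0,1]$. Thus it suffices to prove, uniformly in $t'\ge t_0$, that
\[
\Pr\Big(\sup_{\beta\in(0,1]}\tfrac{\h_{t'}(1+\beta,0)-\h_{t'}(1,0)}{\beta^{1/4-\e}\log^{2/3}(1/\beta)}\ge s\Big)\le e^{-cs^{3/2}},
\]
together with the analogous $e^{-cs^{2}}$ bound for the $\log^{1/2}$-normalized infimum.

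\textbf{Step 2: Two-point bound at an arbitrary dyadic base-point.} The same scaling identity, applied with base $1+k/2^n$, re-expresses
\[
\h_{t'}(1+\tfrac{k+1}{2^n},0)-\h_{t'}(1+\tfrac{k}{2^n},0)=(1+\tfrac{k}{2^n})^{1/3}\big[\h_{t''}(1+\beta'',0)-\h_{t''}(1,0)\big],
\]
with $t''=t'(1+k/2^n)\in[t',2t']$ and $\beta''=2^{-n}/(1+k/2^n)$, so that $\beta''t''=2^{-n}t'$. Proposition~\ref{longdiff} then applies when $2^{-n}t'\ge t_0$ (the small-$n$ regime) and Proposition~\ref{shortdiff} applies when $2^{-n}t'\le 1/t_0$ (the large-$n$ regime). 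Using $\beta''^{1/4}\le \beta''^{1/4-\e}$ to harmonize the exponents and absorbing the bounded prefactor $(1+k/2^n)^{1/12}$ into the constant, we obtain, for all $u\ge s_0$,
\[
\Pr\Big(\h_{t'}(1+\tfrac{k+1}{2^n},0)-\h_{t'}(1+\tfrac{k}{2^n},0)\ge 2^{-n(1/4-\e)}u\Big)\le e^{-cu^{3/2}},
\]
with the analogous $e^{-cu^2}$-bound for the corresponding lower deviation. Only boundedly many intermediate levels (those with $2^{-n}t'\in(1/t_0,t_0)$) fall between the two propositions; these are handled by a single-scale continuity estimate and absorbed into the constants.

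\textbf{Step 3: Dyadic chaining.} Set $\Delta_{n,k}:=\h_{t'}(1+\tfrac{k+1}{2^n},0)-\h_{t'}(1+\tfrac{k}{2^n},0)$ and $u_n:=s(1+\kappa n^{2/3})$ with a constant $\kappa$ large enough that $cs_0^{3/2}\kappa^{3/2}>\log 2$. Using $(a+b)^{3/2}\ge a^{3/2}+b^{3/2}$ one has $u_n^{3/2}\ge s^{3/2}(1+\kappa^{3/2}n)$, so Step~2 plus a union bound give
\[
\Pr\Big(\max_{0\le k<2^n}\Delta_{n,k}\ge 2^{-n(1/4-\e)}u_n\Big)\le 2^n e^{-cu_n^{3/2}}\le e^{-cs^{3/2}}\cdot 2^n e^{-c\kappa^{3/2}s^{3/2}n},
\]
which is summable in $n$ and bounded by a constant multiple of $e^{-cs^{3/2}}$. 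On the complementary good event, for any dyadic $\beta\in[2^{-n_0-1},2^{-n_0}]$ with binary expansion $\beta=\sum_{n>n_0}\epsilon_n 2^{-n}$, telescoping along the partial sums $\beta^{(n)}=\sum_{m\le n}\epsilon_m 2^{-m}$ gives
\[
\big|\h_{t'}(1+\beta,0)-\h_{t'}(1,0)\big|\le \sum_{n>n_0}2^{-n(1/4-\e)}u_n \;\lesssim\; \kappa s\cdot 2^{-n_0(1/4-\e)}(n_0+1)^{2/3},
\]
since the tail of this geometric-polynomial series is dominated by its leading term. As $2^{-n_0(1/4-\e)}(n_0+1)^{2/3}\asymp \beta^{1/4-\e}\log^{2/3}(1/\beta)$ on the interval $[2^{-n_0-1},2^{-n_0}]$, the normalized supremum over dyadic $\beta$ is bounded by a constant multiple of $s$; temporal continuity of $\h_{t'}$ extends the bound to all $\beta\in(0,1]$, and reabsorbing $\kappa$ into $c$ yields~\eqref{temp-sup}. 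The infimum bound~\eqref{temp-inf} follows by the identical scheme using the $e^{-cu^2}$ lower-tail bounds of Step~2 and the adjusted choice $u_n=s(1+\kappa n^{1/2})$, which produces the $\log^{1/2}(1/\beta)$ gauge in place of $\log^{2/3}(1/\beta)$. The only genuinely nontrivial bookkeeping is in Step~2, in stitching Propositions~\ref{shortdiff} and~\ref{longdiff} together across the bounded transitional band; the remainder is standard Kolmogorov–Chentsov-style chaining, and the two different logarithmic gauges in~\eqref{temp-sup}--\eqref{temp-inf} arise transparently from the two different tail exponents $3/2$ and $2$ at the two-point level.
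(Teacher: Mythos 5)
Your argument follows essentially the same route as the paper: reduce to a unit interval by scaling, establish two-point increment tails at dyadic pairs by stitching Propositions~\ref{shortdiff} and~\ref{longdiff} (absorbing the $\alpha_2^{\kappa_2}\in[1,2^{\kappa_2}]$ prefactor and using $\beta^{1/4}\le\beta^{1/4-\e}$), and then run a Kolmogorov--Chentsov-style dyadic chaining with level-dependent thresholds scaling like $s\,n^{2/3}$ (resp.\ $s\,n^{1/2}$), which is exactly the $(n\log 2)^{2/3}s$ (resp.\ $(n\log 2)^{1/2}s$) weighting in the paper's events $\m{A}_{\mathrm{up}}(s),\m{A}_{\mathrm{low}}(s)$, before telescoping along binary digits. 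One small remark: the ``transitional band'' you flag between the regimes of Propositions~\ref{shortdiff} and~\ref{longdiff} does not actually require a separate continuity estimate, since the $t_0$ in Proposition~\ref{longdiff} is a free parameter that can be taken to be the reciprocal of the $t_0(\e)$ in Proposition~\ref{shortdiff}, so the two ranges $\beta t\le 1/t_0(\e)$ and $\beta t\ge 1/t_0(\e)$ exhaust $\beta\in(0,1]$ with no gap.
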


\subsection{Proof of Theorem~\ref{thm:ModCont}}
 Our proof of Theorem~\ref{thm:ModCont} is built upon Proposition~\ref{temp-modulus} and the spatial modulus of continuity result of the KPZ equation from Theorem~1.4 of \cite{corwin2019kpz}. The mainstay of the proof can be divided into two  conceptual steps: the first step is to construct a dyadic mesh of spatio-temporal points  in $[a,b]\times [c,d]$ (recall $[a,b]$, $[c,d]$ from Theorem~\ref{thm:ModCont}) and prove the modulus of continuity result over those discrete set of points. This last part will achieved through Proposition~\ref{temp-modulus} and Theorem~1.4 of \cite{corwin2019kpz}.  The second step is to extend the modulus of continuity over the dyadic mesh to the set of all spatio-temporal pair of points from $[a,b]\times [c,d]$. Although such proofs are standard in the literature, we present here a full argument for the sake of completeness. 
 
  Consider the dyadic partition $\big\{\cup^{2^{n}}_{k_1,k_2=1}\mathcal{J}^{(n)}_{k_1,k_2}\big\}_{n\in \mathbb{N}}$ of $[a,b]\times [c,d]$ as 
 \begin{align*}
 \mathcal{J}^{(n)}_{k_1,k_2} = [\alpha^{(n)}_{k_1-1}, \alpha^{(n)}_{k_1}]\times [x^{(n)}_{k_2-1}, x^{(n)}_{k_2}], \quad \alpha^{(n)}_{k} = a+ \frac{k}{2^{n}}(b-a),  x^{(n)}_{k} = c+ \frac{k}{2^{n}}(d-c), \quad k=1,2,\ldots , 2^{n}-1.
 \end{align*}
 We introduce the following shorthand notations: 
 \begin{align*}
 \mathfrak{h}^{\nabla,\epsilon_1,\epsilon_2}_{t, k_1,k_2} & := \h_{t}(\alpha^{(n)}_{k_1+\epsilon_1},x^{(n)}_{k_1+\epsilon_2})+\frac{(x^{(n)}_{k_2+\epsilon_1})^2}{2\alpha^{(n)}_{k_1+\epsilon_2}} - \h_{t}(\alpha^{(n)}_{k_1},x^{(n)}_{k_2})- \frac{(x^{(n)}_{k_2})^2}{2\alpha^{(n)}_{k_1}} \\
 \mathrm{Sup}\mathfrak{h}^{\nabla,\epsilon_1,\epsilon_2}_{t, k_1,k_2} &:= \sup_{(\alpha,x)\in \mathcal{J}^{(n)}_{k_1,k_2}}\Big|\h_{t}(\alpha,x)+\frac{x^2}{2\alpha} - \h_{t}(\alpha^{(n)}_{k_1},x^{(n)}_{k_2})- \frac{(x^{(n)}_{k_2})^2}{2\alpha^{(n)}_{k_1}}\Big|
 \end{align*}
 for any $k_1,k_2= 1,\ldots , 2^{n}$ and $\epsilon_1,\epsilon_2\in \{-1,0,1\}$. 
 
 We consider the following event
 \begin{align*}
 \mathfrak{A}_{\mathrm{up}}:= \bigcup_{n=1}^{\infty}\bigcup_{k_1,k_2= 1}^{2^n} \Big\{\mathrm{Sup}\mathfrak{h}^{\nabla,1,1}_{t,k_1,k_2}\geq s\big((d-c)^{1/2}2^{-\frac{n}{2}}+ (b-a)^{\frac{1}{4}-\varepsilon}2^{-n(\frac{1}{4}-\varepsilon)}\big)\big(n\log 2\big)^{\frac{2}{3}}\Big\}
\end{align*}  

 By the union bound, we write 
 \begin{align}
 \mathbb{P}(\mathfrak{A}_{\mathrm{up}}) \leq \sum_{n=1}^{\infty}\sum_{k_1,k_2=1}^{2^n} \mathbb{P}\big(\mathrm{Sup}\mathfrak{h}^{\nabla,1,1}_{t,k_1,k_2}\geq s\big((d-c)^{1/2}2^{-\frac{n}{2}}+ (b-a)^{\frac{1}{4}-\varepsilon}2^{-n(\frac{1}{4}-\varepsilon)}\big)\big(n\log 2\big)^{\frac{2}{3}}\big).  \label{eq:UnionBd}
 \end{align}
 Below, we claim and prove that there exist constant $c>0$ and $s_0>0$ such that for all $s>s_0$, $n\in \mathbb{N}$, $k\in \{1,\ldots , 2^{n}\}$ and $\epsilon_1, \epsilon_2\in \{-1,0,1\}$, 
 \begin{align}\label{eq:SupBd}
 \mathbb{P}\big(\mathrm{Sup}\mathfrak{h}^{\nabla,\epsilon_1,\epsilon_2}_{t,k_1,k_2}\geq s\big((d-c)^{1/2}2^{-\frac{n}{2}}+ (b-a)^{\frac{1}{4}-\varepsilon}2^{-n(\frac{1}{4}-\varepsilon)}\big)\big(n\log 2\big)^{\frac{2}{3}}\big) \leq e^{-cs^{3/2}}
 \end{align}
Before proceeding to the proof of \eqref{eq:SupBd}, we first complete the proof of Theorem~\ref{thm:ModCont} by assuming \eqref{eq:SupBd}. Substituting the probability bound of \eqref{eq:SupBd} into \eqref{eq:UnionBd} and summing shows
\begin{align}\label{eq:AupBd}
\mathbb{P}(\mathfrak{A}_{\mathrm{up}})\leq \sum_{n=1}^{\infty}2^{2n}(2\times e^{-ncs^{3/2}\log 2}) = \sum_{n=1}^{\infty}  2^{-n(cs^{3/2}-2)}\leq e^{-c's^{3/2}}, \quad \forall s>(3/c)^{3/2}
\end{align} 
for some constant $c'>0$. Recall the definition of $\mathcal{C}$ from \eqref{eq:Cgg}. In light of \eqref{eq:AupBd}, the proof of the tail bound of $\mathcal{C}$ in \eqref{eq:CIneq} follows if one can show that $$\{\mathcal{C}\geq Ks\}\subset \mathfrak{A}_{\mathrm{up}}$$ for all large $s$ and some constant $K>0$. This is shown as follows. 

We will prove $\neg \mathfrak{A}_{\mathrm{up}}\subset \{\mathcal{C}\geq Ks\}$. For any given $\alpha_1<\alpha_2$ and $x_1<x_2$, there exists $n_0$ such that $(b-a)2^{-n_0-1}\leq \alpha_2- \alpha_1\leq (b-a) 2^{-n_0}$ and $(d-c)2^{-n_0-1}\leq x_2- x_1\leq (d-c) 2^{-n_0}$. This tells that $\alpha_1,\alpha_2$ may belong to the same dyadic interval $[\alpha^{(n_0)}_{k},\alpha^{(n_0)}_{k+1}]$ or, they belong to the two consecutive dyadic intervals $[\alpha^{(n_0)}_{k-1},\alpha^{(n_0)}_{k}]$
and $[\alpha^{(n_0)}_{k},\alpha^{(n_0)}_{k+1}]$. Similarly, there are two cases possible for $x_1,x_2$. Combination of these two sets of possibilities yields $4$ different cases. We only focus on the case when $\alpha_1,\alpha_2\in [\alpha^{(n_0)}_{k_1}, \alpha^{(n_0)}_{k_1+1}]$ and $x_1,x_2\in [x^{(n_0)}_{k_2}, x^{(n_0)}_{k_2+1}]$ for some $k_1,k_2$. The following conclusion in other cases would be same. Note that 
\begin{align}
\big|&\h_{t}(\alpha_1,x_1) + \frac{x^2_1}{2\alpha_1} - \h_{t}(\alpha_2,x_2) - \frac{x^2_2}{2\alpha_2} \big|\leq 2\sup_{\epsilon_1,\epsilon_2\in \{-1,0,1\}}\mathrm{Sup}\mathfrak{h}^{\nabla,\epsilon_1,\epsilon_2}_{t,k_1,k_2}\label{eq:TwoDiff}
\end{align}
On the event $\neg\mathfrak{U}_{\mathrm{up}}$, we have 
\begin{align}
\text{r.h.s. of \eqref{eq:TwoDiff}} &\leq  2s \big((d-c)^{1/2}2^{-\frac{n}{2}}+ (b-a)^{\frac{1}{4}-\varepsilon}2^{-n(\frac{1}{4}-\varepsilon)}\big)\big(n\log 2\big)^{\frac{2}{3}}\nonumber\\
& \leq 2Cs \Big((\alpha_2-\alpha_1)^{\frac{1}{2}}\big(\log \frac{(b-a)}{(\alpha_2-\alpha_1)}\big)^{\frac{2}{3}}+ (x_2-x_1)^{\frac{1}{4}-\varepsilon}\big(\log \frac{(d-c)}{(x_2-x_1)}\big)^{\frac{2}{3}}\Big)\nonumber\\
& = 2Cs \mathrm{Norm}(\alpha_1,x_1:\alpha_2,x_2) \label{eq:LastDis}
\end{align}
for some constant $C>0$ where the definition of $\mathrm{Norm}(\alpha_1,x_1:\alpha_2,x_2)$ can be found in \eqref{eq:NormCgg}. The second inequality in the above display follows since $(b-a) 2^{-n_0-1}\leq \alpha_2- \alpha_1 $ and $ (d-c) 2^{-n_0}\leq x_2- x_1$. Recall $\mathcal{C}$ from \eqref{eq:Cgg}. Substituting \eqref{eq:LastDis} into \eqref{eq:TwoDiff} and recalling the definition $\mathcal{C}$ yields $\neg \mathfrak{A}_{\mathrm{up}}\subset \neg \{\mathcal{C}\geq 2Cs\}$. Combining this with \eqref{eq:AupBd} shows  \eqref{eq:CIneq}. This proves  Theorem~\ref{thm:ModCont}. It remains to show \eqref{eq:SupBd} which we proves as follows. 

Fix $n\in \mathbb{N}$ and $k_1,k_2 \in \{1,2,\ldots , 2^{n}\}$. Consider the event $\tilde{\mathfrak{A}}_{\mathrm{up}}$ defined as 
\begin{align*}
\tilde{\mathfrak{A}}_{\mathrm{up}}:= \bigcup_{m=n+1}^{\infty}\bigcup_{k'_1= k_1}^{2^{m-n}+k_1}\bigcup_{k'_2= k_2}^{2^{m-n}+k_2} \Big\{\mathfrak{h}^{\nabla,1,1}_{t,k'_1,k'_2}\geq s\big((d-c)^{1/2}2^{-\frac{n}{2}}+ (b-a)^{\frac{1}{4}-\varepsilon}2^{-n(\frac{1}{4}-\varepsilon)}\big)\big(n\log 2\big)^{\frac{2}{3}}\Big\}
\end{align*}
By the triangle inequality, we know $|\mathfrak{h}^{\nabla,1,1}_{t,k'_1,k'_2}|\leq |\mathfrak{h}^{\nabla,1,0}_{t,k'_1,k'_2}|+ |\mathfrak{h}^{\nabla,0,1}_{t,k'_1,k'_2}|$. From Theorem 1.4 of \cite{corwin2019kpz} and Proposition~\ref{shortdiff} respectively, we have 
 \begin{align*}
 \mathbb{P}\big(|\mathfrak{h}^{\nabla,-1,0}_{t,k'_1,k'_2}|\geq s(b-a)^{\frac{1}{4}-\varepsilon}2^{-m(\frac{1}{4}-\varepsilon)}(m\log 2)^{\frac{2}{3}}\big)&\leq e^{-mcs^{3/2}\log 2}\\
  \mathbb{P}\big(|\mathfrak{h}^{\nabla,0,-1}_{t,k'_1,k'_2}|\geq s(d-c)^{\frac{1}{2}}2^{-\frac{m}{2}}(m\log 2)^{\frac{2}{3}}\big)&\leq e^{-mcs^{3/2}\log 2}
 \end{align*}
for some constant $c>0$ and $k'_1 \in \{k_1+1,\ldots , 2^{m-n}+k_1\}$ and $k'_2\in \{k_2+1, \ldots , 2^{m-n}+k_1\}$. Applying these inequalities shows 
\begin{align*}
\mathbb{P}\big(\tilde{\mathfrak{A}}_{\mathrm{up}}\big) \leq \sum_{m=n+1}^{\infty} \sum_{k'_1=k_1}^{2^{m-n}+k_1}\sum_{k'_2=k_2}^{2^{m-n}+k_2} e^{-mcs^{3/2}\log2} = \sum_{m=n+1} 2^{2(m-n)} e^{-mcs^{3/2}\log 2}\leq e^{-nc's^{3/2}}, \quad \forall s>(2/c)^{2/3}
\end{align*}
for some constant $c'>0$. Fix any $\alpha\in [\alpha^{(n)}_{k_1},\alpha^{(n)}_{k_1+1}]$ and $x\in [\alpha^{(n)}_{k_2},\alpha^{(n)}_{k_2+1}]$, we choose four sequences $\{\alpha^{(m)}_{k_{m}}\}_{m>n}$ and $\{x^{(m)}_{k_{m}}\}_{m>n}$ such that $\alpha^{(m)}_{k_{m}}\uparrow \alpha$ and $  x^{(m)}_{k_{m}}\uparrow x$
as $m\to \infty$. On the event $\neg \tilde{\mathfrak{A}}_{\mathrm{up}}$, we have  
\begin{align*}
\Big| &\h_{t}(\alpha^{(m)}_{k_{m}},x^{(m)}_{k_{m}}) +\frac{\big(x^{(m)}_{k_{m}}\big)^2}{2\alpha^{(m)}_{k_{m}}}- \h_{t}(\alpha^{(n)}_{k_1}, x^{(n)}_{k_2}) - \frac{\big(x^{(n)}_{k_{2}}\big)^2}{2\alpha^{(n)}_{k_{1}}} \Big|\\ &\leq  
 \sum_{m=n+1}^{\infty} \sum_{k'_1 = k_1}^{2^{m-n}+k_1} \sum_{k'_2= k_2+1}^{2^{m-n}+k_2} s\big((d-c)^{1/2}2^{-\frac{m}{2}}+ (b-a)^{\frac{1}{4}-\varepsilon}2^{-m(\frac{1}{4}-\varepsilon)}\big)\big(m\log 2\big)^{\frac{2}{3}}\\
&\leq  Cs\big((d-c)^{1/2}2^{-\frac{n}{2}}+ (b-a)^{\frac{1}{4}-\varepsilon}2^{-n(\frac{1}{4}-\varepsilon)}\big)\big(n\log 2\big)^{\frac{2}{3}}
\end{align*}
for some constant $C>0$ which does not depend on $\alpha$ or, $x$. 
Since the space time-process $\h_t(\cdot, \cdot)$ is continuous with probability $1$ (see \cite{BC95}), by letting $n\to \infty$ into the above display and taking supremum over $\alpha\in [\alpha^{(n)}_{k_1}, \alpha^{(n)}_{k_1+1}]$ and $x\in [x^{(n)}_{k_1}, x^{(n)}_{k_2+1}]$, we get 
\begin{align*}
\mathrm{Sup}\mathfrak{h}^{\nabla,1,1}_{t,k_1,k_2}\leq  Cs\big((d-c)^{1/2}2^{-\frac{n}{2}}+ (b-a)^{\frac{1}{4}-\varepsilon}2^{-n(\frac{1}{4}-\varepsilon)}\big)\big(n\log 2\big)^{\frac{2}{3}}. 
\end{align*} 
This implies 
\begin{align*}
\mathbb{P}\Big(\mathrm{Sup}\mathfrak{h}^{\nabla,1,1}_{t,k_1,k_2}\geq   Cs\big((d-c)^{1/2}2^{-\frac{n}{2}}+ (b-a)^{\frac{1}{4}-\varepsilon}2^{-n(\frac{1}{4}-\varepsilon)}\big)\big(n\log 2\big)^{\frac{2}{3}}\Big)\leq \mathbb{P}(\tilde{\mathfrak{A}}_{\mathrm{up}})\leq e^{-nc's^{3/2}} 
\end{align*}
for all large $s$. This completes the proof of Theorem~\ref{thm:ModCont}. 
 
\subsection{Proof of Proposition~\ref{shortdiff}}\label{sec:Prop5.1}  We will prove \eqref{sd-low} and \eqref{sd-up} in \emph{Stage-1} and \emph{Stage-2} respectively. We start with introducing relevant notations which will be used throughout the proof. Fix $t_0>0$, $\e\in (0,\frac14)$ and set $\kappa=\frac14-\e$. By the composition law,
	\begin{align} \label{e:comp}
	\h_t(1+\beta,0)-\h_t(1,0)=\frac{1}{t^{\frac13}}\log\int_{\R} \exp\left(t^{\frac13}\left(\h_t(1,t^{-\frac23}y)+\h_{(t+\beta t)\downarrow t}(-t^{-\frac23}y)-\h_t(1,0)\right)\right)dy
	\end{align}
	where $\h_{(t+\beta t)\downarrow t}(\cdot)$ is independent of $\h_t(1,\cdot)$ and is distributed as $\h_t(\beta,\cdot)$. We define $\widetilde{\h}_t(\beta,\cdot):\mathbb{R}\to \mathbb{R} $ and $\widetilde{\sh}_{\beta t}(\cdot):\mathbb{R}\to \mathbb{R}$ by $\widetilde{\h}_t(\beta,\cdot):= \h_{(t+\beta t)\downarrow t}(\cdot)$ and
	\begin{align*}
	\widetilde{\h}_t(\beta,t^{-\frac23}y)=\frac1{t^{\frac13}}\left(\frac{\pi \beta t}4\right)^{\frac14}\left(\widetilde{\sh}_{\beta t}(z)+\frac{z^2}{2}\right)+\frac{\frac{\beta t}{24}-\log\sqrt{2\pi \beta t}}{t^{\frac13}}-\frac{y^2}{2\beta t^{\frac43}},
	\end{align*}
	where $z=(\pi\beta^5t^5/4)^{-1/8}y$. Note that $\widetilde{\sh}_{\beta t}(x)$ is distributed as $\sh_{\beta t}((4^3t/\pi^3)^{1/8}x)$ and independent of $\h_t(1,\cdot)$. Writing the right hand side of \eqref{e:comp} in terms of of $\widetilde{\sh}_{\beta t}$ yields 
	\begin{equation}
	\begin{aligned}
	\h_t(1+\beta,0)-\h_t(1,0) & =\frac{\beta t^{2/3}}{24}+\frac{1}{t^{1/3}}\log\int_{\R} \frac1{\sqrt{2\pi\beta t}}\exp\left\{-\frac{y^2}{2\beta t}+
	t^{\frac13}\left(\h_t(t^{-\frac23}y)-\h_t(0)\right)\right. \nonumber\\ & \left.\hspace{2cm}+\left(\frac{\pi \beta t}4\right)^{\frac14}\left[\widetilde{\sh}_{\beta t}\left(\frac{-y}{(\pi\beta^5t^5/4)^{1/8}}\right)+\frac{y^2}{2(\pi\beta^5t^5/4)^{1/4}}
	\right]\right\}dy \nonumber\\ & =: \frac{\beta t^{2/3}}{24}+\frac1{t^{1/3}}\log \int_\R X_t(\beta,y) dy. \label{diffrel}
	\end{aligned}
	\end{equation}
	where the space-time stochastic process $X_t(\beta, y):\mathbb{R}_{>0}\times \mathbb{R} \to \mathbb{R}_{\geq 0}$ is defined by the above relation. 
	 We seek for an upper bound and a lower bound for the r.h.s.~of \eqref{diffrel} which will prove \eqref{sd-up} and \eqref{sd-low} respectively. 
	 
	 \medskip
	 
	 \noindent \textbf{Stage-1}. Define $\mathfrak{Int}(\beta,t):= [-t^{2/3}\beta^{2\kappa},t^{2/3}\beta^{2\kappa}]$. From \eqref{diffrel}, $\h_t(1+\beta,0)-\h_t(1,0)$ is bounded below by $t^{-1/3}\log \int_{\mathfrak{Int}(\beta,t)} X_t(\beta, y)dy $. This implies  \begin{align}\label{eq:BaseIneq}
	 \mathbb{P}\big(\h_t(1+\beta,0)- \h_t(1,0)\leq - \beta^{1/4-\varepsilon} s\big)\leq \mathbb{P}\Big(\log \int_{\mathfrak{Int}(\beta,t)} X_t(\beta, y)dy\leq -\beta^{1/4-\varepsilon}t^{1/3}s\Big).
	 \end{align}
	 Below, we find the upper bound to the right hand side of the above inequality. 
	The following inequality is straightforward from the definition of $X_t(\cdot,\cdot)$
\begin{align}\label{a:def}
	\frac{1}{t^{1/3}}\log \int_{\mathfrak{Int}(\beta,t)} X_t(\beta, y)dy & \ge \frac1{t^{1/3}}\log\int_{\mathfrak{Int}(\beta,t)} \frac{e^{-y^2/(2\beta t)}}{\sqrt{2\pi\beta t}}dy+\inf_{|y|\le \beta^{2\kappa}} (\h_t(y)-\h_t(0)) \nonumber\\ & +t^{-\frac13}\big(\frac{\pi \beta t}4\big)^{\frac14}\inf_{|y|\le (\pi\beta^5t^5/4)^{-1/8}t^{2/3}\beta^{2\kappa}}\Big(\widetilde{\sh}_{\beta t}(y)+\frac{y^2}{2}\Big).
	\end{align}
	The first term on the right hand side is deterministic. Using the Gaussian integral bound, we can write    
	\begin{align}\label{non-randomA}
	\frac1{t^{1/3}}\log\int_{\mathfrak{Int}(\beta,t)} \frac{e^{-y^2/(2\beta t)}}{\sqrt{2\pi\beta t}}dy & \ge \frac1{t^{1/3}}\log\Big(1-\exp\big(-t^{1/3}\beta^{4\kappa-1}/2\big)\Big) 
	\ge -\frac{2e^{-t^{1/3}\beta^{4\kappa-1}/2}}{t^{1/3}}.
	\end{align}
	where the last inequality follows since $\log(1-x)\geq -x$ for any $x\in (0,1)$. Note that $4\kappa-1<0$. For any given $s_0(\e)$, choosing $t_0(\e)$ large, we may bound $t^{-1/3}e^{-t^{1/3}\beta^{4\kappa-1}/2}$ by $\beta^{\kappa}s_0/8$ for all $t\ge t_0$, and $\beta \le t_0^{-2}$. This shows there exists $t_0(\e)$ large such that the right hand side of \eqref{non-randomA} is bounded below by $-\beta^{\kappa}s/4$ for all $t\geq t_0$, $\beta t\leq t^{-1}_0$ and $s\geq s_0$.

  By the inequality \eqref{a:def}, \eqref{non-randomA} and the union bound, the right side of \eqref{eq:BaseIneq} is bounded by $\mathbb{P}(\calA_1)+ \mathbb{P}(\calA_2)$ for all $t\geq t_0$, $\beta t\leq t^{-1}_0$ and $s\geq s_0$ where 
	\begin{align*}
	\m{A}_1 & := \Big\{\inf_{|y|\le\beta^{2\kappa}} (\h_t(y)-\h_t(0)) \le -\frac{\beta^{\kappa}s}{8}\Big\}, \\
	\m{A}_2 & := \Big\{\inf_{|y|\le (\pi\beta^5t^5/4)^{-1/8}t^{2/3}\beta^{2\kappa} }\Big(\widetilde{\sh}_{\beta t}(y)+\frac{y^2}{2}\Big) \le -\frac18\beta^{\kappa
		-\frac14}t^{1/12}s\Big\}. 
	\end{align*}
	By setting $\alpha=2$ in Lemma \ref{longtime:smdiffbd}, we get $\Pr(\m{A}_1)\le \exp(-cs^2)$ from \eqref{inf-smdiffbd}. In order to bound $\Pr(\m{A}_2)$, we use Lemma \ref{shortsupbd}. 	Mapping $a\mapsto -(\pi\beta^5t^5/4)^{-1/8}t^{2/3}\beta^{2\kappa}$, $\delta \mapsto 2(\pi\beta^5t^5/4)^{-1/8}t^{2/3}\beta^{2\kappa}$ and $s\mapsto -\frac18\beta^{\kappa
		\frac14}t^{1/12}s$ and choosing $s_0(\e)$ large, we note $(|a|+|\delta|)^2-a^2\leq s/2^{8}$ for all $s\geq s_0$. With those choice of $a,\delta, s$ in hand, the condition of Lemma~\ref{shortsupbd} is satisfied and hence, \eqref{e:shortinfbd} yields
	\begin{align*}
	\Pr(\m{A}_2) & \le \exp(-cs^{2}t^{1/6}\beta^{2\kappa-\frac{1}{2}})+\exp(-cs^{2}t^{1/6}\beta^{2\kappa-\frac{1}{2}}(\beta t)^{-1/8}t^{-1/24}\beta^{\kappa-\frac{3}8}) \\ & \le \exp(-cs^{2}t^{1/6}\beta^{2\kappa-\frac{1}{2}})+\exp(-cs^{2}\beta^{3\kappa-1}) \le \exp(-cs^2). 
	\end{align*}
	Combining the upper bounds on $\mathbb{P}(\m{A}_1)$ and $\mathcal{P}(\m{A}_2)$ and using those to bound the right side of \eqref{eq:BaseIneq} completes the proof of \eqref{sd-low}. 
	
	\medskip

	\noindent\textbf{Stage-2:} Here, we prove \eqref{sd-up}. According to \eqref{diffrel}, $\h_t(1+\beta,0) - \h_t(1,0)$ is a sum of $\beta t^{2/3}/24 + t^{-1/3}\log \int X_t(\beta ,y) dy$. For all $t\geq t_0$ and $\beta>0$ satisfying $\beta t\leq t^{-1}_0$, $\beta t^{2/3}$ is less than $\beta^{1/3}t^{-2/3}_0$. We can choose $s_0(\e)>0$ large such that $\beta t^{2/3}/24 \leq \beta^{1/4-\e}s/2$ for all $s\geq s_0$, $t\geq t_0$ and $\beta$ satisfying $\beta t\leq t^{-1}_0$. Thus, for all $s\geq s_0$, we have 
	\begin{align}\label{eq:X_tRep}
	\mathbb{P}(\h_t(1+\beta,0) - \h_t(1,0)\geq \beta^{1/4-\e}s)\leq \mathbb{P}\big(t^{-1/3} \log \int X_t(\beta, y)dy\geq \beta^{1/4-\e}s/2\big).
	\end{align}
%
	Our objective is to the upper bound the right hand side of the above inequality. To this end, let us denote $\mathfrak{Int}_{s}(\beta,t):=  [-\frac1{64}t^{2/3}\beta^{2\kappa}\sqrt{s},\frac1{64}t^{2/3}\beta^{2\kappa} \sqrt{s}]$. By the union bound, we may write 
	\begin{align*}
	\text{r.h.s. of \eqref{eq:X_tRep}}\leq \underbrace{\Pr\big(\int_{\mathfrak{Int}_{s}(\beta,t)} X_t(\beta, y)dy\geq e^{\frac{s}{2}t^{\frac{1}{3}}\beta^{\frac{1}{4}-\e}}\big)}_{=:(\mathbf{I})}+ \underbrace{\Pr\big(\int_{\mathbb{R}\backslash\mathfrak{Int}_{s}(\beta,t)} X_t(\beta, y)dy\geq e^{\frac{s}{2}t^{\frac{1}{3}}\beta^{\frac{1}{4}-\e}}\big)}_{=:(\mathbf{II})}.
	\end{align*}
 We will show that $(\mathbf{I})$ and $(\mathbf{II})$ are bounded above by $\exp(-cs^{3/2})$ for some constant $c>0$ in \emph{Step I} and \emph{Step II} respectively. Substituting these bounds into the right side of the above inequality completes the proof of \eqref{sd-up}.
\medskip 

\noindent \emph{Step I:} Using similar ideas as in \eqref{a:def}, we have 
	\begin{align*}
	\frac1{t^{1/3}}\log \int_{\mathfrak{Int}_{s}(\beta,t)} X_t(\beta,y)dy & \le  \frac1{t^{1/3}}\log\int_{\mathfrak{Int}_s(\beta,t)} \frac{e^{-y^2/(2\beta t)}}{\sqrt{2\pi\beta t}}dy+ \sup_{|y|\le \frac1{64}\beta^{2\kappa} \sqrt{s}} (\h_t(y)-\h_t(0)) \nonumber\\ & +  \frac1{t^{1/3}}\left(\frac{\pi \beta t}4\right)^{\frac14}\sup_{|y|\le \frac1{64}(\pi\beta^5t^5/4)^{-1/8}t^{2/3}\beta^{2\kappa} \sqrt{s}}\left(\widetilde{\sh}_{\beta t}(y)+\frac{y^2}{2}\right).
	\end{align*}
	Since $(2\pi \beta t)^{-1/2}\int_{\mathfrak{Int}_{s}(\beta,t)} e^{-y^2/2\beta t}dy<1$, from the above inequality and the union bound, it follows that $(\mathbf{I})\leq \Pr(\m{A}_3)+ \Pr(\m{A}_4)$ where 
	\begin{align*}
	\m{A}_3  & := \left\{\sup_{|y|\le\frac1{64}\beta^{2\kappa} \sqrt{s}} (\h_t(y)-\h_t(0)) \ge \frac{\beta^{\kappa}s}{8}\right\}, \\ 
	\m{A}_4 & := \left\{\sup_{|y|\le \frac1{64}(\pi\beta^5t^5/4)^{-1/8}t^{2/3}\beta^{2\kappa} \sqrt{s}}\left(\widetilde{\sh}_{\beta t}(y)+\frac{y^2}{2}\right) \ge \frac18\beta^{\kappa
		-\frac14}t^{1/12}s\right\}. 
	\end{align*}
  Indeed, from Lemma \ref{longtime:diffbd}, we know $\Pr(\m{A}_3) \le \exp(-cs^{3/2})$. In what follows, we claim and prove that  $\Pr(\m{A}_4) \le \exp(-cs^{3/2})$ for all large $s$ and some constant $c>0$. 
	\smallskip 
	
	 Let us denote $\mathfrak{M}:= \frac1{64}(4/\pi)^{1/8}\beta^{2\kappa-\frac58}t^{1/24}\sqrt{s}$ and $\delta:=\frac1{2^{14}}\beta^{\frac{3}8-\kappa}t^{1/24}\sqrt{s}$. Define $N:= \lceil \mathfrak{M}/\delta\rceil$. For any  $a\in \mathbb{R}$, define 
	$$
		\m{B}_{[a,a+\delta]}  =\left\{\sup_{y\in [a,a+\delta]} \left(\widetilde{\sh}_{\beta t}(y)+\frac{y^2}{2}\right) \ge \frac18\beta^{\kappa
			-\frac14}t^{1/12}s \right\}.
	$$
		Notice that $ \m{A}_4 \subset \cup_{i=-N-1}^{N} \m{B}_{[i\delta,(i+1)\delta]}$. Hence, by the union bound 
		\begin{align}\label{eq:A_4Bd}
		\Pr(\m{A}_4)\leq \sum_{i=-N-1}^{N} \Pr(\m{B}_{[i\delta,(i+1)\delta]}).
		 \end{align}
		In what follows, we seek to bound $\Pr(\m{B}_{[a,a+\delta]})$ for $a\in \{-(N+1)\delta,-N\delta,\ldots , N\delta\}$. To this end, we wish to apply Lemma \ref{shortsupbd}. It is readily checked that we have $|(|a|+|\delta|)^2-a^2|\le \frac{\beta^{\kappa-\frac14}t^{1/12}s}{2^{11}}$ for $a\in \{-(N+1)\delta,-N\delta,\ldots , N\delta\}$. Thus with the substitutions $t\mapsto \beta t$, $s\mapsto \beta^{\kappa-\frac14}t^{1/12}s$, and $\delta \mapsto \frac1{2^{14}}\beta^{\frac{3}8-\kappa}t^{1/24}\sqrt{s}$ in Lemma \ref{shortsupbd} we have
		\begin{align*}
		\Pr(\m{B}_{[a, a+\delta]}) & \le \exp(-cs^{3/2}t^{1/8}\beta^{\frac{3\kappa}{2}-\frac{3}{8}})+\exp(-cs^{2}t^{1/6}\beta^{2\kappa-\frac12}(\beta t)^{-1/8}t^{-1/24}\beta^{\kappa-\frac38}s^{-1/2}) \\ & \le \exp(-cs^{3/2}t^{1/8}\beta^{\frac{3\kappa}{2}-\frac{3}{8}})+\exp(-cs^{3/2}\beta^{3\kappa-1}). 
		\end{align*}
Substituting this upper bound into the right hand side of \eqref{eq:A_4Bd} and using the fact that $2(N+1)\le 4N \le 2^{11}\beta^{3\kappa-1}$, we get 
		\begin{align*}
		\Pr(\m{A}_4)  & \le 2^{11}\beta^{3\kappa-1}\left[\exp(-cs^{3/2}t^{1/8}\beta^{\frac{3\kappa}{2}-\frac{3}{8}})+\exp(-cs^{3/2}\beta^{3\kappa-1})\right] \le \exp(-cs^{3/2}).
		\end{align*}
		This completes the proof of the claim. Combining the bounds on $\Pr(\calA_3)$ and $\Pr(\calA_4)$ shows $(\mathbf{I})\leq \exp(-cs^{3/2})$ for all large $s$.

	\medskip	
	\noindent \emph{Step II:} Define $\tilde{y}:=y/(\pi\beta^5t^5/4)^{1/8}$. Recall the definition of $X_{t}(\beta,y)$ from \eqref{diffrel}. Adjusting the parabolic term inside the exponential of $X_t(\beta,y)$, we may rewrite
	\begin{align*}
	X_t(\beta,y)& = \frac{1}{\sqrt{2\pi\beta t}}\exp\Big\{-\frac{y^2}{4\beta t}+
	t^{\frac13}\big(\h_t\big(\frac{y}{t^{\frac23}}\big)-\h_t(0)\big)+\Big(\tfrac{\pi \beta t}4\Big)^{\frac14}\big[\widetilde{\sh}_{\beta t}(\tilde{y})+\frac{\tilde{y}^2}{4}\big]\Big\}\\
	 & \le \exp \Big\{ t^{1/3}\sup_{z\in\R} (\h_t(z)-\h_t(0)) +\Big(\tfrac{\pi \beta t}4\Big)^{\frac14}\sup_{z\in\R} \big[\widetilde{\sh}_{\beta t}(z)+\frac{z^2}{4}
	\big]\Big\} \frac{1}{\sqrt{2\pi\beta t}}\exp\Big(-\frac{y^2}{4\beta t}\Big).
	\end{align*}
	where the last inequality follows by fixing the quadratic term in $y$ and taking supremum of the rest of the terms as $y$ varies in $\mathbb{R}$. 
	Integrating both sides of the last inequality over $\mathbb{R}\backslash \mathfrak{Int}_s(\beta,t)$ and taking log on both sides yields shows 
\begin{equation}\label{uddd}
\begin{aligned}
\frac1{t^{\frac13}}\log \int_{\mathbb{R}\backslash \mathfrak{Int}_s(\beta,t)} X(\beta,t,y)dy & \le -\frac{s\beta^{4\kappa-1}}{2^{15}}+\sup_{z\in \R} (\h_t(z)-\h_t(0))+\frac1{t^{\frac13}}\Big(\frac{\pi \beta t}4\Big)^{\frac14}\sup_{z\in \mathbb{R}}\big(\widetilde{\sh}_{\beta t}(z) +\frac{z^2}{4}\big).
\end{aligned}
\end{equation}
where $-\frac{1}{2^{15}}s\beta^{4\kappa-1}$ is an upper bound to the logarithm of the Gaussian integral term. 
To bound $(\mathbf{II})$ using the above inequality, we introduce the following events:
		\begin{align*}
	\m{A}_5 & := \left\{\sup_{y \in \R} \h_t(y) \ge \frac{s}{2^{17}}\right\}, \quad \m{A}_6  := \left\{\h_t(0) \le -\frac{s}{2^{17}}\right\}, \quad 	\m{A}_7  :=\left\{\sup_{z\in\R}\left(\widetilde{\sh}_{\beta t}(z)+\frac{z^2}{4}\right) \ge \frac{s}{8}\right\}.
	\end{align*}
Note that on $\neg\m{A}_5\cap\neg \m{A}_6 \cap \neg\m{A}_7$, we get
	\begin{align*}
	\mbox{r.h.s~of }\eqref{uddd}  \le -\frac{1}{2^{15}}s\beta^{4\kappa-1}+ \frac{s}{2^{16}}+\beta^{1/4}t^{-1/12}\frac{s}{8} \le \frac14\beta^{1/4}s-\frac{s\beta^{4\kappa-1}}{2^{16}}.
	\end{align*}
for any $\beta<1$. Owing to this and the union bound, we have 
$$(\mathbf{II})\leq \Pr(\m{A}_5)+ \Pr(\m{A}_6)+ \Pr(\m{A}_7). $$ 	
From Proposition \ref{onepointlowtail} and Proposition \ref{ppn:UpperTail} with $\nu=1$, we get $\Pr(\m{A}_5), \Pr(\m{A}_6) \le \exp(-cs^{3/2})$. Lemma \ref{sup-proc} shows $\Pr(\m{A}_7)\le \exp(-cs^{3/2})$. Combining these bounds with the above inequality proves $(\mathbf{II})\leq \exp(-cs^{3/2})$ for all $s$ large and $\beta$ small.  This completes the proof of \eqref{sd-up}. 
	


\subsection{Proof of Proposition~\ref{longdiff}}\label{sec:Prop5.2}
	Recall the composition law
	\begin{align} \label{e:comp1}
	\h_t(1+\beta,0)=\frac{1}{t^{1/3}}\log\int_{\R} \exp\left(t^{1/3}\left(\h_t(1,t^{-2/3}y)+\beta^{1/3}\widehat{\h}_{(t+\beta t)\downarrow t}(-(\beta t)^{-2/3}y)\right)\right)dy
	\end{align}
	where  $\widehat{\h}_{\beta t}(x):=\beta^{-1/3}\h_{(t+\beta t)\downarrow t}(\beta^{2/3}x)$. We prove \eqref{ld-up} and \eqref{ld-low} in \emph{Stage-1} and \emph{Stage-2} respectively.
	
	\medskip
	
	\noindent \emph{Stage-1: Proof of \eqref{ld-up}}: We use the following notation $\h^{\nabla}_t(y):= \h_t(y)-\h_t(0)$ throughout this proof. Subtract $\h_t(1,0)$ from both sides of \eqref{e:comp1}. Furthermore, subtracting and adding the parabola $\frac{y^2}{4\beta t}$ inside the exponential of \eqref{e:comp1} shows  
	\begin{align}
	 \h_t(1+\beta,0)-\h_t(1,0) & = \frac{1}{t^{\frac13}}\log\int_\R \exp\Big(-\frac{y^2}{4\beta t}+t^{\frac13}\Big(\h^{\nabla}_t(t^{-\frac23}y)+\beta^{\frac13}\widehat{\h}_{\beta t}(-\beta^{-2/3}t^{-2/3}y)+\frac{y^2}{4\beta t^{4/3}}\Big)\Big)dy \nonumber\\ & \label{arhs} \le \beta^{1/3}\sup_{y\in\R} \big(\widehat{\h}_{\beta t}(y)+\frac{y^2}{4}\big)+ \frac{1}{t^{1/3}}\log\int_\R \exp\Big(-\frac{y^2}{4\beta t}+t^{1/3}\h^{\nabla}_t(t^{-2/3}y)\Big)dy,
	\end{align}
Let us define $\widehat{\mathfrak{Int}}_s(\beta,t):= \frac{1}{32} t^{2/3}\sqrt{\beta s}$ and consider the following events.
\begin{align*}
\m{A}_1 & := \left\{\sup_{x\in \sqrt{\beta s}/32} \h^{\nabla}_t(x) \ge \frac14\beta^{1/4}s\right\},\quad
\m{A}_2  := \left\{\sup_{x\in\R} \left(\widehat{\h}_{\beta t}(y)+\frac{y^2}{4} \right)\ge \frac{s}{4}\right\}	 \\ \m{A}_3	& := \left\{\sup_{|x|\in \R} \h_t(x) \ge \frac{s}{2^{14}}\right\}, \quad
\m{A}_4  := \left\{\h_t(0) \le -\frac{s}{2^{14}}\right\}, 
\end{align*}
To complete the proof of \eqref{ld-up}, we need the following lemma.
\begin{lemma}\label{lem:ContLemma}
$\{\h_t(1+\beta,0)- \h_t(1,0)\geq \beta^{1/4}s\}\subset (\m{A}_1\cup\m{A}_2\cup \m{A}_3\cup\m{A}_4)$. 
\end{lemma}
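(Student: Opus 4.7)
The plan is to work on the complement event $\neg\m{A}_1\cap\neg\m{A}_2\cap\neg\m{A}_3\cap\neg\m{A}_4$ and show directly that $\h_t(1+\beta,0)-\h_t(1,0)<\beta^{1/4}s$, which is equivalent to the claimed set inclusion. Starting from the bound \eqref{arhs}, I would dispatch the two summands on the right-hand side separately. The first summand is immediately controlled by $\neg\m{A}_2$: since $\beta\in(0,1]$ gives $\beta^{1/3}\le\beta^{1/4}$, we have $\beta^{1/3}\sup_{y}(\widehat{\h}_{\beta t}(y)+y^2/4)\le \beta^{1/4}s/4$ on $\neg\m{A}_2$.

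The main work is the second summand, where I would split the integral at $|y|=\widehat{\mathfrak{Int}}_s(\beta,t)=t^{2/3}\sqrt{\beta s}/32$. On the inner region $|y|\le \widehat{\mathfrak{Int}}_s(\beta,t)$, the substitution $x=t^{-2/3}y$ lies in $[-\sqrt{\beta s}/32,\sqrt{\beta s}/32]$, so on $\neg\m{A}_1$ we have $\h^{\nabla}_t(t^{-2/3}y)\le \beta^{1/4}s/4$. Pulling out this supremum and integrating the remaining Gaussian yields an inner-region bound of $\sqrt{4\pi\beta t}\,\exp(t^{1/3}\beta^{1/4}s/4)$. On the outer region $|y|>\widehat{\mathfrak{Int}}_s(\beta,t)$, the Gaussian factor contributes at most $\exp(-t^{1/3}s/2^{12})$ after extracting a linear-in-$y^2$ decay for the remaining Gaussian integral, while $\neg\m{A}_3\cap\neg\m{A}_4$ gives the global bound $\h^{\nabla}_t(t^{-2/3}y)\le s/2^{13}$. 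This makes the outer-region contribution at most $\sqrt{8\pi\beta t}\,\exp(-t^{1/3}s/2^{13})$, which is dominated by the inner one.

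Putting it together, the second summand of \eqref{arhs} is bounded by $t^{-1/3}\log\big(C\sqrt{\beta t}\,\exp(t^{1/3}\beta^{1/4}s/4)\big)=\beta^{1/4}s/4+t^{-1/3}\log(C\sqrt{\beta t})$. Since $\beta\le 1$ and $\beta t\ge t_0$, the error term $t^{-1/3}\log(C\sqrt{\beta t})$ is bounded above by $t^{-1/3}\log(C\sqrt{t})$; for $t\ge t_0$ sufficiently large (using the constraint $\beta\ge t_0/t$ to lower-bound $\beta^{1/4}$ by $(t_0/t)^{1/4}$) and $s\ge s_0$ sufficiently large, this error is absorbed into $\beta^{1/4}s/4$. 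Summing the three contributions (first summand, inner, and error) gives at most $(3/4)\beta^{1/4}s<\beta^{1/4}s$, establishing the inclusion.

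The only subtle step is the last one: verifying that the additive $t^{-1/3}\log\sqrt{\beta t}$ error from the Gaussian normalization can be absorbed uniformly in $\beta$. This requires combining $\beta\le 1$ (to turn $\sqrt{\beta t}$ into $\sqrt{t}$) with the lower bound $\beta t\ge t_0$ (to keep $\beta^{1/4}$ from collapsing faster than the logarithmic error grows). Once this bookkeeping is handled, the argument is otherwise a straightforward splitting argument, mirroring the ideas of the ``Step II'' bound in \emph{Stage-2} of the proof of Proposition~\ref{shortdiff}.
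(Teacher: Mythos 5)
Your proposal follows the paper's proof almost step for step: same use of \eqref{arhs}, same split of the $y$-integral at $|y|=t^{2/3}\sqrt{\beta s}/32$, same roles for $\neg\m{A}_1$ (inner supremum), $\neg\m{A}_2$ (first summand via $\beta^{1/3}\le\beta^{1/4}$), and $\neg\m{A}_3\cap\neg\m{A}_4$ (global bound $\h^\nabla_t\le s/2^{13}$ against the Gaussian decay in the outer region). The only cosmetic difference is how the residual term $t^{-1/3}\log\sqrt{C\beta t}$ is absorbed: you lower-bound $\beta^{1/4}$ via the hypothesis $\beta t\ge t_0$, whereas the paper algebraically rewrites $\tfrac12 t^{-1/3}\log(16\pi\beta t)$ as $(2\pi\beta)^{1/3}(16\pi\beta t)^{-1/3}\log(16\pi\beta t)$, extracting a $\beta^{1/3}\le\beta^{1/4}$ factor and using the boundedness of $r\mapsto r^{-1/3}\log r$; both give a valid $s_0=s_0(t_0)$, so this is not a substantive difference.
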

Before proceeding to prove \ref{lem:ContLemma}, we show how this will imply \eqref{ld-up}. From the above lemma and the union bound, we get 
\begin{align*}
\Pr(\h_t(1+\beta,0)-\h_t(1,0)\ge \beta^{1/4}s) \le \sum_{i=1}^4 \Pr(\m{A}_i)
\end{align*}
By Lemma \ref{longtime:diffbd} with $\kappa=\frac14$ we get that $\Pr(\m{A}_1)\le \exp(-cs^{3/2})$. By Proposition \ref{ppn:UpperTail}, with $\nu=\frac12$ and $\nu=0$ we get $\Pr(\m{A}_2)\le \exp(-cs^{3/2})$ and $\Pr(\m{A}_3)\le \exp(-cs^{3/2})$ respectively. The one point tail estimate in Proposition \ref{onepointlowtail} yields $\Pr(\m{A}_4)\le \exp(-cs^{5/2})\le \exp(-cs^{3/2})$. Combining all these bounds and substituting those into the above inequality completes the proof of \eqref{ld-up}. Now it boils down to proving Lemma~\ref{lem:ContLemma} which we do as follows.

\noindent \emph{Proof of Lemma~\ref{lem:ContLemma}}: Observe the following two inequalities 
\begin{align}
\int_{\widehat{\mathfrak{Int}}_s(\beta,t)} \exp\big(-\frac{y^2}{4\beta t}+t^{1/3}\h^{\nabla}_t(t^{-2/3}y)\big)dy & \le \sup_{|x|\le \sqrt{\beta s}/32} \h^{\nabla}_t(x)+t^{-1/3}\log\sqrt{4\pi\beta t}.\label{brhs} \\
\int_{\mathbb{R}\backslash \widehat{\mathfrak{Int}}_s(\beta,t)} \exp\big(-\frac{y^2}{4\beta t}+t^{1/3}\h^{\nabla}_t(t^{-2/3}y)\big)dy & \le \sup_{x\in \R} \h^{\nabla}_t(x)+t^{-1/3}\log\int_{\mathbb{R}\backslash \widehat{\mathfrak{Int}}_s(\beta,t)} e^{-\frac{y^2}{4\beta t}}dy \nonumber\\ & \label{crhs} \le \sup_{x\in \R} \h^{\nabla}_t(x)+t^{-1/3}\log\sqrt{4\pi\beta t} -\frac{s}{2^{13}}
\end{align}
where the last inequality follows from the bounds on the Gaussian tail integral. 
On $\neg \m{A}_1$ and $(\neg\m{A}_3 \cap \neg\m{A}_4)$, we have $$\mbox{r.h.s.~of }\eqref{brhs} \le \frac14\beta^{\frac14}s+t^{-\frac13}\log\sqrt{4\pi \beta t}, \quad \mbox{r.h.s.~of }\eqref{crhs} \le t^{-\frac13}\log\sqrt{4\pi\beta t} +\frac14\beta^{1/4}s$$
respectively.  Thus on $\neg(\m{A}_1\cup\m{A}_2\cup\m{A}_3\cup\m{A}_4)$ we get
\begin{align*}
\mbox{r.h.s.~of }\eqref{arhs} & \le \frac14\beta^{1/3}s+ t^{-1/3}\log2+t^{-1/3}\log\sqrt{4\pi \beta t}+\frac14\beta^{1/4}s \\ & \le \frac12\beta^{1/4}s+ (2\pi\beta)^{1/3}(16\pi \beta t)^{-1/3}\log(16\pi\beta t) <\beta^{1/4}s.
\end{align*}
The last inequality is true for all large enough $s$ since $\sup_{r>0} r^{-1/3}\log r$ is bounded. This shows $\neg(\m{A}_1\cup\m{A}_2\cup\m{A}_3\cup\m{A}_4)$ is contained in $\{\h_t(1+\beta,0)\leq \h_t(1,0)+\beta^{1/4}s\}$ and hence, completes the proof of the lemma.
\medskip

\noindent \emph{Proof of \eqref{sd-low}}: Restricting the integral in \eqref{e:comp1} over the region $\{|y|\le t^{2/3}\beta^{1/2}\}$ yields 
	\begin{align}
\h_t(1+\beta,0) & \ge \frac{1}{t^{1/3}}\log\int_{|y|\le t^{2/3}\beta^{1/2}} \exp\Big(t^{1/3}\big(\h_t(t^{-2/3}y)+\beta^{1/3}\widehat{\h}_{\beta t}(-\beta^{-2/3}t^{-2/3}y)\big)\Big)dy \nonumber\\ & \ge \beta^{1/3}\inf_{y\in\R} \big(\widehat{\h}_{\beta t}(y)+\frac{y^2}{4}\big)+\inf_{|y|\le \beta^{1/2}} \h_t(y)+ \frac{1}{t^{1/3}}\log\int_{|y|\le t^{2/3}\beta^{1/2}} \exp\Big(-\frac{y^2}{4\beta t}\Big)dy.\label{ano}
\end{align}
From the Gaussian tail bound, we have
\begin{align}\label{eq:gaustail}
\frac1{t^{1/3}}\log\int_{|y|\le t^{2/3}\beta^{1/2}} \exp\Big(-\frac{y^2}{4\beta t}\Big)dy  & \ge \frac1{t^{1/3}}\log\sqrt{4\pi\beta t}-\frac{2}{t^{1/3}}\exp\big(-\frac{t^{1/3}}4\big). 
\end{align}
We now claim and prove that there exists $s_0=s_0(t_0)>0$ such that $\{\h_t(1+\beta, 0)- \h_t(1,0)\leq -\beta^{1/4}s\}\subset \m{A}_5\cup \m{A}_6$ for all $s\geq s_0$ and $\beta>0$ satisfying $\beta t\geq t_0$ where  
\begin{align*}
\m{A}_5 & := \Big\{\inf_{|y|\le\beta^{1/2}} \h_t(y) \le \h_t(0)-\beta^{\frac14}s \Big\}, \quad \m{A}_6  := \Big\{\inf_{y\in \mathbb{R}} \big(\widehat{\h}_{\beta t}(y)+\frac{y^2}{4}\big) \le -\frac{s}{4} \Big\}
\end{align*}
To see this, using \eqref{ano} and \eqref{eq:gaustail}, we have 
\begin{align*}
\mbox{r.h.s.~of }\eqref{ano} & \ge -\frac{\beta^{\frac13}s}4+\h_t(0)-\beta^{\frac14}s+\frac{\log\sqrt{4\pi \beta t}}{t^{1/3}}-\frac2{t^{1/3}}\exp\big(-\frac{t^{1/3}}{4}\big) 
\end{align*}
on $\neg (\m{A}_5 \cup \m{A}_6)$. Note that $\log\sqrt{4\pi \beta t}/t^{1/3}$ is bounded below by $\log(4\pi t_0)/2t^{1/3}_0$ for all $t,\beta>0$ satisfying $t\geq t_0$ and $\beta t\geq t_0$. Furthermore, $\exp(-t^{1/3})/t^{1/3}$ converge to $0$ as $t$ increases to $\infty$. This shows there exists $s_0=s_0(t_0)>0$ such that for all $t\geq t_0$, $s\geq s_0$ and $\beta$ satisfying $\beta t\geq t_0$, the right hand side of the above display is greater than $\h_t(1,0)- \beta^{1/4}s$. This shows $\neg (\m{A}_5 \cup \m{A}_6)\subset \{\h_t(1+\beta,0)>\h_t(1,0)-\beta^{1/4}s\}$ and hence, the claim. 

From the above claim, we have 
$$\Pr(\h_t(1+\beta, 0)- \h_t(1,0)\leq -\beta^{1/4}s)\leq \Pr(\m{A}_5)+ \Pr(\m{A}_6). $$
Using Lemma \ref{longtime:smdiffbd}, we see that $\Pr(\m{A}_5)\le e^{-cs^2}$ and Proposition \ref{ppn:LowerTail} implies $\Pr(\m{A}_6)\le e^{-cs^{5/2}}$. 
Thus, $\Pr(\h_t(1+\beta,0)-\h_t(0)\le -\beta^{1/4}s) \le \Pr(\m{A}_5)+\Pr(\m{A}_6)\le e^{-cs^2}.$ This completes the proof.

\subsection{Proof of Proposition~\ref{temp-modulus}}\label{sec:Prop5.4}

Fix $\e\in (0,\frac14)$. From the scaling of $\h_t$, it follows that $\h_t(\alpha,0)=\alpha^{1/3}\h_{\alpha t}(1,0)$ for any $\alpha,t>0$. Hence, it suffices to prove the result for $a=1$. In the following, we first set up few notations and recall relevant result that we use in this proof. Consider the following events
\begin{align*}
\m{B}_1 & := \left\{\sup_{\tau\in [0,1]} \frac{\h_t(1+\tau,0)-\h_t(1,0)}{\tau^{\frac14-\e}\log^{2/3}\frac{1}{\tau}} \ge s\right\},  \quad
\m{B}_2  := \left\{\inf_{\tau\in [0,1]} \frac{\h_t(1+\tau,0)-\h_t(1,0)}{\tau^{\frac14-\e}\log^{1/2}\frac{1}{\tau}} \le -s\right\}.
\end{align*}
 Set $\kappa_1=\frac14-\e$ and $\kappa_2=\frac1{12}+\e$. For any $\alpha_1>\alpha_2\ge 1$, define
\begin{align*}
\mathfrak{h}^{\nabla}_{t,\alpha_1, \alpha_2} := \h_t(\alpha_1, 0) - \h_t(\alpha_2, 0)=\alpha_2^{1/3}(\h_{t\alpha_2}(\tfrac{\alpha_1}{\alpha_2},0)-\h_{t\alpha_2}(1,0)).
\end{align*}
and set $\beta = \frac{\alpha_1}{\alpha_2}-1$. Combining Proposition \ref{shortdiff} and Proposition~\ref{longdiff}, we get $t_0=t_0(\e)>0$, $s_0=s_0(\e)>0$ and $c=c(\e)>0$ such that for all $s\ge s_0$ and $2\alpha_2\ge \alpha_1> \alpha_2\ge 1$, 
\begin{align}\label{eq:SortTimeTail} \Pr\Big(\frac{\mathfrak{h}^{\nabla}_{t,\alpha_1,\alpha_2}}{(\alpha_1- \alpha_2)^{\kappa_1}}\ge \alpha_2^{\kappa_2}s\Big)  \leq
\exp(-cs^{3/2}),\quad  \Pr\Big(\frac{\mathfrak{h}^{\nabla}_{t,\alpha_1,\alpha_2}}{(\alpha_1- \alpha_2)^{\kappa_1}}\le -\alpha_2^{\kappa_2}s\Big)  \leq
\exp(-cs^{2})
\end{align} 

Now we proceed to complete the proof. Like as in the proof of Theorem~\ref{thm:ModCont}, we first construct a dyadic mesh of points of the interval $[1,2]$ and prove the tail bounds of the modulus of continuity over that mesh. Finally, the tail bounds of  the modulus of continuity will be extended for all points of $[1,2]$. To begin with, we consider the dyadic partitions $\{\bigcup_{k=1}^{2^n} \mathcal{J}^{(n)}_{k}\}_{n\in \N}$ of the interval $[1,2]$
\begin{align*}
\mathcal{J}^{(n)}_{k}:= \Big[\alpha^{(n)}_{k-1}, \alpha^{(n)}_{k}\Big], \quad \alpha^{(n)}_{k}:= 1+ \frac{k}{2^{n}}, \text{ for }k=0,1, \ldots , 2^{n}.
\end{align*}
We now define
\begin{align*}
\m{A}_{\mathrm{up}}(s) & := \bigcup_{n=1}^{\infty} \bigcup_{k=1}^{2^n} \Big\{ \mathfrak{h}^{\nabla}_{t,\alpha^{(n)}_{k},\alpha^{(n)}_{k-1}}\geq (\alpha_{k-1}^{(n)})^{\kappa_2}(\alpha_{k}^{(n)}-\alpha_{k-1}^{(n)})^{\kappa_1}(n \log 2 )^{\frac{2}{3}}s\Big\}, \\ 
\m{A}_{\mathrm{low}}(s) & := \bigcup_{n=1}^{\infty} \bigcup_{k=1}^{2^n} \Big\{ \mathfrak{h}^{\nabla}_{t,\alpha^{(n)}_{k},\alpha^{(n)}_{k-1}}\leq - (\alpha_{k-1}^{(n)})^{\kappa_2}(\alpha_{k}^{(n)}-\alpha_{k-1}^{(n)})^{\kappa_1}(n \log 2 )^{\frac{1}{2}}s\Big\},
\end{align*}
By the union bound, we write
\begin{align}\label{eq:TieSBd}
\mathbb{P}(\m{A}_{\mathrm{up}}(s)) \leq \sum_{n=1}^{\infty} \sum_{k=1}^{2^n} \mathbb{P}\Big(\mathfrak{h}^{\nabla}_{t,\alpha^{(n)}_{k},\alpha^{(n)}_{k-1}}\geq (\alpha_{k-1}^{(n)})^{\kappa_2}(\alpha_{k}^{(n)}-\alpha_{k-1}^{(n)})^{\kappa_1}(n \log 2 )^{\frac{2}{3}}s\Big).
\end{align}
Applying \eqref{eq:SortTimeTail} in the  right hand side of \eqref{eq:TieSBd}, we get
\begin{align*}
\mathbb{P}(\m{A}_{\mathrm{up}}(s))\leq \sum_{n=1}^{\infty}\sum_{k=1}^{2^n} \exp\big(- cn\log2s^{\frac{3}{2}}\big)\leq \sum_{n=1}^{\infty}\exp\big(-n\log2(cs^{\frac{3}{2}}-1)\big)\leq \exp\big(-\frac{c}{2}s^{\frac{3}{2}}\big)
\end{align*} 
Fix $\tau\in [\frac{1}{2^{k+1}},\frac1{2^{k}})$. By continuity of the process $\h_t(\cdot,0)$, we have the following on $\neg \m{A}_{\mathrm{up}}(s)$
\begin{align*}
\h_t(1+\tau,0)-\h_t(1,0) & =\sum_{n=1}^{\infty}\left[\h_t\left(\tfrac1{2^n}\lfloor 2^n(1+\tau)\rfloor,0\right)-\h_t\left(\tfrac1{2^{n-1}}\lfloor 2^{n-1}(1+\tau)\rfloor,0\right)\right] \\ & \le  \sum_{n=1}^{\infty}\left(\frac{\lfloor 2^{n-1}(1+\tau)\rfloor}{2^{n-1}}\right)^{\kappa_2}\left(\frac{\lfloor 2^{n}(1+\tau)\rfloor-2\lfloor 2^{n-1}(1+\tau)\rfloor}{2^n}\right)^{\kappa_1}(n \log 2 )^{\frac{2}{3}}s \\ & \le  2^{\kappa_2}s\sum_{n=k+1}^{\infty}\left(\frac{\lfloor 2^{n}(1+\tau)\rfloor-2\lfloor 2^{n-1}(1+\tau)\rfloor}{2^n}\right)^{\kappa_1}(n \log 2 )^{\frac{2}{3}}s \\ & \le c'\frac{(k+1)^{2/3}s}{2^{\kappa_1(k+1)}} \le c''s\tau^{\kappa_1}\log^{2/3}\frac1{\tau}
\end{align*}
Thus $\m{B}_1 \subset \m{A}_{\mathrm{up}}(s/c'')$ which proves \eqref{temp-sup}. Similarly we get $\m{B}_2 \subset \m{A}_{\mathrm{low}}(s/\tilde{c})$ for some constant $\tilde{c}>0$ and using similar summation trick as in \eqref{eq:TieSBd}, we have  $\Pr(\m{A}_{\mathrm{low}}(s)) \le e^{-cs^{2}}$. This proves \eqref{temp-inf} and hence, completes the proof of the desired results.


\section{Law Of Iterated Logarithms} \label{sec:lil}

The main goal of this section is to prove Theorem~\ref{lil}. We will prove the liminf result and the limsup result in Section~\ref{sec:LimInf} and \ref{sec:LimSup} respectively. One of the key ideas of our proof is to approximate multi-point distributions of the KPZ temporal process $\h_t$ with a set of independent random variables using the multipoint composition law of Proposition~\ref{ppn:MulpointComposition}. The following proposition encapsulates this idea for its use in Section~\ref{sec:LimInf} and~\ref{sec:LimSup}.  
\begin{proposition} \label{prop:IndProx}
For any $0=t_0<t_1<t_2<t_3<\ldots<t_m$, with $\mathfrak{s}:=\min_{i} |\exp(t_{i+1}-t_{i})- 1|$, there exist independent random variables $Y_1,Y_2,\ldots,Y_m$ and some constants $s_0= s_0(t_0)>0, c=c(t_0)>0$ such that for all $x\mathfrak{s}^{1/3}\geq s_0$ and $1\leq i\leq m$,  
\begin{align}\label{ind}
Y_i \stackrel{d}{=} (1-e^{-(t_i-t_{i-1})})^{1/3}\h_{e^{t_i}-e^{t_{i-1}}} \quad \text{and}, \quad \Pr\left(\left|\h_{e^{t_i}}-Y_i\right| \ge x\right)\le \exp(-cx^{3/2}\mathfrak{s}^{1/2})
\end{align}	
whenever $\mathfrak{s}\geq s_0$. 
\end{proposition}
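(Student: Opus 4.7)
My plan is to reduce the statement to the two-point tail bound of Proposition~\ref{lngdiff} via the multipoint composition law together with the scaling relation $\h_s(\alpha,0)=\alpha^{1/3}\h_{\alpha s}$ (which follows from the definition~\eqref{def:kpz-long-scale}).

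First I would invoke Proposition~\ref{ppn:MulpointComposition} with base time $1$ and ratios $\alpha_i:=e^{t_i}$, $i=1,\ldots,m$, to produce independent spatial processes $\h_{e^{t_i}\downarrow e^{t_{i-1}}}$, each satisfying $\h_{e^{t_i}\downarrow e^{t_{i-1}}}(\cdot)\stackrel{d}{=}\h_{e^{t_{i-1}}}(e^{t_i-t_{i-1}}-1,\cdot)$ and independent of $\h_{e^{t_{i-1}}}(\cdot)$. I then set
\[ Y_i := e^{-(t_i-t_{i-1})/3}\,\h_{e^{t_i}\downarrow e^{t_{i-1}}}(0). \]
Joint independence of $(Y_i)_{i=1}^{m}$ is immediate. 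Applying the scaling with $s=e^{t_{i-1}}$ and $\alpha=e^{t_i-t_{i-1}}-1$ shows $\h_{e^{t_{i-1}}}(e^{t_i-t_{i-1}}-1,0)\stackrel{d}{=}(e^{t_i-t_{i-1}}-1)^{1/3}\h_{e^{t_i}-e^{t_{i-1}}}$, and the prefactor collapses as $e^{-(t_i-t_{i-1})/3}(e^{t_i-t_{i-1}}-1)^{1/3}=(1-e^{-(t_i-t_{i-1})})^{1/3}$, establishing the distributional identity in \eqref{ind}.

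Next, for the quantitative closeness I would apply Proposition~\ref{lngdiff} with the substitutions $t\mapsto e^{t_{i-1}}$ and $\beta\mapsto e^{t_i-t_{i-1}}-1$; the base-time condition $e^{t_{i-1}}\geq 1$ and the range condition $\beta\geq\mathfrak{s}\geq 1$ are both in force once $s_0\geq 1$. By the same scaling, $\h_{e^{t_{i-1}}}(e^{t_i-t_{i-1}},0)=e^{(t_i-t_{i-1})/3}\h_{e^{t_i}}$, so Proposition~\ref{lngdiff} yields
\[ \Pr\bigl(\bigl|e^{(t_i-t_{i-1})/3}\h_{e^{t_i}}-\h_{e^{t_i}\downarrow e^{t_{i-1}}}(0)\bigr|\geq s\bigr)\leq e^{-cs^{3/2}}\qquad (s\geq s_0). \]
Writing $x:=s\,e^{-(t_i-t_{i-1})/3}$ rewrites the event as $\{|\h_{e^{t_i}}-Y_i|\geq x\}$ and transforms the tail to $\exp\bigl(-cx^{3/2}e^{(t_i-t_{i-1})/2}\bigr)$. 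Since $\mathfrak{s}\leq e^{t_i-t_{i-1}}-1\leq e^{t_i-t_{i-1}}$, we have $e^{(t_i-t_{i-1})/2}\geq\mathfrak{s}^{1/2}$, delivering the advertised exponent $cx^{3/2}\mathfrak{s}^{1/2}$. The hypothesis $s\geq s_0$ becomes $x\,e^{(t_i-t_{i-1})/3}\geq s_0$, which is guaranteed by the assumption $x\mathfrak{s}^{1/3}\geq s_0$.

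No substantial obstacle is anticipated: all of the analytic heavy lifting is carried by Proposition~\ref{lngdiff}, and the present proposition is essentially a careful renormalization along the geometric grid $\{e^{t_i}\}$. The key bookkeeping point is that the natural $e^{-(t_i-t_{i-1})/3}$ prefactor, dictated by matching the two scaling identities used above, is precisely what converts the $s^{3/2}$ decay for the increment at a single base time into the $\mathfrak{s}^{1/2}$ boost in the tail exponent.
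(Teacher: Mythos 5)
Your proof is correct and follows essentially the same route as the paper: define $Y_i$ via the multipoint composition law, verify the distributional identity with the $1{:}2{:}3$ scaling relation, and deduce the tail bound from Proposition~\ref{lngdiff} applied at $t=e^{t_{i-1}}$, $\beta=e^{t_i-t_{i-1}}-1$. The only cosmetic difference is your substitution $s=xe^{(t_i-t_{i-1})/3}$ in place of the paper's $s=x\beta^{1/3}$, which yields the threshold exactly $x$ rather than $x(1-e^{-(t_i-t_{i-1})})^{1/3}\le x$; the resulting estimate is identical.
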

\begin{proof}
Denote $\tilde{t}_i := e^{t_i}$ and $\tilde{\beta}_i := (\tilde{t}_i - \tilde{t}_{i-1})/\tilde{t}_{i-1}$. For any $1\leq i\leq m$, define $Y_i := (1+\tilde{\beta}_i)^{-1/3}\h_{\tilde{t}_{i}\downarrow \tilde{t}_{i-1}}$. Recall from Proposition~\ref{ppn:MulpointComposition} that $\{\h_{\tilde{t}_{i}\downarrow \tilde{t}_{i-1}}\}^{m}_{i=1}$ are set of independent random variables and $\h_{\tilde{t}_{i}\downarrow \tilde{t}_{i-1}}$ is same in distribution with $(1-(\tilde{t}_{i-1}/\tilde{t}_i))^{-1/3}\h_{\tilde{t}_i-\tilde{t}_{i-1}}$. From this, it follows that $Y_1, \ldots , Y_m$ are independent and $$Y_i\stackrel{d}{=}(1-\exp(-(t_i-t_{i-1})))^{1/3}\h_{e^{t_i}-e^{t_{i-1}}}.$$ Furthermore, applying Proposition~\ref{lngdiff}  with setting $t:= \tilde{t}_{i-1}$, $\beta:=\tilde{\beta}_i$  and $s:=x\tilde{\beta}^{1/3}_i$, there exists $s_0>0$ such that for all $x$ satisfying $x\mathfrak{s}^{1/3}\geq s_0$,
	\begin{align*}
	\Pr\Big(\big|\h_{\tilde{t}_{i-1}}(1+\tilde{\beta}_{i},0)-\h_{\tilde{t}_{i}\downarrow \tilde{t}_{i-1}}(1,0)\big| \ge x\tilde{\beta}^{1/3}_{i}\Big)\le \exp(-cx^{3/2}\tilde{\beta}^{1/2}_i)
	\end{align*}
	for some absolute constant $c>0$ which does not depend on $t_1,\ldots, t_m$. Note that $\h_{\tilde{t}_{i-1}}(1+\tilde{\beta}_{i},0)$ is equal to $(1+\tilde{\beta}_i)^{1/3}\h_{\tilde{t}_{i}}$. We do this substitution along with replacing $\h_{\tilde{t}_{i}\downarrow \tilde{t}_{i-1}}(1,0)$ by $(1+\tilde{\beta}_i)^{1/3}Y_i$ into the left side of the above inequality. Furthermore, we set $x$ to $1$. As a result, we obtain
	\begin{align*}
	\Pr\big(|\h_{\tilde{t}_i}- Y_i|\geq x\tilde{\beta}^{1/3}_i(1+\tilde{\beta}_i)^{-1/3}\big)\leq \exp( - x^{3/2}\tilde{\beta}^{1/2}_i)\leq \exp(- x^{3/2}\mathfrak{s}^{1/2})
\end{align*}	    
where the last inequality follows since $\tilde{\beta}_i\geq \min_{i}(e^{t_i-t_{i-1}}-1)= \mathfrak{s}$. Now \eqref{ind} follows from the above inequality. 
\end{proof}

\subsection{Proof of Liminf}\label{sec:LimInf}
 In this section, we will prove that the liminf of $\h_t/(\log\log t)^{1/3}$ is almost surely equal to $-6^{1/3}$. For any given $\epsilon>0$, we show that the following hold 
$$\underbrace{- \big(6(1+\epsilon)\big)^{1/3}\leq \liminf_{t\to\infty}\frac{\h_t}{(\log\log t)^{1/3}}}_{\mathfrak{LimInf}_{l}}, \qquad  \underbrace{\liminf_{t\to\infty}\frac{\h_t}{(\log\log t)^{1/3}}\le -\big(6(1-\epsilon)\big)^{1/3}}_{\mathfrak{LimInf}_{u}}$$
with probability $1$ in Section~\ref{sec:LimInfl} and~\ref{sec:LimInfu} respectively. By letting $\epsilon\to 0$ in the above two inequalities, it follows that $\liminf \h_t/(\log\log t)^{1/3}$ is equal to $-6^{1/3}$.

\subsubsection{Proof of $\mathfrak{LimInf}_{u}$}\label{sec:LimInfu}
 For any $n\in \mathbb{N}$, define $\mathcal{I}_n:=[\exp(e^{n}),\exp(e^{n+1})] ]$. We show that for any $\epsilon\in (0,1)$, 
\begin{align}\label{eq:BorCant}
\sum_{n=1}^{\infty}\Pr\Big(\inf_{t\in \mathcal{I}_n}\frac{\h_t}{(\log\log t)^{1/3}} \ge - \big(6(1-\epsilon)\big)^{1/3}\Big) <\infty.
\end{align}
Hence, by the Borel-Cantelli lemma, we have 
\begin{align}\label{eq:LimInfBd}
\liminf_{t\to \infty} \frac{\h_t}{(\log\log t)^{1/3}} \le - \big(6(1-\epsilon)\big)^{1/3} \quad \text{w.p. } 1
\end{align}
yielding the inequality $\mathfrak{LimInf}_{u}$.

Throughout the rest of the proof, we show \eqref{eq:BorCant}. Fix any $\epsilon\in (0,1)$ and set $\gamma := (6(1-\epsilon))^{1/3}$. Choose $\eta>0$ small such that $(\tfrac{1}{6}+\eta)(\gamma+2\eta)^3<1$. We define $\zeta:=(\tfrac{1}{6}+\eta)(\gamma+2\eta)^3$.
	Fix $\theta\in (\zeta,1)$ and choose $\delta\in (0,\theta-\zeta)$. For any $n\geq 1$, we consider the following sub-intervals of $\mathcal{I}_n$,
	\begin{equation}\label{eq:Mtheta}
	\mathcal{I}^{(j)}_{n}: = [\exp(e^n+(j-1)e^{n\theta}),\exp(e^n+je^{\theta n})], \quad 1\leq j\leq \mathcal{M}_{\theta}:=\lfloor e^{n-\theta n+1}-e^{n - n\theta}\rfloor.
	\end{equation} 

	 By the union bound, we have
\begin{align}
\Pr\big(\inf_{t\in \mathcal{I}_{n}}\frac{\h_t}{(\log\log t)^{1/3}} \ge -\gamma\big)  & \le \sum_{j=1}^{\mathcal{M}_{\theta}} \Pr\big(\inf_{t\in \mathcal{I}^{(j)}_n}\frac{\h_t}{(\log\log t)^{1/3}} \ge -\gamma\big) \leq \sum_{j=1}^{\mathcal{M}_{\theta}} \Pr\big(\inf_{t\in \mathcal{I}^{(j)}_n}\h_{t} \ge -(n+1)^{1/3}\gamma\big) \label{unino}
\end{align}
where the last inequality follows since $\max_{t\in \mathcal{I}^{(j)}_n}\log \log t\leq (n+1)$. Now we bound each term of the above sum. For convenience, we use the shorthand $\m{A}^{(j)}_n$ to denote $\big\{\inf_{t\in \mathcal{I}^{(j)}_n}\h_{t} \ge -(n+1)^{1/3}\gamma\big\}$. 

We now claim that there exists constants $c_1,c_2>0$ such that 
\begin{align}\label{eq:AjBound}
\Pr(\m{A}^{(j)}_n)\leq \exp(-ce^{n(\theta-\delta)}e^{-n\zeta}) + \exp\big(n(\theta-\delta)-c_2(\exp(e^{n\delta})-1)^{1/2}\big)
\end{align} 
 for all $1\leq j\leq n$ and all large $n$. We first assume \eqref{eq:AjBound} and complete the proof of \eqref{eq:BorCant}. Using \eqref{eq:AjBound}, we may estimate the right hand side of \eqref{unino} as 
 \begin{align}\label{eq:AjBoundCons}
 \text{r.h.s. of \eqref{unino}} \leq e^{n-n\theta +1}\Big(\exp(-ce^{n(\theta-\delta)}e^{-n\zeta}) + \exp\big(n(\theta-\delta)-c_2(\exp(e^{n\delta})-1)^{1/2}\big)\Big) 
\end{align}  
Here, the factor $e^{n-n\theta +1}$ is an upper bound to the number of summands in \eqref{unino}. Recalling that $\theta>\zeta+\delta$, we observe that the right hand side of the above display can be bounded by $\exp(- c_1e^{n\omega})$ for some constant $c_1>0$ and $\omega\in (0,1)$ for all large $n$. This shows the sum in \eqref{eq:BorCant} is finite and hence, completes the proof of \eqref{eq:LimInfBd} modulo \eqref{eq:AjBound} which we show as follows.

 Fix $j\in \{1, \ldots , \mathcal{M}_{\theta}\}$ and some constant $a>1$. We choose a sequence $t_1<t_2<\cdots<t_{L_n} \in [e^{n}+(j-1)e^{n\theta},e^{n}+je^{n\theta}]$ such that $$\min |t_{i+1}-t_i| \ge e^{n\delta }\quad\text{and},  \frac1a(e^{n(\theta-\delta)})\le L_n\le a(e^{n(\theta-\delta)}). $$ Applying Proposition~\ref{prop:IndProx}, we get $Y_1, Y_2, \ldots , Y_{L_n}$ such that \eqref{ind} (with $\mathfrak{s}\geq e^{n\delta}$) will be satisfied for the above choice of $t_1, t_2, \ldots , t_{L_n}$. As a result, we get 
\begin{align}
\Pr(\m{A}^{(j)}_n) &  \le \Pr(\min_{1\le i\le L_n}\h_{e^{t_i}} \ge -(n+1)^{1/3}\gamma) \nonumber\\ &  \le \Pr\big(\min_{1\le i\le L_n}Y_i \ge -(n+1)^{1/3}\gamma-1\big)+\sum_{i=1}^{L_n}\Pr(\h_{e^{t_i}}-Y_i\ge 1) \nonumber\\ &  \le \prod_{i=1}^{L_n}\Pr\big(Y_i \ge -(n+1)^{1/3}\gamma-1\big)+a\exp\big(n(\theta-\delta)-c(\exp(e^{n\delta })-1)^{1/2}\big) \label{eq:AjBdComp}
\end{align}
where in the last line we use the independence of $Y_i$ to write $\Pr\big(\min_{1\le i\le L_n}Y_i \ge -(n+1)^{1/3}\gamma-1\big)$ as a product over  $\Pr\big(Y_i \ge -(n+1)^{1/3}\gamma-1\big)$ and use the inequality in \eqref{ind} to bound $\Pr(\h_{e^{t_i}}-Y_i\ge 1)$. Using the distributional identity of \eqref{ind} , we get  
\begin{align*}
\prod_{i=1}^{L_n}\Pr\left(Y_i \ge -(n+1)^{1/3}\gamma-1\right)  & = \prod_{i=1}^{L_n}\Pr\left((1-e^{-(t_i-t_{i-1})})^{1/3}\h_{e^{t_i}-e^{t_{i-1}}} \ge -(n+1)^{1/3}\gamma-1\right)\\ &  \le \prod_{i=1}^{L_n}\Pr\big(\h_{e^{t_i}-e^{t_{i-1}}} \ge -n^{1/3}(\gamma+\eta)\big) \\ & \le \big[1-\exp(-(\tfrac1{6}+\eta)n(\gamma+2\eta)^3)\big]^{L_n} \\ & \le \exp\big(-L_n\exp(-(\tfrac1{6}+\eta)n(\gamma+2\eta)^3)\big)\leq \exp(-e^{n(\theta-\delta)}
e^{-n\zeta}/a)
\end{align*}
where the first inequality follows by noting that $(1-e^{-(t_{i}-t_{i-1})})^{-1/3}((n+1)^{1/3}\gamma+1)\leq n^{1/3}(\gamma+\eta)$ for all large $n$ and the second inequality follows due to \eqref{eq:lowtailTlarge} of Proposition~\ref{ppn:LargeTime}. The last inequality follows since $L_n\geq e^{n(\theta-\delta)}/a$ and $\zeta= (\tfrac1{6}+\eta)(\gamma+2\eta)^3$. 

Substituting the inequality of the above display into the right hand side of \eqref{eq:AjBdComp} yields the inequality \eqref{eq:AjBound}. This completes the proof.

\subsubsection{Proof of $\mathfrak{LimInf}_{l}$}\label{sec:LimInfl}
 Fix $t_0>0$. Define $\psi:\mathbb{R}_{>1}\to \mathbb{R}_{>0} $ as $\psi(\alpha)=\alpha^{1/3}(\log\log \alpha)^{1/3}$. Let $\alpha_n:=2^n$ and $k_n:=\lfloor(\log\log\alpha_n)^4\rfloor$ for any $n\in \mathbb{N}$. Let us denote $\mathcal{I}_n:=[\alpha_n,\alpha_{n+1}]$ and its $k_n$ many equal length sub-intervals as $\mathcal{I}^{(j)}_n:=[(1+\frac{j-1}{k_n})\alpha_{n},(1+\frac{j}{k_n})\alpha_{n}]$ for $1\leq j\leq k_n $. We will show that 
 \begin{align}\label{eq:SunBd}
 \sum_{n=1}^{\infty}\Pr\Big(\inf_{\alpha\in \mathcal{I}_n}\frac{\h_{t_0}(\alpha,0)}{\psi(\alpha)}\leq -\big(6(1+\epsilon)\big)^{1/3}\Big)<\infty
 \end{align}
 Applying \eqref{eq:SunBd} and Borel-Cantelli lemma, we can conclude that 
 \[\liminf_{\alpha\to \infty}\frac{h_{t_0}(\alpha,0)}{\psi(\alpha)}= \liminf_{t\to \infty}\frac{\h_t}{(\log \log t)^{1/3}}\geq - \big(6(1+\epsilon)\big)^{1/3}\] 
 with probability $1$ where the equality is obtained by substituting $t= \alpha t_0$. Letting $\epsilon\to 0$ on the right hand side of the above inequality yields $\mathfrak{LimInf}_{l}$. It boils down to showing \eqref{eq:SunBd} which we do as follows. 
 
 We claim that there exist constant $c_1>0$ and $c_2>1$ such that 
 \begin{align}\label{eq:EachBd}
 \Pr\Big(\inf_{\alpha\in \mathcal{I}_n}\frac{\h_{t_0}(\alpha,0)}{\psi(\alpha)}\leq -(6(1+\epsilon))^{1/3}\Big)\leq (\log \log \alpha_n)^4\Big(e^{-c_1(\log \log \alpha_n)^{7/6}} + e^{-c_2\log \log \alpha_n}\Big)
 \end{align}
 for all large $n$. Recall that $\alpha_n = 2^{n}$. Substituting this into the right hand side of the above inequality, we see that \eqref{eq:SunBd} holds modulo the last inequality. We now proceed to prove this last inequality.
 
 Let $N$ be the smallest positive integer such that $\alpha_N \ge e^e$. For any $n\ge N$, using the union bound we have
\begin{align} \label{2-sum}
  \Pr\Big(\inf_{\alpha\in \mathcal{I}_n}\frac{\h_{t_0}(\alpha,0)}{\psi(\alpha)}\le -\big(6(1+\epsilon)\big)^{1/3}\Big) & \le \sum_{j=1}^{k_n}\Pr\Big(\inf_{\alpha\in \mathcal{I}^{(j)}_n}\frac{\h_{t_0}(\alpha,0)}{\psi(\alpha)}\le -\big(6(1+\epsilon)\big)^{1/3}\Big)
\end{align}
In what follows, we will show 
\begin{align}\label{eq:IndVBound}
\Pr\Big(\inf_{\alpha\in \mathcal{I}^{(j)}_n}\frac{\h_{t_0}(\alpha,0)}{\psi(\alpha)}\le -\big(6(1+\epsilon)\big)^{1/3}\Big) \leq e^{-c_1(\log \log \alpha_n)^{7/6}} + e^{-c_2\log \log \alpha_n}
\end{align}
for all $1\leq j\leq n$, $n$ large and some constant $c_1>0$ and $c_2>1$. Substituting the above inequality into right side of \eqref{2-sum} and recalling that $k_n\leq (\log \log \alpha_n)^4$ show \eqref{eq:EachBd}.

Throughout the rest of the proof, we focus on proving \eqref{eq:IndVBound}. Fix any $j \in \{1,\ldots , k_n\}$. Denote the left and right end point of $\mathcal{I}^{(j)}_n$ by $a_n$ and $b_n$. For convenience, we will denote $(6(1+\epsilon))^{1/3}$ by $s$. We choose $\eta\in (0,1)$ such that $(1-\eta)^4(1+\epsilon)>1$. Using the fact that $\psi(\alpha)$ is an increasing function of $\alpha$, we get
\begin{align}
 \Pr\Big(\inf_{\alpha\in \mathcal{I}^{(j)}_n}\frac{\h_{t_0}(\alpha,0)}{\psi(\alpha)}\le -s\Big)  & \leq \Pr\Big(\inf_{\alpha\in \mathcal{I}^{(j)}_n}\h_{t_0}(\alpha,0)\le -s\psi(a_n)\Big) \nonumber \\
 &\leq \Pr\Big(\inf_{\alpha\in \mathcal{I}^{(j)}_n}\h_{t_0}(\alpha,0)-\h_{t_0}(a_n,0)\leq -\eta s\psi(a_n)\Big)  \nonumber \\&+ \Pr\Big(\h_{t_0}(a_n,0)\leq -(1-\eta) s\psi(a_n)\Big) \label{trm1}
\end{align}
where the last inequality follows by the union bound. We now apply \eqref{temp-inf} of Proposition~\ref{temp-modulus} and \eqref{eq:lowtailTlarge} of Proposition~\ref{ppn:LargeTime} to bound the first and the second term of the right side of the last inequality.

To apply \eqref{temp-inf}, we set $\e=\frac{1}{8}$. We may now write  
\begin{align}
 \Pr &\Big(\inf_{\alpha\in \mathcal{I}^{(j)}_n}\h_{t_0}(\alpha,0)-\h_{t_0}(a_n,0)\leq -\eta s\psi(a_n)\Big)\nonumber \\& \leq \Pr\Big(\inf_{\tau\in [0,k^{-1}_n]}\frac{\h_{t_0}(a_n+\tau,0)-\h_{t_0}(a_n,0)}{(\tau/a_n)^{1/8}(\log|\tau/a_n|)^{1/2}}\leq -\frac{\eta s\psi(a_n)}{k^{-1/8}_n(\log|k_n|)^{1/2}}\Big)\nonumber\\
 &\leq \exp\big(-c(\eta s)^2k^{1/4}_n(\log|k_n|)^{-1}(\log \log \alpha_n)^{2/3}\big)\label{eq:FRstBd}
\end{align}
where the second inequality follows by applying \eqref{temp-inf}. Since $k_n = \lfloor (\log \log \alpha_n)^4\rfloor$, we get the following bound  
 $$k^{1/4}_n(\log|k_n|)^{-1}(\log \log \alpha_n)^{2/3} \geq (\log \log \alpha_n)^{\frac{1}{2}+\frac{2}{3}} = (\log\log\alpha_n)^{7/6}$$
 for all large $n$. By substituting inequality into the right hand side of \eqref{eq:FRstBd}, we may bound the first term in the right hand side of \eqref{trm1} by $\exp(-c_1(\log \log \alpha_n)^{7/6})$ for all large integer $n$ where $c_1$ is a positive which does not depend on $n$. 
On the other hand, \eqref{eq:lowtailTlarge} of Proposition~\ref{ppn:LargeTime} implies
\begin{align}
\Pr\Big(\h_{t_0}(a_n,0)\leq -(1-\eta) s\psi(a_n)\Big)\leq e^{-(1-\eta)^4(1+\epsilon)(\log \log a_n)}\leq e^{-c_2\log \log \alpha_n}\label{eq:SEcBd}
\end{align} 
for sufficiently large $n$ where $c_2$ is a constant greater than $1$. The second inequality of the above display follows since $a_n\geq \alpha_n$ and $(1-\eta)^4(1+\epsilon)>1$.  

Combining the bounds in \eqref{eq:FRstBd} and \eqref{eq:SEcBd} and substituting those into \eqref{trm1} shows \eqref{eq:IndVBound}. This completes the proof of $\mathfrak{Liminf}_{l}$. 

\subsection{Proof of Limsup}\label{sec:LimSup}
The main goal of this section is to prove the limsup result of the law of iterated logarithm for which we need to show that for any $\epsilon\in (0,1)$,
\begin{align*}
\underbrace{\limsup_{t\to \infty} \frac{\h_{t}}{(\log \log t)^{2/3}} \geq  \Big(\frac{3(1-\epsilon)}{4\sqrt2}\Big)^{2/3}}_{\mathfrak{LimSup}_{l}}, \quad \underbrace{\limsup_{t\to \infty} \frac{\h_{t}}{(\log \log t)^{2/3}} \leq  \Big(\frac{3(1+\epsilon)}{4\sqrt2}\Big)^{2/3}}_{\mathfrak{LimSup}_{u}}. 
\end{align*} 
with probability $1$. 

In what follows, we first show $\mathfrak{LimSup}_{u}$. 
As we discuss in the next section, $\mathfrak{LimSup}_{u}$ will imply that the macroscopic Hausdorff dimension of the level sets $\{t\geq e^{e}:\h_t/(\log \log t)^{2/3}\geq (3(1+\epsilon)/4\sqrt2)^{2/3}\}$ are equal to $0$ with probability $1$ for any $\epsilon>0$ proving partially \eqref{eq:Monofractal}.  

\subsubsection{Proof of $\mathfrak{LimSup}_{u}$}\label{sec:imSupu}

Fix $\z,\theta \in (0,1)$ and $t_0>0$. 
Define $\phi:\mathbb{R}\to \mathbb{R}$ by $\phi(x)= x^{1/3}(\log \log x)^{2/3}$. We note that $\phi(x)$ is increasing in $x$. We will show that
\begin{align} \label{reduced}
\limsup_{\alpha\to\infty} \frac{\h_{t_0}(\alpha,0)}{\phi(\alpha)} \le \Big(\frac{3(1+\epsilon)}{4\sqrt{2}}\Big)^{2/3}
\end{align}
holds with probability $1$ for all large $t$ and $\epsilon>0$. To see how $\mathfrak{LimSup}_{l}$ follows from \eqref{reduced}, note 
\begin{align*}
\limsup_{t\to\infty} \frac{\h_t}{(\log\log t)^{2/3}} & = \limsup_{\alpha\to \infty} \frac{\h_{\alpha t_0}}{(\log\log \alpha t_0)^{2/3}} \\ & = \limsup_{\alpha\to \infty} \Big[\frac{\h_{ t_0}(\alpha,0)}{\alpha^{1/3}(\log\log \alpha)^{2/3}}\cdot \big(\frac{\log\log \alpha t_0}{\log\log \alpha}\big)^{2/3}\Big]=\limsup_{\alpha\to \infty} \frac{\h_{t_0}(\alpha,0)}{\phi(\alpha)}
\end{align*}
Fix $\delta\in (0,1)$. We will make the choice $\delta$ precise in due course of the proof. For any $n\in \mathbb{N}$, we define $\alpha_n:=(1+\delta)^i$ and denote $\mathcal{I}_n:=[\alpha_n,\alpha_{n+1}]$. We claim and prove that 
\begin{align}\label{eq:LimsupSum}
\sum_{n\geq N}\Pr\Big(\sup_{\alpha\in \mathcal{I}_n}\frac{\h_t(\alpha,0)}{\phi(\alpha)}\ge \big(\frac{3(1+\epsilon)}{4\sqrt{2}}\big)^{2/3}\Big) <\infty
\end{align}
for all $\epsilon>0$. By Borel-Cantelli lemma, we get \eqref{reduced} from \eqref{eq:LimsupSum} which we show by proving the following: there exists and $c>1$ such that 
\begin{align}\label{eq:EachTermUpBd}
\Pr\Big(\sup_{\alpha\in \mathcal{I}_n}\frac{\h_t(\alpha,0)}{\phi(\alpha)}\ge \big(\frac{3(1+\epsilon)}{4\sqrt{2}}\big)^{2/3}\Big) \leq \exp(-c\log \log \alpha_n)
\end{align}
for all large $n$ and $t$.

 Let $N$ be the smallest positive integer such that $\alpha_{N} \ge e^e$. Define $s:= (3(1+\epsilon)/4\sqrt{2})^{2/3}$. For $n\ge N$ and $\eta\in (0,1)$, we have
\begin{align} \nonumber
\Pr\Big(\sup_{\alpha\in \mathcal{I}_n}\frac{\h_{t_0}(\alpha,0)}{\phi(\alpha)}\ge s\Big) 
& \le \Pr\Big(\sup_{\alpha\in \mathcal{I}_n}\h_{t_0}(\alpha, 0)\ge s\phi(\alpha_i)\Big) \\&\le \Pr\Big(\sup_{\alpha\in \mathcal{I}_{n}} \h_{t_0}(x,0)-\h_{t_0}(\alpha_{i},0)\ge \eta s\phi(\alpha_i)\Big) + \Pr\Big(\frac{\h_{t_0}(\alpha_{i},0)}{\phi(\alpha_i)}\ge (1-\eta)s\Big) \label{term2} 
\end{align}
where the first inequality follows since $\phi$ is an increasing function of $\alpha$ and the second inequality follows by the union bound. 
We proceed to bound the two terms in the right hand side of the last display. For the first term, we seek to apply \eqref{temp-sup} of Proposition~\ref{temp-modulus}. We set $\e=\frac18$ in Proposition \ref{temp-modulus}, and define  $r:=\sup_{\tau\in (0,\delta]} \tau^{\frac{1}{8}}\big(\log(1/\tau)\big)^{\frac{2}{3}}.$ It straightforward to see that $r$ decreases to $0$ as $\delta$ goes to $0$. We may now write 
\begin{align}
\Pr\Big(\sup_{\alpha\in \mathcal{I}_{n}} \h_{t_0}(\alpha,0)-\h_{t_0}(\alpha_{n},0)\ge \eta s\phi(\alpha_i)\Big) &\le \Pr\Big(\sup_{\tau\in (0,\delta]} \frac{\h_{t_0}((1+\tau)\alpha_n,0)-\h_{t_0}(\alpha_{n},0)}{\tau^{1/8}\log^{2/3}(1/\tau)}\ge \eta \frac{s}{r}\phi(\alpha_n)\Big)\nonumber\\
  &\leq \exp\big(-c\big(s\eta r^{-1}(\log \log \alpha_n)^{2/3}\big)^{3/2}\big)\label{eq:FirstTermBd}
\end{align}
where the last inequality follows from Proposition~\ref{temp-modulus}. For any fixed $\eta$, we choose $\delta>0$ small such that 
$c(\eta r^{-1})^{3/2} > 1.$ For such choice of $\delta$, the right hand side of the last line of the above display will be bounded above by $\exp(-c_1\log \log \alpha_n)$ for some constant $c_1>1$. This bounds the first term in the right hand side of \eqref{term2}. We now proceed to bound the second term.


 Using the second inequality of \eqref{eq:uptailTlarge} in Proposition~\ref{ppn:LargeTime}, for all large $t$ and $n$
\begin{align*}
\Pr\Big(\frac{\h_{t_0}(\alpha_{n},0)}{\phi(\alpha_{n})}\ge (1-\eta)s\Big) & = \Pr\big(\h_{\alpha_{n}t_0}\ge (1-\eta)s(\log \log \alpha_n)^{2/3}\big) \le \exp\Big(-\frac{4\sqrt{2}}{3}(1-\gamma)^{5/2}s^{3/2}\log\log \alpha_n\Big)
\end{align*} 
were the equality holds since $\h_{t_0}(\alpha_{n},0)/\alpha^{1/3}_n$ is same as $\h_{\alpha_n t_0}(1,0)$ and $\h_{\alpha_n t_0}$ is the shorthand for $\h_{\alpha_n t_0}(1,0)$.
Recall that $\eta $ is chosen in a way such that $(1-\eta)^{5/2}(1+\epsilon)>0$. Since $\frac{4\sqrt{2}}{3}(1-\gamma)^{5/2}s^{3/2} = (1-\eta)^{5/2}(1+\epsilon)$, the right hand side of the above display is bounded by $\exp(-c_2\log \log \alpha_n)$ for some constant $c_2>0$. Combining this upper bound with the bounds in \eqref{eq:FirstTermBd} an substituting those into the right hand side of \eqref{term2} yields \eqref{eq:EachTermUpBd}. This completes the proof.



\subsubsection{Proof of $\mathfrak{LimSup}_{l}$}\label{sec:LimSupl}
 
We prove $\mathfrak{LimSup}_{l}$ using similar argument as in the proof of $\mathfrak{LimInf}_{u}$ (see Section~\ref{sec:LimInfu}). Recall the definitions of the interval $\mathcal{I}_n$ from Section~\ref{sec:LimInfu}. Due to Borel-Cantelli lemma, it suffices to show 
\begin{align}\label{eq:BClimSupl}
\sum_{n=1}^{\infty}\Pr\Big(\sup_{t\in \mathcal{I}_n} \frac{\h_t}{(\log \log t)^{1/3}}\leq \Big(\frac{3(1-\epsilon)}{4\sqrt{2}}\Big)^{\frac{3}{2}}\Big)<\infty
\end{align} 
Set $\gamma := (3(1-\epsilon)/4\sqrt{2})^{2/3}$. Choose $\eta>0$ such that $\zeta:=(\tfrac{4\sqrt{2}}{3}+\eta)(\gamma+2\eta)^{3/2}<1$. Fix $\theta \in (\zeta,1)$ and $\delta \in (0, \theta - \zeta)$. For such choice of $\theta$, recall the definition of the subintervals $\{\mathcal{I}^{(j)}_{n}\}^{\mathcal{M}_{\theta}}_{j=1}$ from \eqref{eq:Mtheta}. Denoting $\tilde{\m{A}}^{(j)}_{n}:= \{\sup_{t\in \mathcal{I}^{(j)}_n} \h_t\leq (n+1)^{2/3}\gamma \}$, we have   
$$\Pr\Big(\sup_{t\in \mathcal{I}_n} \frac{h_t}{(\log \log t)^{1/3}}\leq \big(\frac{3(1-\epsilon)}{4\sqrt{2}}\big)^{\frac{3}{2}}\Big)\leq \sum_{j=1}^{\mathcal{M}_{\theta}}\Pr(\tilde{\m{A}}^{(j)}_{n}).
$$
by the union bound. In a similar way as in \eqref{eq:AjBound}, we claim that there exists $c_1,c_2>0$ such that  
\begin{align}\label{eq:AjBoundAlt}
\Pr(\tilde{\m{A}}^{(j)}_{n})\leq \exp(-c_1e^{n(\theta-\delta)}e^{-n\zeta}) + \exp\big(n(\theta-\delta)-c_2(\exp(e^{n\delta})-1)^{1/2}\big)
\end{align}
for all $1\leq j\leq n$ and all large $n$. Using this upper bound on $\Pr(\tilde{\m{A}}^{(j)}_{n})$, we may bound each term in the sum \eqref{eq:BClimSupl} exactly in the same way as in \eqref{eq:AjBoundCons}. Since $\theta>\zeta +\delta$ by our choice, we may also bound each term of the sum in \eqref{eq:BClimSupl} by $\exp(-e^{n\omega})$ for some $\omega\in (0,1)$. This shows the finiteness of the sum in \eqref{eq:BClimSupl}. To complete the proof, it boils down to showing \eqref{eq:AjBoundAlt} which we do as follows.   

Fix $j\in \{1, \ldots , \mathcal{M}_{\theta}\}$ and some constant $a>1$. Like as in Section~\ref{sec:LimInfu}, we choose a sequence $t_1<t_2<\cdots<t_{L_n} \in [e^{n}+(j-1)e^{n\theta},e^{n}+je^{n\theta}]$ such that $$\min |t_{i+1}-t_i| \ge e^{n\delta }\quad\text{and},  \frac1a(e^{n(\theta-\delta)})\le L_n\le a(e^{n(\theta-\delta)}). $$ Proposition~\ref{prop:IndProx} shows the existence of independent r.v. $Y_1, Y_2, \ldots , Y_{L_n}$ satisfying \eqref{ind} (with $\mathfrak{s}\geq e^{n\delta}$) for the above choice of $t_1, t_2, \ldots , t_{L_n}$. Using similar ideas as in \eqref{eq:AjBdComp}, we may now write 
\begin{align*}
\Pr(\tilde{\m{A}}^{(j)}_n) &  \le \Pr(\sup_{1\le i\le L_n}\h_{e^{t_i}} \le (n+1)^{2/3}\gamma) \nonumber\\ &  \le \Pr\big(\sup_{1\le i\le L_n}Y_i \le (n+1)^{2/3}\gamma+1\big)+\sum_{i=1}^{L_n}\Pr(Y_i-\h_{e^{t_i}}\ge 1) \nonumber\\ &  \le \prod_{i=1}^{L_n}\Pr\big(Y_i \le (n+1)^{2/3}\gamma+1\big)+a\exp\big(n(\theta-\delta)-c(\exp(e^{n\delta })-1)^{1/2}\big) 
\end{align*}
Now we apply the distributional identity of \eqref{ind} to write
\begin{align*}
\prod_{i=1}^{L_n}\Pr\left(Y_i \le (n+1)^{2/3}\gamma+1\right)  & = \prod_{i=1}^{L_n}\Pr\left((1-e^{-(t_i-t_{i-1})})^{1/3}\h_{e^{t_i}-e^{t_{i-1}}} \le (n+1)^{2/3}\gamma+1\right)\\ &  \le \prod_{i=1}^{L_n}\Pr\big(\h_{e^{t_i}-e^{t_{i-1}}} \le n^{2/3}(\gamma+\eta)\big) \\ & \le \big[1-\exp(-(\eta+\tfrac{4\sqrt{2}}{3})n(\gamma+2\eta)^{3/2})\big]^{L_n} \\ & \le \exp\big(-L_n\exp(-(\eta+\tfrac{4\sqrt{2}}{3})n(\gamma+2\eta)^{3/2})\big)\leq \exp(-e^{n(\theta-\delta)}
e^{-n\zeta}/a)
\end{align*}
where the first inequality follows by noting that $(1-e^{-(t_{i}-t_{i-1})})^{-1/3}((n+1)^{1/3}\gamma+1)\leq n^{1/3}(\gamma+\eta)$ for all large $n$ and the second inequality follows due to \eqref{eq:uptailTlarge} of Proposition~\ref{ppn:LargeTime}. The last inequality follows since $L_n\geq e^{n(\theta-\delta)}/a$ and $\zeta= (\eta+\tfrac{4\sqrt{2}}{3})(\gamma+2\eta)^{3/2}$. Substituting the inequality in the above display into the right hand side of \eqref{eq:AjBoundAlt} completes the proof of \eqref{eq:AjBoundAlt}.

\section{Mono- and Multifractality of the KPZ equation} \label{sec:MuonoMult}

The aim of this section is to prove Theorem~\ref{thm:FracDim}. The monofractality result of the KPZ equation which is stated in \eqref{eq:Monofractal} will be proved in Section~\ref{sec:Monofractal} where the multifractality result of \eqref{eq:Mult} will be proved in Section~\ref{sec:Multifractality}.  

\subsection{Monofractality: Proof of \ref{eq:Monofractal}}\label{sec:Monofractal}

  By the inequality $\mathfrak{LimSup}_{u}$, we know that the limsup of $\h_t/(\log \log t)^{2/3}$ as $t$ goes to $\infty$ is strictly less than $\gamma$ with probability $1$ for any $\gamma>(3/4\sqrt{2})^{2/3}$. This shows $\{t\geq e^{e}: \h_t/(\log \log t)^{2/3}\geq \gamma\}$ is almost surely bounded. From Proposition~\ref{ppn:EssHaus}, it follows that the Barlow-Taylor Hausdorff dimension of a bounded set is zero. This shows $\mathrm{Dim}_{\mathbb{H}}(\{t\geq e^{e}: \h_t/(\log \log t)^{2/3}\geq \gamma\}) \stackrel{a.s.}{=} 0$ for any $\gamma>(3/4\sqrt{2})^{2/3}$. We now proceed to prove $\mathrm{Dim}_{\mathbb{H}}(\{t\geq e^{e}: \h_t/(\log \log t)^{2/3}\geq \gamma\}) = 1$ with probability $1$ for any $\gamma\leq (3/4\sqrt{2})^{2/3}$. For this, it suffices to show that
\begin{align}\label{eq:HausdorffAtCrit}
\operatorname{Dim}_{\mathbb{H}}(\mathcal{P}_{\mathfrak{h}})\stackrel{a.s.}{=}1, \quad\text{where } \mathcal{P}_{\mathfrak{h}}:=\Big\{t\geq e^{e}: \frac{\h_t}{(\log \log t)^{2/3}}\geq \frac{3}{4\sqrt{2}}\Big\}
\end{align}

Throughout the rest of this section, we show \eqref{eq:HausdorffAtCrit}. Denote $\gamma_0:=(3/4\sqrt{2})^{2/3}$. We use the following shorthand notation 
\begin{align}\label{eq:decorr}
\m{A}_s:=\Big\{\frac{\h_{s}}{(\log\log s)^{2/3}}\ge \gamma_0\Big\}, \quad \text{for any }s>0.
\end{align}
For showing \eqref{eq:HausdorffAtCrit}, we need the following two propositions. These two propositions will shed light on the nature of dependence between $\m{A}_t$ and $\m{A}_s$ when $t$ and $s$ are far from each other and $1$-dimensional Hausdorff content (see Definition~\ref{bfHausdorffCont}) of the the set $\mathcal{P}_{\mathfrak{h}}$. We first complete the proof of \eqref{eq:HausdorffAtCrit} using these two propositions and then, those will be proved in two ensuing subsections.

We are now ready to state Proposition~\ref{asymp_ind} which will demonstrate that $\m{A}_t$ and $\m{A}_s$ are approximately independent when $t$ and $s$ are sufficiently far apart. 

\begin{proposition}\label{asymp_ind}
	There exist $T_0>0$, such that for all $t>s\ge T_0$ with \begin{align}\label{cond}
	t \ge s(\log\log t)^{3}(\log\log s +\log\log t)^2,
	\end{align} we have
	\begin{align*} 
	\Pr(\m{A}_s\cap \m{A}_t)= (1+o(1))\Pr(\m{A}_s)\Pr(\m{A}_t).
	\end{align*}
	where $o(1)$ goes to zero as $s,t\to \infty$.
\end{proposition}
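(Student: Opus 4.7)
My plan is to combine the multi-point composition law with the refined upper-tail asymptotics from the appendix. The lower bound $\Pr(\m{A}_s \cap \m{A}_t) \geq \Pr(\m{A}_s)\Pr(\m{A}_t)$ is immediate from the FKG-type inequality (Proposition~\ref{ppn:FKG}); the whole work lies in the matching upper bound.

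First, I apply Proposition~\ref{prop:IndProx} with time points $(t_0, t_1, t_2) = (0, \log s, \log t)$. This produces independent random variables $Y_1, Y_2$ with
\begin{align*}
Y_1 \stackrel{d}{=} (1 - 1/s)^{1/3}\, \h_{s-1}, \qquad Y_2 \stackrel{d}{=} (1 - s/t)^{1/3}\, \h_{t-s},
\end{align*}
together with the concentration bounds $\Pr(|\h_s - Y_1| \geq x) \vee \Pr(|\h_t - Y_2| \geq x) \leq \exp(-c x^{3/2} \mathfrak{s}^{1/2})$, where $\mathfrak{s} = \min(s-1,\, t/s - 1)$. Write $r_u := \gamma_0(\log \log u)^{2/3}$ and set $\epsilon := (\log \log t)^{-1/2}$. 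A union bound, using the independence of $Y_1, Y_2$, then yields
\begin{align*}
\Pr(\m{A}_s \cap \m{A}_t) \leq \Pr(Y_1 \geq r_s - \epsilon)\,\Pr(Y_2 \geq r_t - \epsilon) + \Pr(|\h_s - Y_1| \geq \epsilon) + \Pr(|\h_t - Y_2| \geq \epsilon).
\end{align*}
Under the hypothesis \eqref{cond}, one has $\mathfrak{s}^{1/2} \gtrsim (\log\log t)^{3/2}(\log\log s + \log\log t)$, so the two concentration error terms are at most $\exp\!\big(-c(\log\log t)^{3/4}(\log\log s + \log\log t)\big)$. Since the numerical identity $\tfrac{4\sqrt{2}}{3}\gamma_0^{3/2} = 1$ combined with Proposition~\ref{ppn:LargeTime} gives $\Pr(\m{A}_u) \asymp 1/\log u$, these error terms are indeed $o(\Pr(\m{A}_s)\Pr(\m{A}_t))$.

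Second, I will show $\Pr(Y_i \geq r_{u_i} - \epsilon) = (1 + o(1))\,\Pr(\m{A}_{u_i})$ for $(u_1, u_2) = (s, t)$. Via the distributional identity for $Y_i$, this reduces to comparing $\Pr\!\big(\h_{u - u'} \geq (r_u - \epsilon)(1 - u'/u)^{-1/3}\big)$ with $\Pr(\h_u \geq r_u)$, where $u' \in \{1, s\}$. Condition \eqref{cond} forces $u'/u \leq (\log\log t)^{-5}$, so the threshold for $\h_{u - u'}$ differs from $r_u$ by at most $O(\epsilon)$. The refined upper tail estimate from Proposition~\ref{lem:RefUpTail} in Appendix~\ref{sec:App} will then allow me to absorb both (i) the time change from $\h_u$ to $\h_{u - u'}$ and (ii) the $O(\epsilon)$ level shift into a multiplicative factor $\exp\!\big(-\tfrac{4\sqrt{2}}{3} \cdot \tfrac{3}{2}\, r_u^{1/2} \cdot O(\epsilon)(1 + o(1))\big) = \exp(O((\log\log t)^{-1/6})) = 1 + o(1)$.

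The hardest step will be this last comparison. Proposition~\ref{ppn:LargeTime} on its own has multiplicative slack of order $(\log u)^{O(\varepsilon)}$, which is far too loose to yield a $(1 + o(1))$ asymptotic equivalence. The refined asymptotic from Proposition~\ref{lem:RefUpTail} --- a $(1 + o(1))$-precise expression for the upper tail of $\h_u$ at levels $r_u(1 + o(1))$ --- is precisely what closes this gap, and its deployment both across the time shift $u \mapsto u - u'$ and across the $O(\epsilon)$ level perturbation is the technical heart of the argument.
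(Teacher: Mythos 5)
Your overall blueprint is sound — FKG for the lower bound, the composition law plus the refined appendix asymptotics for the matching upper bound — and your second-step comparison of $\Pr(Y_2 \geq r_t - \epsilon)$ with $\Pr(\m{A}_t)$ via Proposition~\ref{lem:RefUpTail} is exactly the technical heart and matches the paper. However there is a genuine gap in your first step: you replace \emph{both} $\h_s$ and $\h_t$ by independent proxies $Y_1, Y_2$ via Proposition~\ref{prop:IndProx} with $(t_0,t_1,t_2)=(0,\log s,\log t)$, which is one replacement too many. Your claim that $\mathfrak{s}^{1/2} \gtrsim (\log\log t)^{3/2}(\log\log s + \log\log t)$ is false: $\mathfrak{s}=\min(s-1,\,t/s-1)$, and condition \eqref{cond} controls only $t/s-1$ from below; it imposes no lower bound at all on $s-1$ relative to $t$. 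Consequently the concentration error $\Pr(|\h_s - Y_1| \geq \epsilon)$ is governed by $(s-1)^{1/2}$ alone and cannot beat $\Pr(\m{A}_t) \approx 1/(\log t \log\log t)$ when $t$ is much larger than any fixed power of $s$. Concretely, take $s\to\infty$ with $t = \exp(\exp(s^2))$ (which satisfies \eqref{cond}): then $\Pr(\m{A}_s)\Pr(\m{A}_t)\lesssim e^{-s^2}$ while the best available bound on $\Pr(|\h_s - Y_1| \geq \epsilon)$ is of order $\exp(-c\,s^{1/2}(\log\log s)^{-3/4})$, which is enormously larger — so your additive error term is not $o(\Pr(\m{A}_s)\Pr(\m{A}_t))$ and the upper bound collapses.

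The fix, which is what the paper does, is to not approximate $\h_s$ at all. Part (2) of Proposition~\ref{ppn:MulpointComposition} already gives that the second-increment process $\h_{t\downarrow s}(\cdot)$ — and hence your $Y_2$ — is independent of $\h_s$ itself, not just of $Y_1$. So one replaces only $\h_t$ by $Y:=Y_2$, writes
\begin{align*}
\Pr(\m{A}_s\cap\m{A}_t) \le \Pr(\h_s\ge r_s)\,\Pr(Y\ge r_t-\epsilon) + \Pr(|\h_t - Y|\ge\epsilon),
\end{align*}
and the single remaining concentration term is controlled by $(t/s-1)^{1/2}$, which is exactly what \eqref{cond} makes large. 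Your bookkeeping for the $Y_2$ comparison, including the time change $t\mapsto t-s$ and the $O((\log\log t)^{-1/2})$ level shift absorbed into the $K b_t$ slack of Proposition~\ref{lem:RefUpTail}, then goes through unchanged and yields the $(1+o(1))$ equivalence.
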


The next proposition will investigate $1$-dimensional Hausdorff contents of the set $\mathcal{P}_{\mathfrak{h}}$. 

\begin{proposition}\label{haus}
Denote $\mathcal{V}_n:=[-e^n,e^n]$ and $\mathcal{S}_0:=\mathcal{V}_0, \mathcal{S}_{n+1}:=\mathcal{V}_{n+1}\setminus \mathcal{V}_n$ for $n\in \mathbb{N}$. For any Borel set $G$, define $
\mu(G):=\operatorname{Leb}\left(\mathcal{P}_{\mathfrak{h}}\cap G\right).$ We have
\begin{align}\label{eq:InfinityHaus}
\sum_{n=4}^{\infty}e^{-n}\mu(\mathcal{S}_n)\stackrel{a.s.}{=}\infty.
\end{align}

\end{proposition}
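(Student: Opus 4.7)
The plan is a second moment method on $X_n := e^{-n}\mu(\mathcal{S}_n)$ with partial sums $S_N := \sum_{n=4}^N X_n$: show $\sum_n \mathbb{E}[X_n] = \infty$ and $\mathbb{E}[S_N^2] \leq C(\mathbb{E}[S_N])^2$, then apply Paley--Zygmund, and finally upgrade positive probability to probability one via a 0-1 law. For the first moment, Fubini gives $\mathbb{E}[X_n] = e^{-n}\int_{\mathcal{S}_n}\mathbb{P}(\m{A}_t)\,dt$. Using the refined upper tail estimate (Proposition~\ref{lem:RefUpTail} in Appendix~\ref{sec:App}) at the critical scale $s = \gamma_0(\log\log t)^{2/3}$, where the identity $(4\sqrt{2}/3)\gamma_0^{3/2}=1$ forces the exponent to equal $-\log\log t$ precisely, one obtains $\mathbb{P}(\m{A}_t) \geq c/(\log t\cdot (\log\log t)^\beta)$ for some explicit $\beta \leq 1$. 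This yields $\mathbb{E}[X_n] \gtrsim 1/(n(\log n)^\beta)$, so $\mathbb{E}[S_N]$ diverges as $N \to \infty$. The cruder estimate of Proposition~\ref{ppn:LargeTime} would only give $(\log t)^{-(1+\varepsilon)}$, which is summable, so the sharper prefactor in Proposition~\ref{lem:RefUpTail} is essential.

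For the second moment, expand $\mathbb{E}[\mu(\mathcal{S}_n)\mu(\mathcal{S}_m)] = \int_{\mathcal{S}_n \times \mathcal{S}_m}\mathbb{P}(\m{A}_s \cap \m{A}_t)\,ds\,dt$ and partition the domain according to whether the separation condition~\eqref{cond} of Proposition~\ref{asymp_ind} holds. When $m-n$ exceeds a constant multiple of $\log\log N$, condition~\eqref{cond} is satisfied throughout $\mathcal{S}_n \times \mathcal{S}_m$, and Proposition~\ref{asymp_ind} yields $\mathbb{P}(\m{A}_s \cap \m{A}_t) \leq (1+o(1))\mathbb{P}(\m{A}_s)\mathbb{P}(\m{A}_t)$; summing, the total contribution of such pairs is at most $(1+o(1))(\mathbb{E}[S_N])^2$. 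For the ``close'' pairs (including the diagonal $n=m$), use the trivial bound $\mathbb{P}(\m{A}_s \cap \m{A}_t) \leq \mathbb{P}(\m{A}_s)$; this gives $\mathbb{E}[X_n^2] \leq \mathbb{E}[X_n]$ and hence $\sum_n \mathbb{E}[X_n^2] \leq \mathbb{E}[S_N]$, while the close off-diagonal contribution is $O(\log\log N\cdot \mathbb{E}[S_N]) = O((\mathbb{E}[S_N])^2)$ since $\mathbb{E}[S_N]$ itself grows at least like $\log\log N$ (up to iterated logarithmic factors). Altogether, $\mathbb{E}[S_N^2] \leq C(\mathbb{E}[S_N])^2$.

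By Paley--Zygmund (Proposition~\ref{ppn:Paley-Zygmund}), $\mathbb{P}(S_N \geq \mathbb{E}[S_N]/2) \geq c > 0$ uniformly in $N$, and monotone convergence then gives $\mathbb{P}(S_\infty = \infty) \geq c$. Since $\{S_\infty = \infty\}$ is invariant under finite truncation of $(X_n)$, it lies in the tail $\sigma$-algebra of this sequence; constructing an independent approximation of $(X_n)_{n \geq M}$ via Proposition~\ref{prop:IndProx} (discretize each shell at widely separated time points and replace the height values with the corresponding independent narrow-wedge profiles produced by that proposition) and invoking Kolmogorov's 0-1 law on the approximating sequence upgrades this bound to $\mathbb{P}(S_\infty = \infty) = 1$. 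The principal obstacle is the close-pair analysis in the second moment step, since nearby-shell and within-shell pairs cannot be decoupled via Proposition~\ref{asymp_ind}, whose separation condition requires logarithmic gaps; only the trivial bound is available there, and the argument relies on the sparsity of close pairs ($O((\log\log N)^2)$ shell pairs) to absorb their contribution into the main $(\mathbb{E}[S_N])^2$ term. A secondary obstacle is the 0-1 upgrade, which requires careful coupling arguments using the multi-point composition law.
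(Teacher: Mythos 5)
Your broad strategy is the right one (second moment method, Paley--Zygmund, Proposition~\ref{lem:RefUpTail} for the first moment, and Proposition~\ref{asymp_ind} for decorrelation), and your first-moment calculation and identification of why the cruder tail bound fails are both correct. But there is a real gap in the final step. With the full sum $S_N=\sum_{n=4}^N X_n$, the ``close pair'' block of the second moment (pairs with $|n-m|\lesssim\log\log N$, handled by the trivial bound) contributes at the \emph{same order} as $(\mathbb{E}[S_N])^2$, not smaller; so you obtain $\mathbb{E}[S_N^2]\le C(\mathbb{E}[S_N])^2$ only for some fixed $C>1$ bounded away from $1$. Paley--Zygmund then yields $\Pr(S_\infty=\infty)\ge c$ with $c<1$, and you are forced to invoke a 0--1 law. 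That upgrade is not nearly as routine as you suggest: $X_n=e^{-n}\mu(\mathcal{S}_n)$ is a functional of $\h_t$ over a \emph{continuum} of times in $\mathcal{S}_n$, so you would first need to discretize and control the error; and even after discretization, Proposition~\ref{prop:IndProx} only couples $\h_{e^{t_i}}$ to an independent $Y_i$ up to a small-probability error, which is not the same as producing a genuinely independent sequence to which Kolmogorov's 0--1 law applies. Making the divergence event measurable with respect to the tail of an honest independent sequence would require a substantial additional argument that you have not supplied.

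The paper sidesteps this entirely by \emph{sparsifying} the sum before applying Paley--Zygmund. It works with $S_M=\sum_{k=1}^M e^{-N_k}\mu(\mathcal{S}_{N_k})$ along a subsequence $N_1,N_2,\ldots$ with gaps $N_{k+1}-N_k=10\log\log N_k$. With these gaps, \emph{every} off-diagonal pair $(k,\ell)$ satisfies condition~\eqref{cond}, so Proposition~\ref{asymp_ind} gives $(1+\e)(\mathbb{E}[S_M])^2$ for the off-diagonal block, while the purely diagonal block is of order $\mathbb{E}[S_M]\sim\log\log\log N_M$, which is $o((\mathbb{E}[S_M])^2)$. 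Hence $\mathbb{E}[S_M^2]/(\mathbb{E}[S_M])^2\to 1+\e$, and Paley--Zygmund with $\delta=(\log\log\log N_M)^{-1/2}$ gives $\Pr(S_\infty=\infty)\ge(1-\e)^2/(1+\e)$; letting $\e\to0$ yields probability $1$ with no 0--1 law at all. Since the sparse sum is dominated by the full sum $\sum_{n\ge4}e^{-n}\mu(\mathcal{S}_n)$, divergence of the former implies divergence of the latter. If you want to salvage your version, you should replace the full sum by this sparsified sum; it is the essential device that removes the close-pair obstruction and makes the Paley--Zygmund constant controllable.
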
	

Assuming Proposition~\ref{asymp_ind} and~\ref{haus}, we proceed to complete the proof of \eqref{eq:HausdorffAtCrit}.
\begin{proof}[Proof of \eqref{eq:HausdorffAtCrit}] 
%
Recall the definition of $\rho$-dimensional Hausdorff content $\nu_{n,\rho}$ from Definition~\ref{bfHausdorffCont}. By Proposition~\ref{ppn:Frostman}, there exists some constant $K_{1,n}>0$ (defined in \eqref{eq:Kdef}) such that $$\nu_{n,1}(\mathcal{P}_{\mathfrak{h}}) \ge K^{-1}_{1,n}e^{-n}\mu(\mathcal{P}_{\mathfrak{h}}).$$ Since $\mu(Q)\leq \mathrm{Leb}(Q)$ for any finite Borel set $Q$, $K_{1,n}$ is less than $1$. This implies $\nu_{n,1}(\mathcal{P}_{\mathfrak{h}}) \ge e^{-n}\mu(\mathcal{P}_{\mathfrak{h}}).$ Combining this inequality with \eqref{eq:InfinityHaus} of Proposition~\ref{haus} yields $$\sum_{n=4}^{\infty} \nu_{n,1}(\mathcal{P}_{\mathfrak{h}}) \ge \sum_{n=4}^{\infty} e^{-n}\mu(\mathcal{S}_n) \stackrel{a.s.}{=}\infty.$$ 
From the definition of the Barlow Taylor Hausdorff dimension (see Definition~\ref{bfHausdorffCont}), it now follows that $\operatorname{Dim}_{\mathbb{H}}(\mathcal{P}_{\mathfrak{h}})= 1$ occurs with probability $1$. This completes the proof.
\end{proof}


\begin{proof}[Proof of Proposition \ref{asymp_ind}]
 By Proposition \ref{ppn:FKG}, we know $\Pr(\m{A}_t\cap \m{A}_s)\geq \Pr(\m{A}_t)\Pr(\m{A}_s)$. It suffices to show that $\Pr(\m{A}_t\cap \m{A}_s)\leq (1+o(1))\Pr(\m{A}_t)\Pr(\m{A}_s)$ as $s,t\to\infty$.   
 
  For showing this, we use Proposition~\ref{prop:IndProx}. Fix $t>s>T_0$ such that $t,s$ satisfy the inequality \eqref{cond}. Note that $(\log \log t)^{-1/2}(\tfrac{t}{s}-1)^{1/3}\to \infty$ as $s,t\to \infty$. By Proposition~\ref{prop:IndProx}, there exists a r.v. $Y$ independent of $\h_s$ and constant $c>0$ such that 
\begin{align}
Y \stackrel{d}{=} \big(1-\frac{s}{t}\big)^{\frac{1}{3}}\h_{t-s}, \quad \Pr\big(|\h_t - Y|\geq (\log \log t)^{-1/2}\big) \leq \exp\big(-c(\tfrac{t}{s}-1)^{1/2}(\log \log t)^{-3/4}\big). \label{eq:YBd}
\end{align}  
  Using the above display and the union bound of the probability, we write 
\begin{align*}
		\Pr(\m{A}_s\cap \m{A}_t)  \le  &\Pr\big(\{\m{A}_s\cap \m{A}_t\}\cap\{ |\h_t-Y|\le (\log\log t)^{-1/2}\}\big)+\Pr\big(|\h_t-Y|\ge (\log\log t)^{-1/2}\big) \\  \le &\Pr\left(\{\h_s  \ge  \gamma_0(\log\log s)^{2/3}\}\cap \{ Y \ge \gamma_0(\log\log t)^{2/3}-(\log\log t)^{-1/2}\} \right) \\ & +\exp\big(-c(t/s-1)^{1/2}(\log \log t)^{-3/4}\big)\\  \le &\Pr\left(\h_s  \ge  \gamma_0(\log\log s)^{2/3})\Pr( Y \ge \gamma_0(\log\log t)^{2/3}-(\log\log t)^{-1/2} \right) \\ & + \exp\big(-c(\log \log t)^{3/4}\log\big(\log s \log t\big)\big)  \\  \le &\Pr\left(\h_s  \ge  \gamma_0(\log\log s)^{2/3}\right)\Pr\left(\h_{t-s} \ge \gamma_0(\log\log t)^{2/3}-(\log\log t)^{-1/2}\right)\\&+ \frac{o(1)}{\log \log t\log t\log \log s\log s}\\
		\leq &\frac{(16\pi \gamma^{3/2}_0)^{-2}(1+o(1))}{\log \log s\log s\log \log t\log t} + \frac{o(1)}{\log \log t\log t\log \log s\log s} = (1+o(1))\Pr(\m{A}_t)\Pr(\m{A}_s)  
		\end{align*}  
  where the inequality in the second line follows by observing that $$\m{A}_t\cap \{|\h_t-Y|\le (\log\log t)^{-1/2}\}\subset \big\{Y \ge \gamma_0(\log\log t)^{2/3}-(\log\log t)^{-1/2}\big\}$$ and using the probability bound in \eqref{eq:YBd}. The next inequality follows by the independence of $\h_s$ and $Y$ and through the following observation 
$$\exp\big(-c(t/s-1)^{1/2}(\log \log t)^{-3/4}\big)\leq \exp\big(-c(\log \log t)^{3/4}\log\big(\log s \log t\big)\big)$$  
  which is a consequence of the fact that $t,s$ satisfy the condition \eqref{cond}. The inequality in the sixth line follows by noting $Y \stackrel{d}{=} \big(1-s/t\big)^{1/3}\h_{t-s}$ and observing 
  $$\exp\big(-c(\log \log t)^{3/4}\log\big(\log s \log t\big)\big)= \frac{o(1)}{\log \log t\log t\log \log s\log s}.$$
   The last inequality follows by using Lemma~\ref{lem:RefUpTail}. This completes the proof of Proposition~\ref{asymp_ind}. 
	\end{proof}

\begin{proof}[Proof of Proposition~\ref{haus}]
Fix $\e>0$. Let $N_0=N_0(\e)>T_0$ be such that for any $t,s \geq e^{N_0}$ satisfying \eqref{cond}, we have 
\begin{align*}
\frac{(1-\e)}{\log s\log \log s}\leq \frac{\Pr(\m{A}_s)}{(16\pi\gamma^{3/2}_0)^{-1}}\leq \frac{(1+\e)}{\log s\log \log s}, \quad &  \quad \quad  \frac{(1-\e)}{\log t\log\log t}\leq \frac{\Pr(\m{A}_t)}{(16\pi\gamma^{3/2}_0)^{-1}}\leq \frac{(1+\e)}{\log t\log \log t} ,\\  (1-\e)\leq &\frac{\Pr(\m{A}_t\cap \m{A}_s)}{\Pr(\m{A}_t)\Pr(\m{A}_s)}\leq (1+\e).
\end{align*}
Note that the first two inequality holds due to Proposition~\ref{lem:RefUpTail} of Section~\ref{sec:App} and the last inequality holds due to Proposition~\ref{asymp_ind} which we just proved. Next we define a subsequence $\{N_k\}$ recursively as follows:
\begin{align}
N_1:=\max\{N_0, e^{e^{10}}\}, \quad N_{k+1} = N_{k}+ 10 \log\log N_k, \quad k\in \mathbb{N}. 
\end{align} 
Consider the following random variables
\begin{align*}
S_{M}:=\sum_{k=1}^{M}e^{-N_k}\mu(\mathcal{S}_{N_k}), \quad M \in \mathbb{N}, \qquad S_{\infty} := \sum_{k=1}^{\infty} e^{-N_k}\mu(\mathcal{S}_{N_k}).
\end{align*}
Define $\kappa:=(1-e^{-1})$. For $M\in \mathbb{N}$, we show that
\begin{align}\label{eq:SnmBd}
\frac{\Ex[S_{M}]}{1-\e}\geq (1+o(1))\frac{\kappa\log\log\log  N_{M}}{16\pi\gamma^{3/2}_0}, \qquad  \frac{\Ex [S_{N,M}^2]- (1+\e)(\Ex[S_{N,M}])^2}{(1+\e)(1+o(1))}\le \frac{\kappa^2\log\log\log  N_{M}}{80\pi\gamma^{3/2}_0}
\end{align}
where the term $o(1)$ goes to $0$ as $M$ goes to $\infty$.

By assuming  the above inequality, we first complete the proof of Proposition~\ref{haus}. We seek to show $\Pr(S_{\infty}= \infty)\geq 1$. Note that $\mathbb{P}(S_{\infty}=\infty)=\mathbb{P}(\lim_{M\to \infty} S_{M}=\infty)$. We may now write 
\begin{align}
\Pr\big(\lim_{M\to \infty }S_{M}=\infty\big) & \ge 
\liminf_{M\to\infty}\Pr\big(S_{M}\ge \tfrac{1}{\sqrt{\log\log \log N_M}}\Ex S_{N,M}\big) \nonumber\\ & \ge \liminf_{M\to\infty}\frac{(1-(\log\log \log N_M)^{-1/2})^2(\Ex[S_{N,M}])^2}{\Ex[S^2_{N,M}]} \nonumber\\ & \ge \liminf_{M\to\infty}\frac{(1-(\log \log \log N_M)^{-1/2})^2\cdot\tfrac{(1-\e)^2}{1+\e}}{(1+o(1))4\pi \gamma^{3/2}_0(\log \log \log N_M)^{-1}+1}=\frac{(1-\e)^2}{1+\e}\label{eq:SnmPrBd}
\end{align}
The first inequality in the above display follows since $(\log\log \log N_M)^{-1/2}\Ex [S_{M}]$ converges to $\infty$ thanks to the first inequality of \eqref{eq:SnmBd}. We obtained the second inequality by applying Paley-Zygmund inequality (see Proposition~\ref{ppn:Paley-Zygmund}) for the random variable $S_{M}$ with setting $\delta :=(\log\log\log N_M)^{-1/2}$. The third inequality follows by noticing from \eqref{eq:SnmBd} that $$ \frac{\Ex [S_{M}^2]}{(\Ex[S_{M}])^2}\leq \frac{1+\e}{(1-\e)^2}\Big((1+o(1))4\pi \gamma^{3/2}_0(\log \log \log N_M)^{-1}+1\Big).$$ From \eqref{eq:SnmPrBd}, Proposition~\ref{haus} follows by letting $\e$ to $0$ and observing that $S_{\infty} \le \sum_{n=4}^{\infty}e^{-n}\mu(\mathcal{S}_n)$.

Throughout the rest, we prove \eqref{eq:SnmBd}. Note that 
\begin{align*}
\Ex[S_{M}]  =\sum_{k=1}^M e^{-N_k}\int_{e^{N_{k}-1}}^{e^{N_k}}\Pr(\m{A}_s)\d s  \ge \sum_{k=1}^M e^{-N_k}\int_{e^{N_k-1}}^{e^{N_k}}\frac{(1-\e)(16 \pi\gamma^{3/2}_0)^{-1}}{\log \log s\log s}\d s \ge \sum_{k=1}^M\frac{\kappa(16 \pi\gamma^{3/2}_0)^{-1}}{N_k\log N_k}.
\end{align*}
where the first inequality follows since $\Pr(\m{A}_s)\geq (1-\e)(16\pi \gamma^{3/2}_0\log s\log \log s)^{-1}$ for all $s\geq e^{N_0}$ and the second inequality follows since $\log s\leq N_k$ for all $s\in [e^{N_k-1}, e^{N_k}]$. To lower bound the right hand side of the above display, we note   
\begin{align*}
\sum_{n=N_{k-1}}^{N_k-1}\frac{1}{n\log n \log \log n} \leq \sum_{n=N_{k-1}}^{N_k-1} \frac{(\log \log N_{k-1})^{-1}}{N_{k-1}\log N_{k-1}}  \leq \frac{10\log \log N_{k-1}}{N_{k-1}\log N_{k-1}\log \log N_{k-1}} = \frac{10}{N_{k-1}\log N_{k-1}}
\end{align*} 
where the first inequality is straightforward and the second  inequality follows since $|N_{k}-N_{k-1}|= 10(1+o(1))\log \log N_{k-1}$. Using the above display, we may write 
\begin{align}
\sum_{k=1}^{M}\frac{1}{N_k\log N_k} \geq \frac{1}{10} \sum_{n=N_0}^{N_k} \frac{1}{n\log n\log \log n} = \frac{1}{10}(1+o(1))\log \log \log N_M
\end{align}
where $o(1)$ goes to $0$ as $M$ goes to $\infty$. This implies the first inequality of \eqref{eq:SnmBd}.


Now we proceed to prove the second inequality of \eqref{eq:SnmBd}. Define $\mathfrak{Int}(n,m) := e^{-n-m}\int_{e^{n-1}}^{e^n}\int_{e^{m-1}}^{e^m} \Pr(\m{A}_t\cap \m{A}_s)\d t \d s$. Observe that  

\begin{align*}
\Ex[S_{M}^2]= \sum_{k=1}^M\sum_{\ell=1}^M  \mathfrak{Int}(N_k,N_{\ell}) = \underbrace{\sum_{k=1}^M \mathfrak{Int}(N_{k},N_k)}_{(\mathbf{I})}  + \underbrace{\sum_{k\neq \ell}^M \mathfrak{Int}(N_k,N_\ell)}_{(\mathbf{II})}.
\end{align*}
 We first bound $(\mathbf{I})$. Using the inequality $\Pr(\m{A}_s\cap \m{A}_t)\le \Pr(\m{A}_t)\le (1+\e)(16\pi \gamma^{3/2}_0\log\log t\log t)^{-1}$ for any $s, t\in [e^{N_k-1}, e^{N_k}]$, we see
\begin{align*}
\int_{e^{N_k-1}}^{e^{N_k}}\int_{e^{N_{k}-1}}^{e^{N_k}} \Pr(\m{A}_s\cap \m{A}_t)\d s \d t \le \frac{(1+\e)(e^{N_k}-e^{N_k-1})^2}{16\pi \gamma^{3/2}_0(N_k-1)\log(N_k-1)}. 
\end{align*}

Multiplying both sides by $e^{-2N_{k}}$ and summing over $k$ as $k$ varies in $[1,M]\cap \mathbb{Z}_{\geq 1}$ yields 
\begin{align}
(\mathbf{I}) \le \sum_{k=1}^M \frac{\kappa^2(1+\e)}{16\pi \gamma^{3/2}_0(N_{k}-1)\log(N_{k}-1)}&\leq \frac{\kappa^2(1+\e)}{80\pi \gamma^{3/2}_0}\sum_{n=N_0}^{N_{M}}\frac{C}{(n-1)\log(n-1)\log \log (n-1)}\nonumber\\&= \kappa^2(1+o(1))\frac{(1+\e)\log \log \log N_M}{80\pi \gamma^{3/2}_0}.\label{eq:IandII}
\end{align}
The equality in the last line follows since  $\sum_{n=N_0}^{N_M}((n-1)\log(n-1)\log \log (n-1))^{-1} = (1+o(1))\log \log \log (n)$. It remains to explain the second inequality of the above display. To see this, notice that for any $k\in \mathbb{N}$, 
\begin{align*}
\frac{1}{(N_k-1)\log (N_k-1)}  \leq \frac{2\log \log(N_{k-1})}{(N_{k}-1)\log(N_k-1)\log \log (N_{k}-1)}\leq 2\sum_{n=N_{k-1}}^{N_k} \frac{(\log \log (n-1))^{-1}}{10(n-1)\log(n-1)}
\end{align*}
The first inequality follows since $2\log \log N_{k-1}\geq \log \log (N_{k}-1)$ whereas the second inequality is obtained by noting that $|N_{k} - N_{k-1}|\leq 10 \log \log N_{k-1}$.

Now we bound $(\mathbf{II})$. Fix any $t\in [e^{N_k-1}, e^{N_k}]$ and $s\in [e^{N_{\ell}-1}, e^{N_{\ell}}]$ for $k> \ell \in \mathbb{N}$. Using this information, we write
$$t/s\geq e^{N_{k}-N_{\ell}-1}\geq e^{10 \log \log (N_{k-1})-1}\geq e^{5\log \log (N_k)+\log 4}= 4(\log N_k)^5\geq 4(\log \log t)^5$$
where the second inequality follows since $N_k-N_{\ell}\geq N_k-N_{k-1}\geq 10 \log \log N_{k-1}$ and the third inequality is obtained by noting that $10\log \log (N_{k-1})\geq 5 \log \log N_k+1+\log 4$. From the above display, it follows that $t$ and $s$ satisfy \eqref{cond}.
 Due to Proposition~\ref{asymp_ind}, we have $\Pr(\m{A}_t\cap \m{A}_s)\leq(1+\e)\Pr(\m{A}_t)\Pr(\m{A}_s)$. This implies  
\begin{align}
(\mathbf{II}) & = 2\sum_{\ell=1}^M\sum_{k=\ell+1}^{M}e^{-N_k-N_\ell}\int_{e^{N_k-1}}^{e^{N_k}}\int_{e^{N_{\ell}-1}}^{e^{N_{\ell}}} \Pr(\m{A}_s\cap \m{A}_t)\d s \d t  \nonumber\\ & \le 2(1+\e)\sum_{\ell=1}^M\sum_{k=\ell+1}^{M}e^{-N_k-N_\ell}\int_{e^{N_k-1}}^{e^{N_k}}\int_{e^{N_{\ell}-1}}^{e^{N_{\ell}}} \Pr(\m{A}_s)\Pr(\m{A}_t)\d s \d t \le (1+\e)\big(\Ex[S_{M}]\big)^2.\label{eq:III}
\end{align}
Combining \eqref{eq:IandII} and \eqref{eq:III} yields \eqref{eq:SnmBd}. This completes the proof.
\end{proof}

\subsection{Multifractality: Proof of \ref{eq:Mult}}\label{sec:Multifractality}

 Recall the definition of the exponential time changed process $\mur(t)$. We use the following shorthand notation throughout this section-
\begin{align}\label{eq:Lambda}
\Lambda_{\gamma}:= \left\{t\ge e \mid \mur(t)\ge \gamma\left(\frac{3}{4\sqrt{2}}\log t\right)^{2/3}\right\}, \quad \gamma\in \mathbb{R}.
\end{align}
Due to Theorem~\ref{thm:MainTheorem}, we know 
$$\limsup_{t\to \infty} \frac{\mur(t)}{(3\log t/4\sqrt{2})^{2/3}} \stackrel{a.s.}{=} 1$$
which shows that $\Lambda_{\gamma}$ is almost surely bounded for $\gamma>1$. This proves $\mathrm{Dim}_{\mathbb{H}}(\Lambda_{\gamma}) =0$ with probability $1$ when $\gamma>1$. In the rest of the section, we focus on showing \eqref{eq:Mult} for $\gamma\in (0,1]$. We divide the proof into two stages. The first stage will show the upper bound $\mathrm{Dim}_{\mathbb{H}}(\Lambda_{\gamma})\leq 1 - \gamma^{3/2}$ and the lower bound $\mathrm{Dim}_{\mathbb{H}}(\Lambda_{\gamma})\geq 1 - \gamma^{3/2}$ will be shown in the second stage.  

\medskip


\noindent \textbf{Stage 1: Proof of $\mathrm{Dim}_{\mathbb{H}}(\Lambda_{\gamma})\leq 1-\gamma^{3/2}$.} 
Recall the definition of $\rho$-dimensional Hausdorff content $\nu_{n,\rho}$ from Definition~\ref{bfHausdorffCont}. The main step of the proof of the upper bound is to show that 
\begin{align}\label{eq:ExpSer}
\sum_{n=1}^{\infty}\mathbb{E}\big[\nu_{n,\rho}\big(\Lambda_{\gamma}\big)\big]<\infty, \quad \forall \rho> 1-(1-\epsilon)\gamma^{3/2}, \epsilon\in (0,1)
\end{align}
This immediately implies that $\sum_{n=1}^{\infty}\nu_{n,1-(1-\epsilon)\gamma^{3/2}}(\Lambda_\gamma)<\infty$ almost surely for all $\epsilon\in (0,1)$ and hence, $\mathrm{Dim}_{\mathbb{H}}(\Lambda_{\gamma}) \leq 1-(1-\epsilon)\gamma^{3/2}$. From this upper bound, the result will follow by taking $\epsilon$ to $0$. Below, we state a lemma showing a technical estimate which will be required to bound $\mathbb{E}\big[\nu_{n,1-(1-\epsilon)\gamma^{3/2}}\big(\Lambda_{\gamma}\big)\big]$ for any $n\in \mathbb{N}$. After that, we will proceed to complete the proof of the upper bound which will be followed by the proof of the lemma.


\begin{lemma}\label{tech} Fix $\z\in (0,1)$. We have
\begin{align}\label{eq:PrLambda}
\Pr\left(\Lambda_\gamma\cap [m,m+1]\neq \varnothing \right) \le 2m^{-(1-\z)^{3/2}\gamma^{3/2}+o(1)}\log m
\end{align}
where $o(1)$ term goes to zero as $m\to \infty$. 
\end{lemma}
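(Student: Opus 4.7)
\textit{Proof plan for Lemma~\ref{tech}.} My strategy is to discretize, reduce each sub-interval supremum to a one-point estimate plus a modulus-of-continuity estimate, and sum. Writing $s = e^t$ and using $\mathfrak{h}_1(s,0) = s^{1/3}\mathfrak{h}_s$, the event $\{\Lambda_\gamma \cap [m,m+1] \neq \varnothing\}$ coincides with $\{\exists s \in [e^m, e^{m+1}] : \mathfrak{h}_1(s, 0) \geq \gamma\Phi(s)\}$, where $\Phi(s) := s^{1/3}(3\log\log s/4\sqrt{2})^{2/3}$ is strictly increasing in $s$. I would partition $[e^m, e^{m+1}]$ into $K_m := \lceil\log m\rceil$ sub-intervals $\mathcal{I}_i := [s_{i-1}, s_i]$ with $s_i := e^{m+i/K_m}$, so $s_i - s_{i-1} \leq 2 s_{i-1}/K_m$ for large $m$. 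Monotonicity of $\Phi$ combined with the union bound gives
\begin{align*}
\Pr(\Lambda_\gamma \cap [m,m+1] \neq \varnothing) \leq \sum_{i=1}^{K_m}\Pr\Big(\sup_{s \in \mathcal{I}_i} \mathfrak{h}_1(s,0) \geq \gamma\Phi(s_{i-1})\Big).
\end{align*}

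For each summand I would fix small $\eta\in (0,1)$ and $\e'>0$ and split $\sup_{\mathcal{I}_i}\mathfrak{h}_1(s,0) \leq \mathfrak{h}_1(s_{i-1},0) + \sup_{\mathcal{I}_i}[\mathfrak{h}_1(s,0) - \mathfrak{h}_1(s_{i-1},0)]$. The one-point contribution $\Pr(\mathfrak{h}_{s_{i-1}} \geq (1-\eta)\gamma(3\log\log s_{i-1}/4\sqrt{2})^{2/3})$ is controlled by \eqref{eq:uptailTlarge} of Proposition~\ref{ppn:LargeTime}, giving $\exp(-(1-\e')(1-\eta)^{3/2}\gamma^{3/2}\log\log s_{i-1}) \leq m^{-(1-\e')(1-\eta)^{3/2}\gamma^{3/2}}$, since $\log\log s_{i-1} \geq \log m$. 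The fluctuation contribution is handled by Proposition~\ref{temp-modulus} applied with $t=1$, $a = s_{i-1}$, $\e = 1/8$: denoting $M_m := \sup_{u \in (0, 2/K_m]} u^{1/8}\log^{2/3}(1/u)$, which for large $m$ is attained at $u = 2/K_m$ and equals $(2/K_m)^{1/8}\log^{2/3}(K_m/2) \sim (\log m)^{-1/8}(\log\log m)^{2/3}$, I would obtain
\begin{align*}
\Pr\Big(\sup_{s \in \mathcal{I}_i}[\mathfrak{h}_1(s,0) - \mathfrak{h}_1(s_{i-1},0)] \geq \eta\gamma\Phi(s_{i-1})\Big) \leq \exp(-cS_m^{3/2}), \quad S_m := \frac{\eta\gamma(3\log\log s_{i-1}/4\sqrt{2})^{2/3}}{M_m},
\end{align*}
and since $S_m^{3/2} \sim (\log m)^{19/16}/\log\log m \gg \log m$, this contribution is $o(m^{-A})$ for every $A>0$.

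Summing over the $K_m = \lceil \log m\rceil$ sub-intervals and, given $\z \in (0,1)$, selecting $\e', \eta$ small enough that $(1-\e')(1-\eta)^{3/2} \geq (1-\z)^{3/2}$, the super-polynomially small fluctuation contribution is swallowed and I arrive at $\Pr(\Lambda_\gamma \cap [m, m+1] \neq \varnothing) \leq 2 m^{-(1-\z)^{3/2}\gamma^{3/2}}\log m$ for all sufficiently large $m$, which gives \eqref{eq:PrLambda}. The main obstacle is calibrating the discretization: a coarser partition (with $K_m$ bounded) yields only a polynomial fluctuation bound whose exponent is controlled by the opaque constant in Proposition~\ref{temp-modulus} and may fail to beat $(1-\z)^{3/2}\gamma^{3/2}$; picking $K_m \sim \log m$ exploits the $(\tau/a)^{1/8}$ H\"older factor in Proposition~\ref{temp-modulus} to force the fluctuation contribution below any negative power of $m$ at the cost of only the extra $\log m$ factor present in the statement.
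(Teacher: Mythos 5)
Your proposal is correct and follows essentially the same strategy as the paper: partition $[e^m,e^{m+1}]$ into $\sim \log m$ sub-intervals, apply the union bound, and on each sub-interval split the supremum into a one-point tail (the polynomial contribution) plus a local modulus-of-continuity term (super-polynomially small by Proposition~\ref{temp-modulus} with $\e=1/8$, precisely because the block size $\sim 1/\log m$ feeds a $(\log m)^{3/16}$ gain into the exponent). The variations you introduce are cosmetic: you use a geometric rather than arithmetic mesh, you work directly with $\mathfrak{h}_1(s,0)=s^{1/3}\mathfrak{h}_s$ (which avoids the paper's extra $(x_{j-1}/t)^{1/3}$ bookkeeping in the definition of $\m{A}_{j,m}$), and you invoke the cruder Proposition~\ref{ppn:LargeTime} with a slack $(1-\e')$ instead of the asymptotically sharp Proposition~\ref{lem:RefUpTail}; since the lemma only asks for the exponent $(1-\z)^{3/2}\gamma^{3/2}+o(1)$, the extra slack is harmlessly absorbed by taking $\e',\eta$ small enough that $(1-\e')(1-\eta)^{3/2}\ge (1-\z)^{3/2}$. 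One small point worth making explicit in a final write-up: you should check that the arguments to the one-point tail bound fall in its window of validity, i.e.\ $s_0 \le (1-\eta)\gamma(3\log\log s_{i-1}/4\sqrt2)^{2/3}\le c(\log s_{i-1})^{2/3}$, which holds since $\log\log s_{i-1}\asymp\log m\ll m\le\log s_{i-1}$.
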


\begin{proof}[Final steps of the upper bound proof] Fix $\z>0$ and take any $\rho> 1-(1-\e)^{3/2}\gamma^{3/2}$. For any $n\geq 1$, define $\Xi_n:= [-e^{n+1},-e^{n})\cup (e^{n},e^{n+1}]$. From the definition of $\nu_{n,\rho}$, it follows that 
$$\nu_{n,\rho}(\Lambda_{\gamma}) \le \sum_{m\in \Z_{>0}} \frac{1}{e^{n\rho}}\ind_{\Lambda_{\gamma}\cap [m,m+1] \neq \varnothing}\cdot \ind_{[m,m+1]\subset \Xi_n}.$$
 Taking expectation on both sides, we get
	\begin{align}
	\Ex\left[\nu_{n,\rho}(\Lambda_{\gamma})\right] & \le e^{-n\rho} \sum_{m\in \Z_{>0} }\ind_{[m,m+1]\in \Xi_n}\cdot\Pr\left(\Lambda_{\gamma}\cap [m,m+1] \neq \varnothing\right)\nonumber \\ & \le e^{-n\rho}\sum_{m\in \Z_{>0}} 2m^{-(1-\z)^{3/2}\gamma^{3/2}+o(1)}\log m\cdot \ind_{[m,m+1]\in \Xi_n}  \nonumber\\ & \le e^{-n\rho}\cdot 2e^{n+1} \cdot 2n e^{-(1-\z)^{3/2}\gamma^{3/2}n}= 4ne^{n(1-\rho-(1-\z)^{3/2}\gamma^{3/2})+1}.\label{eq:UpBd}
	\end{align}
 The second inequality follows from Lemma \ref{tech}. We get the third inequality by observing that the number of non-zero terms in the sum is bounded by $2e^{n+1}$ and each non-zero term is bounded above by $2n e^{-(1-\z)^{3/2}\gamma^{3/2}n}$. The upper bound of $\Ex[\nu_{n,\rho}(\Lambda_{\gamma})]$ in \eqref{eq:UpBd} is summable over $n$ whenever $\rho>1-(1-\z)\gamma^{3/2}$. This shows \eqref{eq:ExpSer}. Alluding to the discussion after \eqref{eq:ExpSer}, we get the proof of $\mathrm{Dim}_{\mathbb{H}}(\Lambda_{\gamma})\leq 1-\gamma^{3/2}$.

\end{proof}

\begin{proof}[Proof of Lemma \ref{tech}] Define $B_m:=\lceil\log m \rceil $.  We divide the interval $[e^m,e^{m+1}]$ into $B_m$ many intervals $\{\mathcal{I}^{m}_{j}\}^{m}_{j=1}$ where $$\mathcal{I}^{m}_{j}:= [x^{(m)}_{j-1}, x^{(m)}_{j}] \quad \text{and, } \quad x^{(m)}_{j}:=e^m(1+\tfrac{(e-1)j}{B_m}), \quad j=1,\ldots, B_m.$$
We may now write 
	\begin{align}
	\Pr(\Lambda_\gamma\cap [m,m+1]\neq \varnothing ) &\le \Pr\Big(\sup_{t\in [m,m+1]} \mur(t)\geq \gamma\big(\frac{3}{4\sqrt{2}}\log m\big)^{\frac{2}{3}}\Big)\nonumber\\ & = \Pr \Big( \sup_{t\in [e^{m},e^{m+1}]} \h_t \ge \gamma\big(\frac{3}{4\sqrt{2}}\log m\big)^{\frac{2}{3}} \Big) \nonumber\\ & \le \sum_{j=1}^{B_m} \Pr \Big( \sup_{t\in \mathcal{I}^{m}_{j}} \h_t \ge \gamma\big(\frac{3}{4\sqrt{2}}\log m\big)^{2/3} \Big). \label{eq:trm3}
	\end{align}
	where the last inequality follows by the union bound. In what follows, we show that 
 \begin{align}\label{eq:EachTermBd}
 \Pr \Big( \sup_{t\in \mathcal{I}^{m}_{j}} \h_t \ge \gamma\big(\frac{3}{4\sqrt{2}}\log m\big)^{2/3} \Big)\leq 2m^{-(1-\z)^{3/2}\gamma^{3/2}+o(1)}
\end{align} 	
	where $o(1)$ term converges to $0$ as $m$ goes to $\infty$ uniformly for all $j=1,\ldots , B_m$. From \eqref{eq:EachTermBd}, \eqref{eq:PrLambda} of Lemma~\ref{tech} will follow by noting that there are at most $\log m$ terms in the sum \eqref{eq:trm3}.

	Fix $j\in \{1,\ldots , B_m\}$. For convenience, we use shorthand $x_j$ and $x_{j-1}$ to denote $x^{(m)}_{j}$ and $x^{(m)}_{j-1}$ respectively. Consider the events
	\begin{align*}
	\m{A}_{j,m}  := \Big\{\sup_{t\in \mathcal{I}^{m}_{j}} \big(\h_t-\big(\tfrac{x_{j-1}}{t}\big)^{\frac{1}{3}}\h_{x_{j-1}}\big) \ge \z\gamma\big(\frac{3\log m}{4\sqrt{2}}\big)^{\frac{2}{3}} \Big\}, \quad \m{B}_{j,m}  := \Big\{\h_{x_{j-1}} \ge (1-\z)\gamma\big(\frac{3\log m}{4\sqrt{2}}\big)^{\frac{2}{3}}\Big\}. 
	\end{align*}	
	 Note that 
	\begin{align*}
	\sup_{t\in \mathcal{I}^{m}_j} \h_t & \le \sup_{t\in \mathcal{I}^{m}_j} \Big(\h_t-\big(\tfrac{x_{j-1}}{t}\big)^{1/3}\h_{x_{j-1}}\Big) +\sup_{t\in \mathcal{I}^{m}_j} \left(\tfrac{x_{j-1}}{t}\right)^{1/3}\h_{x_{j-1}} \\ & = \sup_{t\in \mathcal{I}^{m}_j} \Big(\h_t-\big(\tfrac{x_{j-1}}{t}\big)^{1/3}\h_{x_{j-1,m}}\Big) +\max \Big\{\big(\tfrac{x_{j-1}}{x_{j}}\big)^{1/3}\h_{x_{j-1}}, \h_{x_{j-1}} \Big\}.
	\end{align*}
Due to the above inequality, we have 
	\begin{align*}
	\Big\{\sup_{t\in \mathcal{I}^{m}_j} \h_t \ge \gamma\big(\tfrac{3}{4\sqrt{2}}\log m\big)^{2/3} \Big\} \subset \m{A}_{j,m} \cup \m{B}_{j,m}
	\end{align*}
By the union bound, we get 
\begin{align}\label{eq:EachBdSplit}
\Pr \Big( \sup_{t\in \mathcal{I}^{m}_{j}} \h_t \ge \gamma\big(\frac{3}{4\sqrt{2}}\log m\big)^{2/3} \Big) \leq \Pr(\m{A}_{j,m})+ \Pr(\m{B}_{j,m}).
\end{align}
In what follows, we claim and prove that 
\begin{align}\label{eq:AjBjBd}
m^{\gamma^{3/2}}\Pr(\m{A}_{j,m}) = o(1), \quad \text{and }\quad\Pr(\m{B}_{j,m}) = m^{-(1-\z)^{3/2}\gamma^{3/2}+o(1)}
\end{align}
where $o(1)$ terms converge to $0$ as $m\to \infty$ uniformly for all $j \in \{1,\ldots , B_m\}$. Substituting the bounds of \eqref{eq:AjBjBd} into the right hand side of \eqref{eq:EachBdSplit} shows \eqref{eq:EachBdSplit}. To complete the proof of this lemma, it suffices to to show \eqref{eq:AjBjBd}.   	
	
	By noting that $\log x_{j-1,m} \in [m,m+1]$, we use Proposition~\ref{lem:RefUpTail} to get 
	\begin{align*}
	\Pr(\m{B}_{j,m}) & \le \exp\left(-(1+o(1))\gamma^{3/2}(1-\z)^{3/2}\log m\right)  = m^{-(1-\z)^{3/2}\gamma^{3/2}+o(1)}
	\end{align*}
	where the $o(1)$ term goes to zero as $m\to \infty$ uniformly for all $j$. This proves the bound on $\Pr(\m{B}_{j,m})$ in \eqref{eq:AjBjBd}.

	Now we proceed to prove the bound on $\Pr(\m{A}_{j,m})$.
 To this end,  recall that $h_{t}(\alpha,0)={\alpha}^{1/3}h_{\alpha t}$ for any $\alpha, t>0$. Using this, we may write  
		\begin{align} \nonumber
		\Pr(\m{A}_{j,m}) & = \Pr\Big(\sup_{t\in \mathcal{I}^{j}_m} \big(\tfrac{x_{j-1}}{t}\big)^{1/3} \big(\h_{x_{j-1}}(\tfrac{t}{x_{j-1}},0)-\h_{x_{j-1}}(1,0)\big) \ge \z\gamma\big(\tfrac{3}{4\sqrt{2}}\log m\big)^{2/3} \Big) \\ & \le \Pr\Big(\sup_{\tau\in [0,\frac{e-1}{B_m}]} \big(\h_{x_{j-1}}(1+\tau,0)-\h_{x_{j-1}}(1,0)\big) \ge \z\gamma(\tfrac{x_{j}}{x_{j-1}})^{1/3}\big(\tfrac{3}{4\sqrt{2}}\log m\big)^{2/3} \Big) \label{la}
		\end{align}
		where the second inequality follows since $(t^{-1}x_{j-1})^{1/3}$ is bounded below by $(x^{-1}_jx_{j-1})^{1/3}$ for any $t\in \mathcal{I}^{m}_j$. Setting  $r:=\sup_{\tau\in (0,(e-1)/B_m]} \tau^{1/8}\log^{2/3} (1/\tau)<\infty$, we get
		\begin{align}
		\mbox{r.h.s.~of \eqref{la}} & \le \Pr\Big(\sup_{\tau\in [0,\frac{e-1}{B_m}]} \frac{\h_{x_{j-1}}(1+\tau,0)-\h_{x_{j-1}}(1,0)}{\tau^{1/8}\log^{2/3}(1/\tau)} \ge \frac{\z\gamma}{r}(\tfrac{x_{j}}{x_{j-1}})^{1/3}\big(\tfrac{3}{4\sqrt{2}}\log m\big)^{2/3} \Big) \label{la2}
		\end{align}
		Applying Proposition \ref{temp-modulus} with $\e=\frac{1}{8}$, $\delta=\frac{e-1}{B_m}$, $a=1$, we get
		\begin{align*}
		\mbox{r.h.s.~of \eqref{la2}} & \le \exp\Big(-c(\frac{\z \gamma}{r})^{\frac{3}{2}}(\tfrac{x_{j}}{x_{j-1}})^{\frac{1}{2}}\big(\tfrac{3}{4\sqrt{2}}\log m\big)\Big) \le \exp(-C(\log m)^{1+\frac{3}{32}})=o(m^{-\gamma^{3/2}})
		\end{align*}
 for all large $m$. Here, $C$ is a constant which will only depend $\epsilon$. The second inequality follows since $r^{-\frac{3}{2}}\geq c_1(\log m)^{\frac{3}{32}}$ for some $c_1>0$ and $(x_j/x_{j-1})\geq 1$. This proves the first bound in \eqref{eq:AjBjBd} and hence, completes the proof of the lemma.
%
\end{proof}

\medskip

\noindent \textbf{Stage 2: Proof of $\mathrm{Dim}_{\mathbb{H}}(\Lambda_{\gamma})\geq 1-\gamma^{3/2}$.} To prove the lower bound, we use similar techniques used as in \cite[(4.14) of Theorem 4.7]{KKX17}. Recall the definition of `thickness' of a set from Definition~\ref{bd:Thickness}. We seek to use to use Proposition~\ref{thick}. 

 Let us fix $\theta \in (\gamma^{3/2},1)$. Recall $\Lambda_{\gamma}$ from \eqref{eq:Lambda}. We will show that $\Lambda_{\gamma}$ is $\theta$-thick  with probability $1$. This will prove the lower bound $\mathrm{Dim}_{\mathbb{H}}(\Lambda_{\gamma})\stackrel{a.s.}{\geq} 1-\gamma^{3/2}$ via Proposition~\ref{thick}. Let us define
	$$\m{D}_n:=\left\{ \Lambda_{\gamma} \cap [x,x+e^{\theta n}]=\varnothing, \mbox{ for some }x\in \Pi_n(\theta) \right\}.$$
The $\theta$-thickness of $\Lambda_{\gamma}$ will follow through the Borel-Cantelli lemma if the following holds 
\begin{align}\label{eq:Dn}
\sum_{n=1}^{\infty} \Pr(\m{D}_n) < \infty.
\end{align}  
Showing the above display will be the main focus of the rest of the proof.

Recall the definition of the interval $\mathcal{I}_n$ and its $\mathcal{M}_{\theta}$ many sub-intervals $\{\mathcal{I}^{(j)}_n\}^{\mathcal{M}_{\theta}}_{j=1}$ from Section~\ref{sec:LimInfu}. Let us denote the end points of the sub-intervals $\{\mathcal{I}^{(j)}_n\}^{\mathcal{M}_{\theta}}_{j=1}$  as $x^{(1)}_n,\ldots ,x^{(\mathcal{M}_{\theta})}_{n}$ such that $\mathcal{I}^{(j)}_n= [\exp(x^{(j-1)}_n), \exp(x^{(j)}_n)]$. Let us define 
\begin{align*}
\m{B}^{(j)}_n:= \Big\{\sup_{t\in [x^{(j-1)}_n, x^{(j)}_n]}\mur(t)\leq \gamma \big(\frac{3}{4\sqrt{2}}\big)^{\frac{2}{3}}(n+1)^{\frac{2}{3}}\Big\}.
\end{align*}
From the definition of $\m{B}^{(j)}_n$, it immediately follows that $\m{D}_n\subset \cup_{j=1}^{\mathcal{M}_{\theta}}\m{B}^{(j)}_n. $
By the union, we get $\Pr(\m{D}_n)\leq \sum_{j}\Pr(\m{B}^{(j)}_n)$. We will now show \eqref{eq:Dn} by proving a bound (uniform on $j$ and $n$) on $\Pr(\m{B}^{(j)}_n)$.

  Choose $\eta>0$ such that $\zeta := (\tfrac{4\sqrt{2}}{3}+\eta)(\gamma(\tfrac{3}{4\sqrt2})^{\frac23}+ 2\eta)^{3/2}<\theta$ and pick $\delta\in (0,\theta-\zeta)$. We now claim and prove that there exists $c_1,c_2>0$ such that  
\begin{align}\label{eq:BjBound}
\Pr(\m{B}^{(j)}_{n})\leq \exp(-ce^{n(\theta-\delta)}e^{-n\zeta}) + \exp\big(n(\theta-\delta)-c_2(\exp(e^{n\delta})-1)^{1/2}\big)
\end{align}
for all $1\leq j\leq n$ and all large $n$. Using the above inequality, we may bound $\Pr(\m{D}_n)$ by $\exp(n-n\theta+1)$  times the right hand side of \eqref{eq:BjBound}. Since $\theta>\zeta+\delta$, we can bound $\Pr(\m{D}_n)$ by $\exp(- e^{n\omega})$ for some $\omega\in (0,1)$ and for all large $n$. This shows \eqref{eq:Dn} and hence, completes the proof modulo \eqref{eq:BjBound} which is finally remained to be shown. By the identification $\mur(t)= h_{e^{t}}$ and $\mathcal{I}^{(j)}_n= [\exp(x^{(j-1)}_n), \exp(x^{(j)}_n)]$, it is straightforward to see that 
$$\m{B}^{(j)}_{n} = \Big\{\sup_{t\in \mathcal{I}^{(j)}_n}\h_{t}\leq \gamma \big(\frac{3}{4\sqrt{2}}\big)^{\frac{2}{3}}(n+1)^{\frac{2}{3}}\Big\}.$$  
Due to this identity, \eqref{eq:BjBound} now follows from the proof of \eqref{eq:AjBoundAlt}. This completes the proof. 
\qed

\appendix
\section{Auxiliary results}\label{sec:App}

In this section, we will show an improved bound on the upper tail probability of the KPZ equation as time goes to $\infty$. This is used in Section~\ref{sec:MuonoMult} for showing the macroscopic fractal properties of the KPZ equation. 
\begin{proposition}\label{lem:RefUpTail}  Recall $\m{A}_t$ and $\gamma_0$ from \eqref{eq:decorr}. Define $b_t:=(\log\log t)^{-7/6}$. Then, for any constant $K>0$,
	\begin{align*}
	\Pr(\m{A}_t)=\frac{(16\pi)^{-1}(1+o(1))}{\gamma^{3/2}_0\log t\log\log t}, \quad \quad
	\Pr\Big(\frac{\h_t}{(1+Kb_t)} \ge \gamma_0(\log\log t)^{2/3}\Big)= \frac{(16\pi)^{-1}(1+o(1))}{\gamma^{3/2}_0\log t\log\log t}.
	\end{align*}
	where $o(1)$ term converges to $0$ as $t$ goes to $\infty$. 
\end{proposition}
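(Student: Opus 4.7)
The plan is to establish the joint sharp asymptotic
\[
\mathbb{P}(\h_t \ge s) = \frac{1+o(1)}{16\pi s^{3/2}}\exp\!\Big(-\tfrac{4\sqrt{2}}{3}s^{3/2}\Big)
\]
as $t \to \infty$, uniformly for $s$ in a window of the form $[s_0, c(\log t)^{2/3}]$. The first assertion of the proposition follows by specialising to $s = \gamma_0(\log\log t)^{2/3}$: using $\gamma_0^{3/2} = 3/(4\sqrt{2})$, the exponential factor becomes $\exp(-\log\log t) = 1/\log t$ and the prefactor $(16\pi s^{3/2})^{-1}$ becomes $(16\pi\gamma_0^{3/2}\log\log t)^{-1}$, matching the claimed right-hand side. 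The second assertion is then a stability statement: applying the same asymptotic to $s' := s(1 + Kb_t)$ with $b_t = (\log\log t)^{-7/6}$, one computes
\[
(s')^{3/2} - s^{3/2} = \tfrac{3}{2}s^{3/2}Kb_t(1+o(1)) = \tfrac{3}{2}K\gamma_0^{3/2}(\log\log t)^{-1/6}(1+o(1)) = o(1),
\]
so both the exponential factor and the polynomial prefactor shift by multiplicative $1+o(1)$ factors, leaving the asserted asymptotic unchanged.

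To derive the sharp asymptotic itself, the strategy is to refine the method of \cite[Proposition~4.1]{CG18b} so as to track the constant prefactor, not merely the exponent. The starting point is the Amir--Corwin--Quastel Fredholm determinant representation of the Laplace transform $\mathbb{E}[\exp(-e^{t^{1/3}(\h_t+r)})]$, which via an inverse Laplace / exponential tilting argument is recast as a contour integral representation of $\mathbb{P}(\h_t \ge s)$. In \cite{CG18b}, a saddle-point analysis on this contour identifies the dominant exponential as $\exp(-\tfrac{4\sqrt{2}}{3}s^{3/2})$ up to a $(1\pm\varepsilon)$ error in the exponent; the refinement here is to perform the Gaussian expansion at the saddle (producing the $s^{-3/2}$ prefactor) and to compare the resulting Fredholm kernel with the shifted Airy kernel, whose tail determinant satisfies the classical Tracy--Widom asymptotic $1 - \det(I - K_{\mathrm{Ai}})_{L^2(s,\infty)} \sim (16\pi s^{3/2})^{-1}\exp(-\tfrac{4}{3}s^{3/2})$. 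The extra factor of $\sqrt{2}$ appearing in the KPZ exponent, relative to the pure Tracy--Widom rate $\tfrac{4}{3}$, arises from the $(2^{1/3})^{3/2}$ rescaling between $\h_t$ and $TW_{\mathrm{GUE}}$ encoded in the convergence $\h_t \Rightarrow 2^{-1/3}TW_{\mathrm{GUE}}$.

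The principal obstacle is uniform control of the finite-$t$ correction to the ACQ Fredholm kernel. The qualitative distributional convergence to Tracy--Widom is insufficient here; what is needed is a quantitative, trace-norm estimate of the form $\|K_{t,s} - K^{\mathrm{Ai}}_{s}\|_{\mathrm{tr}} \le e^{-ct^{1/3}}$ on the steepest descent contour, valid uniformly in $s \le c(\log t)^{2/3}$. Since $t^{1/3}$ is much larger than $\log\log t$ in this regime, such an exponentially small kernel correction is absorbed into the $1+o(1)$ multiplicative error against the polynomial main term $1/(\log t \log\log t)$. Establishing this uniform kernel bound, by a direct comparison of the ACQ kernel with the Airy kernel on the steepest descent contour and exploiting the wide separation of the scales $(\log\log t)^{2/3} \ll t^{1/3}$, is the crux of the improvement over the $(1\pm\varepsilon)$-level estimates of \cite{CG18b,CG18a} and, once in hand, directly converts the known Tracy--Widom tail asymptotic into the stated sharp asymptotic for $\h_t$.
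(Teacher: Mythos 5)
Your proposal takes a genuinely different route from the paper, and while the high-level strategy is sound (establish a uniform sharp tail asymptotic, then substitute $s = \gamma_0(\log\log t)^{2/3}$ and its $1+Kb_t$ perturbation), the critical step is left unproved. You correctly identify the crux: a quantitative trace-norm comparison of the form $\|K_{t,s} - K^{\mathrm{Ai}}_s\|_{\mathrm{tr}} \leq e^{-ct^{1/3}}$, uniformly for $s$ up to $c(\log t)^{2/3}$, on the steepest descent contour of the ACQ Fredholm determinant. But such an estimate is not available in the literature, and establishing it — with the required uniformity in $s$ — would be a substantial technical undertaking, arguably harder than the problem it is meant to solve. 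The proposal therefore leaves its load-bearing step as a conjecture.

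The paper sidesteps this obstacle entirely by using the Borodin--Gorin identity (Theorem~1 of \cite{BorGor16}, quoted as Proposition~\ref{ppn:PropConnection}): the KPZ Laplace transform $\mathbb{E}[\exp(-e^{t^{1/3}(\h_t - s)})]$ equals \emph{exactly} the expectation $\mathbb{E}_{\mathrm{Airy}}[\prod_k (1+e^{t^{1/3}(\mathbf{a}_k - s)})^{-1}]$ over the Airy point process. In other words, the finite-$t$ kernel \emph{is} the Airy kernel — the entire finite-$t$ correction collapses into the smoothing of the indicator $\mathbf{1}(x<s)$ by the logistic function $\mathcal{I}_s(x) = (1+e^{t^{1/3}(x-s)})^{-1}$, which sharpens at the geometric rate $t^{1/3}$. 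Bounding this smoothing error is far more tractable than a trace-norm comparison: the paper's Proposition~\ref{thm:MainTheorem} handles it using Airy-process rigidity (the coupling to the zeros of the Airy function from \cite{CG18a}), number-variance estimates, and the Tracy--Widom right-tail asymptotic of \cite{BN12}. The remaining conversion from Laplace transform to tail probability (upper bound by dropping the indicator on $\{\h_t \leq \bar s\}$; lower bound via Markov's inequality on $\theta(\bar s)$) is the short final step. If you want to salvage your approach, it is worth knowing that the identity effectively \emph{is} the quantitative kernel statement you were hoping to prove by steepest descent; starting from it rather than from the ACQ determinant makes the rest of the analysis feasible.
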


Our proof of Proposition~\ref{lem:RefUpTail} is closely in line with the proof of Proposition~4.1 of \cite{CG18b}. It will use a Laplace transform formula for $\mathcal{Z}^{\mathbf{nw}}(T,0)$ proved in \cite{BorGor16}. It connects $\mathcal{Z}^{\mathbf{nw}}(T,0)$ with the Airy point process $\mathbf{a}_1> \mathbf{a}_2>\ldots $, a well studied determinantal point process in random matrix theory (see, e.g., \cite[Section~4.2]{AGZ10}).

Throughout the rest, we use the following shorthand notations.
\begin{align*}
\mathcal{I}_s(x) := \frac{1}{1+\exp(t^{\frac{1}{3}}(x-s))} , \qquad \mathcal{J}_s(x):= \log \big(1+\exp(t^{\frac{1}{3}}(x-s))\big).
\end{align*}
\begin{proposition}[Theorem~1 of \cite{BorGor16}]\label{ppn:PropConnection}
For all $s\in\mathbb{R}$,
\begin{align}\label{eq:Connection}
\mathbb{E}_{\mathrm{KPZ}}\Big[\exp\Big(-\exp\big(t^{\frac{1}{3}}(\h_t(0)-s)\big)\Big)\Big]=\mathbb{E}_{\mathrm{Airy}}\left[\prod_{k=1}^{\infty} \mathcal{I}_s(\mathbf{a}_k)\right].
\end{align}
\end{proposition}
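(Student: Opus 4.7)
The plan is to prove Proposition~\ref{lem:RefUpTail} by combining the Laplace-transform/Airy-kernel identity of Proposition~\ref{ppn:PropConnection} with a careful Fredholm-determinant expansion, following closely the approach of \cite[Proposition~4.1]{CG18b}. Throughout, write $s_t:=\gamma_0(\log\log t)^{2/3}$ and $\tilde{s}_t:=(1+Kb_t)s_t$; since $(1+Kb_t)^{3/2}=1+O(b_t)$ and $b_t\log\log t=(\log\log t)^{-1/6}=o(1)$, both target thresholds produce the same leading-order asymptotic, so I will focus on $s=s_t$ and indicate at the end how $\tilde{s}_t$ is handled identically.

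\textbf{Step 1: From probabilities to Laplace transforms.} The map $x\mapsto \exp(-e^{t^{1/3}x})$ drops from $1$ to $0$ across a window of width $t^{-1/3}$ around $x=0$. Choosing an auxiliary scale $r_t\to\infty$ with $r_t/\log\log t\to 0$ (e.g.\ $r_t=(\log\log t)^{1/2}$) and setting $s_t^{\pm}:=s_t\pm r_t t^{-1/3}$, I will derive sandwich inequalities
\begin{align*}
1-\Ex\big[\exp\big(-e^{t^{1/3}(\h_t-s_t^{-})}\big)\big]-e^{-e^{r_t}}\ \le\ \Pr(\h_t\ge s_t)\ \le\ 1-\Ex\big[\exp\big(-e^{t^{1/3}(\h_t-s_t^{+})}\big)\big]+e^{-e^{r_t}}.
\end{align*}
The error $e^{-e^{r_t}}$ is super-polynomially small and will be negligible compared with the target order $1/(\log t\,\log\log t)$.

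\textbf{Step 2: Fredholm-determinant form of the Airy expectation.} By Proposition~\ref{ppn:PropConnection} and the determinantal structure of the Airy point process with kernel $K_{\mathrm{Airy}}$,
\begin{align*}
\Ex_{\mathrm{Airy}}\Big[\prod_{k\ge 1}\mathcal{I}_s(\mathbf{a}_k)\Big]=\det\big(I-\s_s K_{\mathrm{Airy}}\big)_{L^2(\R)},\qquad \s_s(x):=1-\mathcal{I}_s(x)=\frac{1}{1+e^{-t^{1/3}(x-s)}},
\end{align*}
where $\s_s$ is a smoothed indicator of $[s,\infty)$ with transition width $t^{-1/3}$. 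Thus, writing $\m{T}_s:=\mathrm{tr}(\s_s K_{\mathrm{Airy}})$, the Plemelj expansion gives
\begin{align*}
1-\det(I-\s_s K_{\mathrm{Airy}})=\m{T}_s-\tfrac{1}{2}\big(\m{T}_s^{2}-\mathrm{tr}((\s_s K_{\mathrm{Airy}})^2)\big)+O(\m{T}_s^{3}).
\end{align*}

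\textbf{Step 3: Asymptotic evaluation of $\m{T}_{s_t^{\pm}}$.} Using the classical diagonal asymptotic $K_{\mathrm{Airy}}(x,x)\sim \frac{1}{8\pi x}e^{-\frac{4}{3}x^{3/2}}$ as $x\to\infty$ (obtained from Airy function expansions past leading order, where the $x^{1/2}/(4\pi)$ terms in $(\mathrm{Ai}'(x))^2$ and $x(\mathrm{Ai}(x))^2$ cancel), together with Laplace's method, I will show
\begin{align*}
\m{T}_{s_t^{\pm}}=\int_{\R}\s_{s_t^{\pm}}(x)K_{\mathrm{Airy}}(x,x)\,dx=(1+o(1))\,\frac{e^{-\frac{4}{3}(s_t)^{3/2}}}{16\pi (s_t)^{3/2}}.
\end{align*}
The $\pm r_t t^{-1/3}$ shift changes $(s_t^\pm)^{3/2}=s_t^{3/2}\pm \tfrac32 r_t s_t^{1/2}t^{-1/3}+o(1)$, and $s_t^{1/2}t^{-1/3}\to 0$, so the two bounds produce the same leading asymptotic. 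The smooth tail of $\s_{s_t^\pm}$ below $s_t^{\pm}$ contributes negligibly because the Airy kernel grows rapidly into $(-\infty,0)$ but $\s_s(x)$ decays like $e^{t^{1/3}(x-s)}$ there, and the dominant contribution comes from the narrow window $x\approx s_t^\pm$. After converting $\frac{4}{3}(s_t)^{3/2}$ and $(s_t)^{3/2}$ using $\gamma_0^{3/2}$ and the identification $\frac{4\sqrt2}{3}\gamma_0^{3/2}=1$ that absorbs the factor separating the Airy and KPZ normalizations (coming from $\h_t\stackrel{d}{\to}2^{-1/3}TW_{\mathrm{GUE}}$), one gets the claimed $\frac{(16\pi)^{-1}}{\gamma_0^{3/2}\log t\,\log\log t}$.

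\textbf{Step 4: Higher-order error control.} For the Fredholm remainder, standard bounds give $\mathrm{tr}((\s_s K_{\mathrm{Airy}})^2)\le \|\s_s K_{\mathrm{Airy}}\|_{\mathrm{op}}\cdot \m{T}_s\le \m{T}_s^{2}/(1\wedge c)$, so the quadratic and higher terms are $O(\m{T}_{s_t}^{2})=O((\log t)^{-2}(\log\log t)^{-2})$, which is $o(\m{T}_{s_t})$. Combining Steps 1--4 yields the first asserted asymptotic.

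\textbf{Step 5: The $(1+Kb_t)$-perturbation.} The exact same argument with $s_t$ replaced by $\tilde s_t=(1+Kb_t)s_t$ gives $\tilde s_t^{3/2}=s_t^{3/2}(1+O(b_t))=s_t^{3/2}+O(b_t\log\log t)=s_t^{3/2}+o(1)$, so $e^{-\frac{4}{3}\tilde s_t^{3/2}}=(1+o(1))e^{-\frac{4}{3}s_t^{3/2}}$ and $\tilde s_t^{3/2}=(1+o(1))s_t^{3/2}$, recovering the second claim with no further modifications. The choice $b_t=(\log\log t)^{-7/6}$ is precisely what makes $b_t\log\log t\to 0$, so the perturbation is absorbed into the $o(1)$.

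The main obstacle I anticipate is Step~3: getting the sharp constant in the asymptotic of $\m{T}_{s_t}$, which requires the \emph{cancellation} in $K_{\mathrm{Airy}}(x,x)=(\mathrm{Ai}'(x))^2-x\,\mathrm{Ai}(x)^2$ between two terms of the same leading order $\frac{x^{1/2}}{4\pi}e^{-\frac{4}{3}x^{3/2}}$, together with a delicate Laplace-method analysis that handles the $t^{-1/3}$-scale transition layer of $\s_s$ uniformly in $t\to\infty$. Ensuring that the interaction between this analytic asymptotic and the Laplace-transform sandwich from Step~1 preserves the leading constant (and not merely the leading exponent) is the most technical point of the argument.
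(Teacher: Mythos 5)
Your proposal does not prove the statement at issue. The statement is Proposition~\ref{ppn:PropConnection} itself, i.e.\ the identity $\mathbb{E}_{\mathrm{KPZ}}\big[\exp\big(-\exp(t^{1/3}(\h_t(0)-s))\big)\big]=\mathbb{E}_{\mathrm{Airy}}\big[\prod_{k\geq 1}\mathcal{I}_s(\mathbf{a}_k)\big]$, which the paper does not prove but quotes verbatim as Theorem~1 of \cite{BorGor16}. Your write-up opens by declaring that you will prove Proposition~\ref{lem:RefUpTail} \emph{by combining} this identity with a Fredholm-determinant expansion; in other words, you take the statement you were asked to prove as a given input and use it to derive a different result (the refined upper-tail asymptotics). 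Nothing in Steps 1--5 addresses why the Laplace transform of $\exp(t^{1/3}\h_t(0))$ under the KPZ measure equals the expectation of the multiplicative functional $\prod_k(1+e^{t^{1/3}(\mathbf{a}_k-s)})^{-1}$ over the Airy point process, so the proposal is off-target rather than merely incomplete.

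To actually prove the proposition one must match two Fredholm determinants: on the KPZ side, the exact solvability of the narrow-wedge Cole--Hopf solution (the Amir--Corwin--Quastel / Borodin--Corwin type formula) expresses $\mathbb{E}\big[\exp\big(-\exp(t^{1/3}(\h_t(0)-s))\big)\big]$ as a Fredholm determinant whose kernel involves the Airy kernel deformed by a Fermi-type factor; on the Airy side, the standard identity for multiplicative functionals of a determinantal point process gives $\mathbb{E}_{\mathrm{Airy}}\big[\prod_k \mathcal{I}_s(\mathbf{a}_k)\big]=\det\big(I-\phi_s K_{\mathrm{Airy}}\big)_{L^2(\mathbb{R})}$ with $\phi_s(x)=1-\mathcal{I}_s(x)$, and the content of Borodin--Gorin's theorem is that these two determinants coincide. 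Your Step~2 records the second of these facts, but the first (the KPZ-side exact formula) and the identification of the two kernels --- the actual substance of the proof --- are absent. Separately, even as an argument for Proposition~\ref{lem:RefUpTail}, your route differs from the paper's: the paper avoids direct trace asymptotics of $\mathrm{tr}(\phi_s K_{\mathrm{Airy}})$ and instead sandwiches the product over the Airy points using the largest-particle event, the rigidity comparison with Airy zeros from \cite{CG18a}, and the precise $\mathbb{P}(\mathbf{a}_1\geq s)$ asymptotics of \cite{BN12} (Proposition~\ref{thm:MainTheorem}); your Laplace-method evaluation of the kernel diagonal with the stated cancellation in $K_{\mathrm{Airy}}(x,x)$ could plausibly be made to work, but it is not a proof of the proposition you were given.
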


The following proposition proves an upper and lower bound on the r.h.s. of \eqref{eq:Connection}. We use these bounds to complete the proof of Proposition~\ref{lem:RefUpTail}. We defer the proof of Proposition~\ref{thm:MainTheorem} to Section~\ref{NWLaplaceTail}. 

\begin{proposition}\label{thm:MainTheorem}
Fix any constant $K_1,K_2,K_3>0$. Recall $b_t $ from Proposition~\ref{lem:RefUpTail}. There exists $t_0= t_0(K_1,k_2,K_3)>0$ and two sequences $\{\mathfrak{p}_t\}_{t\geq t_0}$, $\{\mathfrak{q}_t\}_{t\geq t_0}$ such that for all $t\geq t_0$, $ K_1(\log\log t)^{2/3}\leq s\leq K_2(\log\log t)^{2/3}$ and $K\in [-K_3,K_3]$,
\begin{align}
1- \mathbb{E}\Big[\prod_{k=1}^{\infty} \mathcal{I}_{(1+Kb_t)s}(\mathbf{a}_k)\Big]&\leq (1+\mathfrak{p}_t)\frac{1}{16\pi s^{3/2}}e^{- \frac{4}{3}s^{3/2}},\label{eq:UpBound}\\
1- \mathbb{E}\Big[\prod_{k=1}^{\infty} \mathcal{I}_{(1+Kb_t)s}(\mathbf{a}_k)\Big]&\geq (1+\mathfrak{q}_t)\frac{1}{16\pi s^{3/2}}e^{- \frac{4}{3} s^{3/2}}\label{eq:LowrBound}
\end{align}
and $\mathfrak{p}_t\to 0$, $\mathfrak{q}_t\to 0$ as $t\to \infty$.
\end{proposition}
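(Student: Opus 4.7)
The strategy, following the template of \cite[Proposition~4.1]{CG18b}, is to sandwich $1-\prod_k \mathcal{I}_r(\mathbf{a}_k)$ (with $r := (1+Kb_t)s$) by first-order Bonferroni bounds and then exploit the determinantal structure of the Airy point process. Writing $y_k := 1-\mathcal{I}_r(\mathbf{a}_k)\in[0,1]$, the identity $1-\prod_k(1-y_k) = \sum_k y_k \prod_{j<k}(1-y_j)$ together with $1-\sum_{j<k}y_j \leq \prod_{j<k}(1-y_j) \leq 1$ yields
\begin{align*}
\sum_k y_k - \sum_{j<k}y_j y_k \;\leq\; 1-\prod_{k\geq 1}\mathcal{I}_r(\mathbf{a}_k) \;\leq\; \sum_k y_k.
\end{align*}
Taking expectation and using the one- and two-point correlation functions $\rho_1, \rho_2$ of the Airy point process, the problem reduces to analysing
\begin{align*}
M_1(r) := \int_\mathbb{R} (1-\mathcal{I}_r(x))\rho_1(x)\,dx, \qquad M_2(r) := \iint (1-\mathcal{I}_r(x))(1-\mathcal{I}_r(y))\rho_2(x,y)\,dx\,dy,
\end{align*}
and establishing $M_1(r) = (1+o(1))(16\pi s^{3/2})^{-1} e^{-\frac{4}{3}s^{3/2}}$ while $M_2(r) = o(M_1(r))$, both uniformly over $s$ and $K$ in the prescribed ranges.

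The core computation is the first-moment asymptotic. The uniform Airy asymptotic $\mathrm{Ai}(x) = \tfrac{1}{2\sqrt{\pi}}x^{-1/4}e^{-\frac{2}{3}x^{3/2}}(1+O(x^{-3/2}))$, combined with $\rho_1(x) = \int_0^\infty \mathrm{Ai}(x+z)^2\,dz$ and a Laplace expansion in $z$, gives $\rho_1(x) = (1+O(x^{-3/2}))\tfrac{1}{8\pi x}e^{-\frac{4}{3}x^{3/2}}$ as $x\to\infty$. A single integration-by-parts (using $\frac{d}{dx}e^{-\frac{4}{3}x^{3/2}} = -2\sqrt{x}\,e^{-\frac{4}{3}x^{3/2}}$) then yields
\begin{align*}
\int_r^\infty \rho_1(x)\,dx = (1+O(r^{-3/2}))\,\frac{1}{16\pi r^{3/2}}\,e^{-\frac{4}{3}r^{3/2}}.
\end{align*}
Next, I would replace the indicator $\mathbf{1}_{x>r}$ by $(1-\mathcal{I}_r(x))$: using $\mathcal{I}_r(x) \leq e^{-t^{1/3}(x-r)}$ for $x>r$ and $1-\mathcal{I}_r(x) \leq e^{t^{1/3}(x-r)}$ for $x<r$, together with the bound $\rho_1(x) \leq C\rho_1(r)$ for $|x-r|\leq 1$ and super-exponential decay beyond, the smoothing error is $O(\rho_1(r) t^{-1/3})$, smaller than $\int_r^\infty \rho_1$ by a factor $O(\sqrt{r}/t^{1/3}) = O((\log\log t)^{1/3} t^{-1/3})\to 0$. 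Finally, substituting $r=(1+Kb_t)s$ and Taylor-expanding: since $b_t s^{3/2} \leq K_2^{3/2}(\log\log t)^{-1/6}\to 0$ uniformly in $|K|\leq K_3$, one obtains $\tfrac{4}{3}r^{3/2} = \tfrac{4}{3}s^{3/2} + O(b_t s^{3/2})$ and $r^{-3/2} = s^{-3/2}(1+O(b_t))$, giving $M_1(r) = (1+o(1))(16\pi s^{3/2})^{-1}e^{-\frac{4}{3}s^{3/2}}$ with $o(1)$ uniform.

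For the second-moment correction, the determinantal identity $\rho_2(x,y) = \rho_1(x)\rho_1(y) - K(x,y)^2$ gives $\rho_2(x,y) \leq \rho_1(x)\rho_1(y)$, hence $M_2(r) \leq M_1(r)^2$. Since $M_1(r)\to 0$ super-polynomially fast, $M_2(r) = o(M_1(r))$, so the lower Bonferroni bound matches the upper one at leading order, yielding both \eqref{eq:UpBound} and \eqref{eq:LowrBound} with sequences $\mathfrak{p}_t, \mathfrak{q}_t\to 0$ that are uniform in the relevant ranges of $s$ and $K$.

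The main obstacle is producing the asymptotic $M_1(r) = (1+o(1))(16\pi s^{3/2})^{-1}e^{-\frac{4}{3}s^{3/2}}$ with multiplicative error \emph{uniform} both in the growing window $s\in [K_1(\log\log t)^{2/3}, K_2(\log\log t)^{2/3}]$ and in the perturbation parameter $K\in[-K_3,K_3]$. This requires uniform (not merely pointwise) versions of the edge Airy-kernel asymptotics and careful tracking of how $(1+Kb_t)^{3/2}$ interacts with $s^{3/2}$; the specific calibration $b_t = (\log\log t)^{-7/6}$ is tuned precisely so that $b_t s^{3/2}\to 0$, which is what allows the exponential factor $e^{-2Kb_t s^{3/2}(1+o(1))}$ to reduce to $1+o(1)$ in the final asymptotic.
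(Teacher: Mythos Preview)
Your argument is correct and takes a genuinely different route from the paper. The paper works probabilistically: it conditions on the event $\{\mathbf{a}_1 \leq (1\pm \tilde{K}b_t)\bar s\}$, controls the product $\prod_k \mathcal{I}_{\bar s}(\mathbf{a}_k)$ on that event via the uniform envelope $\mathbf{a}_k \leq (1-\epsilon)\lambda_k + C^{\mathrm{Ai}}_\epsilon$ of \cite[Proposition~4.5]{CG18a} together with concentration of the counting function $\chi^{\mathrm{Ai}}$, and finally quotes the Tracy--Widom right-tail asymptotic $\mathbb{P}(\mathbf{a}_1>r)\sim (16\pi r^{3/2})^{-1}e^{-\frac{4}{3}r^{3/2}}$ from \cite{BN12}. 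You instead linearise via Bonferroni, reduce to the first two correlation functions $\rho_1,\rho_2$, extract the leading term from the explicit Airy asymptotic $\rho_1(x)\sim (8\pi x)^{-1}e^{-\frac{4}{3}x^{3/2}}$, and kill the second-order term with the determinantal bound $\rho_2\leq \rho_1\otimes\rho_1$.

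What each buys: the paper's approach is modular, reusing heavy off-the-shelf inputs (the envelope from \cite{CG18a}, the counting-function tail, and the \cite{BN12} asymptotic) and never touching integrals of correlation functions. Your approach is more self-contained and analytically direct---it derives the $(16\pi s^{3/2})^{-1}e^{-\frac43 s^{3/2}}$ constant from scratch rather than importing it, and avoids the probabilistic conditioning entirely. The only place requiring care in your write-up is the smoothing estimate $\big|M_1(r)-\int_r^\infty \rho_1\big|=O(\rho_1(r)t^{-1/3})$: since $\rho_1(r-u)/\rho_1(r)\approx e^{2\sqrt r\,u}$ for small $u>0$, one needs $t^{1/3}\gg \sqrt r$, which holds with enormous room since $\sqrt r=O((\log\log t)^{1/3})$. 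Your final calibration check $b_t s^{3/2}\leq K_2^{3/2}(\log\log t)^{-1/6}\to 0$ is exactly the point that makes the $(1+Kb_t)$ perturbation harmless, and matches the role this plays in the paper's argument.
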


\begin{proof}[Proof of Proposition~\ref{lem:RefUpTail}] 
Define $s:=\gamma_0(\log \log t)^{2/3}$, $\bar{s}:= \gamma_0(1+b_t)(\log \log t)^{2/3}$ and $\theta(s) := \exp\big(- \exp\big(t^{\frac{1}{3}}(\h_t-s)\big)\big)$. By \eqref{eq:Connection}, we know $\mathbb{E}_{\mathrm{KPZ}}[\theta(s)] = \mathbb{E}_{\mathrm{Airy}}[\prod_{k=1}^{\infty} \mathcal{I}_s(\mathbf{a}_k)]$. Note that
\begin{align*}
\theta(s)\leq \mathbf{1}(\h_t(0)\leq \bar{s})+\mathbf{1}(\h_t(0)> \bar{s}) \exp(-\exp(b_t st^{1/3}))
\end{align*}
which after rearranging, taking expectations and applying \eqref{eq:Connection} will lead to 
\begin{align*}
\mathbb{P}(\h_t(0)> \bar{s})\leq  \Big(1-\exp(-\exp(b_t st^{\frac{1}{3}}))\Big)^{-1} \Big(1-\mathbb{E}_{\mathrm{Airy}}\big[\prod_{k=1}^{\infty} \mathcal{I}_s(\mathbf{a}_k)]\Big).
\end{align*}
 We may write $1-\exp(-\exp(b_t s t^{\frac{1}{3}}))=1+ o(t)$. Combining this with \eqref{eq:UpBound} yields
\begin{align*}
\mathbb{P}(\h_t(0)\geq \bar{s}) \leq (1+o(1))\frac{1}{16\pi s^{3/2}}e^{-\frac{4}{3}s^{3/2}}
\end{align*}
   for all large $t$. 

We turn now to prove the lower bound. By Markov's inequality, we get
\begin{align*}
\mathbb{P}(\h_t\leq s)= \mathbb{P}\Big(\theta(\bar{s})\geq \exp\big(- e^{-b_t st^{1/3}}\big)\Big)\leq \exp\big(e^{-b_t st^{1/3}}\big)\cdot \mathbb{E}[\theta(\bar{s})]
\end{align*}
which after rearranging yields
$1- \exp\big(- e^{-b_t s t^{1/3}}\big)\mathbb{P}(\h_t\leq s) \geq 1- \mathbb{E}\left[\theta(\bar{s})\right]$.
Finally, applying \eqref{eq:LowrBound} to the right hand side of the above display shows the lower bound.
\end{proof}


\subsubsection{Proof of Proposition~\ref{thm:MainTheorem}}\label{NWLaplaceTail}
\begin{proof}[Proof of \eqref{eq:UpBound}]
  Define $\bar{s}:=(1+Kb_t)s$. Define $\mathbf{A}=\big\{\mathbf{a}_1\leq  (1-\tilde{K}b_t)\bar{s}\big\}$ for some $\tilde{K}\in [0,K_3]$ and note the following lower bound
   \begin{align}\label{eq:TrivLowerBound}
   \mathbb{E}_{\mathrm{Airy}}\big[\prod_{k=1}^{\infty} \mathcal{I}_{\bar{s}}(\mathbf{a}_k)\big] &\geq \mathbb{E}_{\mathrm{Airy}}\big[\prod_{k=1}^{\infty} \mathcal{I}_{\bar{s}}(\mathbf{a}_k)\mathbf{1}(\mathbf{A})\big].
   \end{align}
   We show a lower bound to the right hand side of the above display. We set $k_0:= \lfloor \frac{2}{3\pi}\bar{s}^{\frac{9}{4}+2b_t}\rfloor$. By the inequality $\mathcal{J}_{\bar{s}}(\mathbf{a}_k)\leq \exp(-ct^{\frac{1}{3}}\bar{s}b_t)$ which follows on the event $\mathbf{A}$, we observe that  
   \begin{align}
   \prod_{k=1}^{k_0} \mathcal{I}_s(\mathbf{a}_k)\mathbf{1}(\mathbf{A})&= \exp\Big(- \sum_{k=1}^{k_0}\mathcal{J}_{\bar{s}}(\mathbf{a}_k)\Big)\mathbf{1}(\mathbf{A})\geq \exp\Big(-\frac{2}{3\pi}\bar{s}^{\frac{9}{4}+2b_t}e^{-\tilde{K}\bar{s}b_tt^{\frac{1}{3}}}\Big). \label{eq:1stBound}
   \end{align}
   We now focus on to bound $\prod_{k>k_0} \mathcal{I}_s(\mathbf{a}_k)$ from below on the event $\mathbf{A}$. By the result of \cite[Proposition~4.5]{CG18a}, for any $\epsilon,\delta\in (0,1)$ the probability space corresponding to the Airy point process can be augmented so that there exists a random variable $C^{\mathrm{Ai}}_{\epsilon}$  satisfying
   \begin{equation*}
(1+\epsilon)\lambda_{k} - C^{\mathrm{Ai}}_{\epsilon}\leq \mathbf{a}_k\leq  (1-\epsilon)\lambda_k+ C^{\mathrm{Ai}}_{\epsilon}\quad \text{for all }k\geq 1\quad \text{ and } \quad \mathbb{P}(C^{\mathrm{Ai}}_{\epsilon}\geq s)\leq e^{- s^{1-\delta}}
\end{equation*}
 for all $s\geq s_0$ where $s_0=s_0(\epsilon,\delta)$ is a constant. Here, $\lambda_k$ is the $k$-th zero of the Airy function (see \cite[Proposition~4.6]{CG18a}) and we fix some $\delta\in(0, \epsilon)$. Define $\phi(s) := s^{\frac{3+8\epsilon/3}{2(1-\delta)^2}}$ and observe that 
\begin{align}\label{eq:ResBound}
\prod_{k>k_0} \mathcal{I}_s(\mathbf{a}_k) \geq \prod_{k>k_0} \mathcal{I}_s(\mathbf{a}_k)\mathbf{1}(C^{\mathrm{Ai}}_{\epsilon}\leq \phi(s)) &\geq \exp\Big(-\sum_{k>k_0} \mathcal{J}_s\big((1-\epsilon)\lambda_k+ \phi(s)\big)\Big).
\end{align}
 Using tail probability of $C^{\mathrm{Ai}}_{\epsilon}$ \cite[Proposition~4.6]{CG18a}, we have
$\mathbb{P}(C^{\mathrm{Ai}}_{\epsilon}\leq \phi(s))\geq 1- \exp\big(-s^{\frac{3}{2}+\frac{4}{3}\epsilon}\big)$.
we now claim and prove that 
\begin{align}\label{eq:JBound}
\mathcal{J}_{s}\Big((1-\epsilon)\lambda_k+ \phi(s)\Big)\leq  e^{t^{1/3}\big(-s -(1-\epsilon)(3\pi k/2)^{2/3}+\phi(s)\big)} \leq e^{t^{1/3}\big(-s-(1-\epsilon)(k-k_0)^{2/3}\big)}.
\end{align}
To see this note that for all $k\geq k_0$,
\begin{align*}
\lambda_k \leq -\Big(\frac{3\pi k}{2}\Big)^{\frac{3}{2}} \quad \text{and, } \quad (1-\epsilon)(\frac{3\pi k}{2}\big)^{\frac{3}{2}} - \phi(s) \geq (1-\epsilon)\Big(\frac{3\pi}{2} (k-k_0)\Big)^{\frac{1}{3}}.
\end{align*}
The first and second inequalities are consequence of \cite[Proposition~4.6]{CG18a} and \cite[Lemma~5.6]{CG18a} respectively.
Summing both sides of \eqref{eq:JBound} over $k>k_0$ in \eqref{eq:JBound}, approximating the sum by the corresponding integral, and evaluating shows 
\begin{equation}\label{eq:ResBoundSep}
\sum_{k>k_0} \mathcal{J}_{\bar{s}}((1-\epsilon)\lambda_k+ \phi(s)) \leq Ct^{-\frac{1}{3}}\exp(-\bar{s}t^{\frac{1}{3}}).
\end{equation}
for some constant $C>0$. Now, we substitute \eqref{eq:ResBoundSep} into the r.h.s. of \eqref{eq:ResBound} to write 
\begin{align*}
\prod_{k>k_0} \mathcal{I}_s(\mathbf{a}_k)\mathbf{1}(C^{\mathrm{Ai}}_{\epsilon}\leq \phi(\bar{s}))\geq \exp\left(-\frac{C}{t^{\frac{1}{3}}} \exp(- \bar{s}t^{\frac{1}{3}})\right).
\end{align*}
Applying \eqref{eq:1stBound} in combination with the above inequality shows 
\begin{equation}
\text{l.h.s. of \eqref{eq:TrivLowerBound}}\geq \exp\Big(-\frac{2}{3\pi}\bar{s}^{\frac{9}{4}+2b_t}e^{-\tilde{K}b_t \bar{s}t^{\frac{1}{3}}}-Ct^{-\frac{1}{3}}e^{-\bar{s}t^{\frac{1}{3}}}\Big) \mathbb{P}\big(C^{\mathrm{Ai}}_{\epsilon}\leq \phi(\bar{s}), \mathbf{A}\big).  \label{eq:LowB1}
\end{equation}
First we note that 
$$\exp\Big(-\frac{2}{3\pi}s^{\frac{9}{4}+2b_t}e^{-\tilde{K}b_t \bar{s}t^{\frac{1}{3}}}-Ct^{-\frac{1}{3}}e^{-\bar{s}t^{\frac{1}{3}}}\Big) = 1+o(1)$$
as $t\to \infty$. Using the tail bound of $C^{\mathrm{Ai}}_{\epsilon}\leq \phi(\bar{s})$, we may now write 
\begin{equation}\label{eq:LowB2}
\mathbb{P}\big(C^{\mathrm{Ai}}_{\epsilon}\leq \phi(\bar{s}),\mathbf{A}\big)\geq 1 - \mathbb{P}(C^{\mathrm{Ai}}_{\epsilon}\geq \phi(\bar{s})) - \mathbb{P}(\neg\mathbf{A})\geq 1- e^{-\bar{s}^{\frac{3}{2}+\frac{4}{3}\epsilon}} - (1+o(1))\frac{1}{16\pi s^{3/2}}e^{-\frac{4}{3}s^{\frac{3}{2}}}
\end{equation}
for all large $t$. The second inequality above used $$\mathbb{P}(\neg\mathbf{A})=\mathbb{P}(\mathbf{a}_1\geq (1-\tilde{K}b_t)s)\leq (1+o(1))\frac{1}{16\pi s^{3/2}}\exp(-\frac{4}{3}s^{\frac{3}{2}})$$ which holds when $t$ is sufficiently large (see \cite[Theorem~1]{BN12}). Substituting \eqref{eq:LowB2} into the right hand side of \eqref{eq:LowB1} yields \eqref{eq:UpBound}.
\end{proof}

\begin{proof}[Proof of \eqref{eq:LowrBound}]
Now we show an upper bound on $\mathbb{E}\big[\prod_{k=1}^{\infty} \mathcal{I}_{s+Kb_t}(\mathbf{a}_k)\big]$. Recall that $\bar{s}= (1+Kb_t)s$ and define $\mathbf{A}=\big\{\mathbf{a}_1 \leq (1+\tilde{K}b_t)\bar{s}\big\}$ for some $\tilde{K}\in [0,K_3]$. We split $\mathbb{E}\big[\prod_{k=1}^{\infty} \mathcal{I}_{\bar{s}}(\mathbf{a}_k)\big]$ into two different parts shown as follows  
\begin{align}\label{eq:LowTail1stStep}
\mathbb{E}\Big[\prod_{k=1}^{\infty} \mathcal{I}_{\bar{s}}(\mathbf{a}_k)\Big]\leq \mathbb{E}\Big[\prod_{k=1}^{\infty} \mathcal{I}_{\bar{s}}(\mathbf{a}_k)\mathbf{1}(\mathbf{A})\Big] + \mathbb{P}(\neg \mathbf{A})\cdot \exp(- \tilde{K}b_t \bar{s} t^{\frac{1}{3}}).
\end{align}
%
Let us denote $\chi^{\mathrm{Ai}}(s):= \#\{\mathbf{a}_i\geq s\}$. Fix $\epsilon\in (0,1)$, $c\in(0,\tfrac{2}{3\pi})$ and define
\begin{align*}
\mathbf{B}: = \Big\{ \chi^{\mathrm{Ai}}(-\epsilon \bar{s})- \mathbb{E}\big[\chi^{\mathrm{Ai}}(-\epsilon \bar{s})\big]\geq - c(\epsilon \bar{s})^{\frac{3}{2}}\Big\}
\end{align*}
We split the first term on the r.h.s. of \eqref{eq:LowTail1stStep} as follows
 \begin{align*}
 \mathbb{E}\Big[\prod_{k=1}^{\infty} \mathcal{I}_{\bar{s}}(\mathbf{a}_k)\mathbf{1}(\mathbf{A})\Big] &\leq \mathbb{E}\Big[\prod_{k=1}^{\infty}\mathcal{I}_{\bar{s}}(\mathbf{a}_k)\mathbf{1}\big(\mathbf{B}\cap \mathbf{A}\big)\Big]+  \mathbb{E}\Big[\mathbf{1}\big((\neg\mathbf{B}) \cap \mathbf{A})\Big]. 
\end{align*}
We now bound each term on the right hand side of the above display. Note that 
$$\prod_{k=1}^{\infty}\mathcal{I}_{\bar{s}}(\mathbf{a}_k) \mathbf{1}(\mathbf{B})\leq \exp\Big(-\Big(\frac{2}{3\pi}-c\Big)(\epsilon \bar{s})^{\frac{3}{2}}e^{-(1+\epsilon)\bar{s} t^{\frac{1}{3}}}\Big)$$ holds on the event $\mathbf{B}$. As a consequence, we get 
\begin{align}\label{eq:2ndSplitBd}
\mathbb{E}\Big[\prod_{k=1}^{\infty}\mathcal{I}_{\bar{s}}(\mathbf{a}_k)\mathbf{1}\big(\mathbf{B}\cap\mathbf{A}\big)\Big]\leq \exp\Big(-\Big(\frac{2}{3\pi}-c\Big)(\epsilon \bar{s})^{\frac{3}{2}}e^{-(1+\epsilon)\bar{s}t^{\frac{1}{3}}}\Big)\cdot\mathbb{P}(\mathbf{A}).
\end{align}
We may bound the r.h.s. of \eqref{eq:2ndSplitBd} by $(1-\exp(-\zeta \bar{s} t^{1/3}))\mathbb{P}(\mathbf{A})$ for some $\zeta>0$ as $t$ gets large. On the other hand, we note that there exists $t_{\delta}>0$ such that $\mathbb{P}(\neg\mathbf{B})\leq e^{- c(\epsilon \bar{s})^{3-\delta}}$ for all $t>t_{\delta}$. Substituting these bounds into the r.h.s. of \eqref{eq:LowTail1stStep} shows 
\begin{align}
1-\mathbb{E}\big[\prod_{k=1}^{\infty}\mathcal{I}_{\bar{s}}(\mathbf{a}_k)\big]&\geq \Pr(\neg\mathbf{A})+ \mathbb{P}(\mathbf{A})(e^{- \zeta \bar{s} t^{\frac{1}{3}}}- e^{-\tilde{K}b_t\bar{s}t^{1/3}})- e^{- c(\zeta \bar{s})^{3-\delta}}.\qquad  \label{eq:LowrFnStep}
\end{align}
Due to the following inequality (thanks to \cite[Theorem~1]{BN12}) $$\mathbb{P}(\neg\mathbf{A})\geq (1+o(1))\frac{1}{16\pi s^{3/2}}\exp\big(-\frac{4}{3}s^{\frac{3}{2}}\big)$$  and since $\exp(-\zeta st^{1/3}),\exp(-b_ts t^{1/3})$ and $\exp(-c(\epsilon s)^{3-\delta})$ can be represented as $o(1)\exp(-4s^{3/2}/3)$ as $t$ grows large, the r.h.s. of \eqref{eq:LowrFnStep} is lower bounded by $(1+o(1))(16\pi s^{3/2})^{-1}\exp(-4s^{3/2}/3)$. This completes the proof of \eqref{eq:LowrBound} and hence also of Proposition~\ref{thm:MainTheorem}.
\end{proof}

\bibliographystyle{alphaabbr}		
\bibliography{frt}
\end{document}